\addspace\texttt{\mkbibbrackets{\thefield{arxivclass}}}}}}
\addspace\texttt{\mkbibbrackets{\thefield{arxivclass}}}}}}
\renewcommand\tableofcontents{%
    \@starttoc{toc}%
}
\newcommand\shorttitle{Harmonic flow of $\S7$-structures} 
\newcommand\authors{Dwivedi--Loubeau--S{\'{a}} Earp} 
\newcounter{commentCounter}
\ifodd\value{page}
\authors
\shorttitle
\newcommand*{\rom}[1]{\expandafter\@slowromancap\romannumeral #1@}
\newtheorem{theorem}{Theorem}[section]
\newtheorem{corollary}[theorem]{Corollary}
\newtheorem{lemma}[theorem]{Lemma}
\newtheorem{proposition}[theorem]{Proposition}
\theoremstyle{definition}
\newtheorem{definition}[theorem]{Definition}
\newtheorem{remark}[theorem]{Remark}
\numberwithin{equation}{section}
\newtheorem{thmx}{Theorem}
\def\bR{\mathbb R}
\def\bO{\mathbb O}
\def\bP{\mathbb P}
\def\bRP{\mathbb {RP}}
\newcommand{\rT}{{\rm T}}
\def\wtd{\widetilde}
\def\pt{\partial}
\def\del{\nabla}
\def\G2{\mathrm{G}_2}
\def\g2{\varphi}
\def\S7{\mathrm{Spin}(7)}
\def\s7{\Phi}
\def\ddt{\frac{d}{dt}}
\def\fg{\mathfrak{g}}
\def\fh{\mathfrak{h}}
\def\fm{\mathfrak{m}}
\def\ufm{\underline{\mathfrak{m}}}
\def\fgl{\mathfrak{gl}}
\def\fso{\mathfrak{so}}
\def\cA{\mathcal{A}}
\def\cF{\mathcal{F}}
\def\cH{\mathcal{H}}
\def\cI{\mathcal{I}}
\def\cL{\mathcal{L}}
\def\cT{\mathcal{T}}
\def\cV{\mathcal{V}}
\newcommand{\sX}{\mathscr{X}}
\def\Spin7{\mathrm{Spin(7)}}
\def\Riem{\mathrm{R}}
\def\SO{\mathrm{SO}}
\def\GL{\mathrm{GL}}
\def\Cl{\mathrm{Cl}}
\def\Fr{\mathrm{Fr}}
\def\rg{\mathrm{g}}
\DeclareMathOperator\Crit{Crit}
\DeclareMathOperator\Diff{Diff}
\DeclareMathOperator\Div{div}
\DeclareMathOperator\Dom{Dom}
\DeclareMathOperator\vol{vol_g}
\DeclareMathOperator\tr{tr}
\DeclareMathOperator{\Ima}{Im}
\DeclareMathOperator{\Spa}{span}
\DeclareMathOperator{\Stab}{Stab}
\DeclareMathOperator\End{End}
\newcommand{\qandq}{\quad\text{and}\quad}
\newcommand{\qforq}{\quad\text{for}\quad}
\begin{document}

\title{\vspace{-3cm}
Harmonic flow of $\S7$-structures}
\author{Shubham Dwivedi, Eric Loubeau \& Henrique S{\'{a}} Earp}
\date{\today}

\maketitle

\begin{abstract}
    We formulate and study the isometric flow of $\mathrm{Spin}(7)$-structures on compact $8$-manifolds, as an instance of the harmonic flow of geometric structures. Starting from a general perspective, we establish Shi-type estimates and a correspondence between harmonic solitons and self-similar solutions for arbitrary isometric flows of $H$-structures. We then specialise to $H=\mathrm{Spin}(7)\subset\mathrm{SO}(8)$, obtaining conditions for long-time existence, via a monotonicity formula along the flow, which leads to an $\varepsilon$-regularity theorem. Moreover, we prove Cheeger-Gromov and Hamilton-type compactness theorems for the solutions of the harmonic flow, and we characterise Type-$\mathrm{I}$ singularities as being modelled on shrinking solitons. We also establish a Bryant-type description of isometric $\mathrm{Spin}(7)$-structures, based on squares of spinors, which may be of independent interest. 
\end{abstract}

\begin{adjustwidth}{0.95cm}{0.95cm}
    \tableofcontents
\end{adjustwidth}

\section{Introduction}

We formulate and study the harmonic flow of $\S7$-structures on a compact, oriented and spinnable $8$-manifold. Since the flow runs among isometric $\S7$-structures, this problem fits naturally in the recent thread of activity on isometric/harmonic flows of $\mathrm{G}_2$-structures \cites{Grigorian2017,Grigorian2019,Bagaglini2019,dgk-isometric} and of almost complex structures \cites{He2019,He2019a}. These can be understood as instances of an abstract Dirichlet gradient flow of sections of a natural homogeneous fibre bundle of $H$-structures, according to the general theory developed in \cite{loubeau-saearp}. 

By contrast, to our knowledge, there have yet been no studies exploring \emph{any} flow of $\S7$-structures, ever since the fundamental reference \cite{karigiannis-spin7}.
We organise therefore the exposition around the harmonic $\S7$-flow, even though several analytic results are actually obtained for arbitrary $H$-structures, and can thus be seen as advances in the general theory of \emph{harmonic section flows} formulated in \cite{loubeau-saearp}. Namely, in \textsection\ref{sec: H-structures}, we describe deformations of $H$-structures, along the lines of \cite{karigiannis-spin7}. This leads to a general definition of harmonic $H$-solitons, as well as their correspondence to self-similar solutions in Proposition~\ref{def-sssol-GENERAL}, which generalises \cite[\textsection 2.5]{dgk-isometric}. Moreover, we compute Weitzenb\"ock and Bochner formulas for harmonic sections [\textsection\ref{WnB}], and use them to establish Shi-type estimates [Proposition~\ref{Prop:Shi-estimates}] for the gradient flow of the Dirichlet energy functional in \eqref{eq: Dirichlet energy}, which are instrumental in the study of the long-time behaviour of the harmonic $\S7$-flow.
While our approach builds upon previous broad-ranging results on the harmonic flow of geometric structures from \cite{loubeau-saearp}, at times it may be effortlessly formulated in the even ampler class of sections of Riemannian submersions with totally geodesic fibres.  

Readers especially interested in the $\S7$ narrative may wish to skip directly to  \textsection\ref{sec:prelims}. Throughout the text, a \emph{$\S7$-structure manifold $(M^8,\Phi,g)$} consists of a smooth oriented and spinnable $8$-manifold $M$, endowed with a $\S7$-structure $\s7\in\Omega^4(M)$ which induces the Riemannian metric $g$. 

We begin by reviewing a number of well-known facts about irreducible $\S7$-representations within differential forms and spinors. In \textsection\ref{sec: proof of Spin(7) Bryant formula}, we establish a Bryant-type formula for isometric $\S7$-structures as sections of a $\bRP^7$-bundle; this formula emulates a well-known result for isometric $\mathrm{G}_2$-structures on a $7$-manifold \cite[(3.6)]{Bryant2006}, yet it does not seem to appear anywhere in the literature in the following explicit form:

\begin{thmx}
\label{prop: Spin(7) Bryant formula}
    Let $(M^8,\s7_0,g_0)$ be a $\S7$-structure manifold. Then any other $\S7$-structure inducing the same Riemannian metric $g_0$ can be parametrised by a function $f\in C^\infty(M)$  and a vector field $X\in\sX(M)$ such that $f^2 + |X|^2 = 1$: 
$$
\Phi_{(f,X)} = (f^2- |X|^2) \Phi_0 + 2 f \Theta_{0,X} + 8(X\wedge (X\lrcorner\Phi_0)),
$$
    where the $4$-form $\Theta_{0,X}(a,b,c,d) := \Phi_0 (a \cdot X,b,c,d)$ is defined by spinorial multiplication.
\end{thmx}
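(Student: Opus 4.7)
The approach emulates Bryant's derivation of the analogous $\G2$ formula by exploiting the spinorial description of $\S7$-structures. Fix a reference unit spinor $\psi_0\in\Gamma(\Delta)$ whose algebraic square recovers $\s7_0$ through the identity $\s7_0(a,b,c,d)=\langle a\cdot b\cdot c\cdot d\cdot\psi_0,\psi_0\rangle$. Because the stabiliser of $\psi_0$ in $\mathrm{Spin}(8)$ is $\S7$, and because the induced metric depends quadratically on $\psi$, the isometric $\S7$-structures form the $\bRP^7$-bundle $\SO(8)/\S7$ over $M$; hence every nearby isometric $\S7$-structure arises as $\s7_\psi$ for a unit spinor $\psi$ close to $\pm\psi_0$, and it suffices to write down such $\psi$ explicitly.

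The key step is to parametrise $\psi=f\psi_0+X\cdot\psi_0$ with $f\in C^\infty(M)$ and $X\in\sX(M)$ subject to $f^2+|X|^2=1$, which follows from the Clifford identity $v\cdot w+w\cdot v=-2g(v,w)$ applied to the orthonormal collection $\{\psi_0,e_i\cdot\psi_0\}$. Substituting into the squaring formula and expanding bilinearly yields four terms. The $f^2$-piece gives $f^2\s7_0$. The two cross-terms, linear in $f$, combine by Clifford adjointness $\langle v\cdot\phi,\chi\rangle=-\langle\phi,v\cdot\chi\rangle$ into $2f\,\Theta_{0,X}(a,b,c,d)$, matching the definition $\Theta_{0,X}(a,b,c,d)=\s7_0(a\cdot X,b,c,d)$. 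The quartic spinor piece $\langle a\cdot b\cdot c\cdot d\cdot X\cdot\psi_0,\,X\cdot\psi_0\rangle$ is reduced by transporting $X$ across $a,b,c,d$ using the Clifford anti-commutation together with $X\cdot X=-|X|^2$, producing $|X|^2\s7_0$ plus paired contractions that reorganise into $-2|X|^2\s7_0+8\,X\wedge(X\lrcorner\s7_0)$. Collecting all contributions gives $(f^2-|X|^2)\s7_0+2f\,\Theta_{0,X}+8\,X\wedge(X\lrcorner\s7_0)$, as claimed.

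The principal obstacle is the combinatorial reduction of the quartic spinor term: four successive Clifford commutations generate a sizeable number of contractions, and verifying that they assemble into the stated combination with the precise integer coefficient $8$ demands careful sign and index bookkeeping. A conceptually lighter route, which I would run in parallel as a sanity check, invokes $\S7$-equivariance: both sides of the identity are polynomial tensors in $(f,X)$ taking values in $\Lambda^4T^*M$ and transforming equivariantly under the stabiliser of $\s7_0$, so the identity is determined by matching coefficients on the $\S7$-irreducible summands $\Lambda^4T^*M=\Lambda^4_1\oplus\Lambda^4_7\oplus\Lambda^4_{27}\oplus\Lambda^4_{35}$; testing the ansatz on a single configuration---for instance $f=0$, $X=e_1$ in a Cayley-adapted frame where $\s7_0$ assumes its canonical form---then pins down all the numerical constants, and in particular confirms the coefficient~$8$.
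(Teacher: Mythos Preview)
Your proposal is correct and follows essentially the same strategy as the paper: both exploit the bijection between isometric $\S7$-structures and unit spinors, write $\psi=f\psi_0+X\cdot\psi_0$ with $f^2+|X|^2=1$, and recover $\s7_\psi$ as the $\Lambda^4$-component of the spinor square, expanding bilinearly into the $f^2$, cross, and $X\otimes X$ pieces.

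The only substantive difference lies in how the $X\otimes X$ term is computed. The paper works in the explicit octonionic model $\bP\simeq\bO\oplus\bO$, pairs $X\otimes X$ against a basis $4$-form $e_i\wedge e_j\wedge e_k\wedge e_l$, and evaluates $e_l(e_k(e_j(e_i X)))$ by a case analysis on whether the quadruple is Cayley and on whether $X$ lies in its span; this gives the coefficient~$8$ directly from the identity $4(X\wedge(X\lrcorner\s7_0))(e_i,e_j,e_k,e_l)=|X_{ijkl}|^2\s7_0(e_i,e_j,e_k,e_l)$. Your route---commuting $X$ past $a\cdot b\cdot c\cdot d$ via $v\cdot w+w\cdot v=-2g(v,w)$ and then invoking $\S7$-equivariance to fix constants on a single test configuration---is a legitimate alternative and arguably cleaner conceptually, though the paper's octonionic bookkeeping is more explicit and avoids the equivariance detour. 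Both arrive at the same formula with the same coefficients.
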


In \textsection \ref{sec: Basics of the Spin(7)-flow}, we deduce the specific form of the harmonic $\S7$-flow, as an instance of the general harmonic flow of geometric structures. Consider the negative gradient flow of the functional $16E(\s7)=8\Vert T_\s7\Vert^2_{L^2(M)}$ [cf.  \eqref{energyfuncdefn}], restricted to the isometry class $\llbracket \s7_0 \rrbracket$, in which $T_\s7$ denotes the full torsion tensor. By Lemma \ref{lemma:E1var}, the evolution of $\s7$ is determined, somewhat similarly to the $\mathrm{G}_2$ case, by the divergence of torsion:

\begin{definition}[Harmonic $\S7$-flow]
\label{iflowdefn}
    Let $(M^8, \s7_0,g)$ be a compact $\S7$-structure manifold. The \emph{harmonic $\S7$-flow} is the following initial value problem:
\begin{align} 
\label{eq: Har Spin(7) Flow} 
 \left\{\begin{array}{rl} 
      & \dfrac{\pt \s7}{\pt t} = \Div T \diamond \s7 \\
      & \s7(0) =\s7_0
      \tag{HF}
   \end{array}\right.,
\end{align}
    where $\diamond$ denotes the infinitesimal action of $\End{TM}$ on $\Omega^4(M)$ [Definition \ref{def: diamond}]. 
\end{definition}

As a consequence, we obtain short-time existence and uniqueness of solutions effortlessly from the general theory in \cite[Theorem 1]{loubeau-saearp}. Moreover, after some elementary inquiry into deformations of $H$-structures, we obtain a general correspondence between harmonic $H$-solitons and self-similar solutions [\textsection\ref{sec: Spin(7)-solitons}],  which in the case of $\S7$ reads as follows:
\newpage
\begin{thmx}
\label{cor: iso soliton => self-sim sol}
    Suppose that a $\S7$-structure $\hat\Phi$ on $M$, a vector field $\hat X \in\sX(M)$ and $c\in\{-1,0,1\}$ define a harmonic soliton [Definition \ref{def: harmonic Spin(7)-soliton}]. Then there exist:
\begin{itemize}
    \item 
    a function $\rho\in C^\infty(\bR,\bR^+)$ such that $\rho(0)=1$,
    \item
    a function $\alpha\in C^\infty(\Dom(\alpha),\bR)$, an instant $\hat t \in \Dom(\alpha)$ such that $\alpha(\hat t )=1$, and an interval $\hat I \subset\Dom(\alpha)$ containing both $\{\hat t \}$ and $\{{\hat t} .\infty\}$,
\end{itemize}   
    such that the $1$-parameter family $\{f_t\}\subset\Diff{M}$ defined by
\begin{align}
\left\{\begin{array}{rl}
\ddt f_t &= \alpha(t)\hat X\circ f_t  \\
     f_{\hat t }&= \operatorname{Id}_M 
\end{array}\right.,
\quad  t\in \hat I,
\end{align}
    induces a self-similar solution [Definition \ref{def-sssol}] of the \eqref{eq: Har Spin(7) Flow} by $\Phi_t:=\rho^4(t)f_t^*\hat\Phi$. Moreover, the family of vector fields $\{X_t:=\alpha(t)\hat X \}\subset\sX(M)$ satisfies the \emph{harmonic $\S7$-soliton} condition [Lemma \ref{lemma: vec fields for self-sim sols}]:
$$
\Div{T_t}= X_t\lrcorner T_t + \nabla_7 (X_t),
\quad \textup{for\ all}\  t\in \hat I.
$$ 
    Finally, the data $(\rho, \alpha, \hat t)$  can be explicitly constructed, cf. Remark \ref{rem: explicit self-sim sols-GENERAL}.
\end{thmx}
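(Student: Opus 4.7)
The plan is to specialise the general $H$-structure correspondence of Proposition~\ref{def-sssol-GENERAL} to $H=\mathrm{Spin}(7)$, using the explicit description of the infinitesimal $\mathfrak{so}(8)$-action on $\Omega^4(M)$ via the diamond operator of Definition~\ref{def: diamond}. The natural ansatz is $\Phi_t := \rho(t)^4 f_t^*\hat\Phi$: the fourth power is dictated by the homogeneity of the $\mathrm{Spin}(7)$ form under the metric rescaling $g \mapsto \rho^2 g$ induced by $\hat\Phi \mapsto \rho^4\hat\Phi$, while pulling back by $f_t$ accounts for the diffeomorphism gauge.

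First I would differentiate the ansatz in $t$. Since $f_t$ is the flow of the time-dependent vector field $\alpha(t)\hat X$ with $f_{\hat t}=\mathrm{Id}_M$, one obtains
\[
\partial_t \Phi_t = 4\rho^3\dot\rho\, f_t^*\hat\Phi + \rho^4\,\alpha(t)\, f_t^*(\mathcal L_{\hat X}\hat\Phi),
\]
and rewrite $\mathcal L_{\hat X}\hat\Phi$ through $\diamond$: the orthogonal splitting $\mathfrak{so}(8)=\mathfrak{spin}(7)\oplus\mathfrak{spin}(7)^\perp$, together with the annihilation of $\hat\Phi$ by the $\mathfrak{spin}(7)$-summand, expresses the Lie derivative as the image under $\diamond$ of the $\mathfrak{spin}(7)^\perp$-component of $\nabla\hat X$ modified by the contraction $\hat X\lrcorner T_{\hat\Phi}$.

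Next I would compute the right-hand side of \eqref{eq: Har Spin(7) Flow} along the ansatz. Pullback by $f_t$ commutes with the formation of $T$ and of $\mathrm{Div}$, while under $\hat\Phi\mapsto\rho^4\hat\Phi$ the divergence $\mathrm{Div}\,T$ picks up a factor $\rho^{-2}$ from the two inverse metrics in its definition, yielding
\[
\mathrm{Div}\,T_{\Phi_t}\diamond\Phi_t \;=\; \rho^{2}\, f_t^*\bigl(\mathrm{Div}\,T_{\hat\Phi}\diamond\hat\Phi\bigr).
\]
Substituting the harmonic soliton identity for $\hat\Phi$ from Definition~\ref{def: harmonic Spin(7)-soliton} and equating with $\partial_t\Phi_t$ yields a decoupled ODE system of the form $\rho\dot\rho = c$ together with a linear relation tying $\alpha$ to $\rho^{2}$. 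Integrating with the initial conditions $\rho(0)=1$ and $\alpha(\hat t)=1$ produces $\rho(t)^2 = 1+2ct$ in the expanding/shrinking cases $c=\pm 1$ (on the maximal positivity interval $\hat I$) and $\rho\equiv 1$, $\alpha\equiv 1$ in the steady case $c=0$, as anticipated by Remark~\ref{rem: explicit self-sim sols-GENERAL}.

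The remaining clause — the soliton identity $\mathrm{Div}\,T_t = X_t\lrcorner T_t + \nabla_7 X_t$ for the evolving $X_t=\alpha(t)\hat X$ — follows by pulling back the static identity for $\hat X$ by $f_t$ and rescaling $T$ accordingly, as recorded in Lemma~\ref{lemma: vec fields for self-sim sols}. The main obstacle is the careful bookkeeping of how $T$, $\mathrm{Div}\,T$, and the diamond action simultaneously transform under $g\mapsto\rho^2 g$ and pullback by $f_t$; once these transformation laws are pinned down (which is precisely the content of the general theory underlying Proposition~\ref{def-sssol-GENERAL}), the $\mathrm{Spin}(7)$ statement reduces to a direct ODE integration and a specialisation of the general $H$-structure argument.
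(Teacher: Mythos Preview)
Your approach is essentially the same as the paper's: the theorem is obtained as the $H=\mathrm{Spin}(7)$ instance of Proposition~\ref{prop: iso soliton => self-sim sol-GENERAL}, and the argument there is exactly what you outline --- differentiate the ansatz $\rho^\ell f_t^*\hat\xi$, expand $\mathcal L_{\hat X}\hat\xi$ via \eqref{eq: Lie of xi-GENERAL}, impose the soliton condition, and read off the relation between $\alpha$ and $\rho$ from the requirement that the induced metric $g_t=\rho^2 f_t^*\hat g$ stay constant. Your direct verification of the flow equation is in fact slightly more explicit than the paper's presentation, which derives only the metric condition $(\log\rho^2)'+\alpha c=0$ and leaves the flow identity implicit in the reversal of Lemma~\ref{lemma: vec fields for self-sim sols-GENERAL}.

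Two small slips to fix. First, by Lemma~\ref{lemma:pscale} the $2$-form $\operatorname{Div}T$ is \emph{scale-invariant} under $\hat\Phi\mapsto\rho^4\hat\Phi$, not scaled by $\rho^{-2}$; your displayed formula $\operatorname{Div}T_{\Phi_t}\diamond\Phi_t=\rho^2 f_t^*(\operatorname{Div}T_{\hat\Phi}\diamond\hat\Phi)$ is nonetheless correct, the factor $\rho^2$ coming from the single inverse metric in $\diamond$ against the $\rho^4$ in $\Phi_t$. Second, combining the metric relation $(\log\rho^2)'=-\alpha c$ with the matching condition $\rho^4\alpha=\rho^2$ gives $(\rho^2)'=-c$, hence $\rho(t)^2=1-ct$, not $1+2ct$; this is consistent with the paper's normalisation (shrinkers $c=+1$ extinguish at a finite forward time) and with Remark~\ref{rem: explicit self-sim sols-GENERAL} after a time translation.
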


The analytic core of the paper is developed in \textsection\textsection\ref{sec: Basics of the Spin(7)-flow} and \ref{sec: analysis of evolution}, following closely the study of the harmonic $\G2$-flow in \cite{dgk-isometric} and the methods therein. We prove a general Cheeger--Gromov-type compactness for $\S7$-structures in Theorem \ref{thm:cptthm}, and then use it to prove a Hamilton-type compactness theorem for solutions of \eqref{eq: Har Spin(7) Flow} in Theorem~\ref{thm:flowcptthm}. Furthermore, we establish a monotonicity formula along the flow [Theorem~\ref{thm:almostmon}], which leads to $\varepsilon$-regularity [Theorem \ref{thm:epsilonreg}]. 
We introduce moreover an \emph{entropy} functional for the solutions of \eqref{eq: Har Spin(7) Flow}. Then indeed small initial torsion implies long-time existence and convergence to a critical point and, using Corollary~\ref{cor:enttor}, we show that small initial entropy eventually implies pointwise small torsion.
In summary, the following is proved in Theorems \ref{thm: smalltorconv} and \ref{thm:smallent}:

\begin{thmx}[small torsion and entropy]
\label{thm:small_tor_ent}
    In the context of Definition \ref{iflowdefn}, the following hold:
\begin{enumerate}[(i)]
    \item
    For every $\varepsilon>0$ and $\sigma >0$, there exists a constant  $\lambda_\varepsilon=\lambda_\varepsilon(g,\sigma) >0$ such that, if the entropy \eqref{eq:entropyeqn} satisfies
\begin{align}
\label{smallenteq}
    \lambda(\s7_0, \sigma) < \lambda_\varepsilon,
\end{align}
    then $|T_{\Phi(\tau)}| < \varepsilon$, where $\tau$ is the maximal existence time of \eqref{eq: Har Spin(7) Flow}.
    
    \item 
    For every $\delta >0$, there exists $\varepsilon (\delta, g) >0$ such that, if $|T_{\Phi_0}| < \varepsilon$, then \eqref{eq: Har Spin(7) Flow} exists for all time and converges subsequentially smoothly to a $\S7$-structure $\Phi_{\infty}$ satisfying
$$
\Div T_{\Phi_{\infty}} =0, 
\quad
    |T_{\Phi_{\infty}}| < \delta,
\quad
    |\nabla^k T_{\Phi_{\infty}}| < C_k, \quad\forall\  k\geq 1,  
$$
    for constants $C_k < +\infty$ depending only on $(M,g)$.
\end{enumerate}    
    
\end{thmx}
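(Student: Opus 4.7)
My plan for part (ii) is a doubling-time argument combined with the Shi-type estimates of Proposition~\ref{Prop:Shi-estimates}. First, using the Bochner and Weitzenb\"ock identities of \textsection\ref{WnB}, I would derive an evolution inequality along \eqref{eq: Har Spin(7) Flow} of the form
\[
\Bigl(\frac{\partial}{\partial t}-\Delta\Bigr)|T|^2 \;\leq\; C_1\,|T|^4 + C_2(g)\,|T|^2,
\]
where $C_2$ depends only on the fixed metric $g$, since the flow is isometric. A standard maximum principle comparison then shows that if $|T_{\Phi_0}|<\varepsilon$, there is an interval $[0,\tau_\varepsilon)$ on which $|T|<2\varepsilon$, with $\tau_\varepsilon$ depending only on $\varepsilon$ and $g$. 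The Shi-type estimates propagate all higher derivative bounds, yielding a continuation criterion: the flow cannot break down while $|T|$ is bounded. Iterating, the flow exists for all time with $|T|$ still comparable to $\varepsilon$. For convergence, the Dirichlet energy $E(\Phi_t)$ decreases along \eqref{eq: Har Spin(7) Flow} at rate $\|\Div T_{\Phi_t}\|_{L^2}^2$ and is bounded below, so $\int_0^\infty \|\Div T_{\Phi_t}\|_{L^2}^2\,dt < \infty$ and there is a sequence $t_k\to\infty$ with $\|\Div T_{\Phi_{t_k}}\|_{L^2}\to 0$. The uniform Shi bounds feed into the Hamilton-type compactness Theorem~\ref{thm:flowcptthm} to extract a subsequential smooth limit $\Phi_\infty$ with $\Div T_{\Phi_\infty}=0$, $|T_{\Phi_\infty}|<\delta$ (choosing the input $\varepsilon\leq\delta/2$), and $|\nabla^k T_{\Phi_\infty}|<C_k$ for every $k\geq 1$, inherited from Shi.

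For part (i), I would argue by contradiction using the monotonicity formula (Theorem~\ref{thm:almostmon}) together with the $\varepsilon$-regularity Theorem~\ref{thm:epsilonreg}. Suppose $|T_{\Phi(\tau)}|(x_0)\geq\varepsilon$ at some $x_0\in M$. The contrapositive of $\varepsilon$-regularity guarantees that, on a parabolic neighbourhood of $(x_0,\tau)$ of scale $r\leq\sigma$, a scale-invariant weighted $L^2$-integral of torsion is bounded below by an absolute constant $\varepsilon_0=\varepsilon_0(\varepsilon,g)>0$. The almost-monotonicity formula, integrated backwards in time from $\tau$ to $0$, then propagates that lower bound to a comparable scale-invariant quantity at $t=0$, up to an error controlled by $g$ and $\sigma$. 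Taking the supremum over admissible scales and centres recovers $\lambda(\Phi_0,\sigma)$ on the right, giving $\lambda(\Phi_0,\sigma)\geq \tfrac{1}{2}\varepsilon_0$, say. Choosing $\lambda_\varepsilon < \tfrac{1}{2}\varepsilon_0$ then contradicts \eqref{smallenteq}; here Corollary~\ref{cor:enttor} provides the quantitative entropy-to-pointwise-torsion bridge needed to make this contradiction precise.

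The main obstacle, in my view, is producing the Bochner inequality with the correct sign in the isometric setting: because the flow runs through sections of a non-linear fibre bundle (as encoded by Theorem~\ref{prop: Spin(7) Bryant formula}), the natural vertical Laplacian carries curvature terms coupled to the torsion itself, and one must separate genuine gradient and fixed-curvature contributions carefully to avoid spurious positive terms on the right-hand side of the parabolic inequality. In part (i), the analogous technical difficulty is controlling the error in the almost-monotonicity formula uniformly in the scale $r\in(0,\sigma]$, so that the constant $\lambda_\varepsilon$ really depends only on $(g,\sigma)$ and $\varepsilon$ and not on the torsion of any intermediate $\Phi(t)$ through a hidden circular dependence.
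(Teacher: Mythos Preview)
Your outline has the right pieces, but part (ii) has a genuine gap at the sentence ``Iterating, the flow exists for all time with $|T|$ still comparable to $\varepsilon$.'' The doubling-time estimate coming from $(\partial_t-\Delta)|T|^2\le C_1|T|^4+C_2|T|^2$ only says that $|T|<\varepsilon$ implies $|T|<2\varepsilon$ on an interval of controlled length; restarting from $2\varepsilon$ gives $4\varepsilon$ on the next interval, and the comparison ODE $\dot u\le C_1u^2+C_2u$ blows up in finite time. Something must prevent $|T|$ from ever reaching $2\varepsilon_0$. The paper closes this loop with the interpolation Lemma~\ref{lemma-interpolation}: at the first time $t_*$ where $|T|=2\varepsilon_0$, the energy still satisfies $E(\Phi(t_*))\le E(\Phi_0)<\gamma$ (gradient flow), and the Shi estimates bound $|\nabla T_{\Phi(t_*)}|$; interpolating small $L^2$-norm against bounded gradient forces $|T_{\Phi(t_*)}|$ strictly below $2\varepsilon_0$, contradicting the definition of $t_*$. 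For convergence the paper goes further, using the second-variation inequality of Lemma~\ref{lemma:conv} to get exponential $L^2$-decay of $\Div T$ once $|T|^2<\Lambda/6$, hence $L^1$-Cauchy convergence of $\Phi(t)$; your weaker argument via $\int_0^\infty\|\Div T\|_{L^2}^2\,dt<\infty$ and compactness does suffice for the \emph{subsequential} convergence actually claimed, so that part is fine.

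The same interpolation lemma is the missing step in your part (i). Your contradiction via $\varepsilon$-regularity and monotonicity is exactly how Corollary~\ref{cor:enttor} is proved, but that corollary only yields $|T_{\Phi(t)}|\le C/\sqrt{t}$ with $C$ fixed by $(g,\sigma)$; the contrapositive of Theorem~\ref{thm:epsilonreg} gives a lower bound on $\Theta$ only when $|T|$ exceeds a threshold of order $C/r$ with $r\le\bar\rho$, so you cannot reach an \emph{arbitrary} target $\varepsilon>0$ this way. The paper instead argues: small entropy forces small energy (since on compact $M$ the backward heat kernel at scale $\sigma$ is bounded below), Corollary~\ref{cor:enttor} plus Shi give a bound on $|\nabla T_{\Phi(\tau)}|$, and then Lemma~\ref{lemma-interpolation} converts small $E(\Phi(\tau))$ and bounded $|\nabla T|$ into $|T_{\Phi(\tau)}|<\varepsilon$. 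So the ``bridge'' you allude to at the end is not Corollary~\ref{cor:enttor} alone but Corollary~\ref{cor:enttor} together with the interpolation lemma.
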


Moving on to the analysis of singularities for the flow in \textsection\ref{sec: singsize}, we obtain an upper bound on the ``size'' of the singular set $S$, which is closed:
\begin{thmx}
\label{thm: singsize}
    In the context of Definition \ref{iflowdefn}, let 
\begin{align}
\label{eq:singsize}
    E(\s7_0)= \frac 12\int_M |T_{\s7_0}|^2 \vol \leq E_0.  
\end{align}
    Suppose that the maximal smooth harmonic $\S7$-flow $\{\s7(t)\}_{t\in [0,\tau)}$ starting at $\s7_0$ blows up at time $\tau< +\infty$. Then, as $t\rightarrow \tau$, \eqref{eq: Har Spin(7) Flow} 
    converges smoothly to a $\S7$-structure $\s7_{\tau}$ away from a closed set $S$, with finite $6$-dimensional Hausdorff measure satisfying 
\begin{align*}
    \mathcal{H}^6(S) \leq CE_0,  
\end{align*}
    for some constant $C<\infty$ depending on $g$. In particular, the Hausdorff dimension of $S$ is at most 6.
\end{thmx}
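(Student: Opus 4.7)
The plan follows the Struwe--Chen paradigm for the singular set of the harmonic map heat flow, as implemented for the harmonic $\G2$-flow in \cite{dgk-isometric}. The two essential inputs, already available from the preceding sections, are the $\varepsilon$-regularity result [Theorem~\ref{thm:epsilonreg}] and the almost-monotonicity formula [Theorem~\ref{thm:almostmon}]; both are tailored so that the Hausdorff codimension of $S$ comes out to $2$, which in dimension $n=8$ yields precisely the exponent $6$ in the statement.

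First I would define $S$ as the set of points across which the flow fails to extend smoothly. By the $\varepsilon$-regularity theorem this is equivalent to saying that $x \in S$ if and only if, for every $r>0$, the scaled weighted torsion integral $\Theta(x,\tau,r)$ concentrated at $(x,\tau)$ (the quantity that is almost-monotone by Theorem~\ref{thm:almostmon}) satisfies $\Theta(x,\tau,r) \geq \varepsilon_0^2$, for $\varepsilon_0$ the universal constant of Theorem~\ref{thm:epsilonreg}. Indeed, if $\Theta(x_0,\tau,r_0) < \varepsilon_0^2$ for some $r_0$, then Theorem~\ref{thm:epsilonreg} bounds $|T|$ uniformly on a forward parabolic neighbourhood of $(x_0,\tau)$; the Shi-type estimates of Proposition~\ref{Prop:Shi-estimates} upgrade this to pointwise bounds on $|\nabla^k T|$ for all $k\geq 0$, and integrating the flow equation \eqref{eq: Har Spin(7) Flow} in time yields smooth convergence $\s7(t)\to\s7_\tau$ on a neighbourhood of $x_0$. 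This simultaneously shows that $S$ is closed and that the flow converges smoothly on $M\setminus S$ to some $\s7_\tau$.

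For the Hausdorff bound I would translate concentration into an integral lower bound. Discarding non-negative contributions and absorbing the curvature error terms in the almost-monotonicity formula [Theorem~\ref{thm:almostmon}], for each $x \in S$ and all sufficiently small $r>0$ one obtains an inequality of the form
$$
c_0\,\varepsilon_0^2\, r^{6} \;\leq\; \int_{\tau-r^2}^{\tau-r^2/4} \int_{B_r(x)} |T_{\s7(t)}|^2 \, \vol\, dt.
$$
A standard Vitali argument then extracts from the covering $\{B_{5r}(x_i)\}_{x_i\in S}$ of $S$ a pairwise disjoint sub-family $\{B_{r}(x_i)\}$. Using that $t\mapsto E(\s7(t))$ is non-increasing along \eqref{eq: Har Spin(7) Flow}, the disjointness gives
$$
c_0\,\varepsilon_0^2 \sum_i r^{6} \;\leq\; \int_{\tau-r^2}^{\tau} \int_M |T_{\s7(t)}|^2\, \vol\, dt \;\leq\; 2 E_0 \cdot r^{2},
$$
so $\sum_i (5r)^{6}\leq C\varepsilon_0^{-2}E_0\cdot r^{2}$. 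Letting $r\to 0$ in the definition of Hausdorff measure yields $\mathcal{H}^{6}(S)\leq CE_0$, and in particular $\dim_{\mathcal{H}} S \leq 6$.

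The principal obstacle will be controlling the accumulation of the curvature error terms from the almost-monotonicity formula when summing over the Vitali family, and in particular ensuring that the lower bound in the displayed inequality is clean enough at small scales. Since the metric $g$ is fixed along the isometric flow, these terms are tame exactly as in the $\G2$-case of \cite{dgk-isometric}: they vanish in the small-radius limit after absorbing into $c_0$. Minor book-keeping issues — compatibility of the Vitali radii with the injectivity radius of $(M,g)$ and with the $\varepsilon$-regularity neighbourhood — are handled by choosing $r$ small enough.
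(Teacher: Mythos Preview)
Your outline for defining $S$ via the $\varepsilon$-regularity threshold and establishing smooth convergence on $M\setminus S$ matches the paper (this is Lemma~\ref{lem:sing}). The problem is in your Hausdorff-measure step, where the scaling is off. Localising $\Theta_{(x,\tau)}\geq\varepsilon$ to a ball gives, at a \emph{single} time $t=\tau-\rho^2$, a bound of the form $\int_{B_\rho(x)}|T(\cdot,t)|^2\gtrsim\varepsilon\,\rho^6$; integrating this over a time interval of length $\sim r^2$ produces $r^8$ on the left, not $r^6$. With the exponent you wrote, summing over the disjoint Vitali balls yields exactly what you stated, $\sum_i(5r)^6\leq CE_0\,r^2$, but this tends to $0$ as $r\to 0$ and would force $\mathcal{H}^6(S)=0$ --- strictly stronger than the claim and in general false for harmonic-map-type flows. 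Your final sentence does not follow from your own inequality. The repair is either to drop the time integral and work at a single slice, or to use $r^8$ on the left; in either case one recovers $\sum_i r^6\leq CE_0$. A second, related gap: passing from the Gaussian-weighted quantity $\Theta$ to an integral over the ball $B_r(x)$ is not automatic and needs either a tail estimate for the heat kernel or a cylinder-based version of $\varepsilon$-regularity, which you should make explicit.

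For comparison, the paper bypasses both the localisation and the covering argument entirely, using instead the Grayson--Hamilton heat-kernel averaging trick from \cite{grayson-hamilton}: for any $S'\subset S$ one extracts $S''\subset S'$ with $\mathcal{H}^6(S'')\geq\tfrac12\mathcal{H}^6(S')$ and such that the averaged kernel $u_{S''}(y,s)=\int_{S''}u_{(x,\tau)}(y,s)\,d\mathcal{H}^6(x)$ obeys $u_{S''}\leq C(\tau-s)^{-1}$. Integrating the defining inequality $\varepsilon\leq\Theta_{(x,\tau)}(\s7(\tau-\rho^2))$ over $x\in S''$ and applying Fubini then gives $\varepsilon\,\mathcal{H}^6(S'')\leq C\int_M|T(\cdot,\tau-\rho^2)|^2\vol\leq CE_0$ directly. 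This route never needs to localise the Gaussian to a ball, and no Vitali selection is required.
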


Finally, in \textsection\ref{sec: type_I sing}, we discuss Type-\rom{1} singularities and prove that they are modelled on shrinking harmonic solitons, cf. \ref{sec: Spin(7)-solitons}.

\bigskip

\noindent
\textbf{Notation and Conventions.} We use the symbol $*$ to denote various tensor contractions, the precise form of which is unimportant, and thus we use $\star$ instead for the Hodge star operator. The symbol $C$ denotes some positive real constant, which may change from line to line in the derivation of an estimate. We frequently use Young's inequality $ab \leq \tfrac{1}{2\varepsilon} a^2 + \tfrac{\varepsilon}{2} b^2$, for any $\varepsilon, a, b > 0$.

Throughout the paper, we compute in a local orthonormal frame $\left\{ \frac{\pt}{\pt x^{k}}\right\}$, all indices are subscripts and repeated indices are summed from $1$ to $8$. The symbol $\Delta$ always denotes the \emph{analyst's Laplacian} $\Delta = \nabla_k \nabla_k$ which is the opposite of the \emph{rough Laplacian} $\nabla^* \nabla$.

The Riemann curvature and the Ricci tensor are given respectively by
\begin{align*}
    R_{ijkm} \frac{\pt}{\pt x^{m}} 
    &= (\nabla_{i} \nabla_{j} - \nabla_{j} \nabla_{i})\frac{\pt}{\pt x^{k}},\\
    R_{jk} 
    &= R_{ljkl},
\end{align*}
and the Ricci identity is
\begin{equation} 
\label{eq: ricciidentity}
    \nabla_{k} \nabla_{i} X_l - \nabla_{i} \nabla_{k} X_l 
    = - R_{kilm} X_m.
\end{equation}
Schematically, for any tensor $S$, the Ricci identity implies that
\begin{equation} 
\label{eq: riccischematic}
    \del^m \Delta S - \Delta \del^m S 
    = \sum_{i=0}^{m}\del^{m-i}S*\del^i\Riem. 
\end{equation}
The Riemannian second Bianchi identity is
\begin{equation*}
\nabla_i R_{jkab} + \nabla_j R_{kiab} + \nabla_k R_{ijab} = 0,
\end{equation*}
which contracts in $i,a$ to
\begin{equation} 
\label{eq: riem2ndBid}
    \nabla_i R_{ibjk} 
    = \nabla_k R_{jb} - \nabla_j R_{kb}.
\end{equation}

\noindent\textbf{Acknowledgements:} 
We thank Robert Bryant for suggesting the approach via squares of spinors, leading to Theorem \ref{prop: Spin(7) Bryant formula}. We are also grateful to BIRS-CMO, for providing the excellent working environment where this project originated.
SD would like to thank Spiro Karigiannis and Panagiotis Gianniotis, for discussions on various aspects of geometric flows, and Thomas Walpuski, for discussions on the spinorial description of $\S7$-structures.

EL and HSE benefited from an ongoing CAPES-COFECUB bilateral collaboration (2018-2021), granted by the Brazilian Coordination for the Improvement of Higher Education Personnel (CAPES) – Finance Code 001 [88881.143017/2017-01], and COFECUB [MA 898/18], and from a CAPES-MathAmSud (2021-2022) grant [88881.520221/2020-01]. 

HSE has also been funded by the São Paulo Research Foundation (Fapesp)  \mbox{[2018/21391-1]} and the Brazilian National Council for Scientific and Technological Development (CNPq)  \mbox{[307217/2017-5]}.

\newpage
\section{$H$-Structures, homogeneous sections and harmonic flow}
\label{sec: general theory of H-str}

In many respects, $\S7$-structures should be seen as a particular occurrence of $H$-structures and some of the analytical properties of the harmonic flow are best formulated in the general setting of sections of Riemannian submersions with totally geodesic fibres. 

In \textsection\ref{sec: H-structures}, we briefly review the unifying abstract framework  required to define the harmonicity of $H$-structures, by mediation of their corresponding sections, describe their deformation theory and classify the solitons of the harmonic section flow. For further details, the reader is kindly referred to \cite{loubeau-saearp} and references therein.
Subsequently, in \textsection\ref{sec: Homogeneous sections}, we adopt the wider perspective of sections of Riemannian submersions in order to prove a very general Shi-type estimate, yielding fractional-polynomial time-bounds on all derivatives of sections along the flow.

\subsection{$H$-Structures}
\label{sec: H-structures}

A \emph{geometric structure} defined by an $H$-invariant tensor $\xi\in\cT^{p,q}(M)$ on a manifold $M$ can be viewed as a section of the homogeneous fibre bundle $\pi:N:=P/H\to M$, by reduction of the frame bundle $P:=\Fr(M)$ and a one-to-one correspondence [see \eqref{eq: Xi correspondence}]:
$$
\{\xi:M\to \cT^{p,q}(M)\} 
\quad\leftrightarrow\quad 
\{\sigma:M\to N\}.
$$
Assuming $M$ compact, a Riemannian metric $g$ on $M$ defines a suitable fibre metric $\eta$ on $N$, thus assigning a Dirichlet energy to such sections  $\sigma\in\Gamma(N)$: 
\begin{equation}
\label{eq: Dirichlet energy}
    E(\sigma):=\frac{1}{2}\int_M |d^\cV\sigma|_\eta^2 \vol,
\end{equation}
where the \emph{vertical torsion} $d^\cV\sigma$ is the projection of $d\sigma$ onto the distribution $\cV=\ker\pi_*\subset TN$. An $H$-structure is said to be \emph{harmonic} if $\sigma\in\Crit({E})$, and \emph{torsion-free} if $d^\cV\sigma=0$. 
The critical set is the vanishing locus of the \emph{vertical tension} field $\tau^\cV(\sigma):=\tr_g\nabla d^\cV\sigma$, where $\nabla^\cV$ is the vertical part of the Levi-Civita connection of $(N,\eta)$. The explicit form of $\tau^\cV$, in each particular context, defines a natural geometric PDE.

A sufficient setup for the development of this general theory consists of the following data:
\begin{equation}
\label{eq: conditions of HSF}
\tag{$\mathcal{A}$}
\text{\parbox{.90\textwidth}
    {$\bullet$ $G$ a compact semi-simple Lie group, with natural bi-invariant metric $\eta$;\\
    $\bullet$ $H\subset (G,\eta)$ a normal reductive Lie subgroup;\\
    $\bullet$ $P$ a principal $G$-bundle over a closed Riemannian manifold $(M^n,g)$;  \\
    $\bullet$ $\eta$ an $H$-invariant fibre metric on $P$, constructed from any compatible metric on $G$. 
}}
\end{equation}    
\begin{definition}
    Under the assumptions \eqref{eq: conditions of HSF}, the \emph{harmonic section flow} is the negative gradient flow associated to the Dirichlet energy \eqref{eq: Dirichlet energy}:
\begin{equation}
\label{eq: HSF}
\tag{HSF}
    \left\{\begin{array}{rl}
    \displaystyle \partial_t \sigma_t &= \tau^\cV (\sigma_t)\\
    \sigma_t|_{t=0} &= \sigma \in \Gamma(\pi)
    \end{array}\right.,
\quad\text{on}\quad 
    M_{\rT}:=M\times \left[0,\rT\right[. 
\end{equation}
\end{definition}
We will see below that, by mediation of the \emph{universal section} $\Xi$ [cf. \eqref{eq: Xi correspondence}], to each family $\{\sigma_t\}$ solving \eqref{eq: HSF}, there corresponds a family of $H$-structures   $\{\xi_t:=\sigma_t^*\Xi\}$.

\subsubsection{The geometric setting of $H$-structures}

We describe $H$-structures as sections of an ad-hoc homogeneous fibre bundle.
If $p:P\to M$ is a principal $G$-bundle, with Lie group $G$, and $H\subseteq G$, denote by $q: P\to N$ the corresponding principal $H$-bundle and $\pi : N:=P/H \to M$ the projection. We assume $H$ to be naturally reductive, i.e. the existence of an orthogonal complement $\fm$ satisfying
$$
\fg =\fh\oplus\fm 
\qandq 
\mathrm{Ad}_G(H)\fm\subseteq\fm.
$$
A connection $\omega\in\Omega^1(P,\fg)$  on $P$ (e.g. if $P=\Fr(M)$), will induce a splitting
$$
TN=\cV \oplus \cH
$$
with
$
\cV:=\ker\pi_*=q_*(\ker p_*) 
$
and
$
\cH:=q_*(\ker\omega).
$

\begin{multicols}{2}
\label{figure 1}
Let $\ufm\to N$ be the vector bundle associated to $q$ with fibre $\fm$, points of which are the $H$-equivalence classes defined by the infinitesimal action of $w\in \fm$ on $z\in P$:
$$
z\bullet w:=[(z,w)]_{H}=\cI(q_*(w_z^*)),
$$
where $w_z^*$ is a fundamental left-invariant vector field.
\columnbreak

$$\xymatrix{
\llap{$z\in$ } P 
	\ar[dr]^q 
    \ar[dd]_p
    & & {\ufm} \ar[dl] \rlap{ $:=P\times_H \fm \ni  z\bullet w $}\\ 
  &\llap{$y\in$ } N 
  \ar[dl]_\pi 
  	& \ar[l]\cV
  	\ar[u] ^\cI  \\ 
\llap{$x\in$ }  M 
& & }$$
\end{multicols}
This defines a vector bundle isomorphism
\begin{equation}
\label{eq: isomorphism I:V->m}
\begin{array}{rcccl}
    \cI&:&\cV&\tilde{\rightarrow}&\ufm\\
    &&q_*(w_z^*)&\mapsto&z\bullet w 
\end{array}
\end{equation}
and the $\fm$-component $\omega_\fm \in \Omega^1(P,\fm)$ of the connection is $H$-equivariant and $q$-horizontal, so it projects to a \emph{homogeneous connection form} $f \in\Omega^1(N,\ufm)$ defined by:
\begin{equation}
\label{eq: homog connection form f}
    f (q_*(Z)):= z \bullet \omega_\fm(Z) 
    \qforq
    Z\in T_zP.
\end{equation}
\begin{multicols}{2}
On $\pi$-vertical vectors, $f $ coincides with the canonical isomorphism (\ref{eq: isomorphism I:V->m}), while $\pi$-horizontal vectors are in the kernel: 
$$
f (v_y)=\cI(v_y^\cV), 
\qforq
v_y\in T_yN.
$$
\columnbreak
$$\xymatrix{
\llap{$v_z\in$ }TP 
	\ar[dr]^{q_*} 
    \ar[dd]_{p_*} 
    \ar[r]^\omega 
    \ar@/^2pc/[r]^{\omega_\fm}
    & \fg = \fm \ar@/^2pc/[r]^{z\bullet} \oplus \fh & {\ufm} \\ 
  &TN 
  	\ar[dl]_{\pi_*} 
    \ar@{=}[r]
    \ar[ur]_f 
    & \cV \rlap{ $\oplus\;\cH$}
  		\ar[u] ^\cI  \\ 
TM & &     
}$$
\end{multicols}

Let $\cF$ be a rank $r$ sub-bundle of $\bigoplus \cT^{p,q}(M)$, with fibre $V=\bR^r$. 
When $P=\Fr(M)$, we have a natural monomorphism of principal bundles
$\rho: P\hookrightarrow \Fr(\cF)$, which identifies, at each $x\in M$, the element $z_x\in P_x$ with a frame of $\cF_x$, i.e., with a linear isomorphism onto the typical fibre:
$$
\rho(z_x):\cF_x \tilde\to \;V.
$$
A section $\xi\in\Gamma(\cF)$ is a \emph{geometric structure}, modelled on a fixed element $\xi_0 \in V$, if, for any $x\in M$, there is a frame of $T_x M$ identifying $\xi(x)$ and $\xi_0$.
Suppose now $H\subseteq G=\SO^g(n)$ fixes the model structure $\xi_0$:
\begin{equation}
\label{eq: H=Stab(xi)}
    H=\Stab(\xi_0).    
\end{equation}
\begin{multicols}{2}
In view of \eqref{eq: H=Stab(xi)}, a \emph{universal section} $\Xi\in\Gamma(N,\pi^*(\cF))$ is well-defined by
\begin{align}
\label{eq: universal section}
    \Xi(y):=y^*\xi_0.
\end{align}
Explicitly, one assigns to $y\in N$ the vector of $\cF_{\pi(y)}$ whose coordinates are given by $\xi_0$ in any frame $\rho(z_{\pi(y)})$.
\columnbreak
$$\xymatrix{
	&&\pi^*\cF \ar[dl]
    \\
\pi^*P 
	\ar[r]_{\pi^*p}
    &N \ar[d]_\pi \ar@{-->}@/_1pc/[ur]_(0.75)\Xi& \cF \ar[dl]\\
P \ar@{^{(}->}[u] \ar[r]&M \ar@{-->}@/_/[u]_\sigma &
}$$
\end{multicols}
Now to each section $\sigma\in\Gamma(M,N)$ one associates a  geometric structure $\xi\in\Gamma(M,\cF)$ modelled on $\xi_0$ by
\begin{align}
\label{eq: Xi correspondence}
    \xi_\sigma:=\sigma^*\Xi
    =\Xi\circ \sigma,
\end{align}
and, conversely, to a given geometric structure $\xi\in\Gamma(M,\cF)$, stabilised by $H$, one associates, at $x\in M$, an $H$-class of frames of $T_x M$ which, in turn, identifies an element of $\pi^{-1}(x)$, i.e. an element $\sigma (x)$ in the fibre of $\pi : N \to M$ over $x\in M$.

\subsubsection{Deformations of $H$-structures}

We establish some fundamental facts on the general evolution of an $H$-structure $\xi$ under $\mathrm{GL}(m,\mathbb{R})$-action. Notably, we identify the role of the stabiliser of $\xi$ and its implications on the evolution of the metric. A key result for the ensuing study of solitons is the action  on $\xi$ of the metric itself.

Separating symmetric (and traceless therein) and skew-symmetric parts of $(1,1)$-tensors, we have an orthogonal splitting
\begin{align*}
    \End(TM)=\Gamma(T^*M\otimes TM)=\Omega^0\oplus S_0\oplus \Omega^2.   
\end{align*}
Identifying the irreducible components $\fh \simeq \Omega^2_\fh$ and $\fm \simeq \Omega^2_\fm$ 
in  $\Omega^2 \simeq \fso(\dim M)$, we further decompose   $\Omega^2(M)$ as
\begin{align}
\label{eq:splitting TM* x TM-GENERAL}
 \Gamma(T^*M\otimes TM)=\Omega^0\oplus S_0 \oplus \Omega^2_\fm\oplus \Omega^2_\fh.  
\end{align}
With respect to this splitting, we can write $A=\frac{1}{\dim M} (\tr A)g+A_0+A_\fm+A_\fh$ where $A_0$ is a symmetric traceless $2$-tensor. 

In view of \eqref{eq: H=Stab(xi)} and \eqref{eq: Xi correspondence}, the orthogonal projection along $\Omega^2_\fh$ can be geometrically interpreted as selecting the non-trivial infinitesimal deformations of $\xi$ as an $H$-structure. To see that more clearly, write $\xi\in \cT^{p,q}$ in local coordinates $\{ x_1, \dots, x_m\}$: 
$$
    \xi = \sum \xi^{i_1 \dots i_p}_{j_1 \dots j_q} \frac{\partial}{\partial x_{i_1}}\otimes \dots\otimes\frac{\partial}{\partial x_{i_p}}\otimes dx_{j_1}\otimes\dots \otimes dx_{j_q} .
$$
A local change of frame $\rg\in \mathrm{GL}(m,\mathbb{R})$ acts on $T_pM$ and on $T^*_pM$, respectively, by
    $$
    \begin{array}{rrll}
        \rg: & T_p M&\to& T_{p} M\\
        & X &\mapsto& \rg^{-1}.X
    \end{array}
    \qandq
    \begin{array}{rrll}
        \rg: & T^*_p M&\to& T^*_{p} M\\
        & \alpha &\mapsto& \rg.\alpha .
    \end{array}
    $$
By extension, $\rg$ acts on $\cT^{p,q}$ by
\begin{align}
\label{eq: GL acts on T^p,q}
    \rg.\xi = \sum \xi^{i_1 \dots i_p}_{j_1 \dots j_q} \rg^{-1}.\frac{\partial}{\partial x_{i_1}}\otimes \dots\otimes \rg^{-1}.\frac{\partial}{\partial x_{i_p}}\otimes \rg.dx_{j_1}\otimes\dots \otimes \rg.dx_{j_q} .
\end{align}
Differentiating the action of $e^{tA}\in \GL (m,\bR)$ at the identity, with $A \in \mathfrak{gl}(m, \mathbb{R})$ and  $t\in\,]-\varepsilon,\varepsilon[$, we have:
\begin{align}
\label{eq: gl acts on T^p,q}
    \left.\ddt\right\vert_{t=0} e^{tA}.\xi = \sum \xi^{i_1 \dots i_p}_{j_1 \dots j_q} 
    \sum_{r,s=1}^{p,q} \Big\{
    &- \frac{\partial}{\partial x_{i_1}}\otimes \dots \otimes A.\frac{\partial}{\partial x_{i_r}}\otimes \dots\otimes \frac{\partial}{\partial x_{i_p}}\otimes dx_{j_1}\otimes\dots \otimes dx_{j_q}
    \nonumber\\
    &    +
    \frac{\partial}{\partial x_{i_1}}\otimes \dots\otimes \frac{\partial}{\partial x_{i_p}}\otimes dx_{j_1}\otimes\dots \otimes A.dx_{j_s}\otimes\dots\otimes dx_{j_q}
    \Big\}.
\end{align}

\begin{definition}
\label{def: diamond}
    Let $\xi\in\cT^{p,q}$ be a tensor on $M$ and $A\in\End(TM)$, and define the \emph{infinitesimal deformations} of $\xi$ by:
    $$
    \begin{array}{rcl}
        \diamond \, \xi: \quad \End(TM) &\to& \cF=\cT^{p,q}  \\
        A&\mapsto & A \diamond \xi:= \left.\ddt\right\vert_{t=0} e^{tA}.\xi. 
    \end{array}
    $$
    The \emph{net degree} of $\xi$ is defined as the difference $\ell:=q-p$ between its numbers of covariant and contravariant indices.
\end{definition}

\begin{proposition}
    Let $H\subset G=\SO(m)$. Let $A\in\fgl (m,\bR)$ act on an $H$-structure $\xi\in\cF=\cT^{p,q}$ by \eqref{eq: gl acts on T^p,q}.
    Then, in terms of the splitting \eqref{eq:splitting TM* x TM-GENERAL}, the following assertions hold:
\begin{enumerate}
    \item 
    $\ker (\diamond \, \xi) \supseteq \Omega^2_{\fh}$.
    
    \item 
    A general $GL(m,\bR)$-variation of $\xi$ can be written as $\ddt\xi=A\diamond\xi$, for some $A=B+C$, with $B\in S^2(M)$ and $C\in \Omega^2_{\fm}$.
    
    \item
    If $g_t$ is a co-evolving metric on $M$, and  $H\subset\SO^{g_t}(m)$ along the flow, then $\ddt g_t=2B$. Equivalently, any isometric variation of $\xi$ is characterised by $A=C\in \Omega^2_{\fm}$.
\end{enumerate}
\end{proposition}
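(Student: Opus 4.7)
The plan is to dispatch the three assertions in sequence, each reducing to a conceptually clean observation rather than heavy computation. For Part (1), I would invoke the defining identity \eqref{eq: H=Stab(xi)}, namely $H = \Stab(\xi_0)$, whence the Lie algebra $\fh \subset \fgl(m,\bR)$ annihilates $\xi_0$ under the infinitesimal action $\diamond$. At each $x \in M$, the $H$-structure $\xi$ selects a class of frames identifying $\xi(x)$ with $\xi_0$, well-defined modulo the $H$-action; since $\mathrm{Ad}(H)\fh \subseteq \fh$ by the naturally reductive hypothesis, the sub-bundle $\Omega^2_\fh \subset \End(TM)$ is globally well-defined and coincides pointwise with the infinitesimal stabiliser of $\xi(x)$. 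It then follows at once that $A \diamond \xi \equiv 0$ for every $A \in \Omega^2_\fh$, establishing the inclusion $\Omega^2_\fh \subseteq \ker(\diamond\,\xi)$.

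Part (2) should follow immediately by decomposing a generic $A \in \fgl(m,\bR)$ along the splitting \eqref{eq:splitting TM* x TM-GENERAL} as $A = \tfrac{1}{m}(\tr A)g + A_0 + A_\fm + A_\fh$. The last summand lies in $\ker(\diamond\,\xi)$ by Part (1), so at the level of infinitesimal deformations of $\xi$ it may be dropped, leaving $B := \tfrac{1}{m}(\tr A)g + A_0 \in S^2(M)$ and $C := A_\fm \in \Omega^2_\fm$ as the desired decomposition.

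For Part (3), I would treat the metric $g$ itself as a $(0,2)$-tensor subjected to the same action \eqref{eq: gl acts on T^p,q}. A short index computation with $p = 0$ and $q = 2$ yields the standard formula $(A \diamond g)(X,Y) = g(AX,Y) + g(X,AY)$. Splitting $A = B + C$ with $B$ symmetric and $C$ skew-symmetric with respect to $g$ (automatic since $C \in \Omega^2 \cong \fso(m)$), the skew contribution cancels by antisymmetry while the symmetric piece doubles, giving $\ddt g_t = 2B$ once $B$ is identified with the symmetric $(0,2)$-tensor $(X,Y) \mapsto g(BX,Y)$. The stated equivalence is then immediate: an isometric variation is precisely one with $\ddt g_t = 0$, i.e., $B = 0$, whence $A = C \in \Omega^2_\fm$.

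The principal conceptual hurdle is Part (1), specifically the claim that $\Omega^2_\fh$ coincides pointwise with the stabiliser Lie algebra of $\xi(x)$ independently of the chosen frame; this hinges on the $\mathrm{Ad}(H)$-invariance of $\fh$ that is built into the naturally reductive hypothesis on $H \subset G$. Once this point is secured, the remainder reduces to linear-algebraic bookkeeping and a one-line index computation, so I anticipate no further difficulty.
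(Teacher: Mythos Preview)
Your proposal is correct and follows essentially the same route as the paper, which dispatches Part~1 directly from Definition~\ref{def: diamond} and the stabiliser condition, deduces Part~2 from Part~1 and the splitting~\eqref{eq:splitting TM* x TM-GENERAL}, and for Part~3 cites \cite[Prop.~3.1]{karigiannis-spin7} rather than carrying out the index computation you sketch. Your treatment is in fact more self-contained than the paper's, since you compute $A\diamond g = 2B$ explicitly instead of deferring to the reference.
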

\begin{proof}
Item 1. follows from Definition \ref{def: diamond}, and 2. is a trivial consequence of 1. and \eqref{eq:splitting TM* x TM-GENERAL}. The first assertion in 3. can be proved following  \cite[Prop 3.1]{karigiannis-spin7} for the general case; the second assertion is then a consequence of 2..
\end{proof}

In particular, if $A$ is the metric tensor, i.e. $A_{ij} = \delta_{ij}$ in normal coordinates at a given point, then
    $$
    A.\frac{\partial}{\partial x_{i_r}} = \frac{\partial}{\partial x_{i_r}} ; \quad A.dx_{j_s} = dx_{j_s}
    $$
    so indeed:
\begin{lemma}
\label{lem: homogeneity and diamond}
    Let $g$ be a Riemannian metric and $\ell$ be the net degree of $\xi$; then $$
    g\diamond\xi=\ell.\xi.
    $$
\end{lemma}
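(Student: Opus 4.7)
The plan is to evaluate the operator $\diamond$ of Definition \ref{def: diamond} at $A = g$, interpreted as the identity endomorphism of $TM$ via the splitting \eqref{eq:splitting TM* x TM-GENERAL} (where the trace component is precisely $\frac{1}{m}(\operatorname{tr} A)g$, so $g$ corresponds to $\operatorname{Id}_{TM}$), and then simply read off the answer from formula \eqref{eq: gl acts on T^p,q}.

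Concretely, I would fix a point $x \in M$ and choose normal coordinates at $x$, so that $g_{ij}(x) = \delta_{ij}$. The action of $A = g$ on the tangent space $T_x M$ and on $T^*_x M$ is then the identity, that is,
$$
A.\tfrac{\partial}{\partial x_{i_r}} = \tfrac{\partial}{\partial x_{i_r}}
\qandq
A.dx_{j_s} = dx_{j_s}
\qforq
\text{all } r,s.
$$
Substituting into the right-hand side of \eqref{eq: gl acts on T^p,q}, each of the $p$ summands indexed by $r \in \{1,\dots,p\}$ reproduces the original tensor $\xi$ with an overall minus sign, while each of the $q$ summands indexed by $s \in \{1,\dots,q\}$ reproduces $\xi$ with a plus sign. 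Collecting these contributions gives
$$
g \diamond \xi
= \left.\ddt\right\vert_{t=0} e^{tg}.\xi
= \bigl(-p + q\bigr)\,\xi
= \ell\,\xi,
$$
at the chosen point. Since $x$ was arbitrary and the identity is tensorial, this holds globally.

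There is essentially no obstacle: the content is entirely bookkeeping, and the only point worth flagging is the identification of $g \in S^2(M) \subset \End(TM)$ with the identity endomorphism in the splitting \eqref{eq:splitting TM* x TM-GENERAL}, so that $A_{ij} = \delta_{ij}$ in an orthonormal frame. Once this is made explicit, the conclusion follows immediately from \eqref{eq: gl acts on T^p,q}.
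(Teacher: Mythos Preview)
Your proof is correct and follows essentially the same argument as the paper: both identify $g$ with the identity endomorphism via $A_{ij}=\delta_{ij}$ in normal coordinates, observe that $A$ acts trivially on basis vectors and covectors, and read off the $(q-p)\xi$ count directly from \eqref{eq: gl acts on T^p,q}.
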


\begin{lemma}
The torsion $T=d^\cV\sigma$ is identified with an element of $\Omega^1(M,\sigma^*\cF_{\fm})$ by the relation
\begin{align}
\label{eq: nabla of xi-GENERAL}
    \nabla_Y\xi 
    &= (Y \lrcorner T) \diamond \xi, 
    \quad\forall\, Y \in \sX(M).
\end{align}    
\end{lemma}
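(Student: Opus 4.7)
The plan is to compute $\nabla_Y \xi$ at a point $x \in M$ by working in a local lift of $\sigma$ to the frame bundle, and exploiting the identity $\xi = \sigma^*\Xi$ together with the $H$-invariance of the model tensor $\xi_0$. First I would choose a local section $\tilde\sigma : U \to P$ of the principal $H$-bundle $q : P \to N$ covering $\sigma$, i.e.\ $q\circ\tilde\sigma = \sigma$; this exists on a neighbourhood of any point. In the moving frame $\tilde\sigma(x)$ the defining relation $\Xi(y) = y^*\xi_0$ tells us that $\xi(x) = \sigma(x)^*\xi_0$ has constant coordinates $\xi_0\in V$. The standard formula for the Levi-Civita connection on the associated tensor bundle $\cF$, applied to a section expressed via a local lift, therefore gives
\begin{align*}
    \nabla_Y\xi(x)
    = \bigl[\tilde\sigma(x),\, Y(\xi_0) + \omega(\tilde\sigma_* Y)\cdot\xi_0 \bigr]
    = \bigl[\tilde\sigma(x),\, \omega(\tilde\sigma_* Y)\cdot\xi_0 \bigr],
\end{align*}
where $\omega\in\Omega^1(P,\fg)$ is the Levi-Civita connection form and the dot denotes the tautological infinitesimal action of $\fg\subset\fgl(\dim M)$ on $V\subset\cT^{p,q}(\bR^{\dim M})$, which by Definition~\ref{def: diamond} coincides with $\diamond$.

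Second, I would split $\omega = \omega_\fh + \omega_\fm$. Since $H = \Stab(\xi_0)$, the subalgebra $\fh$ annihilates $\xi_0$, so only the $\fm$-component contributes:
\begin{align*}
    \nabla_Y\xi(x) = \omega_\fm(\tilde\sigma_* Y)\diamond \xi(x).
\end{align*}
It then remains to identify $\omega_\fm(\tilde\sigma_* Y)$ with $Y\lrcorner T$. This is exactly the content of the definition of the homogeneous connection form $f$ in \eqref{eq: homog connection form f}: for any $Z\in T_zP$ one has $f(q_*Z) = z\bullet\omega_\fm(Z)$, and on $\pi$-vertical vectors $f$ restricts to the canonical isomorphism $\cI : \cV\to\ufm$. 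Applied to $Z = \tilde\sigma_* Y$, and using $\sigma_* Y = q_*(\tilde\sigma_* Y)$, this reads $\cI(\sigma_*(Y)^\cV) = \tilde\sigma(x)\bullet \omega_\fm(\tilde\sigma_* Y)$. But by definition $T_Y := d^\cV\sigma(Y) = \sigma_*(Y)^\cV$, so the identification of $T_Y$ as an element of $\sigma^*\ufm$ is precisely $[\tilde\sigma(x), \omega_\fm(\tilde\sigma_* Y)]$. Substituting into the previous display yields \eqref{eq: nabla of xi-GENERAL}.

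The main obstacle is bookkeeping through the canonical isomorphisms rather than any genuine analytic difficulty: I must verify that the inclusion $\fm\subset \fso(\dim M)\simeq\Omega^2\subset\End(\bR^{\dim M})$ globalises to a bundle isomorphism between $\ufm$ and the subbundle $\cF_\fm\subset\End(TM)$ featured in \eqref{eq:splitting TM* x TM-GENERAL}. Under this identification, the $\diamond$-action of $\omega_\fm(\tilde\sigma_* Y)$ on $\xi$ is literally the action on the tensor $\xi$ of the $\fm$-component of $Y\lrcorner T$, viewed as an endomorphism of $TM$; and this simultaneously justifies the target bundle $\sigma^*\cF_\fm$ advertised in the statement.
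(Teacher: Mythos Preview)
Your argument is correct and follows essentially the same route as the paper: both split the connection form into $\fh$ and $\fm$ parts, use $H=\Stab(\xi_0)$ to kill the $\fh$-contribution, and identify the surviving $\fm$-component with $d^\cV\sigma(Y)$ via the homogeneous connection form $f$. The only packaging difference is that the paper invokes a prior result (\cite[Lemma~1]{loubeau-saearp}) stating $\nabla_Y\Xi=f(Y).\Xi$ for the universal section on $N$ and then pulls back by $\sigma$, whereas you work directly with a local lift $\tilde\sigma:U\to P$ and the standard associated-bundle formula for $\nabla$; your computation is in effect a self-contained reproof of that cited lemma composed with $\sigma$.
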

\begin{proof}
    We know from \cite[Lemma 1]{loubeau-saearp} that $\nabla_Y\Xi=\mathrm{f}(Y).\Xi$, where $\mathrm{f} \in\Omega^1(N,\underline{\fm})$ is the homogeneous connection form associated to the connection $\omega\in\Omega^1(P,\mathfrak{g})$ and the splitting $TN=\cV \oplus \cH$:
\begin{equation*}
    \mathrm{f} (q_*(Z))= z \bullet \omega_\fm(Z) 
    \qforq
    Z\in T_zP , z\in P.
\end{equation*}
    Since $\xi = \Xi\circ\sigma$, we have
    \begin{align*}
        \nabla_Y\xi &= \nabla_Y (\Xi\circ\sigma) 
        = (\nabla_{d\sigma(Y)} \Xi)\circ\sigma \\
        &= f(d\sigma(Y)) . \Xi\circ\sigma 
        = \cI(d^\cV\sigma(Y)). \xi \\
        &= (Y \lrcorner T) \diamond \xi .
        \qedhere
    \end{align*}
\end{proof}

Given $Y\in \sX(M)$, in view of the splitting \eqref{eq:splitting TM* x TM-GENERAL} of $\End(TM)$, we may write 
$$
\nabla Y=\frac 12 \cL_Yg +\nabla_\fm (Y) +\nabla_\fh(Y),
$$
where $\nabla_k(Y):=\pi_k(\nabla Y)\in \Omega^2_k$, in particular  $\nabla_\fh Y\in\Omega^2_\fh \subset \ker(\diamond\, \xi)$.
\begin{lemma}
In terms of the torsion $T=d^\cV\sigma$, the Lie derivative of a $H$-structure is
\begin{align}
\label{eq: Lie of xi-GENERAL}
    \cL_Y \xi
    &= (Y\lrcorner T +\frac 12 \cL_Yg +\nabla_\fm (Y))\diamond \xi.
\end{align}    
\end{lemma}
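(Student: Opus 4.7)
The strategy is to express the difference $\cL_Y \xi - \nabla_Y \xi$ as the $\diamond$-action of the full covariant derivative $\nabla Y \in \End(TM)$, then apply the orthogonal decomposition \eqref{eq:splitting TM* x TM-GENERAL} and the kernel property established in part 1 of the preceding Proposition. The previous lemma supplies the formula $\nabla_Y \xi = (Y \lrcorner T)\diamond \xi$, so the main task is to identify the non-covariant correction in the Lie derivative with the $\diamond$-action of the remaining summands of $\nabla Y$.

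The key identity is
\begin{equation*}
    \cL_Y \xi - \nabla_Y \xi \;=\; (\nabla Y) \diamond \xi,
\end{equation*}
which holds for any tensor field $\xi \in \cT^{p,q}$ and any $Y\in\sX(M)$. To see this, both sides are derivations in $\xi$ (the left-hand side because $\cL_Y - \nabla_Y$ commutes with contractions and tensor products and vanishes on functions, the right-hand side by construction of $\diamond$ via \eqref{eq: gl acts on T^p,q}), so it suffices to check the identity on a vector field $Z$ and on a one-form $\alpha$. For a vector field one uses the torsion-freeness of $\nabla$ to write $\cL_Y Z = [Y,Z] = \nabla_Y Z - \nabla_Z Y$, giving $\cL_Y Z - \nabla_Y Z = -\nabla_Z Y$; this matches $(\nabla Y)\diamond Z$ by the minus sign attached to vector slots in \eqref{eq: gl acts on T^p,q}, since $(\nabla Y)\cdot Z = \nabla_Z Y$. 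For a one-form, a direct computation yields $(\cL_Y \alpha - \nabla_Y \alpha)(X) = \alpha(\nabla_X Y) = \alpha\bigl((\nabla Y)\cdot X\bigr)$, which matches $(\nabla Y)\diamond \alpha$ by the plus sign on covector slots. The general case follows by the Leibniz rule.

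With this identity in hand, decompose $\nabla Y$ according to the splitting \eqref{eq:splitting TM* x TM-GENERAL} as
\begin{equation*}
    \nabla Y = \tfrac{1}{2}\cL_Y g + \nabla_\fm(Y) + \nabla_\fh(Y),
\end{equation*}
where the symmetric part $\tfrac12 \cL_Y g$ lies in $\Omega^0 \oplus S_0$ and the skew-symmetric parts lie in $\Omega^2_\fm \oplus \Omega^2_\fh$, as recalled in \eqref{eq:splitting TM* x TM-GENERAL}. Part~1 of the Proposition gives $\Omega^2_\fh \subseteq \ker(\diamond\, \xi)$, so the $\nabla_\fh(Y)$-term annihilates $\xi$. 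Combining with \eqref{eq: nabla of xi-GENERAL} yields
\begin{equation*}
    \cL_Y \xi \;=\; \nabla_Y \xi + (\nabla Y)\diamond \xi \;=\; \bigl(Y\lrcorner T + \tfrac{1}{2}\cL_Y g + \nabla_\fm(Y)\bigr)\diamond \xi,
\end{equation*}
which is \eqref{eq: Lie of xi-GENERAL}. The only genuinely delicate step is reconciling the sign conventions of $\diamond$ (as prescribed in \eqref{eq: gl acts on T^p,q}) with the standard identity relating $\cL_Y$ and $\nabla_Y$; once this is settled on one-forms and vector fields, the rest is automatic from the Leibniz rule and the kernel property of $\Omega^2_\fh$.
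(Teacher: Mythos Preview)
Your proof is correct and follows essentially the same approach as the paper: both establish the key identity $\cL_Y\xi - \nabla_Y\xi = (\nabla Y)\diamond\xi$, then decompose $\nabla Y$ according to \eqref{eq:splitting TM* x TM-GENERAL} and kill the $\Omega^2_\fh$-component using the kernel property. The only difference is that the paper verifies the identity by a direct coordinate expansion on a general $(p,q)$-tensor, whereas you invoke the derivation property to reduce to vector fields and one-forms; your route is cleaner but logically equivalent.
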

\begin{proof}
    If $\xi$ is a $(p,q)$-form then, for $X_i \in \Gamma(TM)$ and $\sigma_j \in \Gamma(T^*M)$,
    \begin{align*}
        (\cL_Y \xi) (X_i, \sigma_j) 
        =&\  Y (\xi (X_i, \sigma_j)) - 
        \xi ( \cL_Y X_i, \sigma_j) - \xi ( X_i, \cL_Y \sigma_j) \\
        =&\  (\nabla_Y \xi) (X_i, \sigma_j) + \xi (\nabla_Y X_i, \sigma_j) + \xi (X_i, \nabla_Y \sigma_j) \\
        &- \xi (\nabla_Y X_i, \sigma_j) + \xi (\nabla_{X_i}Y, \sigma_j) 
        - \xi (X_i, \cL_Y \sigma_j)
    \end{align*}
    but
    $
    \cL_Y \sigma_j = \nabla_Y \sigma_j + \sigma_j (\nabla Y),
    $
    so
    \begin{align*}
        (\cL_Y \xi) (X_i, \sigma_j)
        &=  (\nabla_Y \xi) (X_i, \sigma_j) + \xi (\nabla_{X_i}Y, \sigma_j) 
        - \xi (X_i, \sigma_j (\nabla Y)).
    \end{align*}
    On the other hand, by \eqref{eq: nabla of xi-GENERAL},
    \begin{align*}
    (\nabla_Y \xi) (X_i, \sigma_j) &= ((Y \lrcorner T) \diamond \xi )(X_i, \sigma_j)
    \end{align*}
    while, using normal coordinates, 
    \begin{align*}
       & \xi (\nabla_{X_i}Y, \sigma_j) =
        \xi (\frac 12 (\cL_Y g)(X_i), \sigma_j) + \xi ((\nabla_{\fm}Y)(X_i), \sigma_j) + \xi ((\nabla_{\fh}Y)(X_i), \sigma_j) \\
      & =\frac 12 (\cL_Y g)\Big(X_i,\frac{\partial}{\partial x_{k}}\Big)\xi \Big(\frac{\partial}{\partial x_{k}},\sigma_j\Big) + 
      g\Big((\nabla_{\fm}Y)(X_i), \frac{\partial}{\partial x_{k}}\Big) \xi \Big(\frac{\partial}{\partial x_{k}}, \sigma_j\Big) 
      + g\Big((\nabla_{\fh}Y)(X_i), \frac{\partial}{\partial x_{k}}\Big) \xi \Big(\frac{\partial}{\partial x_{k}}, \sigma_j\Big)  
    \end{align*}
    and 
    \begin{align*}
       & \xi (X_i, \sigma_j (\nabla Y)) = \xi (X_i, \frac 12 (\cL_Y g)(\sigma_j)) + \xi (X_i,(\nabla_{\fm}Y)(\sigma_j)) +
       \xi (X_i,(\nabla_{\fh}Y)(\sigma_j)) \\
       & =\frac 12 (\cL_Y g)(\sigma_j, dx_{k})\xi (X_i,dx_{k} ) + 
       (\nabla_{\fm}Y)(\sigma_j,dx_{k})\xi (X_i,dx_{k}) +
       (\nabla_{\fh}Y)(\sigma_j,dx_{k})\xi (X_i,dx_{k}) .
    \end{align*}
    So putting these three terms together, and keeping in mind Definition \ref{def: diamond}:
    \begin{align*}
        \cL_Y \xi &= (Y \lrcorner T) \diamond \xi + \frac 12 (\cL_Y g)\diamond \xi + \nabla_{\fm}Y \diamond \xi + \nabla_{\fh}Y \diamond \xi.
    \end{align*}
    Now $H$ is the stabiliser of $\xi$, so the last term vanishes: $\nabla_{\fh}Y \diamond \xi = 0$.
\end{proof}

\subsubsection{Harmonic $H$-solitons and self-similar solutions}

The objective of this sub-section is to prove that solitons, i.e. generalised fixed points of the flow, are the same thing as self-similar solutions, in the sense that they are preserved by a family of diffeomorphisms parametrised by time.

\begin{definition}
\label{def-sssol-GENERAL} 
    Let $\{\sigma_t\}$ be a solution of the harmonic section flow \eqref{eq: HSF},
    and let $\{\xi_t:=\sigma_t^*\Xi\}$ be the corresponding family of $H$-structures. We say that it is a \emph{self-similar solution} if there exist a function $\rho(t)$, with $\rho(0)=1$, an $H$-structure $\xi:=\xi_\sigma$ and a family of diffeomorphisms $f_t:M\rightarrow M$ such that
$$
    \xi_t = \rho(t)^{\ell}{f_t}^*\xi,
    \quad\forall\  t\in \, [0,\rT[,
$$ 
    where $\ell$ is the net degree of $\xi$, cf. Definition \ref{def: diamond}.
    
    Denoting by $\{W_t\}\subset\sX(M)$ the infinitesimal generator of $\{f_t\}\subset\Diff{M}$, the \emph{stationary vector field} of a self-similar solution is defined by
\begin{equation}
\label{eq: stationary vf-GENERAL}
    X_t:=(f_t^{-1})_*W_t\in \sX(M),
    \quad\forall\  t\in \, [0,\rT[.
\end{equation}
\end{definition}

\begin{lemma}
    Along the harmonic section flow of a self-similar solution, as in Definition \ref{def-sssol-GENERAL}, we have $g=\rho(t)^2 f_t^*g$, hence
\begin{equation}
\label{eq: Lie of g}
    \cL_{X_t}g
    =- 2(\log\rho)'g.
\end{equation}
\begin{proof}
Asking that $\xi_t$ remains an $H$-structure, for $H\subset \SO^{g_t}(n)$, implies the metric must scale under the dilation prescribed by self-similarity:
$$
g_t=\rho(t)^2 f_t^*g.
$$
Since the \eqref{eq: HSF} is actually isometric, $g_t\equiv g$ and so:
\begin{align*}
    0&=\ddt g_t
    = 2(\log\rho)' g + \cL_{X_t}g.
    \qedhere
\end{align*}
\end{proof}
    
\end{lemma}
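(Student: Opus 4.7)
The plan is to exploit two structural constraints in tandem. First, the self-similarity ansatz $\xi_t=\rho(t)^{\ell}f_t^*\xi$ must be compatible with each $\xi_t$ being an $H$-structure. Since $H\subset\SO^{g_t}(n)$ stabilises $\xi_t$ only relative to a metric that recognises the rescaled/pulled-back frames as orthonormal, the candidate metric at time $t$ must be $g_t=\rho(t)^2 f_t^* g$ --- the exponent $2$ matching the net degree of a $(0,2)$-tensor, in harmony with Lemma~\ref{lem: homogeneity and diamond}. Second, the harmonic section flow \eqref{eq: HSF} is isometric: the underlying metric on $M$ does not evolve, so $g_t\equiv g$. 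Combining these two expressions for $g_t$ yields the first identity $g=\rho(t)^2 f_t^* g$.

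With this relation in hand, the Lie derivative identity should follow by differentiating in $t$. The only delicacy is computing $\ddt f_t^* g$ in terms of $X_t$ rather than the generator $W_t$. Using the standard fact that, for a time-dependent family $\{f_t\}\subset\Diff{M}$ with infinitesimal generator $W_t$, we have $\ddt f_t^*\alpha = f_t^*(\cL_{W_t}\alpha)$ for any tensor field $\alpha$, together with the naturality identity $f_t^*(\cL_{W_t}\alpha) = \cL_{f_t^* W_t}(f_t^*\alpha)$ and the definition $X_t=(f_t^{-1})_*W_t=f_t^*W_t$ from \eqref{eq: stationary vf-GENERAL}, one obtains $\ddt f_t^* g = \cL_{X_t}(f_t^* g)=\rho(t)^{-2}\cL_{X_t}g$. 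Substituting into $0=\ddt(\rho^2 f_t^* g)$ and cancelling the common $\rho^{-2}$ factor gives $0=2(\log\rho)' g+\cL_{X_t}g$, as claimed.

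The main conceptual point --- and the step I would expect to require the most care --- is the first: precisely justifying why isometric self-similarity of the $H$-structure forces the metric to scale as $\rho^2$ under the pullback. Everything else is formal: the second half is a direct computation once one is careful about the distinction between $W_t$ and its diffeomorphism-pullback $X_t$, which is precisely the reason the stationary vector field is defined in \eqref{eq: stationary vf-GENERAL} via $(f_t^{-1})_*$ rather than directly as $W_t$.
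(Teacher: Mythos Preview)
Your proposal is correct and follows essentially the same approach as the paper: both argue that self-similarity of the $H$-structure forces $g_t=\rho(t)^2 f_t^*g$, invoke the isometry of \eqref{eq: HSF} to get $g_t\equiv g$, and then differentiate. Your version is slightly more explicit in unpacking the step $\ddt f_t^*g=\cL_{X_t}(f_t^*g)$ via naturality and the definition \eqref{eq: stationary vf-GENERAL}, which the paper leaves implicit, but there is no substantive difference.
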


We can now prove that solitons of the harmonic section flow \eqref{eq: HSF} are exactly self-similar solutions. This bears close analogy to Lemmata 2.15 and 2.17 in \cite{dgk-isometric}, to the proof of which the reader is referred for details of the following computations. 

\begin{lemma}
\label{lemma: vec fields for self-sim sols-GENERAL}
    Given a self-similar solution $\{\xi_t\}_{t\in \, [0,\rT[,}$  of the harmonic section flow \eqref{eq: HSF}, the stationary vector field $X_t$ and the torsion $T_t$ satisfy the following relation:
    $$
    \Div{T_t}= X_t\lrcorner T_t + \nabla_\fm (X_t),
    $$
    where $\nabla_\fm :=\pi_\fm\circ\nabla:\sX(M)\to\Omega^2_\fm(M)$.
\end{lemma}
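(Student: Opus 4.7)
The plan is to differentiate the self-similar ansatz $\xi_t=\rho(t)^{\ell}f_t^{*}\xi$ in time and match the result against the tensorial form of \eqref{eq: HSF}. As a first step I would use the standard identity $\partial_t (f_t^*\xi)=\cL_{X_t}(f_t^*\xi)$, which is a consequence of \eqref{eq: stationary vf-GENERAL}, since the instantaneous generator of $\{f_t\}$ pulled back by $f_t^{-1}$ is precisely $X_t$. Combined with the Leibniz rule (and the fact that $\rho$ is spatially constant), this yields
\[
\partial_t \xi_t \;=\; \ell\,(\log\rho)'(t)\,\xi_t \;+\; \cL_{X_t}\xi_t .
\]

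Next, I would apply \eqref{eq: Lie of xi-GENERAL} with $Y=X_t$ to decompose
$$
\cL_{X_t}\xi_t = \bigl(X_t\lrcorner T_t + \tfrac{1}{2}\cL_{X_t}g + \nabla_\fm X_t\bigr)\diamond \xi_t.
$$
Since the flow is isometric, \eqref{eq: Lie of g} gives $\tfrac{1}{2}\cL_{X_t}g=-(\log\rho)'g$, and Lemma \ref{lem: homogeneity and diamond} then converts this summand into $-\ell(\log\rho)'\xi_t$. Substituting back, the two $(\log\rho)'$-contributions cancel exactly, leaving
\[
\partial_t \xi_t \;=\; \bigl(X_t\lrcorner T_t + \nabla_\fm X_t\bigr)\diamond \xi_t .
\]

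On the other hand, the correspondence $\xi_t=\sigma_t^*\Xi$ together with the identification $\tau^\cV \leftrightarrow \Div T$ (cf.\ Definition \ref{iflowdefn} and \cite{loubeau-saearp}) renders \eqref{eq: HSF} tensorially as $\partial_t\xi_t=\Div T_t\diamond\xi_t$. Equating the two expressions yields
\[
\bigl(\Div T_t - X_t\lrcorner T_t - \nabla_\fm X_t\bigr)\diamond \xi_t \;=\; 0 .
\]
To finish, I would observe that all three summands on the left lie in $\Omega^2_\fm$: indeed $T_t$ is $\fm$-valued by \eqref{eq: nabla of xi-GENERAL}, whence so are its divergence and interior product with $X_t$; and $\nabla_\fm X_t$ is the $\fm$-projection of $\nabla X_t$ by construction. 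By the proposition preceding Lemma \ref{lem: homogeneity and diamond}, $\ker(\diamond\xi_t)\cap\Omega^2=\Omega^2_\fh$, so $\diamond\xi_t$ is injective on $\Omega^2_\fm$, and the claimed identity follows.

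The step I expect to require most care is the cancellation of the $(\log\rho)'$-terms, which relies crucially on both \eqref{eq: Lie of g} (forced by the isometric nature of the flow) and the homogeneity relation $g\diamond\xi=\ell\xi$; without either of the two, the argument would fail to close. A secondary subtlety is confirming that the general harmonic-section tension field indeed has the tensorial form $\Div T_t\diamond\xi_t$, which we import from \cite{loubeau-saearp}; once these two ingredients are in place, the rest is projection bookkeeping.
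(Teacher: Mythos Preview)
Your proposal is correct and follows essentially the same approach as the paper: differentiate the self-similar ansatz, apply \eqref{eq: Lie of xi-GENERAL} and \eqref{eq: Lie of g} together with Lemma~\ref{lem: homogeneity and diamond} to cancel the $(\log\rho)'$-terms, and then identify the result with $\Div T_t\diamond\xi_t$ via the tensorial form of \eqref{eq: HSF}. The only difference is that you make the final injectivity step (that $\diamond\,\xi_t$ is injective on $\Omega^2_\fm$) explicit, whereas the paper leaves it implicit.
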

\begin{proof}

Using \eqref{eq: HSF}, \eqref{eq: Lie of g}, \eqref{eq: Lie of xi-GENERAL}, and the fact that $g_t\diamond\xi_t=\ell\xi_t$ [Lemma \ref{lem: homogeneity and diamond}],
\begin{align*}
    \Div T_t\diamond{\xi_t}=\ddt\xi_t 
    &= \ell(\log\rho)' \xi_t +\cL_{X_t}\xi_t\\
    &= 
    \ell(\log\rho)' \xi_t +(X_t\lrcorner T + \frac 12  \cL_{X_t}g_t +\nabla_\fm ({X_t}))\diamond \xi_t\\
    &=(X_t\lrcorner T +\nabla_\fm ({X_t}))\diamond \xi_t,
\end{align*}
where the first and third terms in the second row cancel out, by \eqref{eq: Lie of g}.
\end{proof}

\begin{definition}
\label{def: isometric H-soliton}
    A \emph{harmonic $H$-soliton} on a Riemannian manifold $(M,g)$ is a triple $(\hat\xi,\hat X,c)$, consisting of an $H$-structure  $\hat\xi$ with $H\subset \SO^g(n)$, a vector field $\hat X\in\sX(M)$, and $c\in\bR$, such that
\begin{align}
\left\{\begin{array}{rcl}
    \cL_{\hat X} g&=&c g\\
    \Div{\hat T}&=& \hat X\lrcorner \hat T + \nabla_\fm (\hat X).
\end{array}\right.
\end{align}
\end{definition}

Let us ponder in general whether a given harmonic $H$-soliton $(\hat\xi ,\hat X ,c)$ induces a  self-similar solution, i.e., $\xi_t = \rho(t)^{\ell}{f_t}^* \xi_0$, for some given function $\rho$ and $\xi_0=\hat\xi$. For a real function $\alpha$ and instant $\hat t   \in\bR$ to be determined, such that $\alpha(\hat t   )=1$, consider the $1$-parameter family of diffeomorphisms $\{f_t:M\to M\}$ defined by 
\begin{align*}
\left\{\begin{array}{rcl}
     \ddt f_t &=& \alpha(t)\hat X \circ f_t  \\
     f_{\hat t   }&=& \operatorname{Id}_M ,
\end{array}\right.
\end{align*}
and the vector field $W_t:=\alpha(t)\hat X $.
In particular,
$
\ddt f_t^* = \alpha f_t^*\circ \cL_{\hat X }.
$
Now $g_t:= \rho(t)^2 f_t^*g_0$ satisfies:
\begin{align*}
    0=\ddt g_t 
    &= \{(\log\rho^2)'+ \alpha c\} g_t\\
    \Leftrightarrow\qquad
    \alpha(t)&=-\frac 2c (\log\rho)',
    \qforq c\neq0.
\end{align*}
Imposing $\rho(0)=1=\alpha(\hat t   )$, this is solved, for instance, by
\begin{align*}
    \rho(t)=|t|^{\frac 12}&
    \qandq
    \alpha(t)= \mp\frac{1}{t}, \qforq
    c=\pm1=:-\hat t   ,\\
    \rho(t)=1&
    \qandq
    \alpha(t)= 1, \qforq
    c=0=:\hat t ,
\end{align*}
but also by more general rates (see Remark \ref{rem: explicit self-sim sols-GENERAL} below).

We now wish to determine the stationary family $\{X_t\}\subset\sX(M)$ predicted in Lemma \ref{lemma: vec fields for self-sim sols-GENERAL}:
$$
\Div{T_t}= X_t\lrcorner T_t + \nabla_\fm (X_t).
$$
We begin by observing that the vector field $\hat X$, which generates the soliton's translational symmetry, is itself constant in time: 
$$ 
\ddt (f_t)^{-1}_* \hat X  =\cL_{W_t}\hat X  =\cL_{\alpha \hat X  }\hat X  =0,
$$
so indeed $(f_t)^{-1}_*\hat X  =\hat X  $, for all $t$ along the flow. Therefore, we obtain the desired vector field by setting $X_t:=(f_t)^{-1}_*W_t=\alpha(t)(f_t)^{-1}_*\hat X  =\alpha(t)\hat X  $, as in Lemma \ref{lemma: vec fields for self-sim sols-GENERAL}. In summary:

\begin{proposition}
\label{prop: iso soliton => self-sim sol-GENERAL}
    Suppose that an $H$-structure $\hat\xi$ on $M$, a vector field $\hat X\in\sX(M)$ and $c\in\{-1,0,1\}$ define an harmonic soliton. Then there exist:
\begin{itemize}
    \item 
    a function $\rho\in C^\infty(\bR,\bR^+)$ such that $\rho(0)=1$,
    \item
    a function $\alpha\in C^\infty(\Dom(\alpha),\bR)$, an instant $\hat t \in \Dom(\alpha)$ such that $\alpha(\hat t )=1$, and an interval $\hat I \subset\Dom(\alpha)$ containing both $\{\hat t \}$ and $\{\hat t \cdot \infty\}$,
\end{itemize}   
    such that the $1$-parameter family $\{f_t\}\subset\Diff{M}$ defined by
\begin{align*}
\left\{\begin{array}{rcl}
     \ddt f_t &=& \alpha(t)\hat X\circ f_t  \\
     f_{\hat t }&=& \operatorname{Id}_M ,
 \quad  t\in \hat I ,
\end{array}\right.
\end{align*}    
    induces a self-similar solution of the \eqref{eq: HSF} by $\xi_t:=\rho(t)^\ell f_t^*\hat\xi$. Moreover, the family of vector fields $\{X_t:=\alpha(t)\hat X\}\subset\sX(M)$ satisfies the harmonic soliton condition of Lemma \ref{lemma: vec fields for self-sim sols-GENERAL}:
$$
\Div{T_t}= X_t\lrcorner T_t + \nabla_\fm (X_t),
\quad\forall\  t\in \hat I .
$$ 
\end{proposition}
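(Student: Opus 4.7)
The statement is essentially a consolidation of the computations carried out in the paragraphs immediately preceding it. My strategy is to (i) explicitly construct $\rho$ and $\alpha$ on a case-by-case basis according to the sign of $c$, (ii) define $f_t$ as the flow of the time-dependent vector field $\alpha(t)\hat X$, and then (iii) verify directly that the candidate $\xi_t := \rho(t)^\ell f_t^*\hat\xi$ solves \eqref{eq: HSF}, with $\{X_t\}$ as predicted by Lemma~\ref{lemma: vec fields for self-sim sols-GENERAL}.

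For step (i), the key constraint is that $g_t := \rho(t)^2 f_t^*g$ remain equal to $g$ for all $t$, which by \eqref{eq: Lie of g} and the soliton condition $\cL_{\hat X}g = cg$ forces the ODE $(\log\rho^2)' + \alpha c = 0$. For $c = \pm 1$ one takes $\rho(t) = |t|^{1/2}$, $\alpha(t) = \mp 1/t$, and $\hat t = \mp 1$, so that $\hat I$ is a half-line having $\hat t$ as one endpoint and running to $\pm\infty$; for $c = 0$ take $\rho \equiv 1$, $\alpha \equiv 1$, $\hat t = 0$, $\hat I = \bR$. Since $M$ is compact and $\alpha\hat X$ is smooth on $\hat I$, the flow $f_t$ in step (ii) exists throughout $\hat I$.

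For step (iii), differentiate the ansatz using $\ddt f_t^* = \alpha \, f_t^*\circ\cL_{\hat X}$, obtaining
\begin{equation*}
    \ddt \xi_t = \ell (\log\rho)' \xi_t + f_t^*(\cL_{\alpha \hat X}\hat\xi) = \ell (\log\rho)' \xi_t + \cL_{X_t}\xi_t,
\end{equation*}
where $X_t := (f_t^{-1})_*(\alpha \hat X)$. Using Lemma~\ref{lem: homogeneity and diamond} to rewrite $\ell \xi_t = g\diamond\xi_t$ and equation \eqref{eq: Lie of xi-GENERAL} for $\cL_{X_t}\xi_t$, the terms $(\log\rho)' g \diamond \xi_t$ and $\tfrac{1}{2}\cL_{X_t} g \diamond \xi_t$ cancel by \eqref{eq: Lie of g}, leaving $\ddt \xi_t = (X_t\lrcorner T_t + \nabla_\fm(X_t))\diamond\xi_t$. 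To identify this with $\Div T_t \diamond \xi_t$, observe that $\ddt (f_t^{-1})_* \hat X = \cL_{\alpha\hat X}\hat X = 0$, so $X_t = \alpha(t)\hat X$, and then pull the soliton identity $\Div \hat T = \hat X\lrcorner \hat T + \nabla_\fm(\hat X)$ back by $f_t$: diffeomorphism-naturality of $T$, $\nabla$, and $\fm$-projection (the latter because $f_t^*g = \rho^{-2} g$ is conformal to $g$ but evaluation at the fixed time $t$ preserves the $H$-reduction) gives the same equation with hats removed and subscripts $t$ inserted.

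The main obstacle will be the last identification in step (iii): one needs to confirm that the torsion of $\xi_t := \rho(t)^\ell f_t^* \hat\xi$ coincides with the pullback-rescaling of $\hat T$ in the appropriate way, so that the soliton equation at $\hat t$ propagates to every $t \in \hat I$. This ultimately amounts to the naturality of the vertical derivative $d^\cV\sigma$ under diffeomorphisms of $M$ combined with the $g_t \equiv g$ constraint secured in step (i); once this covariance is spelled out the identification is automatic, and Lemma~\ref{lemma: vec fields for self-sim sols-GENERAL} confirms the soliton condition holds for every $t \in \hat I$.
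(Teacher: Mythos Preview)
Your proposal is correct and follows essentially the same approach as the paper: the paper's ``proof'' is precisely the discussion in the paragraphs preceding the proposition (construction of $\rho,\alpha,\hat t$ from the metric constraint $(\log\rho^2)'+\alpha c=0$, definition of $f_t$, and the invariance computation $\ddt(f_t^{-1})_*\hat X=\cL_{\alpha\hat X}\hat X=0$ yielding $X_t=\alpha(t)\hat X$), with the finer details deferred to \cite{dgk-isometric}. Your step (iii) spells out the verification that $\xi_t$ actually solves \eqref{eq: HSF} more explicitly than the paper does; note a small slip in your displayed equation---the second term should carry the factor $\rho^\ell$, i.e.\ $\rho^\ell f_t^*(\cL_{\alpha\hat X}\hat\xi)$, though your final identification $\cL_{X_t}\xi_t$ is correct.
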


\begin{remark}
\label{rem: explicit self-sim sols-GENERAL}
    The data $(\rho, \alpha, \hat t )$ in Proposition \ref{prop: iso soliton => self-sim sol-GENERAL} can be explicitly constructed in the following way:
\begin{description}
    \item[$c=\pm1$:]
    Take any $(\rho, \alpha, \hat t )$ solving
    $$
    \alpha(t)=-\frac 2c (\log\rho)'(t)
    \qandq 
    \rho(0)=1=\alpha(\hat t ).
    $$   
    For instance, given any $p>0$,
$$
\rho(t)=|t|^p,
    \quad
    \alpha(t)= \mp\frac{2p}{t}, 
    \quad
    \hat t =-2pc,
    \qandq
    \hat I=
    \begin{cases}
         ]-\infty,\hat t ],& c=1;\\
         [\hat t ,+\infty[,&c=-1,  
    \end{cases}
$$    
    \item[$c=0$:]
    take $\rho,\alpha \equiv 1$, $\hat t =0$ and $\hat I=\bR$.
\end{description}    
\end{remark}

\subsection{Harmonic sections}
\label{sec: Homogeneous sections}

In this sub-section, we take a slight detour through  Riemannian submersions $\pi : (N,\eta) \to (M,g)$ with totally geodesic fibres, and their sections $\sigma: M \to N$. The kernel of $d\pi$ defines a vertical distribution in $TN$ and the metric $\eta$ defines the horizontal sub-bundle as its orthogonal complement.

If $M$ is compact, the (vertical) energy is then the $L^2$-norm of the vertical component of $d\sigma$:
$$
E^\cV (\sigma) = \frac12 \int_M |d^\cV \sigma|^2 \, \vol .
$$
This defines a variational principle for sections of $\pi$ and its critical points, coined {\emph{harmonic sections}}, are characterised, following the same formal principles as in \textsection\ref{sec: H-structures}, by the Euler-Lagrange equation:
$
\tau^\cV (\sigma) =0.
$
Since $\pi$ is a Riemannian submersion with totally geodesic fibres, $\tau^\cV (\sigma)$ is actually the vertical part of the tension field $\tau (\sigma)$ from harmonic maps theory \cites{Wood1986,Wood1997}.
The analytic study of the corresponding heat equation
\begin{align} 
\label{HSecF}
    \frac{d \sigma_t}{dt} 
    = \tau^\cV (\sigma_t) 
\end{align}
aims at finding sufficient conditions for convergence towards a harmonic section.
This will require the local expression of the vertical second fundamental form, which will lead to a Weitzenb\"ock formula on the swapping of derivatives for the vertical tension field and, taking inner-products, an adapted Bochner formula. Under bounded geometry, we obtain a Bochner-type inequality along the harmonic flow.

As a consequence of these formulas, we prove wide-ranging Shi-type estimates on the derivatives of solutions of the harmonic flow. This result naturally holds for $H$-structures and will be exploited for $H=\S7\subset \SO{(8)}$ later on.

\subsubsection{Vertical second fundamental form}
\label{sec: vertical 2ndFF}

To compute the vertical second fundamental form of a section $\sigma \in \Gamma(\pi)$, choose a point $p\in M$ and local coordinates $x_1 , \dots,x_m$ on a domain $U$ around $p$ and $y_1,\dots , y_n$ local coordinates on $N$ such that $y_i = x_i \circ \pi$ for $i\leq m$ and for all $x\in U$, $y_{m+1}|_{\pi^{-1}(x)}, \dots , y_{n}|_{\pi^{-1}(x)}$ are local coordinates\footnote{This part greatly benefited from discussions with C.M.~Wood.} at $\pi^{-1}(x)$.

For $i\leq m$, let $\partial_i$ be the coordinate field for $x_i$ and $\tilde{\partial}_a$ the coordinate field for $y_a$ and for $1\leq i\leq m$, we have 
$d\pi(\tilde{\partial}_i) = {\partial}_i$, while $\tilde{\partial}_r$ is vertical for $m+1 \leq r \leq n$ (but $\tilde{\partial}_i$ is not, a priori, horizontal).
We assume summation on repeated indices and take the index convention that $1\leq i,j,k,l\leq m$ and $m+1 \leq u,v,s,p,r,t\leq n$.

In this coordinate system, the differential of the section $\sigma$ is
$    d\sigma \Big(\frac{\partial}{\partial x_i}\Big)
    = \frac{\partial \sigma^u}{\partial x_i} \tilde{\partial}_u + \tilde{\partial}_i
$,
and its vertical differential is therefore
\begin{align*}
    d^\cV \sigma \Big(\frac{\partial}{\partial x_i} \Big) &= \Big( \frac{\partial \sigma^u}{\partial x_i}  + \eta_{is}\eta^{su}\Big)\tilde{\partial}_u.
\end{align*}
An easy computation shows that
\begin{align*}
    |d^\cV \sigma|^2 &= g^{ij} \eta_{uv}  \frac{\partial \sigma^u}{\partial x_i} \frac{\partial \sigma^v}{\partial x_j} + 2 g^{ij} \eta_{iv}  \frac{\partial \sigma^v}{\partial x_j}  +  g^{ij} \eta_{is}\eta_{ju}\eta^{su} .
\end{align*}
The vertical second fundamental form is
\begin{align*}
\alpha(X,Y) &= (\nabla^\cV  d^\cV \sigma)(X,Y)= \nabla_{X}^{\sigma^{-1}\cV} (d^\cV \sigma (Y)) - (d^\cV \sigma)(\nabla^{M}_{X} Y)
\end{align*}
and, writing $\frac{\partial}{\partial x_i}=\partial_i$, we compute henceforth $\alpha(\partial_i, \partial_j)$:
\begin{align*}
    &\alpha(\partial_i, \partial_j) 
    = \Big( \frac{\partial^2 \sigma^u}{\partial x_i\partial x_j} + \frac{\partial}{\partial x_i}(\eta_{js}\eta^{su})\Big) \tilde{\partial}_u + 
   \Big(\frac{\partial \sigma^u}{\partial x_j} + \eta_{js}\eta^{su}\Big) \nabla_{d\sigma(\partial_i)}^{\cV} \tilde{\partial}_u - \Gamma^{k}_{ij} \Big( \frac{\partial \sigma^u}{\partial x_k} + \eta_{ks}\eta^{su}\Big)\tilde{\partial}_u .
\end{align*}
But $\tilde{\Gamma}^{k}_{uv}=dy^k (\nabla_{\tilde{\partial}_v}\tilde{\partial}_u)=0$, because the fibres are totally geodesic, so
\begin{align*}
    \nabla_{d\sigma(\partial_i)}^{\cV} \tilde{\partial}_u &=
    \frac{\partial \sigma^v}{\partial x_i} \tilde{\Gamma}^{w}_{uv} \tilde{\partial}_w
      + \Big(\tilde{\Gamma}^{k}_{iu} \eta_{ks}\eta^{sw} + \tilde{\Gamma}^{w}_{iu}\Big) \tilde{\partial}_w
\end{align*}
since $\tilde{\partial}^{\cV}_k= \eta_{ks}\eta^{sw}\tilde{\partial}_w$.
Now, denoting by $\tilde{\partial}^{z}_i$  the horizontal part of $\tilde{\partial}_i$,
\begin{align}
    \frac{\partial}{\partial x_i}(\eta_{js}\eta^{su}\circ\sigma) &=
(\tilde{\partial}^{z}_i)(\eta_{js}\eta^{su}) + (\eta_{is}\eta^{sv} + \frac{\partial \sigma^v}{\partial x_i}) \tilde{\partial}_v  (\eta_{js}\eta^{su}). \label{eqa}
\end{align}
Similarly,
\begin{align*}
   \tilde{\Gamma}^{r}_{ij}
    &= dy^r (\nabla_{\tilde{\partial}_i}\tilde{\partial}_j) =  dy^r (\nabla_{\tilde{\partial}^{z}_i}\tilde{\partial}^{z}_j) + \tilde{\partial}^{z}_i (\eta_{js}\eta^{sr}) + \eta_{js}\eta^{sv}\tilde{\Gamma}^{r}_{iv} - \eta_{js}\eta^{sv}\eta_{iw}\eta^{wu}\tilde{\Gamma}^{r}_{uv} + \eta_{is}\eta^{su}\tilde{\Gamma}^{r}_{ju} .
\end{align*}
By O'Neill's connection formulas for Riemannian submersions~\cite{oneill}, and denoting also by $\cV$ (resp. $\cH$) the projection onto the vertical (resp. horizontal) distribution, we have
\begin{align*}
    dy^r (\cH\nabla_{\tilde{\partial}^{z}_i}\tilde{\partial}^{z}_j) &=
    - \Gamma^{k}_{ij} \eta_{ks}\eta^{sr}, \quad 
\cV\nabla_{\tilde{\partial}^{z}_i}\tilde{\partial}^{z}_j = \frac12 \cV[\tilde{\partial}^{z}_i, \tilde{\partial}^{z}_j], \end{align*}
hence
$ dy^r (\nabla_{\tilde{\partial}^{z}_i}\tilde{\partial}^{z}_j) =  \frac12 dy^r (\cV[\tilde{\partial}^{z}_i, \tilde{\partial}^{z}_j]) - \Gamma^{k}_{ij} \eta_{ks}\eta^{sr}$.

For the other terms, first observe that
\begin{align*}
    \nabla_{\tilde{\partial}_u}\tilde{\partial}_j &= 
     \nabla_{\tilde{\partial}_u}\tilde{\partial}^{z}_j + \tilde{\partial}_u(\eta_{jv}\eta^{vw})\tilde{\partial}_w + \eta_{jv}\eta^{vw}\nabla_{\tilde{\partial}_u}\tilde{\partial}_w
\end{align*}
and applying $dy^w$ we get
\begin{align*}
   \tilde{\Gamma}^{w}_{uj} &= dy^w (\nabla_{\tilde{\partial}_u}\tilde{\partial}_j) 
    = dy^w (\nabla_{\tilde{\partial}_u}\tilde{\partial}^{z}_j) + \tilde{\partial}_u(\eta_{jv}\eta^{vw}) + \eta_{jv}\eta^{vt}
    \tilde{\Gamma}^{w}_{tu} .
\end{align*}
Since the fibres are totally geodesic, O'Neill's tensor $T$ vanishes and $\cV \nabla_{\tilde{\partial}_u}\tilde{\partial}^{z}_j =0$. 
Therefore, as $\tilde{\partial}^{z}_j$ is basic, one has
$\nabla_{\tilde{\partial}_u}\tilde{\partial}^{z}_j =  \tilde{\Gamma}^{k}_{ju} \tilde{\partial}^{z}_k
$ 
and, as we already know that $dy^w (\tilde{\partial}^{z}_k) = - \eta_{kv}\eta^{vw}$, we obtain
\begin{align*}
dy^w (\nabla_{\tilde{\partial}_u}\tilde{\partial}^{z}_j) &= - \eta_{kv}\eta^{vw}\tilde{\Gamma}^{k}_{ju} .
\end{align*}

In conclusion, the vertical second fundamental form is
 \begin{align}
    \alpha(\partial_i, \partial_j)  =&\
\Big\{ \frac{\partial^2 \sigma^u}{\partial x_i\partial x_j} 
- \Gamma^{k}_{ij} \frac{\partial \sigma^u}{\partial x_k}
+ \tilde{\Gamma}^{u}_{vw} 
\frac{\partial \sigma^v}{\partial x_i} 
\frac{\partial \sigma^w}{\partial x_j} 
\label{vsf} \\
& +\frac{\partial \sigma^v}{\partial x_j}
\Big(\tilde{\Gamma}^{u}_{iv} + \eta_{ks}\eta^{su} \tilde{\Gamma}^{k}_{iv} \Big) 
+ \eta_{js}\eta^{sv} \eta_{kw}\eta^{wu}\tilde{\Gamma}^{k}_{iv} \notag \\
&+ \frac{\partial \sigma^v}{\partial x_i}
\Big(\tilde{\Gamma}^{u}_{jv} 
+ \eta_{ks}\eta^{su}\tilde{\Gamma}^{k}_{jv}\Big)
+ \eta_{is}\eta^{sv} \eta_{kw}\eta^{wu}\tilde{\Gamma}^{k}_{jv} \notag\\
&+ \Big(\tilde{\Gamma}^{u}_{ij} 
    -\frac12 dy^u (\cV[\tilde{\partial}^{z}_i, \tilde{\partial}^{z}_j]) \Big)
 \Big\}\tilde{\partial}_u \notag
\end{align}
with
$
-2  g_{kj} \eta^{uv}\tilde{\Gamma}^{k}_{iu} = dy^v (\cV [\tilde{\partial}^{z}_i , \tilde{\partial}^{z}_j]).
$

\begin{remark}
The vertical second fundamental form of a submersion, unlike its usual counterpart, is not symmetric. In some cases, the non-symmetric part of $\alpha$ can be expressed in terms of curvature, e.g. if $\cH$ is a $\mathrm{GL}(n)$-connection on a vector bundle, then it is equal to $\frac12 (\pi^* R)$, see \cite[9.53, 9.54 and 9.65]{Besse2008} and \cite[\textsection5]{oneill}. When $\pi: N \to M$ is a homogeneous fibre bundle, the skew-symmetric part of vertical second fundamental form is $\frac12 \sigma^* \Phi$, where $\Phi$ is the curvature two-form on $N$ 
\cite[Thm 3.5]{Wood2003}.
\end{remark}

Since $\pi$ is a Riemannian submersion, we have that $\tau^\cV (\sigma) = \tr{\alpha}$, so tracing the above expression yields the vertical tension field of $\sigma$:
\begin{align*}
    \tau^\cV (\sigma) =&\ g^{ij} \Big\{ \frac{\partial^2 \sigma^u}{\partial x_i\partial x_j} + \tilde{\Gamma}^{u}_{rs} \frac{\partial \sigma^r}{\partial x_i}\frac{\partial \sigma^s}{\partial x_j} - \Gamma^{k}_{ij}\frac{\partial \sigma^u}{\partial x_k} \\
    &+2 \Big( \tilde{\Gamma}^{u}_{ir} \frac{\partial \sigma^r}{\partial x_j} 
    + \tilde{\Gamma}^{k}_{ir} \Big(\frac{\partial \sigma^r}{\partial x_j} + 
    \eta_{js}\eta^{sr}\Big) \eta_{kw}\eta^{wu}\Big) + \tilde{\Gamma}^{u}_{ij} \Big\}\tilde{\partial}_{u},
\end{align*}
where $1\leq i,j,k\leq m$ and $m+1 \leq r,s,u,w\leq n$.

\begin{remark}
    To obtain the equation of the harmonic flow, work with the Cartesian product $M\times \bR$ and write $\frac{\partial \sigma}{\partial t}$ as $d\sigma(\partial_t)$. Then, since each $\sigma_t$ is a section and $\sigma^l$ is constant, 
\begin{align*}
   \frac{\partial \sigma}{\partial t} &= d\sigma(\partial_t)
   = \frac{\partial \sigma^u}{\partial t} \tilde{\partial}_u + \frac{\partial \sigma^l}{\partial t} \tilde{\partial}_l = \frac{\partial \sigma^u}{\partial t} \tilde{\partial}_u 
   = d^\cV \sigma(\partial_t).
\end{align*}
\end{remark}

\subsubsection{Weitzenb\"ock and Bochner formulas} \label{WnB}

We assume the coordinates $\{x_i\}$ are normal at the point $p$, so that $\partial_k (g^{ij})|_p = \frac{\partial g^{ij}}{\partial x_k}|_p =0$, and evaluate all quantities at $p$. Given a section $\sigma:M\to N$, we denote by $\sigma^{-1}\cV$ the pullback of the vertical distribution to the base manifold $M$.
By standard arguments, we have a Weitzenb\"ock formula for the vertical tension:
\begin{align*}
   \nabla_{\partial_k}^{\sigma^{-1}\cV} \tau^\cV  (\sigma) 
   =&\ \nabla_{\partial_k}^{\sigma^{-1}\cV} \Big( g^{ij} \alpha(\partial_i,\partial_j)\Big) \\
  =&\ g^{ij} \Big\{R^{\sigma^{-1}\cV} (\partial_k,\partial_i) (d^\cV \sigma(\partial_j)) 
 +  (d^\cV \sigma)(R^{M} (\partial_i,\partial_k)\partial_j) \\
 &+  \nabla_{\partial_i}^{\sigma^{-1}\cV} \nabla_{\partial_j}^{\sigma^{-1}\cV} (d^\cV \sigma(\partial_k)) 
 +  \nabla_{\partial_i}^{\sigma^{-1}\cV} \Big( \alpha(\partial_k,\partial_j) - \alpha(\partial_j,\partial_k)\Big) - 
 (d^\cV \sigma)(\nabla_{\partial_i}\nabla_{\partial_j}\partial_k)\Big\} .
\end{align*}
Recall from \eqref{vsf} that 
\begin{align*}
    \alpha(\partial_k,\partial_j) - \alpha(\partial_j,\partial_k) 
    &= 
    \cV([\tilde{\partial}^{z}_j, \tilde{\partial}^{z}_k]  ) = 2  g_{ij} \eta^{uv}\tilde{\Gamma}^{i}_{ku} \tilde{\partial}_{v},
\end{align*}
so that
$
\nabla_{\partial_i}^{\sigma^{-1}\cV} \Big( \alpha(\partial_k,\partial_j) - \alpha(\partial_j,\partial_k)\Big)
$
is a curvature term.

Let us now compute the Laplacian of the energy density:
\begin{align*}
   - \Delta e(\sigma)& = -g^{ij} \frac{\partial^2}{\partial x_i \partial x_j}\frac{|d^\cV \sigma|^2}{2} \\
    &= -\frac{1}{2}g^{ij} \Bigg[
    g^{kl}(\nabla^{2}_{\partial_i ,\partial_j} \eta)(d^\cV \sigma(\partial_k),d^\cV \sigma(\partial_l)) +
    4 g^{kl}(\nabla_{\partial_j} \eta)(\nabla^{\sigma^{-1}\cV}_{\partial_i}(d^\cV \sigma(\partial_k)),d^\cV \sigma(\partial_l)) \\
    & \quad + \frac{\partial^2 g^{kl}}{\partial x_i \partial x_j}\eta(d^\cV \sigma(\partial_k),d^\cV \sigma(\partial_l)) 
    \\
    & \quad + 2 g^{kl} 
    \eta(\nabla^{\sigma^{-1}\cV}_{\partial_i}(d^\cV \sigma(\partial_k)),\nabla^{\sigma^{-1}\cV}_{\partial_j}(d^\cV \sigma(\partial_l)))
    + 2 g^{kl} 
    \eta(\nabla^{\sigma^{-1}\cV}_{\partial_i}\nabla^{\sigma^{-1}\cV}_{\partial_j}(d^\cV \sigma(\partial_k)),d^\cV \sigma(\partial_l))
    \Bigg]
\end{align*}
The covariant derivative of the target metric $\eta$ by the pull-back connection is
\begin{align*}
    \nabla_{\partial_j} \eta 
    &=  \frac{\partial \sigma^u}{\partial x_j} \nabla_{\tilde{\partial}_u} \eta +  \nabla_{\tilde{\partial}_j} \eta
\end{align*}
and its second covariant derivatives are
\begin{align*}
    \nabla^{2}_{\partial_i , \partial_j} \eta 
    =&\ \frac{\partial \sigma^u}{\partial x_j} 
  \frac{\partial \sigma^v}{\partial x_i} \nabla^{2}_{\tilde{\partial}_v ,\tilde{\partial}_u} \eta
  + \frac{\partial \sigma^u}{\partial x_j}\nabla^{2}_{\tilde{\partial}_i ,\tilde{\partial}_u} \eta
  + \frac{\partial \sigma^u}{\partial x_i} \nabla^{2}_{\tilde{\partial}_u ,\tilde{\partial}_j} \eta
  +  \nabla^{2}_{\tilde{\partial}_i ,\tilde{\partial}_j} \eta \\
  &+ \Big(
  \frac{\partial^2 \sigma^w}{\partial x_i \partial x_j} +  \frac{\partial \sigma^u}{\partial x_j} 
  \frac{\partial \sigma^v}{\partial x_i} \tilde{\Gamma}^{w}_{uv} + \frac{\partial \sigma^u}{\partial x_j} \tilde{\Gamma}^{w}_{ui} + \frac{\partial \sigma^u}{\partial x_i}
  \tilde{\Gamma}^{w}_{uj} + \tilde{\Gamma}^{w}_{ij} 
  \Big)\nabla_{\tilde{\partial}_w} \eta \\
  &+ \Big(
  \frac{\partial \sigma^u}{\partial x_j}\tilde{\Gamma}^{k}_{ui} +
   \frac{\partial \sigma^u}{\partial x_i}\tilde{\Gamma}^{k}_{uj} + \tilde{\Gamma}^{k}_{ij}
  \Big)\nabla_{\tilde{\partial}_k} \eta .
\end{align*}
On the other hand
\begin{align*}
    &\nabla^{\sigma^{-1}\cV}_{\partial_i}(d^\cV \sigma(\partial_k)) =\nabla^{\sigma^{-1}\cV}_{\partial_i}\Big[ \Big(\frac{\partial \sigma^u}{\partial x_k}  + \eta_{ks}\eta^{su}\Big)\tilde{\partial}_u\Big] \\
    &= \Big[\frac{\partial^2 \sigma^u}{\partial x_i \partial x_k} + \frac{\partial}{\partial x_i}
    (\eta_{ks}\eta^{su}) +
    \Big( \frac{\partial \sigma^w}{\partial x_k}  + \eta_{ks}\eta^{sw}\Big) 
    \Big(
    \frac{\partial \sigma^v}{\partial x_i} \tilde{\Gamma}^{u}_{wv}
    + \tilde{\Gamma}^{u}_{wi}\Big)
    \Big]\tilde{\partial}_u +
    \Big( \frac{\partial \sigma^w}{\partial x_k}  + \eta_{ks}\eta^{sw}\Big)\tilde{\Gamma}^{k}_{wi} \tilde{\partial}_k.
\end{align*}
Therefore, using the Weitzenb\"ock formula and properties of the Levi-Civita connection yields a Bochner formula for $\sigma$:
\begin{align*}
     -\Delta e(\sigma) =&\
 -\frac{1}{2}g^{ij}g^{kl} \Big( \frac{\partial \sigma^t}{\partial x_k}  + \eta_{ks}\eta^{st}\Big)\Big( \frac{\partial \sigma^r}{\partial x_l}  + \eta_{ls}\eta^{sr}\Big)\Bigg[
    \frac{\partial \sigma^u}{\partial x_j} 
  \frac{\partial \sigma^v}{\partial x_i} \nabla^{2}_{\tilde{\partial}_v ,\tilde{\partial}_u} \eta
  + \frac{\partial \sigma^u}{\partial x_j}\nabla^{2}_{\tilde{\partial}_i ,\tilde{\partial}_u} \eta
  + \frac{\partial \sigma^u}{\partial x_i} \nabla^{2}_{\tilde{\partial}_u ,\tilde{\partial}_j} \eta \\
  &+  \nabla^{2}_{\tilde{\partial}_i ,\tilde{\partial}_j} \eta 
    \Bigg](\tilde{\partial}_t,\tilde{\partial}_r)
    \\
    & -\frac{1}{2}g^{ij}\frac{\partial^2 g^{kl}}{\partial x_i \partial x_j}
    \Big( \frac{\partial \sigma^t}{\partial x_k}  + \eta_{ks}\eta^{st}\Big)\Big( \frac{\partial \sigma^r}{\partial x_l}  + \eta_{ls}\eta^{sr}\Big)
    \eta_{tr} - |\nabla^\cV  d^\cV \sigma|^2\\
    &+ g^{ij} g^{kl} 
    \eta \Big(d^\cV \sigma(\partial_l) , 
    R^{\sigma^{-1}\cV} (\partial_k,\partial_i) (d^\cV \sigma(\partial_j)) 
 +  (d^\cV \sigma)(R^{M} (\partial_i,\partial_k)\partial_j) \\
 & +  \nabla_{\partial_i}^{\sigma^{-1}\cV} \Big( 2  g_{ij} \eta^{uv}\tilde{\Gamma}^{i}_{ku} \tilde{\partial}_{v}\Big) - 
\Big( \frac{\partial \Gamma^{m}_{jk}}{\partial x_i} +  \Gamma^{p}_{jk} \Gamma^{m}_{ip}\Big) d^\cV \sigma(\partial_m ) \Big) - g^{kl} \eta( \nabla_{\partial_k}^{\sigma^{-1}\cV} \tau^\cV  (\sigma),d^\cV \sigma(\partial_l)).
\end{align*}

Then, still under no assumption on $\sigma$ being harmonic, we obtain a Bochner-type inequality:

\begin{lemma}
Let $\sigma:M\to N$ be a section of a Riemannian submersion $\pi : (N,\eta) \to (M,g)$ between compact manifolds, with totally geodesic fibres, then there exists a universal constant  $C>0$ depending only on  $(M,g)$ and $(N,\eta)$ such that
\begin{align*}
    & - \Delta e(\sigma) \leq - |\nabla^\cV  d^\cV \sigma|^2 + C\Big( 
    e^2 (\sigma) + e(\sigma) + 1\Big) - (\nabla \tau^\cV  (\sigma),d^\cV \sigma) .
\end{align*}
\end{lemma}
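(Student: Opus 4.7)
The plan is to start from the exact identity for $-\Delta e(\sigma)$ derived immediately above the statement, and to bound each of its summands. Two terms already match the target inequality verbatim, namely $-|\nabla^\cV d^\cV \sigma|^2$ and the pairing $-g^{kl}\eta\bigl(\nabla_{\partial_k}^{\sigma^{-1}\cV}\tau^\cV(\sigma), d^\cV\sigma(\partial_l)\bigr)$, which is precisely $-(\nabla\tau^\cV(\sigma), d^\cV\sigma)$. All remaining summands must therefore be absorbed into $C(e^2(\sigma) + e(\sigma) + 1)$ for a constant $C = C(g,\eta)$.

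The key observation is that the exact formula depends on $\sigma$ only through the entries $\partial \sigma^u/\partial x_i$, while every other tensorial factor is pulled back from fixed data on $M$ or $N$. By compactness of both manifolds, I fix a uniform constant $C_0 > 0$ which bounds the Christoffel symbols $\Gamma^k_{ij}$, $\tilde\Gamma^a_{bc}$ and their first derivatives, the metric components $\eta_{ab}$, $\eta^{ab}$ together with their covariant derivatives of order up to two, and the curvature tensors $R^M$ and $R^{\sigma^{-1}\cV}$. Moreover, since $\pi \circ \sigma = \operatorname{Id}_M$, the restriction $d\pi|_{\cH_{\sigma(x)}}$ is a fibrewise linear isometry, so $d^\cH\sigma$ is the horizontal lift of $\operatorname{Id}_{TM}$ and satisfies $|d^\cH\sigma|^2 = \dim M$ pointwise; consequently
\[
|d\sigma|^2 \leq 2\, e(\sigma) + C_1.
\]

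With these uniform bounds in hand, each remaining summand of the exact formula is a polynomial of degree at most four in the entries of $d\sigma$, with coefficients controlled by $C_0$. The quartic terms arise from the contractions of two factors $d^\cV\sigma \otimes d^\cV\sigma$ against a Hessian of $\eta$, producing a contribution of order $e^2(\sigma)$; the quadratic terms come from the curvature contractions and from $\partial^2_{ij}g^{kl}$ applied to $d^\cV\sigma \otimes d^\cV\sigma$, producing $e(\sigma)$; and the O'Neill-type antisymmetric term $\alpha(\partial_k,\partial_j) - \alpha(\partial_j,\partial_k)$ is linear in $d\sigma$, hence quadratic after pairing with $d^\cV\sigma$. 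Each such summand is therefore pointwise dominated by $C\bigl(e(\sigma)+1\bigr)^{k/2}$ for some $k \in \{1,2,3,4\}$, which in turn is bounded by $C'\bigl(e^2(\sigma) + e(\sigma) + 1\bigr)$ via Young's inequality.

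The main obstacle is purely bookkeeping: for each of the many summands in the exact Bochner formula, one must identify its degree in $d\sigma$ and its fixed geometric coefficient. Once this catalogue is assembled, the stated inequality follows by summing the individual estimates and collecting constants, the resulting $C$ depending only on $C_0$, $C_1$ and the numerical factors produced by Young's inequality, hence only on $(M,g)$ and $(N,\eta)$.
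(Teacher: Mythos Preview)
Your approach is correct and is exactly what the paper intends: the lemma is stated there without proof, as a direct consequence of the exact Bochner identity combined with compactness bounds. One small correction to your degree count: the pullback curvature satisfies $R^{\sigma^{-1}\cV}(\partial_k,\partial_i) = R^{\cV}\bigl(d\sigma(\partial_k),d\sigma(\partial_i)\bigr)$, so it is not uniformly bounded independently of $\sigma$ but rather carries two hidden factors of $d\sigma$; the curvature contraction is therefore quartic, not quadratic, in $d\sigma$. This changes nothing in the end, since the resulting estimate $C\,|d^\cV\sigma|^2\,|d\sigma|^2 \le C'\bigl(e^2(\sigma)+e(\sigma)\bigr)$ is already covered by your general polynomial scheme.
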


\subsubsection{Shi-type estimates along \eqref{eq: HSF}}

We obtain global derivative estimates along the harmonic flow, with a technique used by Shi~\cite{Shi1989} for the Ricci flow, Lotay--Wei~\cite{lotay-wei-gafa} for the Laplacian flow of closed ${\rm G}_2$-structures and Dwivedi \emph{et al.}~\cite{dgk-isometric} for the isometric flow of ${\rm G}_2$-structures. This will later on allow us to establish long-time existence conditions of the harmonic $\S7$-flow.

\begin{proposition} 
\label{Prop:Shi-estimates}
    Let $\{\sigma_t\}_{t\in [0, t_1[}$ be a solution of the harmonic section flow \eqref{eq: HSF}, and assume that there exist $K \geq 1$ and constants $B_j$, $0\leq j\in\mathbb{Z}$, and $D\geq0$, such that 
$$
    |T| \leq K,\quad |\nabla g|,\ |\nabla \eta| \leq D K,
    \qandq
    |\nabla^j R^M|,\ |\nabla^j R^N| \leq B_j K^{j+2},\quad\forall\  j\geq 0.
$$
    Then, for all $m\in \mathbb{N}$, there exist constants $C_m=C_m(g, \eta)$ such that 
$$
    |\nabla^{m} T| \leq C_mK t^{-m/2},
    \quad\forall\  t\in [0, \frac{1}{K^4}].
$$
\end{proposition}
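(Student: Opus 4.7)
The plan is to adapt the Bernstein--Shi technique, used by Shi for Ricci flow, by Lotay--Wei for the Laplacian flow of closed $\G2$-structures, and by Dwivedi et al.\ for the isometric $\G2$-flow, to the present setting of sections of a Riemannian submersion with totally geodesic fibres. The first step is to derive a parabolic evolution equation for the torsion $T=d^\cV\sigma$ along \eqref{eq: HSF}. Using $\partial_t\sigma = \tau^\cV(\sigma)$ together with the Bochner/Weitzenb\"ock machinery of \textsection\ref{WnB} and the vertical second fundamental form identity \eqref{vsf}, one obtains schematically
\begin{equation*}
    \partial_t T \;=\; \Delta T \;+\; T \ast T \ast T \;+\; T \ast R \;+\; T \ast \nabla g \;+\; T \ast \nabla\eta,
\end{equation*}
where the precise coefficients are linear combinations of pullback curvatures $R^M,R^N$ and the non-symmetric part of the vertical second fundamental form (controlled by $R^N$ via Wood's formula \cite[Thm 3.5]{Wood2003}).

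Next, I would iteratively differentiate, commuting covariant derivatives via the Ricci identity \eqref{eq: riccischematic}, to obtain the evolution of $\nabla^m T$ in the schematic form
\begin{equation*}
    \partial_t \nabla^m T \;=\; \Delta \nabla^m T \;+\!\!\! \sum_{i+j+k=m}\!\!\!\nabla^i T \ast \nabla^j T \ast \nabla^k T \;+\!\!\sum_{i+j=m}\!\! \nabla^i T \ast \nabla^j R \;+\;(\textrm{lower order in }T).
\end{equation*}
Taking inner products with $\nabla^m T$ and applying Young's and Kato's inequalities yields, under the standing hypotheses $|T|\leq K$ and $|\nabla^jR^{M}|,|\nabla^j R^N|\leq B_j K^{j+2}$, a differential inequality of the form
\begin{equation*}
    (\partial_t-\Delta)|\nabla^m T|^2 \;\leq\; -\,2|\nabla^{m+1} T|^2 \;+\; C_m\sum_{i+j+k=m,\,i\leq j\leq k} |\nabla^i T||\nabla^j T||\nabla^k T||\nabla^m T|\;+\; C_m K^{m+3}|\nabla^m T|.
\end{equation*}

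The third step is to run induction on $m$ via the Bernstein-type auxiliary function
\begin{equation*}
    F_m \;:=\; t^m |\nabla^m T|^2 \;+\; \beta_m\, t^{m-1}|\nabla^{m-1} T|^2,
\end{equation*}
with $\beta_m$ chosen large enough so that the negative term $-2\beta_m t^{m-1}|\nabla^m T|^2$ coming from $(\partial_t-\Delta)|\nabla^{m-1}T|^2$ absorbs the positive $mt^{m-1}|\nabla^m T|^2$ arising from differentiating $t^m$. The base case $m=1$ is established directly via $F_1=t|\nabla T|^2+\beta_1|T|^2$, using that $|T|\leq K$ and the evolution of $|T|^2$. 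For the inductive step, the crucial observation is that the cubic terms $\nabla^i T\ast\nabla^j T\ast\nabla^k T$ with $i,j<m$ are bounded by the inductive hypothesis $|\nabla^i T|\leq C_i K t^{-i/2}$, so that each summand in the cross terms reduces to a tractable power $K^{\alpha}t^{-\beta}$, and the normalisation $t\leq 1/K^4$ ensures all such terms remain uniformly bounded by a multiple of $K^{m+2}t^{-m/2}$. The parabolic maximum principle applied to $F_m$ then closes the induction.

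The main obstacle, as always in this family of arguments, will be the bookkeeping: writing down the evolution of $\nabla^m T$ with explicit control of every curvature commutator coming from the Ricci identity and from the non-symmetric part of $\alpha$ in \eqref{vsf}, and checking that the chosen auxiliary quantity $F_m$ still gives a clean maximum-principle inequality once these terms are absorbed using $|\nabla g|,|\nabla\eta|\leq DK$ and the curvature bounds. A subsidiary care point is that $T$ takes values in the pullback bundle $\sigma^{-1}\cV$, so the Laplacian appearing above is the connection Laplacian built from $\nabla^{\sigma^{-1}\cV}$; the Weitzenb\"ock term this introduces produces an extra $R^{\sigma^{-1}\cV}\ast T$ contribution which, by the hypothesis on $R^N$, fits the same schematic estimate and therefore does not affect the final bound.
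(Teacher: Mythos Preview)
Your overall strategy matches the paper's, but the two-term auxiliary function
\[
F_m = t^m|\nabla^m T|^2 + \beta_m\, t^{m-1}|\nabla^{m-1}T|^2
\]
does not close the induction as written. Differentiating the second summand in $t$ produces $\beta_m(m-1)t^{m-2}|\nabla^{m-1}T|^2$, and this is absorbed by nothing: the good term $-2\beta_m t^{m-1}|\nabla^m T|^2$ controls $|\nabla^m T|^2$, not $|\nabla^{m-1}T|^2$. If you invoke the inductive hypothesis $|\nabla^{m-1}T|\leq C_{m-1}Kt^{-(m-1)/2}$ at this point, the term becomes $CK^2 t^{-1}$, which is not integrable on $[0,1/K^4]$, so the maximum-principle integration of $(\partial_t-\Delta)F_m$ diverges. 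The paper avoids this by using the full telescoping sum
\[
f_m = t^m|\nabla^m T|^2 + \beta_m\sum_{k=1}^m \alpha^m_k\, t^{m-k}|\nabla^{m-k}T|^2,
\qquad (m-k)\alpha^m_k = \alpha^m_{k+1},
\]
so that the dangerous $(m-k)\alpha^m_k\, t^{m-k-1}|\nabla^{m-k}T|^2$ at each level is exactly cancelled by the good $-|\nabla^{m-k}T|^2$ term from the next level down; the cascade terminates at $k=m$, leaving only bounded and $t^{-1/2}$-integrable remainders, with $f_m(0)=\beta_m\alpha^m_m|T|^2\leq CK^2$. Your $F_m$ can be salvaged either by adopting this full sum or by running the two-term argument on $[t_0/2,t_0]$ and iterating in $m$ with a time shift, but this has to be made explicit.

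A smaller discrepancy: the schematic evolution the paper actually obtains is
\[
\partial_t T = \Delta T + \nabla T * \nabla\eta + \nabla T * \nabla g + T^3*R^N + T*R^M + T*R^N + \nabla g*\nabla\eta,
\]
so the metric-gradient terms hit $\nabla T$ rather than $T$, and the cubic term carries an $R^N$ factor (it comes from the pullback curvature $R^{\sigma^{-1}\cV}$, not a bare harmonic-map nonlinearity). After $m$ differentiations this yields a $CK|\nabla^{m+1}T||\nabla^m T|$ contribution to $(\partial_t-\Delta)|\nabla^m T|^2$, which must first be absorbed into $-2|\nabla^{m+1}T|^2$ via Young's inequality before the rest of the argument runs. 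Your schematic hides this, though once noted it causes no further difficulty.
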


\begin{proof}
Locally the Weitzenb\"ock formula for $H$-structures becomes
\begin{align*}
   \nabla_{\partial_k}^{\sigma^{-1}\cV} \tau^\cV  (\sigma) 
   =&\ g^{ij} \Big\{R^{\sigma^{-1}\cV} (\partial_k,\partial_i) (d^\cV \sigma(\partial_j)) 
 +  (d^\cV \sigma)(R^{M} (\partial_i,\partial_k)\partial_j) +  \nabla_{\partial_i}^{\sigma^{-1}\cV} \nabla_{\partial_j}^{\sigma^{-1}\cV} (d^\cV \sigma(\partial_k))\\
 & + 2 \nabla_{\partial_i}^{\sigma^{-1}\cV} \Big( g_{lj} \eta^{uv}\tilde{\Gamma}^{l}_{ku} \tilde{\partial}_{v}\Big) - 
 (d^\cV \sigma)(\nabla_{\partial_i}\nabla_{\partial_j}\partial_k)\Big\} + \frac{\partial g^{ij}}{\partial x_k} \alpha(\partial_i,\partial_j) - g^{ij}(\nabla^\cV d^\cV  \sigma)(\partial_k,\nabla_{\partial_i}\partial_j) .
\end{align*}
Along the harmonic flow, we have
\begin{align*}
   \nabla_{\partial_k}^{\sigma^{-1}\cV} \frac{\partial \sigma}{\partial t} 
   =&\ g^{ij} \Big\{R^{\sigma^{-1}\cV} (\partial_k,\partial_i) (d^\cV \sigma(\partial_j)) 
 +  (d^\cV \sigma)(R^{M} (\partial_i,\partial_k)\partial_j) +  \nabla_{\partial_i}^{\sigma^{-1}\cV} \nabla_{\partial_j}^{\sigma^{-1}\cV} (d^\cV \sigma(\partial_k))\\
 & + 2 \nabla_{\partial_i}^{\sigma^{-1}\cV} \Big( g_{lj} \eta^{uv}\tilde{\Gamma}^{l}_{ku} \tilde{\partial}_{v}\Big) - 
 (d^\cV \sigma)(\nabla_{\partial_i}\nabla_{\partial_j}\partial_k)\Big\} + \frac{\partial g^{ij}}{\partial x_k} \alpha(\partial_i,\partial_j) - g^{ij}(\nabla^\cV d^\cV  \sigma)(\partial_k,\nabla_{\partial_i}\partial_j) .
\end{align*}
Since
\begin{align*}
       \nabla_{\partial_k}^{\sigma^{-1}\cV} \frac{\partial \sigma}{\partial t}
       &= \Big[(\nabla^\cV d^\cV  \sigma)(\partial_k ,\partial_t) -(\nabla^\cV d^\cV  \sigma)(\partial_t ,\partial_k)\Big] + 
        \frac{\partial}{\partial t} (d^\cV \sigma(\partial_k))
   \end{align*}
and
   \begin{align*}
   &\Big[(\nabla^\cV d^\cV  \sigma)(\partial_k ,\partial_t) -(\nabla^\cV d^\cV  \sigma)(\partial_t ,\partial_k)\Big] =
   \frac{\partial \sigma^u}{\partial t} \nabla^{N}_{\tilde{\partial}_k}\tilde{\partial}_u - \frac{\partial}{\partial t} \Big( \eta_{ks}\eta^{su} \Big) \tilde{\partial}_u - \eta_{ks}\eta^{su} \frac{\partial \sigma^v}{\partial t} \nabla^{N}_{\tilde{\partial}_v}\tilde{\partial}_u ,
   \end{align*}
the flow equation for $d^\cV  \sigma$ is
   \begin{align*}
   & \frac{\partial}{\partial t} (d^\cV \sigma(\partial_k)) = -\frac{\partial \sigma^u}{\partial t} \nabla^{N}_{\tilde{\partial}_k}\tilde{\partial}_u + \frac{\partial}{\partial t} \Big( \eta_{ks}\eta^{su} \Big) \tilde{\partial}_u + \eta_{ks}\eta^{su} \frac{\partial \sigma^v}{\partial t} \nabla^{N}_{\tilde{\partial}_v}\tilde{\partial}_u \\
     & + g^{ij} \Big\{R^{\sigma^{-1}\cV} (\partial_k,\partial_i) (d^\cV \sigma(\partial_j)) 
 +  (d^\cV \sigma)(R^{M} (\partial_i,\partial_k)\partial_j) +  \nabla_{\partial_i}^{\sigma^{-1}\cV} \nabla_{\partial_j}^{\sigma^{-1}\cV} (d^\cV \sigma(\partial_k))\\
 & + 2 \nabla_{\partial_i}^{\sigma^{-1}\cV} \Big( g_{lj} \eta^{uv}\tilde{\Gamma}^{l}_{ku} \tilde{\partial}_{v}\Big) - 
 (d^\cV \sigma)(\nabla_{\partial_i}\nabla_{\partial_j}\partial_k)\Big\} + \frac{\partial g^{ij}}{\partial x_k} \alpha(\partial_i,\partial_j) - g^{ij}(\nabla^\cV d^\cV  \sigma)(\partial_k,\nabla_{\partial_i}\partial_j)   .
   \end{align*}
   
 We will only indicate the main steps of the induction, using the Ricci identity \eqref{eq: riccischematic}.
   Write $T$ for $d^\cV  \sigma$ and use a schematic expression for the flow of $T$. In particular, note that the curvature term $R^{\sigma^{-1}\cV}$ of the pull-back connection will contribute instances of $T$.
\begin{align}
\label{eq3.1}
    \frac{\partial}{\partial t} T &= \nabla T * \nabla \eta + T^3 * R^N + T * R^M + \Delta T + \nabla T * \nabla g +  \nabla g*  \nabla \eta + T * R^N
\end{align}
so the schematic expression for the flow of $\nabla T$ is
\begin{align}
    \frac{\partial}{\partial t} \nabla T 
    =& \nabla^2 T * \nabla \eta + \nabla T *T* R^N + \nabla T * T^2 * R^N + T^4* \nabla R^N + \nabla T * R^M + T * \nabla R^M \\
    & + \Delta (\nabla T) + \nabla^2 T * \nabla g +  R^M * \nabla \eta + \nabla g * T * R^N + \nabla T * R^N + T^2 * \nabla R^N
\end{align}
since $\nabla ( R^N \circ \sigma) = T * \nabla R^N$, $\nabla (\Delta T) =  \Delta ( \nabla T) + \nabla T * R^N + T * \nabla R^M$ and $\nabla (\frac{\partial}{\partial t} T) = \frac{\partial}{\partial t} (\nabla T) + R^N * T$.
   
Therefore the evolution equation for $|T|^2$ is 
\begin{align*}
    \frac{\partial}{\partial t} |T|^2 
    =&\ 2 \langle \nabla T * \nabla \eta, T \rangle + 2 \langle T^3 * R^N, T \rangle + 2 \langle T * R^M, T \rangle + 2 \langle \Delta T,T \rangle + 2 \langle \nabla T * \nabla g , T \rangle \\
        & + 2 \langle \nabla g * \nabla \eta ,T \rangle + 2 \langle T * R^N, T \rangle ,
\end{align*}
so 
\begin{align*}
    \frac{\partial}{\partial t} |T|^2 
    \leq&\  \Delta |T|^2  - 2 |\nabla T|^2 + C |\nabla \eta| |T| |\nabla T| + C |T|^4 |R^N| + C |T|^2 |R^M| + C |\nabla g| |T| |\nabla T| \\
       &+ C |\nabla g||\nabla \eta| |T| + C  |R^N||T|^2
\end{align*}
and the evolution equation for $\nabla T$ implies
\begin{align*}
       \frac{\partial}{\partial t} |\nabla T|^2 \leq&  \Delta |\nabla T|^2  - 2 |\nabla^2 T|^2 
   + C |\nabla \eta| |\nabla T | |\nabla^2 T | + C |R^N||T|| \nabla T|^2 + C |R^N | | T|^2 |\nabla T|^2   \\
   & + C |\nabla R^N| | T|^4 |\nabla T| + C |R^M| |\nabla T|^2 + C |\nabla R^M| |T | |\nabla T| \\
   & + C |\nabla g||\nabla T ||\nabla^2 T| + C| R^M||\nabla \eta||\nabla T| + C|\nabla g||R^N||T| |\nabla T| 
   +  C | R^N| |\nabla T |^2 \\
   & + C |\nabla R^N| | T|^2 |\nabla T | .
\end{align*}
The hypotheses imply that
\begin{align*}
    \frac{\partial}{\partial t} |T|^2 
    &\leq  \Delta |T|^2  - 2 |\nabla T|^2 + C K |T| |\nabla T| +  C K^2 |T|^4 
       + C K^2 |T|^2 + C K^2  |T|
\end{align*}
and
\begin{align*}
    \frac{\partial}{\partial t} |\nabla T|^2 \leq&  \Delta |\nabla T|^2  - 2 |\nabla^2 T|^2 
    + C K|\nabla T | |\nabla^2 T | + C K^2 |T|| \nabla T|^2 + C K^2 | T|^2 |\nabla T|^2   \\
    & + C K^3 | T|^4 |\nabla T| + C K^2 |\nabla T|^2 + C K^3 |T | |\nabla T|  +  C K^3 |\nabla T | + C K^3 |T|^2 |\nabla T | .
\end{align*}
   
   \underline{Base step}: Let $f = t | \nabla T|^2 + \beta |T|^2$, for a constant $\beta$, then
   \begin{align*}
        \frac{\partial f}{\partial t} =& | \nabla T|^2 + t \frac{\partial | \nabla T|^2 }{\partial t} + \beta
\frac{\partial | T|^2 }{\partial t} \\
 \leq& | \nabla T|^2 + t \Big( 
\Delta |\nabla T|^2  - 2 |\nabla^2 T|^2 
   + C K|\nabla T | |\nabla^2 T | + C K^2 |T|| \nabla T|^2 + C K^2 | T|^2 |\nabla T|^2   \\
   & + C K^3 | T|^4 |\nabla T| + C K^2 |\nabla T|^2 + C K^3 |T | |\nabla T|  +  C K^3 |\nabla T | + C K^3 |T|^2 |\nabla T |
\Big) \\
&+ \beta \Big(
\Delta |T|^2  - 2 |\nabla T|^2 + C K^2 |\nabla T| + C K^6
\Big).
\end{align*}
By Young's inequality and $|T|\leq K$,
   \begin{align*}
        \frac{\partial f}{\partial t}  \leq & | \nabla T|^2 + \Delta f - (2 -\varepsilon) t |\nabla^2 T|^2 
   + t\Big( C K^4 |\nabla T |^2 + C K^{10}  \Big) \\
&+ \beta \Big( - (2 -\varepsilon) |\nabla T|^2 + C K^4 + C K^6 \Big) \\
\leq & \Delta f + (C- \beta(2 -\varepsilon)) |\nabla T|^2 + C \beta K^6 ,
   \end{align*}
   so, for $\beta$ large enough,
   $$
   \frac{\partial f}{\partial t}  \leq \Delta f + C \beta K^6 .
$$
Since $f(x,0)\leq \beta K^2$, by the maximum principle and the assumption $0\leq t\leq 1/K^4$,
\begin{align*}
    \sup f(x,t) 
    &\leq \beta K^2 + C\beta t K^6 \leq c K^2.
\end{align*}
Therefore $t |\nabla T|^2 \leq C K^2$, that is $|\nabla T| \leq C K t^{-1/2}$, and this proves the base step.

\underline{Induction step}: From \eqref{eq3.1}, we obtain the schematic expression for the flow of $\nabla^m T$ based on
\begin{equation*}
   \begin{aligned}[l]
       a) & \nabla^m \frac{\partial}{\partial t} T =  \frac{\partial}{\partial t} \nabla^m T + \nabla^{m-1} (T* R^N)\\
       b) & \nabla^m(\Delta T) = \Delta (\nabla^m T) + \sum_{i=0}^{m} \nabla^{m -i} T * \nabla^i R^M \\
       c) & \nabla^m(T * R^N) = \sum_{a+b=m} \nabla^{a} T * \nabla^{b} (R^N \circ\sigma) \\
       d) & \nabla^m(T^3 * R^N) = \sum_{i=0}^{m} \nabla^{i} (T^3) * \nabla^{m-i} (R^N \circ\sigma) \\
       h) & \nabla^m(\nabla T * \nabla \eta) = \nabla^{m+1}  T * \nabla \eta + \sum_{i=0}^{m-1} \nabla^{i+1} T * \nabla^{m-i}(\nabla \eta\circ\sigma)
       \end{aligned}
       \hspace{-.5in}
       \begin{aligned}[l]
       e) & \nabla^m(T * R^M) = \sum_{a+b=m} \nabla^{a} T * \nabla^{b} R^M \\
       f) & \nabla^m(\nabla T * \nabla g) = \sum_{a+b=m} \nabla^{a+1} T * \nabla^{b+1} g \\
       g) & \nabla^m(\nabla g*  \nabla \eta) = \sum_{a+b=m} \nabla^{a+1} g * \nabla^{b} (\nabla \eta \circ\sigma) 
       \end{aligned}
   \end{equation*}
and the Fa{\`a}-di Bruno formula~\cite{FdB}:
$$
\nabla^{b} (R^N \circ\sigma) = \sum_{\sum_{j=1}^{n} jb_j = b}
\Big(\nabla^{b_1 + \dots + b_j} R^N\Big) * 
\prod_{j=1}^{n} \Big( \nabla^{j-1} T\Big)^{b_j}
$$
where $\prod$ is a product of $*$'s. Then 
\begin{align*}
    \frac{\partial}{\partial t} (|\nabla^m T|^2) =& 
     2 \langle \nabla^m(\nabla T * \nabla \eta) , \nabla^m T\rangle
    + 2 \langle \nabla^m(T^3 * R^N), \nabla^m T\rangle
    + 2 \langle \nabla^m(T * R^M), \nabla^m T\rangle\\
&    + 2 \langle \nabla^m(\Delta T), \nabla^m T\rangle
    + 2 \langle \nabla^m(\nabla T * \nabla g), \nabla^m T\rangle
    + 2 \langle \nabla^m(\nabla g*  \nabla \eta), \nabla^m T\rangle \\
&    + 2 \langle \nabla^m(T * R^N), \nabla^m T\rangle
    - 2 \langle \nabla^{m-1} (T* R^N), \nabla^m T\rangle .
\end{align*}
A combination of the chain rule, the above formulas and the hypotheses yields
\begin{align*}
    |\nabla^{b} (R^N \circ\sigma)| &\leq C K^2 t^{-b/2} ;\\
    2 \langle \nabla^m(\Delta T), \nabla^m T\rangle 
    &\leq \Delta |\nabla^m T|^2 -2|\nabla^{m+1} T|^2 + C K^2 |\nabla^m T|^2  + C  K^{3} t^{-m/2}  |\nabla^m T| ;\\
   2 \langle \nabla^m(T * R^M), \nabla^m T\rangle  &\leq C K^2 |\nabla^m T|^2 + C K^3  t^{-m/2} |\nabla^m T| ;\\
    2 \langle \nabla^m(T * R^N), \nabla^m T\rangle 
     &\leq CK^2 |\nabla^{m} T|^2 + C K^3 t^{-m/2} |\nabla^m T| ;\\
   2 \langle \nabla^m(\nabla T * \nabla g), \nabla^m T\rangle 
&\leq CK |\nabla^{m+1} T||\nabla^{m} T| + CK^2 |\nabla^{m} T|^2 + CK^2 t^{-(m+1)/2} |\nabla^{m} T| ;\\
    2 \langle \nabla^m(T^3 * R^N), \nabla^m T\rangle 
    &\leq CK^4 |\nabla^{m} T|^2 + C K^5  t^{-m/2}|\nabla^m T| ;\\
    2 \langle \nabla^m(\nabla T * \nabla \eta) , \nabla^m T\rangle 
     &\leq CK |\nabla^{m}  T| |\nabla^{m+1}  T| + C K^2 |\nabla^{m}  T|^2 + CK^3t^{-m/2} |\nabla^{m}  T| ;\\
    2 \langle \nabla^m(\nabla g *  \nabla \eta), \nabla^m T\rangle 
    &\leq C K^{3} t^{-m/2}|\nabla^{m}  T| + CK^3 t^{-(m-1)/2} |\nabla^{m}  T| ;\\
    2 \langle \nabla^{m-1} (T* R^N), \nabla^m T\rangle 
    &\leq C K t^{-m/2} |\nabla^m T| .
\end{align*}
Therefore
\begin{align*}
    \frac{\partial}{\partial t} (|\nabla^m T|^2) 
    &\leq \Delta |\nabla^m T|^2  - 2 |\nabla^{m+1} T|^2 + C K |\nabla^m T| |\nabla^{m+1} T| + CK^2(1+K^2) |\nabla^m T|^2\\
    & + C K^2 (K + K^3) t^{-m/2} |\nabla^{m} T|  + CK^2 t^{-(m+1)/2} |\nabla^{m} T| + CK^3 t^{-(m-1)/2} |\nabla^{m} T|.
\end{align*}
Now, by Young's inequality, for $\varepsilon$ small enough, and using $K\geq 1$ and $t\leq 1$, we have
\begin{align*}
    \frac{\partial}{\partial t} (|\nabla^m T|^2) 
    \leq& \Delta |\nabla^m T|^2  - |\nabla^{m+1} T|^2 + CK^2 |\nabla^m T|^2 + C K^2 t^{-1/2} |\nabla^m T|^2 + C K^5 t^{-m/2} |\nabla^{m} T| \\
    &+ CK^2 t^{-(m+1)/2} |\nabla^{m} T| 
\end{align*}
This remains true replacing $k$ by $m-k$ so that
\begin{align*}
    \frac{\partial}{\partial t} (|\nabla^{m-k} T|^2) 
    \leq& \Delta |\nabla^{m-k} T|^2  - |\nabla^{m-k+1} T|^2 + CK^4 t^{-(m-k)} + CK^4 t^{-1/2} t^{-(m-k)} \\
    & + C K^6 t^{-(m-k)/2} t^{-(m-k)/2} + CK^3 t^{-(m-k+1)/2} t^{-(m-k)/2} \\
    \leq& \Delta |\nabla^{m-k} T|^2  - |\nabla^{m-k+1} T|^2 + CK^4 t^{-(m-k)} + CK^4 t^{-1/2} t^{-(m-k)} .
\end{align*}
Let $f_m = t^m |\nabla^{m} T|^2 + \beta_m \sum_k \alpha^m_k t^{m-k} |\nabla^{m-k} T|^2$.

Combining the previous estimates and Young's inequality again,
 \begin{align*}
    \frac{\partial f_m}{\partial t} \leq& \Delta f_m + \Big( CK^2 t^m +CK^6 t^m + CK^2 t^{(2m-1)/2} + (C+m) t^{m-1} - \beta_m \alpha^m_1 t^{m-1} \Big) |\nabla^m T|^2 \\
    & + \beta_m \sum_{k=1}^{m-1} ((m-k)\alpha^m_k - \alpha^m_{k+1})   t^{m-k-1} |\nabla^{m-k} T|^2 \\
& + C\beta_m \sum_{k=1}^{m} \alpha^m_k (K^4 + K^4 t^{-1/2}) +CK^4 .
\end{align*}
But $(m-k)\alpha^m_k - \alpha^m_{k+1} =0$, for $1\leq k \leq m-1$, and for $\beta_m$ large enough, we have
\begin{align*}
    \frac{\partial f_m}{\partial t} &\leq \Delta f_m  + C K^4 t^{-1/2} +CK^4 .
\end{align*}
 Since $f_m (0) = \beta_m \alpha^m_{m} |T|^2 \leq \beta_m \alpha^m_{m} K^2$, we invoke the maximum principle to conclude that
 \begin{align*}
     \sup f_m &\leq \beta_m \alpha^m_{m} K^2 + C K^4 t^{1/2} +CK^4 t \\
     &\leq CK^2.
 \end{align*}
    Hence $t^m |\nabla^{m} T|^2 \leq CK^2$, that is the $m$-induction step $|\nabla^{m} T| \leq CK t^{-m/2}$, provided $0\leq t\leq 1/K^4$. 
 \end{proof}
 
 \begin{remark}
As described in \textsection\ref{sec: vertical 2ndFF}, $H$-structures are in one-to-one correspondence with sections of a Riemannian homogeneous fibre bundle, so  Proposition~\ref{Prop:Shi-estimates} applies in particular to $H$-structures.
\end{remark}

\section{Preliminaries on $\S7$-structures}
\label{sec:prelims}

\subsection{Definitions and basic properties}

In this section, we review the notion of a $\S7$-structure on an $8$-dimensional manifold $M$ and the associated decomposition of differential forms. We also discuss the torsion tensor of a $\S7$-structure. More details can be found in \cite[Chapter 10]{joycebook} and \cite{karigiannis-spin7}. In particular, we closely follow \cite{karigiannis-spin7} in the discussion below.

A $\S7$-structure on $M^8$ is a reduction of the structure group of the frame bundle $\Fr(M)$  to the Lie group $\S7\subset \SO(8)$. From the point of view of differential geometry, a $\S7$-structure on $M$ is a $4$-form $\s7$ on $M$. The existence of such a structure is a \emph{topological condition}. The space of $4$-forms which determine a $\S7$-structure on $M$ is a subbundle $\cA$ of $\Omega^4(M)$, called the bundle of \emph{admissible} $4$-forms. This is \emph{not} a vector subbundle and it is not even an open subbundle, unlike the case for $\G2$-structures.

A $\S7$-structure $\s7$  determines a Riemannian metric and an orientation on $M$ in a nonlinear way. Let $p\in M$ and extend  a non-zero tangent vector $v\in T_pM$  to a local frame $\{v, e_1, \cdots , e_7\}$ near $p$. Define
\begin{align*}
    B_{ij}(v)&=((e_i\lrcorner v\lrcorner \s7)\wedge (e_j\lrcorner v\lrcorner \s7)\wedge (v\lrcorner \s7))(e_1, \cdots , e_7),\\
    A(v)&=((v\lrcorner \s7)\wedge \s7)(e_1, \cdots, e_7).
\end{align*}
Then the metric induced by $\s7$ is given by
\begin{align}
\label{eq:metric}
    (g_{\s7}(v,v))^2=-\frac{7^3}{6^{\frac{7}{3}}}\frac{(\textup{det}\ B_{ij}(v))^{\frac 13}}{A(v)^3}.    
\end{align}
The metric and the orientation determine a Hodge star operator $\star$, and the $4$-form is \emph{self-dual}, i.e., $\star \s7=\s7$. 

\begin{definition}
    Let $\del$ be the Levi-Civita connection of the metric $g_{\s7}$. The pair $(M^8, \s7)$ is a \emph{$\S7$-manifold} if $\del \s7=0$. This is a non-linear partial differential equation for $\s7$, since $\del$ depends on $g$, which in turn depends non-linearly on $\s7$. A $\S7$-manifold has Riemannian holonomy contained in the subgroup $\S7\subset \SO(8)$. Such a parallel $\S7$-structure is also called \emph{torsion-free}. 
\end{definition}

\subsection{Decomposition of the space of forms}
\label{subsec:formdecomp}

The existence of a $\S7$-structure $\s7$ induces a decomposition of the space of differential forms on $M$  into irreducible $\S7$ representations. We have the following orthogonal decomposition, with respect to $g_\s7$:
\begin{align*}
\Omega^2=\Omega^2_{7}\oplus \Omega^2_{21},\ \ \ \ \ \ \ \Omega^3=\Omega^3_{8}\oplus \Omega^3_{48}, \ \ \ \ \ \ \ \ \  \Omega^4=\Omega^4_{1}\oplus \Omega^4_{7}\oplus \Omega^4_{27}\oplus \Omega^4_{35},
\end{align*}
where $\Omega^k_l$ has pointwise dimension $l$. Explicitly, $\Omega^2$ and $\Omega^3$ are described as follows:
\begin{align}
    \Omega^2_7&=\{\beta \in \Omega^2 \mid \star(\s7 \wedge \beta)=-3\beta\}, \label{eq:27decomp1}\\
    \Omega^2_{21}&=\{ \beta\in \Omega^2 \mid \star(\s7\wedge \beta)=\beta\}, \label{eq:221decomp1}
\end{align}
and
\begin{align}
    \Omega^3_8&=\{ X\lrcorner \s7 \mid X\in \Gamma(TM)\}, \label{eq:38decomp1}\\
    \Omega^3_{48}&=\{ \gamma \in \Omega^3\mid \gamma \wedge \s7 =0\}. \label{eq:348decomp1}
\end{align}
For our computations, it is useful to describe the spaces of forms in local coordinates. For $\beta\in \Omega^2(M)$,
\begin{align}
    \beta_{ij}\in \Omega^2_7 \iff \beta_{ab}\s7_{abij}&=-6\beta_{ij},\label{eq:27decomp2}\\
    \beta_{ij}\in \Omega^2_{21} \iff \beta_{ab}\s7_{abij}&=2\beta_{ij}.\label{eq:221decomp2}
\end{align}
Similarly, for $\gamma\in \Omega^3(M)$,
\begin{align}
    \gamma_{ijk}\in \Omega^3_8 &\iff \gamma_{ijk}=X_l\s7_{ijkl}\ \ \ \ \textup{for\ some}\ X\in \Gamma(TM), \label{eq:38decomp2}\\
    \gamma_{ijk}\in \Omega^3_{48} &\iff \gamma_{ijk}\s7_{ijkl}=0. \label{eq:348decomp2}
\end{align}
If $\pi_7$ and $\pi_{21}$ are the projection operators on $\Omega^2$, it follows from \eqref{eq:27decomp2} and \eqref{eq:221decomp2} that 
\begin{align}
\pi_7(\beta)_{ij}&=\frac 14\beta_{ij}-\frac 18\beta_{ab}\s7_{abij}, \label{eq:pi7}\\
\pi_{21}(\beta)_{ij}&=\frac 34\beta_{ij}+\frac 18\beta_{ab}\s7_{abij}. \label{eq:pi21}
\end{align}
Finally, for $\beta_{ij}\in \Omega^2_{21}$,
\begin{align}
    \beta_{ab}\s7_{bpqr}&=\beta_{pi}\s7_{iqra}+\beta_{qi}\s7_{irpa}+\beta_{ri}\s7_{ipqa}, \label{eq:221prop}   
\end{align}
which can be used to show that $\Omega^2_{21}\equiv \mathfrak{so}(7)$ is the Lie algebra of $\S7$, with the commutator of matrices
\begin{align*}
    [\alpha, \beta]_{ij}=\alpha_{il}\beta_{lj}-\alpha_{jl}\beta_{li}.
\end{align*}

To describe $\Omega^4$ in local coordinates, we use the $\diamond$ operator described in Definition~\ref{def: diamond}, as in \cite{dgk-isometric} for the $\G2$ case. Given $A\in \Gamma(T^*M\otimes TM)$, define
\begin{align}
\label{eq:diadefn1}
    A\diamond \s7= \frac{1}{24}(A_{ip}\s7_{pjkl}+A_{jp}\s7_{ipkl}+A_{kp}\s7_{ijpl}+A_{lp}\s7_{ijkp})dx^i\wedge dx^j\wedge dx^k\wedge dx^l,    
\end{align}
and hence 
\begin{align}
\label{eq:diadefn2}
    (A\diamond \s7)_{ijkl}=  A_{ip}\s7_{pjkl}+A_{jp}\s7_{ipkl}+A_{kp}\s7_{ijpl}+A_{lp}\s7_{ijkp}. 
\end{align}
Recall that $
    \Gamma(T^*M\otimes TM)=\Omega^0\oplus S_0\oplus \Omega^2$,
and $\Omega^2$ further splits orthogonally into \eqref{eq:27decomp1} and \eqref{eq:221decomp1}, so
\begin{align}
\label{eq:splitting TM* x TM}
 \Gamma(T^*M\otimes TM)=\Omega^0\oplus S_0 \oplus \Omega^2_7\oplus \Omega^2_{21}.   
\end{align}
With respect to this splitting, we can write $A=\frac 18 (\tr A)g+A_0+A_7+A_{21}$ where $A_0$ is a symmetric traceless $2$-tensor. 
The diamond contraction \eqref{eq:diadefn2} defines a linear map $A\mapsto A\diamond \s7$, from $\Omega^0\oplus S_0 \oplus \Omega^2_7\oplus \Omega^2_{21}$ to $\Omega^4(M)$. 
The following proposition is proved  in \cite[Prop. 2.3]{karigiannis-spin7}.

\begin{proposition}\label{prop:diaproperties1}
The kernel of the map $A\mapsto A\diamond \s7$ is isomorphic to the subspace $\Omega^2_{21}$. The remaining three summands $\Omega^0,\ S_0$ and $\Omega^2_7$ are mapped isomorphically onto the subspaces $\Omega^4_1,\ \Omega^4_{35}$ and $\Omega^4_7$ respectively.
\end{proposition}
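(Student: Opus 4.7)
The plan is to use representation-theoretic arguments based on $\S7$-equivariance of the $\diamond$ map, combined with minimal explicit verification. Since the $4$-form $\s7$ is $\S7$-invariant by construction, the linear map $A \mapsto A \diamond \s7$ is $\S7$-equivariant, so it intertwines the $\S7$-representation structures on source and target. By Schur's lemma, its restriction to each irreducible summand of the source is either zero or an isomorphism onto an isomorphic copy in the target.

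First I would tabulate the irreducible $\S7$-decompositions on each side: on the source, via \eqref{eq:splitting TM* x TM}, $\Gamma(T^*M\otimes TM) = \Omega^0 \oplus S_0 \oplus \Omega^2_7 \oplus \Omega^2_{21}$, with dimensions $1+35+7+21=64$; on the target, $\Omega^4 = \Omega^4_1 \oplus \Omega^4_7 \oplus \Omega^4_{27} \oplus \Omega^4_{35}$, with dimensions $1+7+27+35=70$. Since $\S7$ has no subrepresentation of dimension $21$ in $\Omega^4$, the adjoint summand $\Omega^2_{21} \cong \mathfrak{spin}(7)$ must lie in the kernel of $\diamond$; conversely, the summands $\Omega^0$, $\Omega^2_7$, $S_0$ each match unique irreducible components $\Omega^4_1$, $\Omega^4_7$, $\Omega^4_{35}$ in $\Omega^4$, and $\Omega^4_{27}$ is disjoint from the image.

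It remains only to show non-vanishing on one element of each of $\Omega^0$, $\Omega^2_7$ and $S_0$, since each restriction is then forced to be an isomorphism by Schur. For $A = g \in \Omega^0$, Lemma~\ref{lem: homogeneity and diamond} (with net degree $\ell = 4$) gives the immediate identity $g \diamond \s7 = 4\s7 \neq 0$. For $\Omega^2_7$ and $S_0$, I would take a local frame realising $\s7$ in the standard Cayley form and compute $(A \diamond \s7)_{ijkl}$ explicitly via \eqref{eq:diadefn2} on a single representative element (e.g.\ a rank-one $A_{ij} = X_i Y_j + Y_i X_j \in S_0$ for orthonormal $X,Y$, and any $\beta \in \Omega^2_7$ singled out by the projector \eqref{eq:pi7}); the resulting $4$-form is patently nonzero.

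The step I expect to be most delicate, should one prefer a self-contained verification that avoids invoking Schur's lemma, is the direct proof that $\beta \in \Omega^2_{21}$ forces $\beta \diamond \s7 = 0$. One would substitute the characterising identity \eqref{eq:221prop} into each of the four terms of $\beta_{ip}\s7_{pjkl} + \beta_{jp}\s7_{ipkl} + \beta_{kp}\s7_{ijpl} + \beta_{lp}\s7_{ijkp}$ and collapse the resulting twelve-term sum using the total antisymmetry of $\s7$ in its four indices together with the skew-symmetry of $\beta$. The bookkeeping of indices is the only real obstacle, but it is a purely algebraic calculation carried out explicitly in \cite[Prop.~2.3]{karigiannis-spin7}, and one may simply quote it.
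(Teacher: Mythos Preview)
Your proposal is correct, and in fact more detailed than what the paper itself offers: the paper does not prove this proposition at all but simply cites \cite[Prop.~2.3]{karigiannis-spin7}, exactly the reference you invoke at the end. Your Schur-lemma argument, supplemented by the explicit non-vanishing checks on $\Omega^0$, $S_0$ and $\Omega^2_7$, is the standard route and is essentially how Karigiannis organises the argument; there is nothing to correct.
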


We also record the following properties of the $\diamond$ operator.

\begin{proposition}\label{prop:diaproperties2}
Let $(M, \s7)$ be a manifold with a $\S7$-structure. Then
\begin{enumerate}
\item The Hodge star of $A\diamond \s7$ is
\begin{align}\label{diapropertieseqn1}
\star(A\diamond \s7)&=(\Bar{A}\diamond \s7)\ \  \textup{where}\ \  \Bar{A}=\frac 14 (\tr A)g_{ij}-A_{ji}.    
\end{align}

\item If $A, B\in \Gamma(T^*M\otimes TM)$ and $A_0,\  B_0$ denote the traceless symmetric parts of $A$ and $B$ respectively and $A_7,\ B_7$ denote their $\Omega^2_7$ component. Then
\begin{align}\label{diapropertieseqn2}
g(A\diamond \s7, B\diamond \s7)&=\frac 72(\tr A)(\tr B)+4\tr (A_0B_0) -16\tr(A_7B_7) 
\end{align}
where $A_0B_0$ and $A_7B_7$ are matrix multiplications.
\end{enumerate}
\end{proposition}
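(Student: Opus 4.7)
My plan is to reduce both assertions to elementary calculations on the three irreducible summands of \eqref{eq:splitting TM* x TM} that survive under $\diamond$. Writing $A = \tfrac{1}{8}(\tr A)\,g + A_0 + A_7 + A_{21}$, Proposition~\ref{prop:diaproperties1} annihilates $A_{21}$ and sends the remaining pieces isomorphically into the pairwise orthogonal summands $\Omega^4_1$, $\Omega^4_{35}$, $\Omega^4_7$. A key preliminary observation is that $A \mapsto \bar A$ preserves this splitting: since $\tr g = 8$ one has $\bar g = g$; for symmetric traceless $A_0$, $\bar A_0 = -A_0$; for skew $A_7$, $\bar A_7 = A_7$; and $\bar A_{21} = A_{21}$ remains in $\ker(\diamond\,\s7)$.

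For (1), the identity $\star(A \diamond \s7) = \bar A \diamond \s7$ then decouples into three component-wise checks, each asserting the standard Hodge type of the corresponding target summand. Since $g \diamond \s7 = 4\s7$, self-duality $\star\s7 = \s7$ yields $\star(g \diamond \s7) = g \diamond \s7$; for $A_0$ the claim $\star(A_0 \diamond \s7) = -A_0 \diamond \s7$ is the anti-self-duality of $\Omega^4_{35}$; and for $A_7$ the claim $\star(A_7 \diamond \s7) = A_7 \diamond \s7$ is the self-duality of $\Omega^4_7$. These are classical $\S7$-representation facts on $\Omega^4$; if a hands-on verification is wanted, one works at a point in a Cayley-adapted orthonormal frame, expands a basis of each of $\Omega^0$, $S_0$, $\Omega^2_7$ into elementary 4-forms via \eqref{eq:diadefn2}, and applies $\star$ term-by-term.

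For (2), orthogonality of $\Omega^4_1$, $\Omega^4_{35}$, $\Omega^4_7$ (immediate from part (1), as $\star$-eigenspaces with distinct eigenvalues are mutually $L^2$-orthogonal) kills all cross-terms, leaving
$$
g(A \diamond \s7, B \diamond \s7) = \tfrac{1}{64}(\tr A)(\tr B)\,|g \diamond \s7|^2 + g(A_0 \diamond \s7, B_0 \diamond \s7) + g(A_7 \diamond \s7, B_7 \diamond \s7).
$$
The scalar piece follows from $g \diamond \s7 = 4\s7$ and the standard normalisation $|\s7|^2 = 14$, producing the coefficient $\tfrac{16 \cdot 14}{64} = \tfrac{7}{2}$. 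The remaining two are a direct expansion of the four-term sum \eqref{eq:diadefn2}, reduced via the basic contraction $\s7_{iabc}\s7_{jabc} = 42\,\delta_{ij}$ and the finer double contraction $\s7_{abip}\s7_{abjq}$ (expressible in $\delta$'s and $\s7$, cf.\ \cite[\textsection 2]{karigiannis-spin7}). Symmetry and tracelessness of $A_0, B_0$ annihilate the surviving $\s7$-dependent terms on the $S_0$-piece, collapsing the calculation to $4\tr(A_0 B_0)$; on the $\Omega^2_7$-piece the defining relation $(A_7)_{ab}\s7_{abij} = -6(A_7)_{ij}$ from \eqref{eq:27decomp2} similarly collapses the expansion to $-16\tr(A_7 B_7)$, the sign tracking the skew-symmetry of the factors (so that $\tr(A_7 B_7) = -(A_7)_{ij}(B_7)_{ij}$).

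The main obstacle is the bookkeeping in the latter two calculations: the fourfold symmetric sum in \eqref{eq:diadefn2} produces sixteen contractions of pairs of $\s7$-tensors, and a raw expansion is tedious. The decisive economy is to pre-project $A$ and $B$ onto their irreducible summands using \eqref{eq:pi7}, \eqref{eq:pi21}: the cross-type contributions then vanish automatically, so in each sub-case only one family of contractions survives, and the constants $\tfrac{7}{2}$, $4$, $-16$ emerge cleanly from the standard $\s7$-contraction identities.
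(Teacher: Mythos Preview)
The paper does not actually prove Proposition~\ref{prop:diaproperties2}; it is simply ``recorded'' without argument, in the same spirit as Proposition~\ref{prop:diaproperties1} (which is cited from \cite{karigiannis-spin7}). Your outline therefore goes beyond what the paper supplies, and the strategy you describe --- decompose $A$ according to \eqref{eq:splitting TM* x TM}, check that $A\mapsto\bar A$ respects the decomposition, reduce (1) to the self\nobreakdash-/anti-self-duality of $\Omega^4_1,\Omega^4_7,\Omega^4_{35}$, and reduce (2) to three Schur-type blocks computed via the contraction identities \eqref{eq:impiden2}--\eqref{eq:impiden4} --- is correct and is the natural proof. Carrying it through, the sixteen terms in $(A\diamond\s7)_{ijkl}(B\diamond\s7)_{ijkl}$ collapse exactly as you say, giving $96\,A_{ip}B_{ip}$ on $S_0$ and $384\,A_{ip}B_{ip}$ on $\Omega^2_7$, whence the coefficients $4$ and $-16$ after dividing by $4!$ and using $A_{ip}B_{ip}=-\tr(A_7B_7)$ for skew tensors.

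There is one small inaccuracy. You justify the vanishing of cross-terms in (2) by saying the three target summands are ``$\star$-eigenspaces with distinct eigenvalues''. That argument separates $\Omega^4_{35}$ from the other two, but $\Omega^4_1$ and $\Omega^4_7$ are \emph{both} self-dual (cf.\ \eqref{eq:sd&asd}), so part (1) alone does not give their orthogonality. You should instead invoke the distinct eigenvalues of the symmetric operator $\Lambda_\s7$ in Proposition~\ref{prop:427decomp}, or Schur's lemma for irreducible $\S7$-modules, or simply verify $g(g\diamond\s7,A_7\diamond\s7)=0$ directly from \eqref{eq:impiden3} and the skew-symmetry of $A_7$. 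With that fix, your proof is complete.
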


To understand $\Omega^4_{27}$, we need another characterization of the space of $4$-forms using the $\S7$-structure. Following \cite{karigiannis-spin7}, we adopt the following:

\begin{definition}
    On $(M, \s7)$, define a $\S7$-equivariant linear operator $\Lambda_{\s7}$ on $\Omega^4$ as follows. Let $\sigma\in \Omega^4(M)$ and let $(\sigma \cdot \s7)_{ijkl}=\sigma_{ijmn}\s7_{mnkl}$. Then 
\begin{align}
\label{eq:lambdamapdefn}
    (\Lambda_{\s7}(\sigma))_{ijkl}=(\sigma \cdot \s7)_{ijkl}+(\sigma \cdot \s7)_{iklj}+(\sigma \cdot \s7)_{iljk}+(\sigma \cdot \s7)_{jkil}+(\sigma \cdot \s7)_{jlki}+(\sigma \cdot \s7)_{klij}. 
\end{align}
\end{definition}

\begin{proposition}
\label{prop:427decomp}
    The spaces $\Omega^4_1$, $\Omega^4_7,\ \Omega^4_{27}$ and $\Omega^4_{35}$ are all eigenspaces of $\Lambda_{\s7}$ with distinct eigenvalues:
\begin{align}
\label{eq:4formes}
  \begin{aligned}
   \Omega^4_1&=\{\sigma\in \Omega^4 \mid \Lambda_{\s7}(\sigma)=-24\sigma\}, \\       \Omega^4_{27}&=\{\sigma\in \Omega^4 \mid \Lambda_{\s7}(\sigma)=4\sigma\},
  \end{aligned}
  &&
  \begin{aligned}
   \Omega^4_7&=\{\sigma\in \Omega^4 \mid \Lambda_{\s7}(\sigma)=-12\sigma\}, \\       \Omega^4_{35}&=\{\sigma\in \Omega^4 \mid \Lambda_{\s7}(\sigma)=0\}.
  \end{aligned}
 \end{align}
 Moreover, the decomposition of $\Omega^4(M)$ into self-dual and anti-self-dual parts is
 \begin{align}\label{eq:sd&asd}
\Omega^4_+=\{\sigma \in \Omega^4\mid \star \sigma = \sigma\}=\Omega^4_1\oplus \Omega^4_7\oplus \Omega^4_{27},\ \ \ \ \ \Omega^4_-=\{\sigma\in \Omega^4\mid \star \sigma=-\sigma\}=\Omega^4_{35} .    
 \end{align}
\end{proposition}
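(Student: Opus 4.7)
The strategy is Schur's lemma combined with explicit evaluations on representatives. The operator $\Lambda_{\s7}$ is manifestly $\S7$-equivariant, being built entirely from $\s7$, the metric and index permutations. Since the splitting $\Omega^4 = \Omega^4_1 \oplus \Omega^4_7 \oplus \Omega^4_{27} \oplus \Omega^4_{35}$ is into pairwise non-isomorphic irreducible $\S7$-representations, Schur's lemma forces $\Lambda_{\s7}$ to act as a scalar $\lambda_k$ on each summand $\Omega^4_k$, so the proof reduces to identifying these four scalars.

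To pin down $\lambda_1$, I would take $\sigma = \s7$ itself and exploit the standard $\S7$-contraction identity (cf.\ \cite{karigiannis-spin7})
\[
\s7_{ijmn}\s7_{mnkl} = 6\,(g_{ik}g_{jl} - g_{il}g_{jk}) - 4\,\s7_{ijkl}.
\]
Summing over the six rearrangements prescribed by \eqref{eq:lambdamapdefn}, the metric-quadratic contributions antisymmetrise against the total antisymmetry of a $4$-form and cancel, while the $\s7$-terms all accumulate with the same sign, giving $\Lambda_{\s7}(\s7) = -24\,\s7$. For $\lambda_7$ and $\lambda_{35}$, Proposition~\ref{prop:diaproperties1} supplies representatives of the form $A \diamond \s7$ with $A \in \Omega^2_7$ and $A \in S_0$ respectively; inserting \eqref{eq:diadefn2} into \eqref{eq:lambdamapdefn} and applying the same $\s7\cdot\s7$ identity together with Proposition~\ref{prop:diaproperties2} reduces each calculation to purely algebraic contractions, yielding $\lambda_7 = -12$ and $\lambda_{35} = 0$.

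Since $\Omega^4_{27}$ lies outside the image of $\diamond$, the most efficient route to $\lambda_{27}$ is the trace identity
\[
\sum_{k \in \{1,7,27,35\}} (\dim \Omega^4_k)\,\lambda_k = \tr \Lambda_{\s7},
\]
where the right-hand side can be evaluated directly on an orthonormal basis of $4$-forms using the pointwise contraction $\s7_{ijkl}\s7_{ijkl}$; a short antisymmetrisation argument shows $\tr \Lambda_{\s7} = 0$, which together with the previously computed values and the dimensions $1, 7, 27, 35$ forces $\lambda_{27} = 4$ and completes the identification \eqref{eq:4formes}.

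For the self-dual/anti-self-dual statement, $\star$ is $\S7$-equivariant, so $\Omega^4_+$ and $\Omega^4_-$ are each $\S7$-stable and split into irreducibles. Since $\s7$ is self-dual, $\Omega^4_1 \subset \Omega^4_+$; a dimension count ($\dim \Omega^4_\pm = 35 = 1+7+27$) combined with the fact that the $35$-dimensional irreducible $\S7$-representation appears with multiplicity one in $\Lambda^4(\bR^8)$ forces $\Omega^4_{35} = \Omega^4_-$ and $\Omega^4_+ = \Omega^4_1 \oplus \Omega^4_7 \oplus \Omega^4_{27}$, establishing \eqref{eq:sd&asd}. The most delicate step is the identification of $\lambda_{27}$, which I would sidestep via the trace argument rather than fighting with a direct computation on an ad hoc representative.
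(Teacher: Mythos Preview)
The paper does not actually prove Proposition~\ref{prop:427decomp}; it is stated without proof as a preliminary fact drawn from \cite{karigiannis-spin7}. Your proposal therefore supplies something the paper omits, and the overall strategy is sound.

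A few remarks on the details. The Schur's-lemma reduction is correct, and your computation of $\lambda_1$ is right: the six $-4\s7_{\cdots}$ contributions are all even permutations of $\s7_{ijkl}$, giving $-24$, while the metric-quadratic terms cancel in threes exactly as you claim. For $\lambda_7$ and $\lambda_{35}$ the representatives $A\diamond\s7$ work, though note that the paper's Proposition~\ref{prop:diaproperties2}(1) already gives the self-dual/anti-self-dual placement of these directly: for $A\in\Omega^2_7$ one has $\bar A = A$, hence $\star(A\diamond\s7)=A\diamond\s7$, and for $A\in S_0$ one has $\bar A=-A$, hence $\star(A\diamond\s7)=-(A\diamond\s7)$. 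This is a quicker route to \eqref{eq:sd&asd} than your dimension count, though the dimension count is also valid.

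The trace shortcut for $\lambda_{27}$ is legitimate, but the justification deserves a sharper phrasing than ``antisymmetrisation''. The clean argument is that $\s7\mapsto\tr\Lambda_{\s7}$ is an $\SO(8)$-invariant linear functional on $\Lambda^4(\bR^8)$ (the construction of $\Lambda_{\s7}$ uses only the metric and index permutations), and since $\Lambda^4(\bR^8)$ contains no trivial $\SO(8)$-summand, this functional vanishes identically. The equation $-24-84+27\lambda_{27}+0=0$ then forces $\lambda_{27}=4$.
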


\medskip

Before we discuss the torsion of a $\S7$-structure, we note some contraction identities involving the $4$-form $\s7$. In local coordinates $\{x^1, \cdots, x^8\}$, the $4$-form $\s7$ is
\begin{align*}
\s7=\frac{1}{24}\s7_{ijkl}\ dx^i\wedge dx^j\wedge dx^k\wedge dx^l    
\end{align*}
where $\s7_{ijkl}$ is totally skew-symmetric. We have the following identities
\begin{align}
    \s7_{ijkl}\s7_{abcl}
    &=g_{ia}g_{jb}g_{kc}+g_{ib}g_{jc}g_{ka}+g_{ic}g_{ja}g_{kb}\nonumber \\
    & \quad -g_{ia}g_{jc}g_{kb}-g_{ib}g_{ja}g_{kc}-g_{ic}g_{jb}g_{ka} \nonumber \\
    & \quad -g_{ia}\s7_{jkbc}-g_{ib}\s7_{jkca}-g_{ic}\s7_{jkab} \nonumber \\
    & \quad -g_{ja}\s7_{kibc}-g_{jb}\s7_{kica}-g_{jc}\s7_{kiab} \nonumber \\
    & \quad -g_{ka}\s7_{ijbc}-g_{kb}\s7_{ijca}-g_{kc}\s7_{ijab} \label{eq:impiden1} \\
    \s7_{ijkl}\s7_{abkl}&=6g_{ia}g_{jb}-6g_{ib}g_{ja}-4\s7_{ijab} \label{eq:impiden2} \\
    \s7_{ijkl}\s7_{ajkl}&=42g_{ia} \label{eq:impiden3} \\
    \s7_{ijkl}\s7_{ijkl}&=336 . \label{eq:impiden4}
\end{align}
We also have contraction identities involving $\del \s7$ and $\s7$
\begin{align}
(\del_m\s7_{ijkl})\s7_{abkl}&=-\s7_{ijkl}(\del_m\s7_{abkl})-4\del_m\s7_{ijab} \label{eq:impiden5}\\
(\del_m\s7_{ijkl})\s7_{ajkl}&=-\s7_{ijkl}(\del_m\s7_{ajkl}) \label{eq:impiden6} \\
(\del_m\s7_{ijkl})\s7_{ijkl}&=0. \label{eq:impiden7}
\end{align}

We now describe the \emph{torsion} of a $\S7$-structure. Given $X\in \Gamma(TM)$, we know from \cite[Lemma 2.10]{karigiannis-spin7} that $\del_X\s7$ lies in the subbundle $\Omega^4_7\subset\Omega^4$. 
\begin{definition}
    The \emph{torsion tensor} of a $\S7$-structure $\s7$ is the element of $\Omega^1_8\otimes \Omega^2_7$ defined by expressing $\del \s7$ in the light of Proposition \ref{prop:diaproperties1}:
\begin{align}
\label{Tdefneqn}
    \del_m\s7_{ijkl}=(T_m\diamond \s7)_{ijkl}=T_{m;ip}\s7_{pjkl}+T_{m;jp}\s7_{ipkl}+T_{m;kp}\s7_{ijpl}+T_{m;lp}\s7_{ijkp}    
\end{align}
    where $T_{m;ab}\in\Omega^2_7$, for each fixed $m$. 
\end{definition}

Directly in terms of $\del \s7$, the torsion $T$ is given by
\begin{align}
\label{eq:Texpress}
    T_{m;ab} 
    =\frac{1}{96}(\del_m\s7_{ajkl})\s7_{bjkl}    
\end{align}

\begin{theorem}
\cite{fernandez-spin7}
    The $\S7$-structure $\s7$ is torsion-free if, and only if, $d\s7=0$. Since $\star \s7=\s7$, this is equivalent to $d^*\s7=0$.
\end{theorem}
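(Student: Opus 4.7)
The forward implication is immediate from the definition of $T$: if $T = 0$, then $\del \s7 = 0$ by \eqref{Tdefneqn}, and $d$ being the skew-symmetrisation of $\del$ forces $d\s7 = 0$. The equivalence between $d\s7 = 0$ and $d^*\s7 = 0$ is a direct consequence of self-duality $\star\s7 = \s7$ and the identity $d^* = -\star d\star$ on $4$-forms over an oriented $8$-manifold, so that $d^*\s7 = -\star d\s7$.

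For the non-trivial converse, my plan is to realise $T \mapsto d\s7$ as a linear isomorphism between spaces of equal dimension. Substituting $\del_m\s7_{ijkl} = (T_m \diamond \s7)_{ijkl}$ into
\begin{equation*}
(d\s7)_{mijkl} = \del_m\s7_{ijkl} - \del_i\s7_{mjkl} + \del_j\s7_{mikl} - \del_k\s7_{mijl} + \del_l\s7_{mijk}
\end{equation*}
expresses $d\s7$ explicitly and linearly in $T$, and a dimension count gives $\dim(\Omega^1 \otimes \Omega^2_7) = 8\cdot 7 = 56 = \binom{8}{5} = \dim\Omega^5$. Injectivity therefore suffices, and it will be cleaner to work with the companion $3$-form $d^*\s7$ of matching total dimension $56 = 8 + 48$ decomposed along \eqref{eq:38decomp1}--\eqref{eq:348decomp1}.

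To establish injectivity I would exploit $\S7$-equivariance. As $\S7$-representations, $\Omega^1 \otimes \Omega^2_7 \cong V_8 \oplus V_{48}$ (the Clebsch--Gordan decomposition of spin $\otimes$ vector), while $\Omega^3 = \Omega^3_8 \oplus \Omega^3_{48}$ is built from exactly the same pair of pairwise non-isomorphic irreducibles. By Schur's lemma, the equivariant map $T \mapsto d^*\s7$ must act on each irreducible summand by multiplication by a scalar, say $\lambda_8$ and $\lambda_{48}$, and it suffices to check that both are non-zero. For $\lambda_8$, I would feed in a test torsion $T_{m;ab} = (X \lrcorner \s7)_{mab}$ of pure type $V_8$ built from some fixed vector field $X$, and contract \eqref{Tdefneqn} with $g^{im}$ using the identities \eqref{eq:impiden1}--\eqref{eq:impiden3} to extract the $\Omega^3_8$-piece of $d^*\s7$ as a non-zero multiple of $X \lrcorner \s7$. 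For $\lambda_{48}$, I would feed in a model torsion orthogonal to the $V_8$-part and analogously verify that its image under $T \mapsto \pi_{48}(d^*\s7)$ is non-vanishing, using the characterisation \eqref{eq:348decomp2}.

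The core obstacle is precisely this bookkeeping with the various $\s7\cdot\s7$ contractions, compounded by the fact that each $T_m$ lives in the restricted $7$-dimensional subspace $\Omega^2_7$ rather than in all of $\Omega^2$: many candidate contributions must cancel thanks to the orthogonality $\Omega^2_7 \perp \Omega^2_{21}$ and the constraint \eqref{eq:27decomp2}. Once both Schur scalars are verified non-zero, the map $T \mapsto d\s7$ is a bijection and the theorem follows.
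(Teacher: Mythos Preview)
The paper does not prove this theorem at all: it is stated with a citation to \cite{fernandez-spin7} and then used without further argument. So there is no ``paper's own proof'' to compare against; your proposal is in fact supplying what the paper omits.

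Your overall strategy is the standard one from the literature and is sound: the pointwise map $T\mapsto d^*\s7$ is $\S7$-equivariant between two $56$-dimensional representations, both decomposing as $\mathbf{8}\oplus\mathbf{48}$, so Schur's lemma reduces the question to checking two scalars. That is exactly how Fern\'andez (and later Karigiannis in \cite{karigiannis-spin7}) organise the computation.

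One concrete issue: your proposed test torsion $T_{m;ab}=(X\lrcorner\s7)_{mab}=X^l\s7_{lmab}$ does \emph{not} lie in $\Omega^1\otimes\Omega^2_7$. For fixed $m$, the $2$-form $\beta_{ab}:=X^l\s7_{lmab}$ fails the $\Omega^2_7$ criterion \eqref{eq:27decomp2}: using \eqref{eq:impiden2} one finds $\beta_{ab}\s7_{abij}=6X_ig_{mj}-6X_jg_{mi}-4X^l\s7_{lmij}$, which is not $-6\beta_{ij}$ in general. You must either apply $\pi_7$ from \eqref{eq:pi7} to the $ab$ indices first, or parametrise the $\mathbf{8}$-piece differently (e.g.\ via the trace map $T_{m;ab}\mapsto T_{m;ma}$, whose adjoint embeds $\Omega^1$ into $\Omega^1\otimes\Omega^2_7$). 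Once that is fixed, the contraction argument you sketch goes through.
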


Finally, the torsion  satisfies a `Bianchi-type identity'. This was first proved in \cite[Theorem 4.2]{karigiannis-spin7}, using the diffeomorphism invariance of the torsion tensor. We give a different proof below, using the Ricci identity \eqref{eq: riccischematic}.

\begin{theorem}\label{thm:spin7bianchi}
The torsion tensor $T$ satisfies the following `Bianchi-type identity'
\begin{align}
\label{spin7bianchi}
    \del_iT_{j;ab}-\del_jT_{i;ab}=2T_{i;am}T_{j;mb}-2T_{j;am}T_{i;mb}+\frac 14R_{jiab}-\frac 18R_{jimn}\s7_{mnab}.  
\end{align}
\end{theorem}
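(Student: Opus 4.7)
The plan is to differentiate the explicit torsion formula \eqref{eq:Texpress} and antisymmetrise in the differentiation indices, which splits the computation into a ``commutator of derivatives'' piece producing the curvature contribution, and a ``product of first derivatives'' piece producing the quadratic‑torsion contribution. Concretely, I would start from $T_{j;ab}=\tfrac{1}{96}(\del_j\s7_{apqr})\s7_{bpqr}$, apply $\del_i$ via the Leibniz rule, and antisymmetrise in $(i,j)$ to obtain
\begin{align*}
\del_iT_{j;ab}-\del_jT_{i;ab}
&=\tfrac{1}{96}\bigl([\del_i,\del_j]\s7_{apqr}\bigr)\s7_{bpqr}\\
&\quad+\tfrac{1}{96}\bigl[(\del_j\s7_{apqr})(\del_i\s7_{bpqr})-(\del_i\s7_{apqr})(\del_j\s7_{bpqr})\bigr],
\end{align*}
whose two lines contribute the $R$-terms and the $T\!\cdot\!T$-terms of \eqref{spin7bianchi} respectively.

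For the commutator piece I would apply the Ricci identity \eqref{eq: ricciidentity} to the four indices of $\s7_{apqr}$, producing four terms of the form $-R_{ij\bullet m}\s7_{\cdots m\cdots}$. Contracting each with $\s7_{bpqr}$, the ``$a$-slot'' term collapses via \eqref{eq:impiden3} to a pure $R_{ijab}$, while each of the three $(p,q,r)$-slot terms reduces, after permuting indices to place the contracted pair in the last two positions, via the two‑index identity \eqref{eq:impiden2} to a sum of a pure-$R$ piece and an $R\cdot\s7$ piece. Collecting and using the antisymmetries $R_{ij\cdot\cdot}=-R_{ji\cdot\cdot}$ together with the total skewness of $\s7$ (in particular to recognise $R_{ijkm}\s7_{ambk}=R_{ijmn}\s7_{mnab}$), then dividing by $96$, should yield exactly $\tfrac{1}{4}R_{jiab}-\tfrac{1}{8}R_{jimn}\s7_{mnab}$.

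For the quadratic piece I would substitute $\del_m\s7_{\bullet pqr}=(T_m\diamond\s7)_{\bullet pqr}$ from \eqref{Tdefneqn} into both factors, producing $4\times 4=16$ sub‑terms in each product. These fall into three classes indexed by the position of the ``torsion‑carrying slot'': both torsions in the first slot (one term, handled by \eqref{eq:impiden3}); exactly one torsion in the first slot (six terms, each handled by the two‑index identity \eqref{eq:impiden2} after permuting indices); and both torsions in later slots (nine terms, handled by the three‑index identity \eqref{eq:impiden1}). After antisymmetrising in $(i,j)$, every term proportional to $g_{ab}$ is symmetric in $(i,j)$ and cancels, and every term containing a trace $T_{m;pp}$ vanishes by skewness of $T_m\in\Omega^2_7$.

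The main obstacle is tracking the surviving $T\!\cdot\!T$ and $T\!\cdot\!T\!\cdot\!\s7$ contractions through this 16-term expansion. The pure $T\!\cdot\!T$ pieces combine, via $T_{i;bc}=-T_{i;cb}$, into a scalar multiple of $T_{i;am}T_{j;mb}-T_{j;am}T_{i;mb}$, and the mixed $T\!\cdot\!T\!\cdot\!\s7$ pieces are reduced to the same form using the $\Omega^2_7$-projection identity $T_{m;ab}\s7_{abcd}=-6T_{m;cd}$ from \eqref{eq:27decomp2}. After dividing by $96$, the aggregated coefficient must equal $2$, giving exactly $2T_{i;am}T_{j;mb}-2T_{j;am}T_{i;mb}$; summing the two contributions then reproduces \eqref{spin7bianchi}.
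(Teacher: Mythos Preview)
Your proposal is correct and follows exactly the same approach as the paper: differentiate the explicit formula \eqref{eq:Texpress}, antisymmetrise, apply the Ricci identity to the commutator of second derivatives of $\s7$, substitute \eqref{Tdefneqn} into the first-derivative products, and then reduce everything with the contraction identities \eqref{eq:impiden1}--\eqref{eq:impiden4} together with the $\Omega^2_7$ condition \eqref{eq:27decomp2}. In fact your outline is more detailed than the paper's, which simply sets up the same expansion and then declares the remaining contraction bookkeeping ``an easy albeit tedious computation''.
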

\begin{proof}
As remarked in \cite{karigiannis-spin7}, relation \eqref{spin7bianchi} can be proved by using the Ricci identity \eqref{eq: riccischematic} and the contraction identities \eqref{eq:impiden1}-\eqref{eq:impiden4}. We provide the details for completeness.
From \eqref{eq:Texpress}, we have
\begin{align*}
    \del_nT_{m;ab}=\frac{1}{96}(\del_n\del_m\s7_{ajkl})\s7_{bjkl}+\frac{1}{96}\del_m\s7_{ajkl}\del_n\s7_{bjkl}    
\end{align*}
which gives
\begin{align}
    \del_nT_{m;ab}-\del_mT_{n;ab}
    &=\frac{1}{96}(\del_n\del_m-\del_m\del_n)\s7_{ajkl}\s7_{bjkl}+\frac{1}{96}(\del_m\s7_{ajkl}\del_n\s7_{bjkl}-\del_n\s7_{ajkl}\del_m\s7_{bjkl})\nonumber \\
    &=\frac{1}{96}(-R_{nmas}\s7_{sjkl}-R_{nmjs}\s7_{askl}-R_{nmks}\s7_{ajsl}-R_{nmls}\s7_{ajks})\s7_{bjkl} \nonumber \\
    & \quad +\frac{1}{96}(T_{m;ap}\s7_{pjkl}+T_{m;jp}\s7_{apkl}+T_{m;kp}\s7_{ajpl}+T_{m;lp}\s7_{ajkp}) \nonumber \\
    & \qquad \qquad  (T_{n;bs}\s7_{sjkl}+T_{n;js}\s7_{bskl}+T_{n;ks}\s7_{bjsl}+T_{n;ls}\s7_{bjks}) \nonumber \\
& \quad +\frac{1}{96}(T_{n;ap}\s7_{pjkl}+T_{n;jp}\s7_{apkl}+T_{n;kp}\s7_{ajpl}+T_{n;lp}\s7_{ajkp}) \nonumber \\
& \qquad \qquad  (T_{m;bs}\s7_{sjkl}+T_{m;js}\s7_{bskl}+T_{m;ks}\s7_{bjsl}+T_{m;ls}\s7_{bjks}) \label{spinbpf1}
\end{align}
where we used the Ricci identity \eqref{eq: riccischematic} on the first line of the second equality and \eqref{Tdefneqn} to write the remaining terms of the second equality. Now using the contraction identities \eqref{eq:impiden1}-\eqref{eq:impiden4}, an easy albeit tedious computation gives \eqref{spin7bianchi}.
\end{proof}

Using the Riemannian Bianchi identity, we see  that
\begin{align*}
R_{ijkl}\s7_{ajkl}=-(R_{jkil}+R_{kijl})\s7_{ajkl}=-R_{iljk}\s7_{aljk}-R_{ikjl}\s7_{akjl},    
\end{align*}
hence 
\begin{align}
\label{rmspin7}
    R_{ijkl}\s7_{ajkl}=0.    
\end{align}
Using this and contracting \eqref{spin7bianchi} on $j$ and $b$ gives the expression for the Ricci curvature of a metric induced by a $\S7$-structure:
\begin{align}
\label{ricci}
    R_{ij}=4\del_iT_{a;ja}-4\del_aT_{i;ja}-8T_{i;jb}T_{a;ba}+8T_{a;jb}T_{i;ba}.   
\end{align}
This also proves that the metric of a torsion-free $\S7$-structure is Ricci-flat, a result originally due to Bonan. Taking the trace of \eqref{ricci} gives the  scalar curvature $R$:
\begin{align}
\label{scalar}
    R=4\del_iT_{a;ia}-4\del_aT_{i;ia}+8|T|^2+8T_{a;jb}T_{j;ba}.    
\end{align}

\subsection{Proof of Theorem \ref{prop: Spin(7) Bryant formula}}
\label{sec: proof of Spin(7) Bryant formula}

This section addresses the proof of the Bryant-type formula for isometric $\S7$-structures, which are in bijection with unit spinors \cite[\textsection 2]{martin-merchan}. We derive a $\S7$ counterpart of the well-known formula for $\G2$-structures \cite[(3.6)]{Bryant2006}.

Fix a $\S7$-structure $\Phi_0$ and identify its corresponding unit spinor with the identity ${\mathbf 1}_+$. As the twistor space is an $\mathbb{R}\mathrm{P}^7$-bundle, any other isometric $\S7$-structure can be identified with a unit spinor $\psi$ and decomposed into 
\begin{equation}
    \psi=f {\mathbf 1}_+\oplus X,
    \quad\text{with}\quad 
    X\in\Gamma(\mathbf{7})
    \quad\text{and}\quad 
    |f|^2+|X|^2=1,
\end{equation}
in which $\mathbf{7}$ is the orthogonal complement to the span of ${\mathbf 1}_+$ (later on identified with $\Ima\bO$).
Then its square is
$$
\psi\otimes\psi
    = f^2{\mathbf 1}_+ \otimes {\mathbf 1}_+ + f({\mathbf 1}_+ \otimes X + X\otimes {\mathbf 1}_+) + X\otimes X
    \in\Cl_8 \simeq \Lambda \bR^8,
$$
and we are only interested in its $\Lambda^4 \bR^8$-component $[\psi\otimes\psi]_4$.

It is well-known that $[{\mathbf 1}_+ \otimes {\mathbf 1}_+]_4 = \Phi_0$ 
\cite[p. 355]{lawson-michelsohn}. In order to compute the cross-term $[{\mathbf 1}_+ \otimes X + X\otimes {\mathbf 1}_+]_4$, 
we take its inner-product with a general quadruple $e_i \wedge e_j \wedge e_k \wedge e_l \in \Lambda^4 \bR^8$, following eg. the proof of \cite[Prop 2.1]{D-H}):
\begin{align*}
    & 16 \langle {\mathbf 1}_+ \otimes X + X\otimes {\mathbf 1}_+ , e_i \wedge e_j \wedge e_k \wedge e_l\rangle \\
    &= 16 \langle {\mathbf 1}_+ , (e_i \wedge e_j \wedge e_k \wedge e_l) X \rangle + 16 \langle  X , (e_i \wedge e_j \wedge e_k \wedge e_l) {\mathbf 1}_+\rangle.
\end{align*}
With the identifications $\Cl_8 \simeq \bR(16) \simeq \End(\bP) \simeq \bP \otimes \bP^*$ and $\bP = S^+ \oplus S^-$, so that $\bP \simeq \bO \oplus \bO$, with $S_+ \simeq \bO\times \{0\}$ and $S_- \simeq \{0\} \times \bO$, a general spinor $s$ satisfies:
$$
(e_i \wedge e_j \wedge e_k \wedge e_l) s = (e_i(e_j(e_k(e_l s))))
$$
where the right-hand side is a composition of left-multiplications in $\bO$.

Take $e_i,e_j,e_k,e_l$ to be distinct unit orthonormal vectors ordered such that $7\geq l>k>j>i\geq 0$.
By standard properties of the octonions \cite[(5.6), (5.21), (5.31)]{salamon-walpuski},
we have
\begin{align*}
    & \langle {\mathbf 1}_+ \otimes X + X\otimes {\mathbf 1}_+ , e_i \wedge e_j \wedge e_k \wedge e_l\rangle \\
    &= \langle {\mathbf 1}_+ , (e_i(e_j(e_k(e_l X))))  \rangle + \langle  X , (e_i(e_j(e_k(e_l {\mathbf 1}_+)))) \rangle .
\end{align*}
Identifying ${\mathbf 1}_+$ with the identity of $\bO$ and defining the $\S7$-structure $\Phi_0$ by \cite[Thm 5.20]{salamon-walpuski}
we obtain
\begin{align*}
& \langle {\mathbf 1}_+ \otimes X + X\otimes {\mathbf 1}_+ , e_i \wedge e_j \wedge e_k \wedge e_l\rangle = 2 \Phi_0(e_i X,e_j,e_k, e_l).
\end{align*}

To compute the term $[X \otimes X]_4$, we again take its inner-product with $e_i \wedge e_j \wedge e_k \wedge e_l \in \Lambda^4 \bR^8$:
$$
\langle X \otimes X , e_i \wedge e_j \wedge e_k \wedge e_l \rangle= 
\varepsilon (X,e_l (e_k (e_j (e_i X)))),
$$
where $\epsilon$ is an inner-product on $\bP$.
If $\Phi_0(e_i,e_j,e_k,e_l)\neq 0$, then 
$$
e_l (e_k (e_j (e_i X)))= \begin{cases*}
    \phantom{-} \Phi_0 (e_i, e_j, e_k,e_l)X, \text{ if } X = e_i, e_j, e_k,e_l \\
    - \Phi_0 (e_i, e_j, e_k,e_l)X\ \  \text{otherwise}.
    \end{cases*}
$$
Decomposing $X= X_{ijkl} + X_*$, with $X_{ijkl} \in \Spa(e_i,e_j,e_k,e_l)$, we obtain
\begin{align*}
\varepsilon (X,e_l (e_k (e_j (e_i X)))) &= \Phi_0 (e_i, e_j, e_k,e_l)( |X_{ijkl}|^2 - |X_*|^2) \\
&= \Phi_0 (e_i, e_j, e_k,e_l)( 2|X_{ijkl}|^2 - |X|^2).
\end{align*}
On the other hand, we have
 \begin{align*}
    4(X\wedge (X\lrcorner\Phi_0))(e_i \wedge e_j\wedge e_k \wedge e_l) &=
    |X_{ijkl}|^2 \Phi_0(e_i , e_j, e_k , e_l),
\end{align*}
and, at least for such quadruples, the following relation holds:
$$
\langle X \otimes X , e_i \wedge e_j \wedge e_k \wedge e_l \rangle= 
(8(X\wedge (X\lrcorner\Phi_0)) - |X|^2 \Phi_0) (e_i , e_j, e_k , e_l).
$$
This formula remains valid for all the other quadruples, so long as  $X$ is pure imaginary.

Summing up the three computations, we have:

$$
[X \otimes X]_4 = (f^2- |X|^2) \Phi_0 + 2 f \Theta_{0,X} + 8(X\wedge (X\lrcorner\Phi_0)),
$$
where $\Theta_{0,X}(a,b,c,d) = \Phi_0 (a \cdot X,b,c,d)$ is a four-form.


\section{Harmonic $\S7$-flow}
\label{sec: Basics of the Spin(7)-flow}

\subsection{Derivation of the flow}
\label{sec:ifintro}
In this section we define the harmonic flow of $\S7$-structures and prove that it is a negative gradient flow.

\begin{definition}[Isometric $\S7$-structures.]
Two $\S7$-structures $\s7_1$ and $\s7_2$ on $M$ are called \emph{isometric} if they induce the same Riemannian metric, that is, if $g_{\s7_1}=g_{\s7_2}$. We will denote by $\llbracket \s7\rrbracket$ the space of $\S7$-structures that are isometric to a given $\S7$-structure $\s7$.
\end{definition}

\begin{definition}
Let $\s7_0$ be a fixed initial $\S7$-structure on $M$. The \emph{energy functional} $E$ on the set $\llbracket \s7_0 \rrbracket$ is
\begin{align}
\label{energyfuncdefn}
    E(\s7)=\frac 12 \int_M |T_{\s7}|^2 \vol_{g_{\s7}}     
\end{align}
where $T_{\s7}$ is the torsion of $\s7$.
\end{definition}
At this point it is worth  noticing that, whereas the energy functional \eqref{energyfuncdefn} could be considered over \emph{all} $\S7$-structures, in the present setting it is being considered only over isometric $\S7$-structures. The natural question here, inspired by harmonic map theory, is whether, given a $\S7$-structure $\s7_0$, there is a  ``best'' $\S7$-structure in $\llbracket \s7_0 \rrbracket$. An obvious way to study this question is to consider the negative gradient flow of $E$.

\subsubsection{Direct approach: variational calculus}
 
The most general flow of $\S7$-structures \cite{karigiannis-spin7} is given by
\begin{align}\label{gflowdefn}
\frac{\pt \s7}{\pt t}=A\diamond \s7 = (h+X)\diamond \s7   
\end{align}
where $A_{ij}=h_{ij}+X_{ij}$ with $h_{ij}\in S^2$ and $X_{ij}\in \Omega^2_7$. Thus, the most general flow of $\S7$-structures is given by a time-dependent family of symmetric $2$-tensors and a time-dependent family of $2$-forms lying in the subspace $\Omega^2_7$. In this case the evolution of the metric is given by 
\begin{align}\label{gflowmetricevol}
\frac{\pt g}{\pt t}=2h_{ij}.    
\end{align}
We start by considering the variation of the torsion $T$ with respect to variations of the $\S7$-structures that preserve the metric.

\begin{proposition}\label{prop:1var}
Let $(\s7_t)_{t\in (-\varepsilon, \varepsilon)}$ be a smooth family of $\S7$-structures in the class $\llbracket \s7_0 \rrbracket$. By \eqref{gflowdefn} and \eqref{gflowmetricevol}, we can write $\left.\frac{\pt}{\pt t}\right|_{t=0}\s7_t=X\diamond \s7$ for some $X\in \Omega^2_7$. If $T_t$ is the torsion of $\s7_t$ then
\begin{align}\label{1vareqn}
\left.\frac{\pt}{\pt t}\right|_{t=0}(T_{t})_{m;is}=X_{ip}T_{m;ps}-X_{sp}T_{m;pi}+\pi_7(\del_mX_{is}).  
\end{align}
\end{proposition}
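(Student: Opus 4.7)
The plan is to differentiate the defining equation of the torsion, $\nabla_m \Phi_{ijkl} = (T_m \diamond \Phi)_{ijkl}$ from \eqref{Tdefneqn}, at $t=0$. Since $(\Phi_t)$ is an isometric family, the Levi-Civita connection is $t$-independent and $\partial_t$ commutes with $\nabla$. Substituting $\partial_t \Phi|_{t=0} = X \diamond \Phi$, invoking the Leibniz rule for the $\diamond$ action under $\nabla$, and rearranging, one gets
\[
(\partial_t T_m) \diamond \Phi \;=\; (\nabla_m X) \diamond \Phi \;+\; \bigl( X \diamond (T_m \diamond \Phi) \;-\; T_m \diamond (X \diamond \Phi) \bigr).
\]
Since $\diamond$ is the infinitesimal $\mathfrak{gl}$-action on tensors (Definition \ref{def: diamond}), the bracketed term equals $[X, T_m] \diamond \Phi$ for the ordinary matrix commutator in $\mathfrak{so}(8)$.

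Next, because the subspace $\Omega^2_7(\Phi_t) \subset \Omega^2$ \emph{moves} with $t$, the tensor $\partial_t T_m$ will generally have both a $\pi_7$- and a $\pi_{21}$-component, which must be extracted separately. For the $\pi_7$-part, Proposition \ref{prop:diaproperties1} identifies the kernel of $A \mapsto A \diamond \Phi$ on $\Omega^2$ as precisely $\Omega^2_{21}$. The inclusion $[\Omega^2_7, \Omega^2_7] \subseteq \Omega^2_{21}$ (by Schur's lemma: $\Lambda^2 \Omega^2_7 \cong \mathfrak{spin}(7) = \Omega^2_{21}$ as $\mathrm{Spin}(7)$-modules, and the bracket is nonzero by simplicity of $\mathfrak{so}(8)$) then yields
\[
\pi_7(\partial_t T_m) \;=\; \pi_7(\nabla_m X).
\]
For the $\pi_{21}$-part, I would differentiate the moving-subspace constraint $T_{m;ab}(t)\,\Phi_{abij}(t) = -6 T_{m;ij}(t)$, which comes from \eqref{eq:27decomp2}, at $t=0$. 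Using $\partial_t \Phi = X \diamond \Phi$ and $\pi_{21}(\beta)_{ij} = \tfrac34 \beta_{ij} + \tfrac18 \beta_{ab}\Phi_{abij}$, this rearranges to
\[
8 \pi_{21}(\partial_t T_m)_{ij} \;=\; -T_{m;ab}\,(X \diamond \Phi)_{abij}.
\]

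The remaining task is to verify $T_{m;ab}(X \diamond \Phi)_{abij} = -8[X, T_m]_{ij}$. Expanding $(X \diamond \Phi)_{abij}$ via \eqref{eq:diadefn2} and grouping, the two terms in which $X$ acts on slots other than $a,b$ collapse through the $\Omega^2_7$-identity $T_{m;ab}\Phi_{abpj} = -6 T_{m;pj}$, contributing $-6[X, T_m]_{ij}$ after a skew-symmetry rearrangement; the other two terms coincide by an $a \leftrightarrow b$ relabelling, and are handled by writing $T_{m;ab} X_{ap} = -(T_m X)_{bp}$, decomposing $T_m X = \tfrac12\{T_m, X\} + \tfrac12[T_m, X]$, noting that the symmetric part contracts to zero against the first two antisymmetric slots of $\Phi$, while the antisymmetric part $-\tfrac12[X, T_m] \in \Omega^2_{21}$ evaluates via $[X, T_m]_{ab}\Phi_{abij} = 2[X, T_m]_{ij}$ to yield a further $-2[X, T_m]_{ij}$. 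Summing $\pi_7(\partial_t T_m) + \pi_{21}(\partial_t T_m)$ then reproduces \eqref{1vareqn} upon recognising $[X, T_m]_{is} = X_{ip}T_{m;ps} - X_{sp}T_{m;pi}$ via skew-symmetry of $X$ and $T_m$. The main obstacle is this final index computation: the cancellation isolating precisely the commutator in the ``inner-slot'' contribution requires careful bookkeeping, since neither the $\pi_7$-projection argument nor the moving-subspace argument alone singles it out.
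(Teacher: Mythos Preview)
Your argument is correct and complete, but it follows a different route from the paper. The paper differentiates the explicit contraction formula \eqref{eq:Texpress}, $T_{m;is}=\tfrac{1}{96}(\nabla_m\Phi_{ijkl})\Phi_{sjkl}$, applies the product rule, expands both $\partial_t\nabla_m\Phi$ and $\partial_t\Phi$ via $X\diamond\Phi$, and then reduces the resulting sixteen-term expression with the contraction identities \eqref{eq:impiden1}--\eqref{eq:impiden4} until the three terms of \eqref{1vareqn} emerge. Your approach instead differentiates the \emph{structural} equation \eqref{Tdefneqn}, exploits that $\diamond$ is a Lie-algebra action to recognise the commutator $[X,T_m]$, and then splits $\partial_t T_m$ into its $\pi_7$ and $\pi_{21}$ parts using, respectively, the kernel characterisation of Proposition~\ref{prop:diaproperties1} together with $[\Omega^2_7,\Omega^2_7]\subset\Omega^2_{21}$, and the differentiated $\Omega^2_7$-constraint. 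The trade-off: the paper's method is self-contained index algebra requiring only the contraction identities, whereas yours front-loads representation theory (the Schur argument for $[\Omega^2_7,\Omega^2_7]\subset\Omega^2_{21}$) to make the origin of the commutator term transparent; your final verification of $T_{m;ab}(X\diamond\Phi)_{abij}=-8[X,T_m]_{ij}$ is essentially the same bookkeeping the paper performs, just organised around the symmetric/antisymmetric splitting of $T_mX$ rather than raw expansion.
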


\begin{proof}
As the $\S7$-structures are varying in an isometry class, $g_t=g_0$ for all $t\in (-\varepsilon, \varepsilon)$ and hence the covariant derivative $\del$ is independent of $t$ and so is the volume form. Since $\left.\frac{\pt}{\pt t}\right|_{t=0}\s7_t=X\diamond \s7$, by \cite[Lemma 3.3]{karigiannis-spin7}, $\left.\frac{\pt}{\pt t}\right|_{t=0}\del \s7_t=X\diamond \del \s7+\del X\diamond \s7$. Explicitly, we have
\begin{align}
\left.\frac{\pt}{\pt t}\right|_{t=0}(\s7_t)_{ijkl}&=X_{ip}\s7_{pjkl}+X_{jp}\s7_{ipkl}+X_{kp}\s7_{ijpl}+X_{lp}\s7_{ijkp}, \label{1varpf1} \\
\left.\frac{\pt}{\pt t}\right|_{t=0}(\del_m(\s7_{t})_{ijkl})&=X_{ip}\del_m\s7_{pjkl}+X_{jp}\del_m\s7_{ipkl}+X_{kp}\del_m\s7_{ijpl}+X_{lp}\del_m\s7_{ijkp} \nonumber \\
& \quad + (\del_mX_{ip})\s7_{pjkl}+(\del_mX_{jp})\s7_{ipkl}+(\del_mX_{kp})\s7_{ijpl}+(\del_mX_{lp})\s7_{ijkp}.
\end{align}
Using the above equations and \eqref{eq:Texpress}, we compute
\begin{align}
96\left. \frac{\pt}{\pt t}\right|_{t=0}(T_t)_{m;is}&=\left.\frac{\pt}{\pt t}\right|_{t=0}((\del_m\s7_{ijkl})\s7_{sjkl}) \nonumber \\
&= \s7_{sjkl}\left.\frac{\pt}{\pt t}\right|_{t=0}(\del_m(\s7_{t})_{ijkl}) +(\del_m\s7_{ijkl})\left.\frac{\pt}{\pt t}\right|_{t=0}(\s7_t)_{ijkl} \nonumber \\
&=\Big [X_{ip}\del_m\s7_{pjkl}+X_{jp}\del_m\s7_{ipkl}+X_{kp}\del_m\s7_{ijpl}+X_{lp}\del_m\s7_{ijkp} \nonumber \\
& \quad + (\del_mX_{ip})\s7_{pjkl}+(\del_mX_{jp})\s7_{ipkl}+(\del_mX_{kp})\s7_{ijpl}+(\del_mX_{lp})\s7_{ijkp}  \Big]\s7_{sjkl} \nonumber \\
& \quad + \del_{m}\s7_{ijkl}\Big[X_{ip}\s7_{pjkl}+X_{jp}\s7_{ipkl}+X_{kp}\s7_{ijpl}+X_{lp}\s7_{ijkp}  \Big] \nonumber .
\end{align}
Using the contraction identities \eqref{eq:impiden1}-\eqref{eq:impiden4}, the above becomes
\begin{align*}
96\left. \frac{\pt}{\pt t}\right|_{t=0}(T_t)_{m;is}&= 96X_{ip}T_{m;ps}-96X_{sp}T_{m;pi} +42\del_mX_{is}+\del_mX_{jp}(6g_{is}g_{pj}-6g_{ij}g_{ps}-4\s7_{ipsj})\\
& \quad +\del_mX_{kp}(6g_{is}g_{pk}-6g_{ik}g_{ps}-4\s7_{ipsk})+\del_mX_{lp}(6g_{is}g_{pl}-6g_{il}g_{ps}-4\s7_{ipsl}) \\
&=96X_{ip}T_{m;ps}-96X_{sp}T_{m;pi} +42\del_mX_{is}-18\del_mX_{is}-12\del_mX_{jp}\s7_{isjp}\\
&=96X_{ip}T_{m;ps}-96X_{sp}T_{m;pi} +24\del_mX_{is}-12\del_mX_{jp}\s7_{isjp}
\end{align*}
which, using \eqref{eq:pi7} for the last two terms, is precisely \eqref{1vareqn}.
\end{proof}

\medskip

Let $E$ be the energy functional from \eqref{energyfuncdefn}, restricted to the isometry class $\llbracket \s7 \rrbracket$ of $\S7$-structures. The following result motivates our formulation of the harmonic $\S7$-flow, cf. Definition \ref{iflowdefn}.

\begin{lemma}
\label{lemma:E1var}
    The gradient of $16 E:\llbracket \s7 \rrbracket \rightarrow \bR_{\geq 0}$ at the point $\s7$ is $-\Div T\diamond \s7$, where $T$ is the torsion of $\s7$. In other words, if $(\s7_t)_{t\in (-\varepsilon, \varepsilon)}$ is a smooth family of $\S7$-structures in $\llbracket \s7_0 \rrbracket$ and $\left. \frac{d}{dt}\right|_{t=0}\s7_t=\xi$, then
\begin{align*}
    \left.\frac{d}{dt}\right|_{t=0}16 E(\s7_t) 
    =-\int_M \langle \Div T\diamond \s7, \xi  \rangle \vol.  
\end{align*}
\end{lemma}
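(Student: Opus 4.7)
The plan is a direct variational calculation. Since $\s7_t$ stays in the isometry class $\llbracket \s7_0 \rrbracket$, both the metric $g$ and the volume form are constant along the variation, and by Proposition~\ref{prop:1var} the tangent vector may be written as $\xi = X \diamond \s7$ for some $X \in \Omega^2_7$, with
$$
\dot T_{m;is} := \left.\ddt\right|_{t=0}(T_t)_{m;is} = X_{ip}\, T_{m;ps} - X_{sp}\, T_{m;pi} + \pi_7(\del_m X)_{is}.
$$
Consequently $\left.\ddt\right|_{t=0} 16 E(\s7_t) = 16 \int_M \langle \dot T, T\rangle\, \vol$, and the task reduces to evaluating this pointwise pairing.

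The first key step is to observe that the two $T$-quadratic contributions to $\langle \dot T, T\rangle = \dot T_{m;is}\, T_{m;is}$ vanish identically. Indeed, using skew-symmetry $T_{m;ps} = -T_{m;sp}$, the tensors $S_{ip} := T_{m;ps}\, T_{m;is}$ and $U_{sp} := T_{m;pi}\, T_{m;is}$ are symmetric in their free indices (the sign flips in each $T$-factor cancel upon swapping), whereas $X$ is a skew $2$-form, so both pairings are zero. Hence
$$
\langle \dot T, T\rangle = \pi_7(\del_m X)_{is}\, T_{m;is} = \del_m X_{is}\, T_{m;is},
$$
where the last equality follows from self-adjointness of the orthogonal projector $\pi_7$ combined with $T_{m;\cdot\cdot} \in \Omega^2_7$. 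Setting $(\Div T)_{is} := \del_m T_{m;is}$ and integrating by parts over the closed manifold $M$ yields
$$
16 \int_M \langle \dot T, T\rangle\, \vol = -16 \int_M \langle X, \Div T\rangle\, \vol.
$$

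Finally, to repackage the right-hand side as the desired pairing of $4$-forms, I would invoke Proposition~\ref{prop:diaproperties2} applied to the skew $2$-tensors $\Div T$ and $X$: their traces and traceless-symmetric parts vanish, and $X \in \Omega^2_7$, so only the $\Omega^2_7$-projections contribute, giving
$$
\langle \Div T \diamond \s7,\, X \diamond \s7\rangle = -16\,\tr\!\big((\Div T)_7\, X\big) = 16\, \langle \Div T, X\rangle,
$$
where $\tr(\alpha\beta) = -\langle \alpha, \beta\rangle$ for skew matrices. Substituting $\xi = X \diamond \s7$ completes the identification. The main obstacle will be the bookkeeping of index symmetries and inner-product conventions needed to match the numerical factor of $16$ on both sides---precisely why the functional is normalised as $16 E$ rather than $E$. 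Once the cancellation of the two $T$-quadratic terms is noted, the remainder is formal.
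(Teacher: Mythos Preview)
Your proof is correct and follows essentially the same route as the paper's: both use Proposition~\ref{prop:1var}, observe that the two $T$-quadratic terms vanish by the symmetry of $T_{m;is}T_{m;ps}$ against the skew $X$, integrate by parts, and then invoke Proposition~\ref{prop:diaproperties2} to convert $16\langle \Div T, X\rangle$ into $\langle \Div T\diamond\s7, X\diamond\s7\rangle$. The only cosmetic difference is that you remove $\pi_7$ before integrating by parts, via self-adjointness of $\pi_7$ together with $T_{m;\cdot\cdot}\in\Omega^2_7$, whereas the paper expands $\pi_7$ explicitly and uses $X\in\Omega^2_7$ after integrating by parts; these are equivalent.
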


\begin{proof}
We use Proposition \ref{prop:1var} to compute
\begin{align*}
\left.\frac{d}{dt}\right|_{t=0} E(\s7_t)&=  \left.\frac{d}{dt}\right|_{t=0}\frac 12\int_M(T_t)_{m;is}(T_t)_{m;is}\vol\\
&=\int_M T_{m;is}(X_{ip}T_{m;ps}-X_{sp}T_{m;pi}+\pi_7(\del_mX_{is}))\vol\\
&=\int_M T_{m;is}(X_{ip}T_{m;ps}-X_{sp}T_{m;pi}+\frac 14\del_mX_{is}-\frac 18\del_mX_{jp}\s7_{isjp}) \vol.
\end{align*}
The first two terms vanish, as $T_{m;is}T_{m;ps}$ and $T_{m;is}T_{m;pi}$ are symmetric in $i,\ p$ and $s,\ p$, respectively, and $X\in \Omega^2_7$. We integrate by parts on the remaining two terms:
\begin{align*}
    \left.\frac{d}{dt}\right|_{t=0} E(\s7_t)&=\int_M -\frac{1}{4}\del_mT_{m;is}X_{is}+\frac{1}{8}\del_mT_{m;is}X_{jp}\s7_{isjp}\vol  \\
    &=\int_{M}-\frac{1}{4}\del_mT_{m;is}X_{is}-\frac{3}{4}\del_mT_{m;is}X_{is} \vol\\ 
    &=-\int_M \langle \Div T, X \rangle \vol.
\end{align*}
Using Proposition \ref{prop:diaproperties2}--$2.$, we see that $A, B\in \Omega^2_7$ satisfy $\langle A\diamond \s7, B\diamond \s7 \rangle=16\langle A, B\rangle $, so the above equation becomes
\begin{align}\label{E1varpf1}
\left.\frac{d}{dt}\right|_{t=0} 16E(\s7_t)=-16\int_M \langle \Div T, X\rangle \vol=\int_M \langle \Div T\diamond \s7, X\diamond \s7 \rangle \vol. 
\end{align}
The result now follows from the splitting $
\Omega^4=\Omega^4_1\oplus \Omega^4_7\oplus \Omega^4_{27}\oplus \Omega^4_{35}
$,
with $\Omega^4_7=\{Y\diamond \s7 \mid Y\in \Omega^2_7\}$ and $T_{m;is}\in \Omega^2_7$, for each $m$.
\end{proof}

Comparing \eqref{gflowdefn} and \eqref{eq: Har Spin(7) Flow}, we see that $h\equiv 0$ for the harmonic flow, and hence it is indeed a flow of isometric $\S7$-structures.

\subsubsection{Alternative approach: representation theory}

The harmonicity condition for $\S7$-structures can be inferred as an instance of the general framework in \textsection\ref{sec: H-structures}, stemming from the representation theory of $\S7$.
The key ingredient is the connection form $f$, which identifies the vertical of the tangent bundle of the twistor space with $\mathfrak{m}$, the (naturally reductive) complement of $\mathfrak{so}(7)= \mathfrak{spin}(7))\subset\mathfrak{so}(8)$. Sections of this space correspond to $\S7$-structures and restricting ourselves to the vertical part means we only look at variations through $\S7$-structures.

If $\tilde{\s7}$ is the universal $\S7$-structure [cf. \eqref{eq: universal section}], then the connection form is characterised by
$$
\nabla_A \tilde{\s7} = f(A).\tilde{\s7} .
$$
Identifying $\mathfrak{so}(8)\simeq\Omega^2$, the reductive complement $\mathfrak{m}$ is then identified with $\Omega^2_7$ and $f(A).\tilde{\s7}$ should be understood as the the natural $\mathrm{GL}(8)$-action encoded in the diamond operator \eqref{eq:diadefn1}. We invert the $\diamond$ operator with a triple contraction $\lrcorner_3$:
$$
(\beta \diamond \s7) \lrcorner_3 \s7 = 96 \beta .
$$
The connection form is then given by $96 f(A) = \nabla_A \tilde{\s7} \lrcorner_3 \tilde{\s7}$ and,  pulling back by a section $\sigma$ of the twistor space, we obtain:
$$
f(d\sigma(X)) = \tfrac{1}{96} (\nabla_{X}\s7) \lrcorner_3 \s7 .
$$
Everything now  follows from this expression,
which is precisely the torsion $T(X)$ of \eqref{eq:Texpress}. 
Then the (vertical) energy density is $|d^\cV \sigma|^2 = |T|^2$, so the energy is precisely the $L^2$-norm of the torsion. The vertical tension field is 
$$
I(\tau^\cV (\sigma)) = \sum_1^8 \nabla_{e_i} (T(e_i)) - T(\nabla_{e_i} e_i) = \Div T
$$
and the flow $\frac{d\sigma_t}{dt} = \tau^\cV (\sigma_t)$ is equivalent to $I(\frac{d\sigma_t}{dt}) = I(\tau^\cV (\sigma_t))$
where $I$ plays the role of an extended connection $f$ on $M\times \bR$.
We know that $I(\tau^\cV (\sigma_t))= \Div T_t$ and, generalising to $M\times \bR$ (or at least an interval) all the previous objects, we have  
$$
I\left(\frac{d\sigma_t}{dt}\right) = \tfrac{1}{96} \frac{d{\s7}_t}{dt} \lrcorner_3 {\s7}_t .
$$
On the other hand, since $
\Div T_t = \tfrac{1}{96} (\Div T_t \diamond {\s7}_t) \lrcorner_3 {\s7}_t
\in\Omega^2_7
$,
and $\lrcorner_3 {\s7}_t$ is an isomorphism on $\Omega^2_7$ (its kernel being $\Omega^2_{21}$), we recover indeed
$$
\frac{d{\s7}_t}{dt} = \Div T_t \diamond {\s7}_t .
$$

\begin{remark}
This flow is the vertical part of the harmonic map flow, so short time existence of \eqref{eq: Har Spin(7) Flow} follows from general properties of parabolic equations, cf. \cite{loubeau-saearp}.
\end{remark}

\subsection{Immediate consequences from the harmonic $H$-flow}

Let us collect from the outset a number of analytic properties of the harmonic $\S7$-flow which can be effortlessly deduced from the general theory in \textsection\ref{sec: general theory of H-str}.

\subsubsection{Harmonic $\S7$-solitons and self-similar solutions}
\label{sec: Spin(7)-solitons}

The specific notion of self-similar solution to \eqref{eq: Har Spin(7) Flow}, derived from Definition \ref{def-sssol-GENERAL}, is the following: 
\begin{definition}
\label{def-sssol}
    Let $\{\s7_t\}_{t\in (a, b)}$ be a solution of the harmonic flow \eqref{eq: Har Spin(7) Flow} where $0\in (a, b)$. We say that it is a \emph{self-similar solution} if there exist a function $\rho(t)$ with $\rho(0)=1$, a $\S7$-structure $\s7_0$ and a family of diffeomorphisms $f_t:M\rightarrow M$ such that
\begin{align*}
\s7_t = \rho(t)^{4}{f_t}^*\s7_0    
\end{align*}
for all $t\in (a, b)$. 
\end{definition}

For $Y\in \Gamma(TM)$, in view of the splitting \eqref{eq:splitting TM* x TM} of $TM^*\otimes TM$ and Proposition \ref{prop:diaproperties1}, we may write 
$$
\nabla Y=\frac 12 \cL_Yg +\nabla_7 (Y) +\nabla_{21}(Y),
$$
where $\nabla_k(Y):=\pi_k(\nabla Y)\in \Omega^2_k$. In particular  $\nabla_{21}Y\in\Omega^2_{21}=\ker(\diamond\,\Phi)$. We now use \eqref{eq: Lie of xi-GENERAL} to get 
\begin{equation}
\label{eq: Lie of Phi}
 \cL_Y \s7= (Y\lrcorner T +\frac 12 \cL_Yg +\nabla_7 (Y))\diamond \Phi.
\end{equation}    
    
The stationary condition for harmonic $\S7$-solitons is obtained from Lemma \ref{lemma: vec fields for self-sim sols-GENERAL}:
\begin{lemma}
\label{lemma: vec fields for self-sim sols}
    Given a self-similar solution $\{\s7_t\}_{t\in (a, b)}$  of the harmonic flow \eqref{eq: Har Spin(7) Flow}, there is a family  of vector fields $\{X_t\}\subset\sX(M)$ such that
    $$
    \Div{T_t}= X_t\lrcorner T_t + \nabla_7 (X_t),
    $$
    where $\nabla_7 :=\pi_7\circ\nabla:\sX(M)\to\Omega^2_7(M)$. 
\end{lemma}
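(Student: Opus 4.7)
The plan is to specialise the abstract Lemma~\ref{lemma: vec fields for self-sim sols-GENERAL} to the case $(G,H) = (\SO(8), \S7)$. Under the standard identification $\mathfrak{so}(8) \cong \Omega^2 = \Omega^2_{21} \oplus \Omega^2_7$, the Lie algebra $\fh = \mathfrak{spin}(7)$ corresponds to $\Omega^2_{21}$ (as recorded just after \eqref{eq:221prop}), and the naturally reductive complement is $\fm \cong \Omega^2_7$. Consequently the abstract projector $\nabla_\fm = \pi_\fm \circ \nabla$ specialises exactly to $\nabla_7 = \pi_7 \circ \nabla$, and the net degree of $\s7 \in \Omega^4(M)$ equals $\ell = 4$, matching the exponent $\rho(t)^4$ in Definition~\ref{def-sssol}. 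Thus Definition~\ref{def-sssol} is precisely the $\ell=4$ instance of Definition~\ref{def-sssol-GENERAL}, and the hypotheses of the general lemma are met.

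With these identifications in place, I would then transcribe the proof of Lemma~\ref{lemma: vec fields for self-sim sols-GENERAL}. Let $\{W_t\}$ be the infinitesimal generator of the diffeomorphism family $\{f_t\}$ in the self-similar ansatz, and set $X_t := (f_t^{-1})_* W_t$, as in \eqref{eq: stationary vf-GENERAL}. Combining the flow equation \eqref{eq: Har Spin(7) Flow}, the Lie derivative formula \eqref{eq: Lie of Phi}, the homogeneity identity $g_t \diamond \s7_t = 4\s7_t$ from Lemma~\ref{lem: homogeneity and diamond}, and the metric-scaling relation \eqref{eq: Lie of g}, one computes
\begin{align*}
    \Div T_t \diamond \s7_t
    = \frac{\partial \s7_t}{\partial t}
    &= 4(\log \rho)'(t)\, \s7_t + \cL_{X_t} \s7_t\\
    &= 4(\log \rho)'(t)\, \s7_t + \bigl( X_t \lrcorner T_t + \tfrac12 \cL_{X_t} g_t + \nabla_7(X_t) \bigr) \diamond \s7_t\\
    &= \bigl( X_t \lrcorner T_t + \nabla_7(X_t) \bigr) \diamond \s7_t,
\end{align*}
where the scalar term is absorbed because, by \eqref{eq: Lie of g} and Lemma~\ref{lem: homogeneity and diamond},
$\tfrac12 (\cL_{X_t} g_t) \diamond \s7_t = -(\log\rho)'(t)\, g_t \diamond \s7_t = -4(\log\rho)'(t)\, \s7_t$.

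To conclude, I would invoke Proposition~\ref{prop:diaproperties1}, which identifies $\diamond\,\s7_t : \Omega^2_7 \to \Omega^4_7$ as a linear isomorphism. Both $\Div T_t$ and $X_t \lrcorner T_t + \nabla_7(X_t)$ lie in $\Omega^2_7$ (the former because $T_{m;\cdot\cdot} \in \Omega^2_7$ for each $m$ by \eqref{Tdefneqn}; the latter by construction), so cancelling $\diamond\,\s7_t$ on both sides yields the asserted soliton identity. The only genuine difficulty here is bookkeeping: verifying consistency between the abstract identification $\fm \cong \Omega^2_7$ from \textsection\ref{sec: H-structures} and the concrete $\diamond$-calculus of \textsection\ref{subsec:formdecomp}. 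No analytic input beyond the general framework is required.
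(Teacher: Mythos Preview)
Your proposal is correct and follows exactly the paper's approach: the paper simply states that the lemma is obtained from Lemma~\ref{lemma: vec fields for self-sim sols-GENERAL} via the identifications $\fh\cong\Omega^2_{21}$, $\fm\cong\Omega^2_7$, $\ell=4$, and you have spelled out those identifications and the ensuing cancellation in full detail. If anything, your write-up is more explicit than the paper's, which leaves the transcription implicit.
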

    
\begin{definition}
\label{def: harmonic Spin(7)-soliton}
    A \emph{harmonic $\S7$-soliton} on a Riemannian manifold $(M^8,g)$ is a triple $(\hat\Phi,\hat X ,c)$, in which $\hat\Phi$ is a $\S7$-structure with induced metric $g$, $\hat X \in\sX(M)$ and $c\in\bR$ are such that
\begin{align}
\begin{split}
    \cL_{\hat X }g&=cg,\\
    \Div\hat{T}&= \hat X \lrcorner \hat T + \nabla_7 (\hat X ).
\end{split}
\end{align}
\end{definition}

We obtain the family of stationary vector fields $\{X_t\}\subset\sX(M)$ predicted in Lemma \ref{lemma: vec fields for self-sim sols}, by setting [cf. \eqref{eq: stationary vf-GENERAL}]
$$
X_t:=(f_t)^{-1}_*W_t=\alpha(t)(f_t)^{-1}_*\hat X =\alpha(t)\hat X.
$$ 
Finally, the equivalence between harmonic $\S7$-solitons and self-similar solutions to \eqref{eq: Har Spin(7) Flow} is summarised in Theorem \ref{cor: iso soliton => self-sim sol}, as an instance of Proposition \ref{prop: iso soliton => self-sim sol-GENERAL}.

\subsubsection{Doubling-time estimates}

As an immediate consequence of the general theory of harmonic section flows \cite[Thm.2]{loubeau-saearp}, we obtain a doubling-time estimate for \eqref{eq: Har Spin(7) Flow} which says that if the torsion is bounded at the initial time then it will remain bounded until some subsequent time.

\begin{corollary}[Doubling-Time Estimate]
\label{dteforif} 
    Let $\Phi_t$ be a solution of the harmonic flow on a compact $8$-manifold $M$ for $t\in [0,\tau]$. Then there exists $\delta >0$ such that
\begin{equation}
    \cT(t) \leq 2 \cT(0)
\end{equation}
    for all $0\leq t \leq \delta$, where 
$$
\cT(t) = \sup_{M} |T(x,t)| .
$$
\end{corollary}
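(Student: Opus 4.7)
The plan is to invoke the general doubling-time estimate for harmonic section flows from \cite[Thm.~2]{loubeau-saearp}, of which this corollary is the specialisation to $H=\S7\subset\SO(8)$. Since the text advertises the result as an immediate consequence, the main task is to verify that the hypotheses of that general theorem are met in our setting; on a fixed compact background $(M,g)$ with target $(N,\eta)$, bounded geometry of $g$, $\eta$, $R^M$, $R^N$ is automatic by compactness. Nonetheless, for the record I sketch the underlying parabolic maximum principle argument, which is the substance of \cite[Thm.~2]{loubeau-saearp} in our case.

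First I would compute the evolution of $|T|^2$. Starting from the schematic identity
\begin{equation*}
    \partial_t T = \Delta T + \nabla T * \nabla g + \nabla T * \nabla \eta + T*R^M + T*R^N + T^3*R^N + \nabla g * \nabla \eta
\end{equation*}
extracted from the proof of Proposition~\ref{Prop:Shi-estimates}, and pairing with $T$, the Bochner-type identity $\langle T,\Delta T\rangle = \tfrac12\Delta|T|^2 - |\nabla T|^2$ combined with Young's inequality (applied to absorb all $|\nabla T|$ cross-terms into the $-|\nabla T|^2$ term) yields, at every $(x,t)$, a pointwise differential inequality of the form
\begin{equation*}
    \partial_t |T|^2 \leq \Delta |T|^2 + C_1 |T|^4 + C_2 |T|^2 + C_3,
\end{equation*}
for constants $C_i$ depending only on $(M,g)$ and $(N,\eta)$.

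Next, set $u(t) := \sup_M |T(\cdot,t)|^2 = \cT(t)^2$. Compactness of $M$ together with smoothness of $\Phi_t$ on $[0,\tau]$ make $u$ locally Lipschitz, and Hamilton's parabolic maximum principle on compact manifolds upgrades the preceding pointwise inequality to the scalar ODE inequality
\begin{equation*}
    \dot u(t) \leq C_1 u(t)^2 + C_2 u(t) + C_3,
\end{equation*}
in the upper-Dini sense. Standard ODE comparison with the initial value problem $\dot U = C_1 U^2 + C_2 U + C_3$, $U(0)=u(0)$, yields $u(t)\leq U(t)$ on the common existence interval.

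Finally, since $U$ is continuous with $U(0)=\cT(0)^2$, there exists $\delta>0$, depending only on $\cT(0)$ and the background geometry of $(M,g)$ and $(N,\eta)$, such that $U(\delta)\leq 4\,\cT(0)^2$, whence $\cT(t)\leq 2\,\cT(0)$ for every $t\in[0,\delta]$. The only delicate step is the careful bookkeeping of curvature and metric-derivative terms in the evolution of $|T|^2$, which is implicitly already carried out in the Shi-estimate computation preceding the corollary; once that inequality is in hand, the doubling-time estimate reduces to a one-line ODE-comparison argument.
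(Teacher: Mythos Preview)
Your proposal is correct and takes essentially the same approach as the paper: the paper simply states this corollary as an immediate consequence of \cite[Thm.~2]{loubeau-saearp}, without further proof. Your additional sketch of the maximum-principle argument behind that theorem is a helpful elaboration, but the core invocation is identical.
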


\subsubsection{Shi-type estimates for the harmonic $\S7$-flow}

For $H=\S7$, the homogeneous fibre bundle $N = P/H \to M^8$ of \textsection\ref{sec: H-structures} is compact, with fibres isometric to $\SO(8)/\S7\simeq\bRP^7$. In this case, O'Neill's curvature formulas~\cite{oneill} enable us to reduce the hypotheses of Proposition~\ref{Prop:Shi-estimates} to geometric bounds over the base manifold $M$ only.

\begin{corollary}
\label{shitypeest}
    Let $\s7_t$ be a solution of the harmonic flow \eqref{eq: Har Spin(7) Flow} and assume that there exist $K \geq 1$ and constants $B_j$, $0\leq j\in\mathbb{Z}$, such that 
\begin{align*}
    |T| \leq K,\ \ |\nabla^j \Riem| \leq B_j K^{j+2},\ \text{for all} \ j\geq 0.
\end{align*}
    Then, for all $m\in \mathbb{N}$, there exist constants $C_m=C_m(g)$ such that for all $t\in [0, \frac{1}{K^4}]$, we have
\begin{align*}
    |\nabla^{m} T| \leq C_mK t^{-m/2}.  
\end{align*}    
\end{corollary}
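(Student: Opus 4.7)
The plan is to deduce this directly from Proposition~\ref{Prop:Shi-estimates} by showing that, in the specific $\S7$ setting, the hypotheses there on the ambient metric $\eta$ and the target curvature $R^N$ reduce to the stated bounds on $g$ and $R^M$ alone. The universal section formalism of \textsection\ref{sec: H-structures} identifies $\S7$-structures with sections of the compact homogeneous fibre bundle $\pi:N=\Fr(M)/\S7\to M^8$, whose fibres are isometric to the fixed Riemannian symmetric space $\SO(8)/\S7\simeq\bRP^7$ (with its standard bi-invariant metric), so all fibrewise geometric data are $\sigma$-independent and uniformly bounded.

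First I would observe that $|\nabla g|\equiv 0$ since $\nabla$ is the Levi-Civita connection of $g$, so one may take $D$ to involve only the constant bounding $|\nabla\eta|$. For the target metric $\eta$ on $N$, recall from \eqref{eq: conditions of HSF} that it is built from the Levi-Civita connection of $g$ together with the fixed bi-invariant metric on $\SO(8)$; hence all covariant derivatives of $\eta$ are polynomially controlled by derivatives of $g$ and a universal constant depending only on the fibre, giving $|\nabla\eta|\leq C$, which is subsumed into $DK$ (using $K\geq 1$).

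The key step is to bound $|\nabla^j R^N|$ in terms of $|\nabla^j R^M|$. Since the fibres of $\pi$ are totally geodesic, O'Neill's $T$-tensor vanishes and O'Neill's formulas \cite{oneill} express $R^N$ entirely in terms of the restriction of $R^M$ to horizontal distributions, the intrinsic fibre curvature (fixed and bounded), and the $A$-tensor of the submersion, which is a purely algebraic expression in the curvature of the principal connection on $\Fr(M)$, i.e.~in $R^M$ itself. Covariantly differentiating these relations $j$ times and using $K\geq 1$ to absorb lower-order terms yields
\[
|\nabla^j R^N|\leq C_j'(1+|\nabla^j R^M|+\text{polynomial in lower derivatives})\leq B_j' K^{j+2},
\]
for new constants $B_j'$ depending only on $g$ and the fixed fibre geometry.

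With these bounds in hand, the hypotheses of Proposition~\ref{Prop:Shi-estimates} are met, and its conclusion $|\nabla^m T|\leq C_m K t^{-m/2}$ on $[0,1/K^4]$ transfers verbatim to the harmonic $\S7$-flow. The main (but entirely routine) obstacle is carefully bookkeeping the O'Neill-type iterated derivatives to ensure the powers of $K$ in the estimates on $\nabla^j R^N$ match those required; this is purely algebraic and does not interact with the parabolic analysis already carried out in Proposition~\ref{Prop:Shi-estimates}.
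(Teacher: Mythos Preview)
Your proposal is correct and follows exactly the approach sketched in the paper: the paper's proof is a one-sentence remark that, since the fibres of $N=\Fr(M)/\S7\to M$ are the compact symmetric space $\bRP^7$, O'Neill's curvature formulas reduce the hypotheses of Proposition~\ref{Prop:Shi-estimates} on $(N,\eta)$ to bounds over the base $(M,g)$ alone. You have simply unpacked this remark in more detail, noting $|\nabla g|=0$, bounding $|\nabla\eta|$ via the construction of $\eta$, and using O'Neill to control $|\nabla^j R^N|$ by $|\nabla^j R^M|$ plus fixed fibre data.
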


\subsection{New analytic preliminaries for the harmonic $\S7$-flow}

We now derive some properties which are context-specific, and cannot heretofore be obtained directly as instances of the general theory for $H$-flows.

\subsubsection{Parabolic rescaling}
\label{sec:pscale}

We discuss the `parabolic rescaling' for \eqref{eq: Har Spin(7) Flow}. The natural parabolic rescaling for a geometric evolution equation involves scaling the time variable $t$ by $c^2t$, when the space variable scales by $c$. We will use this notion frequently in the paper.

\begin{lemma}
\label{lemma:pscale}
    Let $c>0$ be a constant. If $\{\s7(t)\}$ is a solution of the harmonic $\S7$-flow \eqref{eq: Har Spin(7) Flow} with $\s7(0)=\s7$, then $\wtd{\s7}(\wtd{t})=c^4\s7(c^2t)$ is still a solution of \eqref{eq: Har Spin(7) Flow} with $\wtd{\s7}(0)=c^4\s7$. In particular,
$$
\wtd{T}=c^2T \qandq \Div_{\widetilde{g}}\wtd{T}=\Div_{g}T.
$$
\end{lemma}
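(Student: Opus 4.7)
My plan is to verify the three assertions in order: the metric scaling $\widetilde{g} = c^2 g$, the rescalings of the torsion and its divergence, and finally the flow equation.

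First, I would establish that $\widetilde{g} := g_{c^4 \Phi} = c^2 g_{\Phi}$. Counting the homogeneity of \eqref{eq:metric} in $\Phi$, the matrix $B_{ij}(v)$ is cubic and $A(v)$ is quadratic, so $(\det B)^{1/3}$ has degree $7$ and $A^3$ has degree $6$; the ratio (and hence $g_{\Phi}(v,v)^2$) is thus linear in $\Phi$. Therefore $(g_{c^4 \Phi}(v,v))^2 = c^4\, (g_{\Phi}(v,v))^2$, giving $\widetilde{g} = c^2 g$. Since this is a constant global rescaling, the Christoffel symbols and the Levi-Civita connection are unchanged, so $\widetilde{\nabla} = \nabla$, together with $\widetilde{g}^{-1} = c^{-2} g^{-1}$.

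Second, I would track the scaling of the torsion through \eqref{eq:Texpress}, which in a general frame reads
\[
T_{m;ab} = \tfrac{1}{96}\, g^{jj'} g^{kk'} g^{ll'}\, (\nabla_m \Phi_{ajkl})\, \Phi_{bj'k'l'}.
\]
Substituting $\widetilde{\Phi} = c^4 \Phi$, $\widetilde{\nabla} = \nabla$, and $\widetilde{g}^{-1} = c^{-2} g^{-1}$ contributes three factors of $c^{-2}$ and two of $c^{4}$, for a net factor of $c^{2}$: $\widetilde{T} = c^2 T$ as $(0,3)$-tensors. The divergence then follows immediately,
\[
(\Div_{\widetilde{g}} \widetilde{T})_{ab} = \widetilde{g}^{mn} \nabla_n \widetilde{T}_{m;ab} = c^{-2}\, g^{mn}\, \nabla_n (c^2 T_{m;ab}) = (\Div_g T)_{ab},
\]
proving the second half of the lemma.

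Third, I would verify the flow equation. The diamond operation \eqref{eq:diadefn2}, written with explicit metric contraction as $(A \diamond \Phi)_{ijkl} = g^{pq}(A_{ip}\Phi_{qjkl} + \cdots)$, scales as $A \diamond_{\widetilde{g}} \widetilde{\Phi} = c^{-2} \cdot c^{4}\, A \diamond_g \Phi = c^2\, (A \diamond_g \Phi)$ for any fixed $(0,2)$-tensor $A$. Under the parabolic time reparametrisation $\widetilde{t} = c^2 t$ with $\widetilde{\Phi}(\widetilde{t}) := c^4 \Phi(t)$, the chain rule gives
\[
\partial_{\widetilde{t}} \widetilde{\Phi} = c^4 \cdot \tfrac{dt}{d\widetilde{t}}\, \partial_t \Phi = c^2\, (\Div_g T \diamond_g \Phi),
\]
which matches $\Div_{\widetilde{g}} \widetilde{T} \diamond_{\widetilde{g}} \widetilde{\Phi} = c^2\, (\Div_g T \diamond_g \Phi)$ by the previous step, confirming that $\widetilde{\Phi}$ satisfies \eqref{eq: Har Spin(7) Flow} with initial condition $\widetilde{\Phi}(0) = c^4 \Phi$.

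The main obstacle is simply exposing the implicit metric contractions hidden in \eqref{eq:Texpress} and \eqref{eq:diadefn2}, since those formulas are written in orthonormal frames where all indices are lowered; once those contractions are made visible, every scaling factor follows from a routine homogeneity count, and no new analytic input is required.
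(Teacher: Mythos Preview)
Your proof is correct and follows essentially the same approach as the paper's: both arguments track the hidden $g^{-1}$ contractions in \eqref{eq:metric}, \eqref{eq:Texpress}, and \eqref{eq:diadefn2} to obtain $\widetilde g=c^2g$, $\widetilde T=c^2T$, $\Div_{\widetilde g}\widetilde T=\Div_g T$, and $(\Div\widetilde T)\diamond_{\widetilde g}\widetilde\Phi=c^2(\Div T)\diamond_g\Phi$, then match this against the chain-rule factor from the time reparametrisation. The only cosmetic point is that ``linear in $\Phi$'' should read ``homogeneous of degree~$1$ in $\Phi$'', but the computation is unaffected.
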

\begin{proof}
    Define a new $\S7$-structure $\wtd{\s7}=c^4\s7$. It follows from \eqref{eq:metric} and the expressions of $B_{ij}$, $A$ in \textsection \ref{sec:prelims} that $\wtd{g}=c^2g$, as well as $\wtd{g}^{-1}=c^{-2}g^{-1}$ and $\wtd{\vol}=c^8\vol$. Recall that we are suppressing the writing of $g^{-1}$ terms because we are using an orthonormal frame. Hence \eqref{eq:Texpress} yields $\wtd{T}=c^2T$. Thus, as a $2$-form, $\Div_{\widetilde{g}}\wtd{T}=\Div_{g}T$, and therefore  $(\Div_{\wtd{g}}\wtd{T})\diamond \wtd{\s7}=c^{-2}c^4(\Div_{g}T\diamond \s7)=c^2(\Div_g T)\diamond \s7$. The main assertion now follows from \eqref{eq: Har Spin(7) Flow}.
\end{proof}

Since $\wtd{\del}=\del$, we note, for later use that if $\wtd{\s7}=c^4\s7$ then
\begin{align}\label{pscaletor}
|\wtd{\del}^j\wtd{\Riem}|_{\wtd{g}}=c^{-(2+j)}|\del^j\Riem|_g,\ \ \ \ \ \ \ \  |\wtd{\del}^j\wtd{T}|_{g}=c^{-(1+j)}|\del T|_g \cdot  
\end{align}

\subsubsection{Torsion evolution}

We determine the evolution equations of the torsion and its norm under the harmonic $\S7$-flow, following a process similar to the $\G2$ case \cite[Lemma 3.1]{dgk-isometric}.

Let $\{ \partial_t, e_1, \dots , e_7\}$ be an orthonormal (geodesic) frame and use $t$ and $i$ as shorthands for $\partial_t$ and $e_i$. First we use the formula
$$
(R(i,j)T)(a,b,c) = - T(R(i,j)a,b,c) - T(a,R(i,j)b,c) - T(a,b,R(i,j)c)
$$
to derive a formula for the Laplacian of the torsion of a $\S7$-structure.

\begin{lemma} Let $\Delta = +\tr{\nabla_i\nabla_i}$ be the Laplacian operator, then
 \begin{align*}
   (\Delta T)_{m;ab} =& \nabla_m \nabla_i T_{i;ab} - T_{q;ab}R_{imiq} - T_{i;qb}R_{imaq} - T_{i;aq}R_{imbq} + 2 \nabla_i T_{i;ap}T_{m;bp} + 2 T_{i;ap}\nabla_i T_{m;bp} \\
    &  - 2 \nabla_i T_{m;ap}T_{i;bp} - 2 T_{m;ap}\nabla_i T_{i;pb} + \tfrac14 \nabla_i R_{miab} - \tfrac18 \nabla_i R_{mipq} \Phi_{pqab} - \tfrac18 R_{mipq} \nabla_i \Phi_{pqab}.
\end{align*}
\end{lemma}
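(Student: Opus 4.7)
The statement is purely algebraic in nature once the right starting point is identified; the key is to recognise that $\Delta T$ can be re-expressed by means of the Bianchi-type identity \eqref{spin7bianchi}, which already relates a skew-combination of first derivatives of $T$ to quadratic torsion terms and curvature. My plan is to rewrite \eqref{spin7bianchi} in the form
$$\nabla_i T_{m;ab} \;=\; \nabla_m T_{i;ab} + 2 T_{i;ap} T_{m;pb} - 2 T_{m;ap} T_{i;pb} + \tfrac14 R_{miab} - \tfrac18 R_{mipq} \Phi_{pqab},$$
then apply $\nabla_i$ to both sides and sum on $i$. The left-hand side is exactly $\Delta T_{m;ab}$, while the Leibniz expansion of the right-hand side already produces all four quadratic torsion $\ast$ $\nabla T$ terms (together with the $\tfrac14 \nabla_i R_{miab}$ and the two $R \cdot \Phi$ curvature terms, one of which picks up a $R_{mipq} \nabla_i \Phi_{pqab}$ from the derivative landing on $\Phi$).

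The second ingredient is then to commute $\nabla_i \nabla_m T_{i;ab}$ into $\nabla_m \nabla_i T_{i;ab}$, since the latter is the divergence-of-torsion term that one wants to make explicit (it is the combination that drives the harmonic $\S7$-flow). For this I will invoke the Ricci identity \eqref{eq: ricciidentity}, applied to $T$ viewed as a $(0,3)$-tensor in its three indices $(i,a,b)$; this yields precisely the three curvature-times-torsion terms $-R_{imiq} T_{q;ab} - R_{imaq} T_{i;qb} - R_{imbq} T_{i;aq}$.

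Finally, I will tidy up indices using the antisymmetry $T_{m;ab} = -T_{m;ba}$ (recall $T \in \Omega^1_8 \otimes \Omega^2_7$), which allows one to rearrange the four quadratic torsion times $\nabla T$ terms into the form displayed in the statement. The calculation is a mechanical one, essentially just bookkeeping; the only thing to be careful about is tracking the $(a,b)$ skew-symmetry when swapping pairs like $T_{m;pb}\leftrightarrow -T_{m;bp}$ and making sure that the Ricci identity is applied with the paper's sign convention from \eqref{eq: ricciidentity}. No new representation-theoretic input (in particular, no projection into $\Omega^2_7$) is required at this stage, because all manipulations are done at the level of the raw indexed tensor $T_{m;ab}$; the fact that the result still lies in the correct irreducible component is automatic since $\Delta$ commutes with $\S7$-equivariant projections.
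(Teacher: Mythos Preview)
Your proposal is correct and follows essentially the same approach as the paper: rewrite the Bianchi-type identity \eqref{spin7bianchi} as an expression for $\nabla_i T_{m;ab}$, apply $\nabla_i$ and sum, then commute $\nabla_i\nabla_m$ to $\nabla_m\nabla_i$ via the Ricci identity on the three lower indices of $T$. One small caveat: your closing remark that ``$\Delta$ commutes with $\S7$-equivariant projections'' is not actually true here (the Levi-Civita connection is not the canonical $\S7$-connection, and indeed the paper later has to compute $\pi_7(\Delta T)$ explicitly), but this does not affect the lemma, which is stated and proved at the level of the raw indexed tensor without any projection.
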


\begin{proof}
Use the $\S7$ Bianchi-type identity \eqref{spin7bianchi} to compute:
\begin{align*}
    \nabla_i \nabla_i  T_{m;ab} 
    =&\ \nabla_i (\nabla_i  T_{m;ab}) \\
    =&\ \nabla_i ( \nabla_m  T_{i;ab} + 2 T_{i;ap} T_{m;pb} - 2 T_{m;ap}T_{i;pb} + \tfrac14 R_{miab} - \tfrac18 R_{mipq} \Phi_{pqab} )\\
    =&\ \nabla_i \nabla_m  T_{i;ab} + 2 \nabla_i T_{i;ap} T_{m;pb} + 2 T_{i;ap} \nabla_i T_{m;pb} - 2 \nabla_i T_{m;ap}T_{i;pb} - 2  T_{m;ap} \nabla_i T_{i;pb} \\
    & + \tfrac14 \nabla_i R_{miab} - \tfrac18 \nabla_i R_{mipq} \Phi_{pqab} - \tfrac18 R_{mipq} \nabla_i\Phi_{pqab} 
\end{align*}
and then use 
\begin{align*}
    \nabla_i \nabla_m  T_{i;ab} =  \nabla_m \nabla_i  T_{i;ab} - T_{q;ab}R_{imiq} - T_{i;qb}R_{imaq} - T_{i;aq}R_{imbq}.
\qedhere
\end{align*}
\end{proof}

\begin{proposition}
Let $\{\s7_t\}$ be a solution of the harmonic $\S7$-flow \eqref{eq: Har Spin(7) Flow}; then its torsion evolves according to the equation
\begin{align*}
  4\frac{\partial}{\partial t} T_{m;is} 
  =&\ 
 4(\Delta T)_{m;is} \\
  &+ \nabla_a T_{m;bc}\Big(4T_{a;bp}\s7_{pcis}+T_{a;ip}\s7_{bcps}+T_{a;sp}\s7_{bcip} \Big) + T_{m;bc} \nabla_a T_{a;bp} \s7_{pcis} \\
&+ 3\nabla_a T_{a;ip} T_{m;ps} + \nabla_a T_{a;sp} T_{m;pi} - 2 T_{a;ip}\nabla_a T_{m;sp} + 2 \nabla_a T_{m;ip}T_{a;sp} \\
 & + T_{m;bc} T_{a;bp} \Big(T_{a;pq}\s7_{qcis}+ T_{a;cq}\s7_{pqis}+2T_{a;iq}\s7_{pcqs}+
2T_{a;sq}\s7_{pciq} \Big) \\
&+  \tfrac12 T_{m;bc}T_{a;ip} \Big(
T_{a;pq}\s7_{bcqs}+
2T_{a;sq}\s7_{bcpq} \Big) 
 + \tfrac12 T_{m;bc} T_{a;sp}
T_{a;pq}\s7_{bciq} \\
& + 4 T_{q;is}R_{amaq}
- (\nabla_a R_{mais} - \tfrac12 \nabla_a R_{mapq} \Phi_{pqis})\\
&+ T_{a;qs}R_{amiq} + T_{a;iq}R_{amsq}  
  + \tfrac18 R_{mapq} \nabla_a \Phi_{pqis}  - T_{a;qc}R_{ambq}\Phi_{bcis} 
  - \tfrac{1}{16} R_{mapq} \nabla_a \Phi_{pqbc}\Phi_{bcis}.
\end{align*}
\end{proposition}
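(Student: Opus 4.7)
The plan is to apply the first-variation formula of Proposition \ref{prop:1var} to the harmonic flow, then convert the resulting $\nabla_m \nabla_a T_{a;\cdot\cdot}$ terms into $(\Delta T)_{m;\cdot\cdot}$ plus lower-order corrections via the Bianchi-type identity of Theorem \ref{thm:spin7bianchi}, and finally reduce any remaining $\Phi$-contractions using the identities \eqref{eq:impiden1}--\eqref{eq:impiden4}.

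Since \eqref{eq: Har Spin(7) Flow} reads $\partial_t \s7 = \Div T \diamond \s7$ and $\diamond$ annihilates the $\Omega^2_{21}$-component by Proposition \ref{prop:diaproperties1}, only $X := \pi_7(\Div T)$ enters the flow. Applying Proposition \ref{prop:1var} to this $X$ gives
$$
\frac{\pt T_{m;is}}{\pt t} = X_{ip}T_{m;ps} - X_{sp}T_{m;pi} + \pi_7(\nabla_m X)_{is}.
$$
Expanding $\pi_7$ via \eqref{eq:pi7}, the last term equals $\tfrac14 \nabla_m \nabla_a T_{a;is} - \tfrac18 \nabla_m \nabla_a T_{a;bc}\s7_{bcis}$; this explains the overall factor $4$ in the target equation, as one multiplies through to clear the $\tfrac14$.

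The key manipulation is to convert $\nabla_m \nabla_a T_{a;is}$ into $(\Delta T)_{m;is}$ plus corrections. This is exactly the content of the preceding Laplacian Lemma, which was proved by swapping $\nabla_m T_{a;is}$ with $\nabla_a T_{m;is}$ via the Bianchi-type identity \eqref{spin7bianchi} and then commuting $\nabla_a$ through with the Ricci identity \eqref{eq: ricciidentity}. Rearranging that lemma produces $(\Delta T)_{m;is}$ plus contributions of the form $T{*}R$, $\nabla T {*} T$, $\nabla R$, and $R{*}\nabla \s7$. For the $\s7$-contracted second term $\nabla_m \nabla_a T_{a;bc}\s7_{bcis}$, the same manipulation is applied first, and then any residual $\nabla \s7$ is replaced by $T{*}\s7$ via \eqref{Tdefneqn}, turning the whole expression into a polynomial in $T$, $\s7$, $\nabla T$, $\nabla R$, and $R$.

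The final step is to collect all contributions, reduce products of two $\s7$'s via \eqref{eq:impiden1}--\eqref{eq:impiden4}, and regroup by tensor type. The main obstacle is precisely this symbolic bookkeeping: each displayed $T^3{*}\s7$ and $R{*}\s7$ term arises from a specific pairing of $\s7$-contractions, and matching the numerical coefficients demands careful use of the antisymmetry of $T$ in its last two indices, and of $\s7$ throughout. The $\nabla T{*}T$ coefficients (such as the $3$ multiplying $\nabla_a T_{a;ip} T_{m;ps}$) then arise from summing the contribution of $4X_{ip}T_{m;ps}$ with the $\nabla T{*}T$ terms produced by inverting the Laplacian formula, once the $\pi_7$-projection in $X$ is expanded via \eqref{eq:pi7}.
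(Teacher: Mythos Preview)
Your proposal is correct and follows essentially the same approach as the paper: start from Proposition~\ref{prop:1var} with $X=\Div T$, use the Laplacian Lemma (i.e.\ the Bianchi-type identity~\eqref{spin7bianchi} plus the Ricci identity) to trade $\nabla_m\nabla_a T_{a;is}$ for $(\Delta T)_{m;is}$ with curvature and $\nabla T*T$ corrections, then expand $\nabla\Phi$ via~\eqref{Tdefneqn} and reduce with the contraction identities. The only organisational difference is that the paper applies the Laplacian Lemma once to $\nabla_m\nabla_r T_{r;is}$ and \emph{then} projects with $\pi_7$ (computing $4\pi_7((\Delta T)_{m;is})$ by differentiating $T_{m;bc}\Phi_{bcis}=-6T_{m;is}$ twice), whereas you propose to split off $\pi_7$ first and apply the Laplacian Lemma to each piece; the two routes are equivalent and the remaining work is indeed the careful bookkeeping you describe.
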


\begin{proof}
    The general expression for the flow is
$$
\frac{\partial}{\partial t} T_{m;is} = X_{ip} T_{m;ps} - X_{sp} T_{m;pi} + \pi_7 (\nabla_m X_{is}),
$$
where $(\pi_7 \beta)_{ij} = \tfrac14 \beta_{ij} - \tfrac18 \beta_{ab}\Phi_{abij}$. Take $X_{is} = (\Div T)_{is} = \nabla_a T_{a;is}$ for the harmonic $\S7$-flow, so
\begin{align}
\label{eq: dT/dt}
    \frac{\partial}{\partial t} T_{m;is} = \nabla_r T_{r;ip} T_{m;ps} - \nabla_r T_{r;sp} T_{m;pi} + \pi_7 (\nabla_m (\nabla_r T_{r;is})) .
\end{align}
But
\begin{align*}
    \nabla_m \nabla_r T_{r;is} 
    =&\ (\Delta T)_{m;is} + T_{q;is}R_{amaq} + T_{a;qs}R_{amiq} + T_{a;iq}R_{amsq} - 2 \nabla_a T_{a;ip}T_{m;sp} - 2 T_{a;ip}\nabla_a T_{m;sp} \\
    &  + 2 \nabla_a T_{m;ip}T_{a;sp} + 2 T_{m;ip}\nabla_a T_{a;ps} - \tfrac14 \nabla_a R_{mais} + \tfrac18 \nabla_a R_{mapq} \Phi_{pqis} + \tfrac18 R_{mapq} \nabla_a \Phi_{pqis}
\end{align*}
and note that $T_{q;is}R_{amaq}$ and $-\tfrac14 (\nabla_a R_{mais} - \tfrac12 \nabla_a R_{mapq} \Phi_{pqis})= - \pi_7 (\nabla_a R_{mais})$ are in $\Omega^2_7$, 
so
\begin{align}
\label{eq-proj}
    4\pi_7 (\nabla_m \nabla_r T_{r;is}) 
    =&\ 4\pi_7 ((\Delta T)_{m;is}) + 4 T_{q;is}R_{amaq}
- (\nabla_a R_{mais} - \tfrac12 \nabla_a R_{mapq} \Phi_{pqis}) \\
&+ T_{a;qs}R_{amiq} + T_{a;iq}R_{amsq} - 2 \nabla_a T_{a;ip}T_{m;sp} 
- 2 T_{a;ip}\nabla_a T_{m;sp} \notag \\
&  + 2 \nabla_a T_{m;ip}T_{a;sp} + 2 T_{m;ip}\nabla_a T_{a;ps}  + \tfrac18 R_{mapq} \nabla_a \Phi_{pqis} \notag\\
    &  - \tfrac12 T_{a;qc}R_{ambq}\Phi_{bcis} - \tfrac12 T_{a;bq}R_{amcq}\Phi_{bcis} 
    +  \nabla_a T_{a;bp}T_{m;cp}\Phi_{bcis} \notag \\
    & +  T_{a;bp}\nabla_a T_{m;cp} \Phi_{bcis} -  \nabla_a T_{m;bp}T_{a;cp} \Phi_{bcis} 
  -  T_{m;bp}\nabla_a T_{a;pc}\Phi_{bcis} 
  - \tfrac{1}{16} R_{mapq} \nabla_a \Phi_{pqbc}\Phi_{bcis}.\notag
\end{align}
Since $T \in \Omega^2_7$, we have 
\begin{align*}
  T_{q;bc} \s7_{bcis} &= -6 T_{q;is} ;\\
  (96)^2  T_{m;rp} T_{n;sp} &= 6  \nabla_m \Phi_{rjkl} \nabla_n \Phi_{sjkl}
    -9 \nabla_m \Phi_{rjkl} \nabla_n \Phi_{sjbc}\Phi_{klbc} ; \\
    96  T_{a;qc} \Phi_{bcis} &= - 6 \nabla_a \Phi_{qbis} + 3 \nabla_a \Phi_{qbkl} \Phi_{klis} + 
    3 \nabla_a \Phi_{qikl} \Phi_{klsb} + 3 \nabla_a \Phi_{qskl} \Phi_{klbi}.
\end{align*}
The first two terms on line 4 of  \eqref{eq-proj} combine to give
$$
- \tfrac12 T_{a;qc}R_{ambq}\Phi_{bcis} - \tfrac12 T_{a;bq}R_{amcq}\Phi_{bcis} = - T_{a;qc}R_{ambq}\Phi_{bcis},
$$
the third term of line 4 and the third term on line 5 of  \eqref{eq-proj} give
$$
\nabla_a T_{a;bp}T_{m;cp}\Phi_{bcis}-  T_{m;bp}\nabla_a T_{a;pc}\Phi_{bcis} = 0.
$$
The first two terms on line 5 of  \eqref{eq-proj} combine to give
$$
T_{a;bp}\nabla_a T_{m;cp} \Phi_{bcis} 
  -  \nabla_a T_{m;bp}T_{a;cp} \Phi_{bcis} = 2 T_{a;bp}\nabla_a T_{m;cp} \Phi_{bcis},
$$
so
\begin{align*}
4\pi_7 (\nabla_m \nabla_r T_{r;is}) 
=&\ 4\pi_7 ((\Delta T)_{m;is}) + 4 T_{q;is}R_{amaq}
- (\nabla_a R_{mais} - \tfrac12 \nabla_a R_{mapq} \Phi_{pqis})\\
&+ T_{a;qs}R_{amiq} + T_{a;iq}R_{amsq} - 2 \nabla_a T_{a;ip}T_{m;sp} 
- 2 T_{a;ip}\nabla_a T_{m;sp} \\
&  + 2 \nabla_a T_{m;ip}T_{a;sp} + 2 T_{m;ip}\nabla_a T_{a;ps}  + \tfrac18 R_{mapq} \nabla_a \Phi_{pqis}\\
    &  - T_{a;qc}R_{ambq}\Phi_{bcis} + 2 T_{a;bp}\nabla_a T_{m;cp} \Phi_{bcis} 
  - \tfrac{1}{16} R_{mapq} \nabla_a \Phi_{pqbc}\Phi_{bcis}
\end{align*}
and 
\begin{align*}
    4\frac{\partial}{\partial t} T_{m;is} 
    =&\ 4\nabla_a T_{a;ip} T_{m;ps} - 4\nabla_a T_{a;sp} T_{m;pi} + 4\pi_7 (\nabla_m \nabla_r T_{r;is}) \\
    =&\ 4\nabla_a T_{a;ip} T_{m;ps} - 4\nabla_a T_{a;sp} T_{m;pi} + 4\pi_7 ((\Delta T)_{m;is}) + 4 T_{q;is}R_{amaq}
- (\nabla_a R_{mais} - \tfrac12 \nabla_a R_{mapq} \Phi_{pqis})\\
    &+ T_{a;qs}R_{amiq} + T_{a;iq}R_{amsq} - 2 \nabla_a T_{a;ip}T_{m;sp} 
- 2 T_{a;ip}\nabla_a T_{m;sp} \\
    &  + 2 \nabla_a T_{m;ip}T_{a;sp} + 2 T_{m;ip}\nabla_a T_{a;ps}  + \tfrac18 R_{mapq} \nabla_a \Phi_{pqis}\\
    &  - T_{a;qc}R_{ambq}\Phi_{bcis} + 2 T_{a;bp}\nabla_a T_{m;cp} \Phi_{bcis}
  - \tfrac{1}{16} R_{mapq} \nabla_a \Phi_{pqbc}\Phi_{bcis} \\
    =&\ 6\nabla_a T_{a;ip} T_{m;ps} - 2\nabla_a T_{a;sp} T_{m;pi} + 4\pi_7 ((\Delta T)_{m;is}) + 4 T_{q;is}R_{amaq}
- (\nabla_a R_{mais} - \tfrac12 \nabla_a R_{mapq} \Phi_{pqis})\\
    &+ T_{a;qs}R_{amiq} + T_{a;iq}R_{amsq}  
- 2 T_{a;ip}\nabla_a T_{m;sp} + 2 \nabla_a T_{m;ip}T_{a;sp}  + \tfrac18 R_{mapq} \nabla_a \Phi_{pqis}\\
    &  - T_{a;qc}R_{ambq}\Phi_{bcis} + 2 T_{a;bp}\nabla_a T_{m;cp} \Phi_{bcis} 
  - \tfrac{1}{16} R_{mapq} \nabla_a \Phi_{pqbc}\Phi_{bcis} .
\end{align*}

We compute $4\pi_7 ((\Delta T)_{m;is})$:
\begin{align*}
    4\pi_7 ((\Delta T)_{m;is}) &= (\Delta T)_{m;is} -\tfrac12 (\Delta T)_{m;bc}\s7_{bcis} \\
    &= (\nabla_a \nabla_a T)_{m;is} -\tfrac12 (\nabla_a \nabla_a T)_{m;bc}\s7_{bcis} .
\end{align*}
Differentiating $ T_{m;bc} \s7_{bcis} = -6 T_{m;is}$  twice, we have
$$
(\nabla_a \nabla_a T)_{m;bc} \s7_{bcis} + 2 \nabla_a T_{m;bc} \nabla_a \s7_{bcis} +
T_{m;bc} (\nabla_a \nabla_a \s7)_{bcis} = -6 (\nabla_a \nabla_a T)_{m;is},
$$
so
\begin{align*}
4\pi_7 ((\Delta T)_{m;is}) &=  (\Delta T)_{m;is} -\tfrac12 (\nabla_a \nabla_a T)_{m;bc}\s7_{bcis} \\
&= (\Delta T)_{m;is} -\tfrac12 \Big( -6 (\nabla_a \nabla_a T)_{m;is} - 2 \nabla_a T_{m;bc} \nabla_a \s7_{bcis} - T_{m;bc} (\nabla_a \nabla_a \s7)_{bcis} \Big)\\
&= 4(\Delta T)_{m;is} + \nabla_a T_{m;bc} \nabla_a \s7_{bcis} + \tfrac12 T_{m;bc} (\nabla_a \nabla_a \s7)_{bcis}, 
\end{align*}
and the third term expands as follows:
\begin{align*}
    (\nabla_a \nabla_a \s7)_{bcis} =&\ 
    \nabla_a T_{a;bp} \s7_{pcis} + \nabla_a T_{a;cp} \s7_{bpis} +\nabla_a T_{a;ip} \s7_{bcps} +\nabla_a T_{a;sp} \s7_{bcip} \\
    & +T_{a;bp} \nabla_a\s7_{pcis} +  T_{a;cp} \nabla_a\s7_{bpis} + T_{a;ip} \nabla_a\s7_{bcps} + T_{a;sp} \nabla_a\s7_{bcip}.
\end{align*}
So
\begin{align*}
    4\pi_7 ((\Delta T)_{m;is}) 
    =&\ 4(\Delta T)_{m;is} + \nabla_a T_{m;bc} \nabla_a \s7_{bcis} \\
    &+ \tfrac12 T_{m;bc} \Big( \nabla_a T_{a;bp} \s7_{pcis} + \nabla_a T_{a;cp} \s7_{bpis} +\nabla_a T_{a;ip} \s7_{bcps} +\nabla_a T_{a;sp} \s7_{bcip} \\
    & +T_{a;bp} \nabla_a\s7_{pcis} +  T_{a;cp} \nabla_a\s7_{bpis} + T_{a;ip} \nabla_a\s7_{bcps} + T_{a;sp} \nabla_a\s7_{bcip}\Big) \\
    =&\ 4(\Delta T)_{m;is} + \nabla_a T_{m;bc} \nabla_a \s7_{bcis} \\
    &+ \tfrac12 T_{m;bc} \Big( \nabla_a T_{a;bp} \s7_{pcis} + \nabla_a T_{a;cp} \s7_{bpis}  \\
    & +T_{a;bp} \nabla_a\s7_{pcis} +  T_{a;cp} \nabla_a\s7_{bpis} + T_{a;ip} \nabla_a\s7_{bcps} + T_{a;sp} \nabla_a\s7_{bcip}\Big) \\
    & +\tfrac12 T_{m;bc} \Big(\nabla_a T_{a;ip} \s7_{bcps} +\nabla_a T_{a;sp} \s7_{bcip}\Big)\\
    =&\ 4(\Delta T)_{m;is} + \nabla_a T_{m;bc} \nabla_a \s7_{bcis} \\
    &+ \tfrac12 T_{m;bc} \Big( \nabla_a T_{a;bp} \s7_{pcis} + \nabla_a T_{a;cp} \s7_{bpis}  \\
    & +T_{a;bp} \nabla_a\s7_{pcis} +  T_{a;cp} \nabla_a\s7_{bpis} + T_{a;ip} \nabla_a\s7_{bcps} + T_{a;sp} \nabla_a\s7_{bcip}\Big) \\
    & -3 T_{m;ps} \nabla_a T_{a;ip} -3 T_{m;ip} \nabla_a T_{a;sp}. 
\end{align*}

In conclusion,
\begin{align*}
    4\frac{\partial}{\partial t} T_{m;is} 
    =&\ 
    4(\Delta T)_{m;is} + \nabla_a T_{m;bc} \nabla_a \s7_{bcis} \\
    &+ \tfrac12 T_{m;bc} \Big( \nabla_a T_{a;bp} \s7_{pcis} + \nabla_a T_{a;cp} \s7_{bpis}  \\
    & +T_{a;bp} \nabla_a\s7_{pcis} +  T_{a;cp} \nabla_a\s7_{bpis} + T_{a;ip} \nabla_a\s7_{bcps} + T_{a;sp} \nabla_a\s7_{bcip}\Big) \\
    & -3 T_{m;ps} \nabla_a T_{a;ip} -3 T_{m;ip} \nabla_a T_{a;sp} \\
    & + 6\nabla_a T_{a;ip} T_{m;ps} - 2\nabla_a T_{a;sp} T_{m;pi} + 4 T_{q;is}R_{amaq}
- (\nabla_a R_{mais} - \tfrac12 \nabla_a R_{mapq} \Phi_{pqis})\\
    &+ T_{a;qs}R_{amiq} + T_{a;iq}R_{amsq}  
- 2 T_{a;ip}\nabla_a T_{m;sp} + 2 \nabla_a T_{m;ip}T_{a;sp}  + \tfrac18 R_{mapq} \nabla_a \Phi_{pqis}\\
    &  - T_{a;qc}R_{ambq}\Phi_{bcis} + 2 T_{a;bp}\nabla_a T_{m;cp} \Phi_{bcis} 
  - \tfrac{1}{16} R_{mapq} \nabla_a \Phi_{pqbc}\Phi_{bcis},
  \end{align*}
  so
  \begin{align*}
  4\frac{\partial}{\partial t} T_{m;is} 
  =&\ 
4(\Delta T)_{m;is} + \nabla_a T_{m;bc} \nabla_a \s7_{bcis} \\
    &+ \tfrac12 T_{m;bc} \Big( \nabla_a T_{a;bp} \s7_{pcis} + \nabla_a T_{a;cp} \s7_{bpis}  \\
    & +T_{a;bp} \nabla_a\s7_{pcis} +  T_{a;cp} \nabla_a\s7_{bpis} + T_{a;ip} \nabla_a\s7_{bcps} + T_{a;sp} \nabla_a\s7_{bcip}\Big) \\
    & + 3\nabla_a T_{a;ip} T_{m;ps} + \nabla_a T_{a;sp} T_{m;pi} + 4 T_{q;is}R_{amaq}
- (\nabla_a R_{mais} - \tfrac12 \nabla_a R_{mapq} \Phi_{pqis})\\
    &+ T_{a;qs}R_{amiq} + T_{a;iq}R_{amsq}  
- 2 T_{a;ip}\nabla_a T_{m;sp} + 2 \nabla_a T_{m;ip}T_{a;sp}  + \tfrac18 R_{mapq} \nabla_a \Phi_{pqis}\\
    &  - T_{a;qc}R_{ambq}\Phi_{bcis} + 2 T_{a;bp}\nabla_a T_{m;cp} \Phi_{bcis} 
  - \tfrac{1}{16} R_{mapq} \nabla_a \Phi_{pqbc}\Phi_{bcis}\\
    =&\ 4(\Delta T)_{m;is} + \nabla_a T_{m;bc} \nabla_a \s7_{bcis} \\
    &+ \tfrac12 T_{m;bc} \Big( 2\nabla_a T_{a;bp} \s7_{pcis}  + 2T_{a;bp} \nabla_a\s7_{pcis} + T_{a;ip} \nabla_a\s7_{bcps} + T_{a;sp} \nabla_a\s7_{bcip}\Big) \\
    & + 3\nabla_a T_{a;ip} T_{m;ps} + \nabla_a T_{a;sp} T_{m;pi} + 4 T_{q;is}R_{amaq}
- (\nabla_a R_{mais} - \tfrac12 \nabla_a R_{mapq} \Phi_{pqis})\\
    &+ T_{a;qs}R_{amiq} + T_{a;iq}R_{amsq}  
- 2 T_{a;ip}\nabla_a T_{m;sp} + 2 \nabla_a T_{m;ip}T_{a;sp}  + \tfrac18 R_{mapq} \nabla_a \Phi_{pqis}\\
    &  - T_{a;qc}R_{ambq}\Phi_{bcis} + 2 T_{a;bp}\nabla_a T_{m;cp} \Phi_{bcis} 
  - \tfrac{1}{16} R_{mapq} \nabla_a \Phi_{pqbc}\Phi_{bcis}.
\end{align*}
Inserting
\begin{align*}
\del_m\s7_{ijkl}=T_{m;ip}\s7_{pjkl}+T_{m;jp}\s7_{ipkl}+T_{m;kp}\s7_{ijpl}+T_{m;lp}\s7_{ijkp}    
\end{align*}
into first order terms, we get
\begin{align*}
  4\frac{\partial}{\partial t} T_{m;is} 
    =&\ 
  4(\Delta T)_{m;is} + \nabla_a T_{m;bc} \nabla_a \s7_{bcis} \\
&+ \tfrac12 T_{m;bc} \Big( 2\nabla_a T_{a;bp} \s7_{pcis}  
 + 2T_{a;bp} \nabla_a\s7_{pcis} + T_{a;ip} \nabla_a\s7_{bcps} + T_{a;sp} \nabla_a\s7_{bcip}\Big) \\
& + 3\nabla_a T_{a;ip} T_{m;ps} + \nabla_a T_{a;sp} T_{m;pi} + 4 T_{q;is}R_{amaq}
- (\nabla_a R_{mais} - \tfrac12 \nabla_a R_{mapq} \Phi_{pqis})\\
&+ T_{a;qs}R_{amiq} + T_{a;iq}R_{amsq}  
- 2 T_{a;ip}\nabla_a T_{m;sp} + 2 \nabla_a T_{m;ip}T_{a;sp}  + \tfrac18 R_{mapq} \nabla_a \Phi_{pqis}\\
    &  - T_{a;qc}R_{ambq}\Phi_{bcis} + 2 T_{a;bp}\nabla_a T_{m;cp} \Phi_{bcis} 
  - \tfrac{1}{16} R_{mapq} \nabla_a \Phi_{pqbc}\Phi_{bcis}\\
    =&\ 4(\Delta T)_{m;is} \\
  &+ \nabla_a T_{m;bc} \nabla_a \s7_{bcis} 
+ T_{m;bc} \nabla_a T_{a;bp} \s7_{pcis} + 3\nabla_a T_{a;ip} T_{m;ps} + \nabla_a T_{a;sp} T_{m;pi} \\
    & - 2 T_{a;ip}\nabla_a T_{m;sp} + 2 \nabla_a T_{m;ip}T_{a;sp} + 2 T_{a;bp}\nabla_a T_{m;cp} \Phi_{bcis} \\
    & + \tfrac12 T_{m;bc} \Big( 2T_{a;bp} \nabla_a\s7_{pcis} + T_{a;ip} \nabla_a\s7_{bcps} + T_{a;sp} \nabla_a\s7_{bcip}\Big) \\
    & + 4 T_{q;is}R_{amaq}
- (\nabla_a R_{mais} - \tfrac12 \nabla_a R_{mapq} \Phi_{pqis})\\
    &+ T_{a;qs}R_{amiq} + T_{a;iq}R_{amsq}  
  + \tfrac18 R_{mapq} \nabla_a \Phi_{pqis}  - T_{a;qc}R_{ambq}\Phi_{bcis} 
  - \tfrac{1}{16} R_{mapq} \nabla_a \Phi_{pqbc}\Phi_{bcis}.
\end{align*}
Moreover,
\begin{align*}
i) \quad 
    & \nabla_a T_{m;bc} \nabla_a \s7_{bcis}
    + T_{m;bc} \nabla_a T_{a;bp} \s7_{pcis} + 3\nabla_a T_{a;ip} T_{m;ps} + \nabla_a T_{a;sp} T_{m;pi} \\
    & - 2 T_{a;ip}\nabla_a T_{m;sp} + 2 \nabla_a T_{m;ip}T_{a;sp} + 2 T_{a;bp}\nabla_a T_{m;cp} \Phi_{bcis} \\
    =&\ \nabla_a T_{m;bc}\Big(T_{a;bp}\s7_{pcis}+T_{a;cp}\s7_{bpis}+T_{a;ip}\s7_{bcps}+T_{a;sp}\s7_{bcip} \Big) + T_{m;bc} \nabla_a T_{a;bp} \s7_{pcis} \\
    &+ 3\nabla_a T_{a;ip} T_{m;ps} + \nabla_a T_{a;sp} T_{m;pi} - 2 T_{a;ip}\nabla_a T_{m;sp} + 2 \nabla_a T_{m;ip}T_{a;sp} + 2 T_{a;bp}\nabla_a T_{m;cp} \Phi_{bcis} \\
    =&\ \nabla_a T_{m;bc}\Big(4T_{a;bp}\s7_{pcis}+T_{a;ip}\s7_{bcps}+T_{a;sp}\s7_{bcip} \Big) + T_{m;bc} \nabla_a T_{a;bp} \s7_{pcis} \\
    &+ 3\nabla_a T_{a;ip} T_{m;ps} + \nabla_a T_{a;sp} T_{m;pi} - 2 T_{a;ip}\nabla_a T_{m;sp} + 2 \nabla_a T_{m;ip}T_{a;sp}  ;
\end{align*}
\begin{align*}
ii) \quad 
    & 4 T_{m;bc} T_{a;bp} \nabla_a\s7_{pcis} +  \tfrac12 T_{m;bc}T_{a;ip} \nabla_a\s7_{bcps} + \tfrac12 T_{m;bc} T_{a;sp} \nabla_a\s7_{bcip} \\
    =&\ T_{m;bc} T_{a;bp} \Big(T_{a;pq}\s7_{qcis}+T_{a;cq}\s7_{pqis}+T_{a;iq}\s7_{pcqs}+T_{a;sq}\s7_{pciq} \Big) \\
    &+  \tfrac12 T_{m;bc}T_{a;ip} \Big(T_{a;bq}\s7_{qcps}+T_{a;cq}\s7_{bqps}+T_{a;pq}\s7_{bcqs}+T_{a;sq}\s7_{bcpq} \Big) \\
    & + \tfrac12 T_{m;bc} T_{a;sp} \Big(T_{a;bq}\s7_{qcip}+T_{a;cq}\s7_{bqip}+T_{a;iq}\s7_{bcqp}+T_{a;pq}\s7_{bciq} \Big) \\
    =&\ T_{m;bc} T_{a;bp} \Big(T_{a;pq}\s7_{qcis}+ T_{a;cq}\s7_{pqis}+2T_{a;iq}\s7_{pcqs}+
2T_{a;sq}\s7_{pciq} \Big) \\
    &+  \tfrac12 T_{m;bc}T_{a;ip} \Big(
T_{a;pq}\s7_{bcqs}+
2T_{a;sq}\s7_{bcpq} \Big) 
 + \tfrac12 T_{m;bc} T_{a;sp}
T_{a;pq}\s7_{bciq}.
\end{align*}
Gathering terms yields the Proposition.
\end{proof}

\begin{proposition}
\label{prop:evolnormT}
    If $\{\s7_t\}$ is a solution of the harmonic $\S7$-flow \eqref{eq: Har Spin(7) Flow} and $T$ is its torsion, then the evolution equation for $|T|^2$ is
\begin{align*}
  2\frac{\partial}{\partial t} |T|^2 &= 2\Delta |T|^2 - 4 |\nabla T|^2 
   + 16 T_{a;bp} T_{m;bc} T_{a;pq} T_{m;qc} 
   + 16 T_{a;bp} T_{m;bc} T_{a;cq} T_{m;pq} \\
  & \quad  + 16 T_{a;qs} T_{m;is} R_{amiq} 
  + 4 T_{q;is} T_{m;is} R_{amaq} 
  -4 T_{m;is} \nabla_a R_{mais}. 
\end{align*}
\end{proposition}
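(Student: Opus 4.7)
The plan is to contract the evolution equation for $T_{m;is}$ from the preceding proposition with $2T_{m;is}$, noting that $2\frac{\partial}{\partial t}|T|^2 = 4T_{m;is}\frac{\partial T_{m;is}}{\partial t}$. The Laplacian term is handled by the standard Bochner identity $2T_{m;is}\Delta T_{m;is} = \Delta|T|^2 - 2|\nabla T|^2$, which accounts for the first two terms on the right-hand side. It then remains to show that the many remaining terms collapse to the stated quartic-$T$ and curvature expressions.

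The strategy for the $\Phi$-contracted terms is to exploit repeatedly the membership $T_{m;\cdot\cdot}\in\Omega^2_7$, which by \eqref{eq:27decomp2} gives $T_{m;bc}\Phi_{bcis} = -6\,T_{m;is}$ and, by symmetry of $\Phi$, also $T_{m;is}\Phi_{pcis} = -6\,T_{m;pc}$. Applied to the three terms $\nabla_a T_{m;bc}\bigl(4T_{a;bp}\Phi_{pcis} + T_{a;ip}\Phi_{bcps} + T_{a;sp}\Phi_{bcip}\bigr)$ and to $-2T_{a;ip}\nabla_a T_{m;sp} + 2\nabla_a T_{m;ip}T_{a;sp}$, this reduces each to $T\ast\nabla T\ast T$ expressions in which the $\nabla T$-factor has no free index contracted with $\Phi$. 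These should then either cancel in pairs by antisymmetry of $T$ in its last two indices, or recombine with divergence terms.

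For the divergence-of-torsion terms $T_{m;bc}\nabla_a T_{a;bp}\Phi_{pcis}$, $3\nabla_a T_{a;ip}T_{m;ps}$ and $\nabla_a T_{a;sp}T_{m;pi}$, the key tool is the Bianchi-type identity \eqref{spin7bianchi}, contracted and traced as necessary. This converts each $\nabla_a T_{a;\cdot\cdot}$ into a derivative in a different direction (cancelling the remaining $\nabla T$-pieces from step two) plus quartic torsion terms $T\ast T\ast T\ast T$ and curvature contributions $R$ and $R\ast\Phi$. Careful bookkeeping should show that the quartic residuals, once added to the quartic $T$-terms already present in the evolution equation, combine to exactly $16\,T_{a;bp}T_{m;bc}T_{a;pq}T_{m;qc} + 16\,T_{a;bp}T_{m;bc}T_{a;cq}T_{m;pq}$, after repeated use of the contraction identities \eqref{eq:impiden1}--\eqref{eq:impiden4} to absorb the $\Phi$-factors.

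The remaining curvature terms must reduce to $16\,T_{a;qs}T_{m;is}R_{amiq} + 4\,T_{q;is}T_{m;is}R_{amaq} - 4\,T_{m;is}\nabla_a R_{mais}$. Here the manipulation of $T_{m;is}R_{mapq}\nabla_a\Phi_{pqis}$ and $T_{m;is}R_{mapq}\nabla_a\Phi_{pqbc}\Phi_{bcis}$ requires expanding $\nabla\Phi$ via \eqref{Tdefneqn} (producing further $R\ast T\ast T$ terms that merge with the quartic count) and using \eqref{rmspin7}, i.e.\ $R_{ijkl}\Phi_{ajkl}=0$, to eliminate the spurious $R\ast\Phi$ pieces; the term $\tfrac12 \nabla_a R_{mapq}\Phi_{pqis}$ from $\pi_7$ disappears for the same reason upon contraction with $T_{m;is}\in\Omega^2_7$ (via the Bianchi identity \eqref{eq: riem2ndBid}). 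The main obstacle will be the combinatorial accounting of the $\Omega^2_7$-contractions with $\Phi$: every $\Phi$ that survives after applying $T_{m;bc}\Phi_{bcis}=-6T_{m;is}$ must be re-expressed through the contraction identities, and one has to verify that the resulting coefficients genuinely consolidate into the precise numerical factors $16$, $16$, $16$, $4$ and $-4$ of the statement, with all residuals cancelling.
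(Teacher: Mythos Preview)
Your approach is correct and is precisely the (implicit) route the paper takes: Proposition~\ref{prop:evolnormT} is stated without proof immediately after the evolution equation for $T_{m;is}$, so the intended argument is exactly to contract that equation with $2T_{m;is}$ and simplify using the $\Omega^2_7$ identities and the contraction formulas \eqref{eq:impiden1}--\eqref{eq:impiden4}.

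One simplification worth noting: you invoke the Bianchi-type identity \eqref{spin7bianchi} to handle the divergence terms $3\nabla_a T_{a;ip}T_{m;ps}$, $\nabla_a T_{a;sp}T_{m;pi}$ and $T_{m;bc}\nabla_a T_{a;bp}\Phi_{pcis}$, but this is unnecessary. After contraction with $T_{m;is}$ (and, for the third term, after applying $T_{m;is}\Phi_{pcis}=-6T_{m;pc}$), each of these becomes a pairing of $\nabla_a T_{a;\cdot\cdot}\in\Omega^2_7$ (hence antisymmetric) with a symmetric quantity of the form $T_{m;\cdot s}T_{m;\cdot s}$, and so vanishes outright---exactly as in the proof of Lemma~\ref{lemma:E1var}. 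The remaining $T\ast T\ast\nabla T$ terms then cancel among themselves by the same symmetry mechanism, rather than ``recombining with divergence terms''. This shortens the bookkeeping considerably but does not change the structure of your argument.
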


\begin{remark}
    Even though we obtained the doubling-time estimate in Corollary~\ref{dteforif} and the Shi-type estimates in Corollary~\ref{shitypeest} for \eqref{eq: Har Spin(7) Flow} as a consequence of the general theory for \eqref{eq: HSF}, we could also derive those estimates from Proposition~\ref{prop:evolnormT} and the maximum principle, as one does for the $\G2$ case in \cite[Prop. 3.2 \& Thm. 3.3]{dgk-isometric}. Hopefully the general theory will be able to spare people the effort of similar lengthy computations in the future. 
\end{remark}

We also have the following local derivative estimates for the torsion tensor along \eqref{eq: Har Spin(7) Flow}. The proof is similar to \cite[Theorem 3.7]{dgk-isometric} so we omit the details. We first define the parabolic cylinder of radius $r$ centred at $(x_0, t_0)$ as
\begin{align*}
P_r(x_0, t_0) = \{ (x,t)\in M\times \bR \mid d(x, x_0)\leq r,\ t_0-r^2\leq t\leq t_0\}.     
\end{align*}

\begin{corollary}[Local derivative estimates]\label{localderest}
Let $\s7(t)$ be a solution to \eqref{eq: Har Spin(7) Flow} on $M^8$. Let $x_0\in M$ and $t_0\geq 0$ such that $\s7(t)$ is defined at least up to $t_0$. There exist constants $s>0$ and $C_m$ for all $m\geq 1$ such that if $|T|\leq K$ and $|\del^j \Riem|\leq B_jK^{2+j}$ for all $j\geq 0$ in some parabolic cylinder $P_r(x_0, t_0)$ with $r\leq s$ and $K\geq \frac{1}{r^2}$, then we have
\begin{align}\label{eq:localderest}
|\del^mT|\leq C_mK^{m+1}    
\end{align}
in the smaller parabolic cylinder $P_{\frac{r}{2^m}}(x_0, t_0)$.
\end{corollary}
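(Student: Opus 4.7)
The plan is to follow the classical Bernstein technique adapted to the parabolic setting, mirroring the argument of \cite[Thm.~3.7]{dgk-isometric} but using the evolution equations specific to \eqref{eq: Har Spin(7) Flow}. The key input is the schematic form of $\partial_t \nabla^m T$, which can be read off from Proposition~\ref{prop:evolnormT} and the induction step in the proof of Proposition~\ref{Prop:Shi-estimates}: setting $T=d^\cV\sigma$ with $\sigma=\sigma_\Phi$, the evolution of $\nabla^m T$ is controlled by $\Delta\nabla^m T$, by terms of the form $\sum_{a+b+c=m}\nabla^a T*\nabla^b T*\nabla^c T$ and by curvature terms $\sum_{a+b=m}\nabla^a T*\nabla^b \Riem$. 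Under the pointwise hypotheses $|T|\leq K$ and $|\nabla^j\Riem|\leq B_j K^{2+j}$, this will take the schematic form
\begin{equation*}
    \frac{\partial}{\partial t}|\nabla^m T|^2 \leq \Delta|\nabla^m T|^2 - 2|\nabla^{m+1}T|^2 + C\sum_{a+b=m}|\nabla^a T|\,|\nabla^b T|\,|\nabla^m T|\cdot K + C K^{m+3}|\nabla^m T|,
\end{equation*}
in which all combinatorial constants depend on $(M,g)$ only.

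Next I would introduce a family of smooth parabolic cutoff functions $\eta_k\in C^\infty_c(P_r(x_0,t_0))$, with $\eta_k\equiv 1$ on $P_{r/2^k}(x_0,t_0)$ and satisfying the standard bounds $|\nabla\eta_k|\leq C/r$, $|\Delta\eta_k|+|\partial_t\eta_k|\leq C/r^2$. The hypothesis $K\geq 1/r^2$ is crucial here, as it allows us to absorb every cutoff-derivative factor into the $K$-powers. The base step $m=1$ follows the pattern of the global Shi estimate: set
\begin{equation*}
    F_1 := \eta_1^2\,|\nabla T|^2 + \beta\,|T|^2,
\end{equation*}
for a large constant $\beta$, compute $(\partial_t-\Delta)F_1$, and absorb the cross terms $\nabla\eta_1\cdot\nabla|\nabla T|^2$ and the bad lower-order terms by Young's inequality, using the $-2|\nabla^2 T|^2$ reaction term and the $-2\beta|\nabla T|^2$ coming from $|T|^2$. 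The parabolic maximum principle, together with the initial bound $F_1\leq \beta K^2$ on $\{t=t_0-r^2\}\cap\operatorname{supp}\eta_1$, then gives $\sup_{P_{r/2}}|\nabla T|^2\leq C K^4$, whence $|\nabla T|\leq C_1 K^2$.

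For the induction step, assume $|\nabla^j T|\leq C_j K^{j+1}$ on $P_{r/2^j}(x_0,t_0)$ for all $j\leq m-1$. On the cylinder $P_{r/2^m}$, consider
\begin{equation*}
    F_m := \eta_m^2\,|\nabla^m T|^2 + \sum_{j=0}^{m-1} \beta_j\,\eta_{m-1-j}^{2(m-j)}\,|\nabla^{j} T|^2,
\end{equation*}
with the coefficients $\beta_j$ to be chosen inductively large enough to absorb the bad terms, exactly as in the Shi-type argument. Computing $(\partial_t-\Delta)F_m$, the schematic evolution equation for $\nabla^m T$ produces terms that are either quadratic in $|\nabla^m T|$ (absorbed by $-2|\nabla^{m+1}T|^2$ via Young) or of lower order (absorbed by $-\beta_j|\nabla^{j+1}T|^2$ coming from differentiating $|\nabla^j T|^2$). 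The induction hypothesis turns every lower-order term into an explicit polynomial in $K$, and one checks by Young's inequality and $K\geq 1/r^2$ that the net inequality is of the form
\begin{equation*}
    \frac{\partial F_m}{\partial t} \leq \Delta F_m + C_m K^{2m+2}.
\end{equation*}
The parabolic maximum principle applied on $P_{r/2^m}$, together with $F_m|_{t=t_0-r^2}\leq C K^{2m+2}$ coming from the hypothesis $K\geq 1/r^2$ and the induction hypothesis, then yields $\sup_{P_{r/2^m}} \eta_m^2|\nabla^m T|^2 \leq C_m K^{2(m+1)}$, i.e. \eqref{eq:localderest}.

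The main technical obstacle is the bookkeeping in the induction step: one must choose the weights $\beta_j$ and cutoff exponents so that (i) the positive $|\nabla^{j+1} T|^2$ terms produced by $(\partial_t-\Delta)$ on $\beta_j\eta_{m-1-j}^{2(m-j)}|\nabla^j T|^2$ dominate the bad $|\nabla^{j+1} T|^2$ terms from the $j$-th layer, and (ii) the cutoff-derivative contributions $|\nabla\eta_{m-1-j}|\cdot|\nabla^{j+1}T|\cdot|\nabla^j T|$ can be controlled by the $K\geq 1/r^2$ hypothesis. This is routine but notationally heavy; since it reproduces verbatim the structure of \cite[Thm.~3.7]{dgk-isometric}, we omit the explicit verification and refer to that reference.
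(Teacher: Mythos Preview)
Your proposal is correct and follows essentially the same approach as the paper: the paper itself omits the proof entirely, stating only that ``The proof is similar to \cite[Theorem 3.7]{dgk-isometric} so we omit the details,'' and your sketch of the localised Bernstein technique with parabolic cutoffs and layered auxiliary functions is precisely the argument of that reference, adapted to the $\S7$-evolution equations.
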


\subsection{Compactness}

\label{sec:compactness}
We want to prove a Hamilton-type compactness theorem for the solutions of \eqref{eq: Har Spin(7) Flow}. Apart from aiding in the understanding of singularity formation, it will also be useful in the applications of the monotonicity formula. 

\subsubsection{Cheeger-Gromov compactness  for $\S7$-structures}

We first prove a very general Cheeger-Gromov type compactness theorem for $\S7$-structures which, while believed by experts, has not appeared in the literature yet. 

\begin{definition}
\label{def:C-Gcomp}
    Let $\{M_i\}$ be a sequence of $8$-manifolds and let $p_i\in M_i$, for each $i$. Let $\s7_i$ be a $\S7$-structure on $M_i$ such that the associated Riemannian metric $g_i$ on $M_i$ are complete, for each $i$. Let $M$ be an $8$-manifold with $p\in M$ and $\s7$ a $\S7$-structure on $M$. We say that
\begin{align*}
(M_i, \s7_i, p_i) \rightarrow (M, \s7, p)\ \ \ \ \textup{as}\ i\rightarrow \infty    
\end{align*}
if there exists a sequence of compact subsets $\Omega_i\subset M$ exhausting $M$ with $p_i\in \textup{int}(\Omega_i)$ for each $i$ and a sequence of diffeomorphisms $F_i:\Omega_i\rightarrow F_i(\Omega_i)\subset M_i$ with $F_i(p)=p_i$ such that on every compact subset $K$ of $M$ and for every $\varepsilon >0$, there exists $i_0=i_0(\varepsilon)$ such that for $i\geq i_0$,
\begin{align*}
\underset{x\in K}{\textup{sup}}\ |\del^k(F_i^*\s7_i-\s7)|_{g_0} <\varepsilon,\ \ \ \ \ \ \ \ \textup{for}\ k\geq 0,   
\end{align*}
uniformly on $K$. Here the covariant derivative is with respect to any fixed metric $g_0$ on $M$. 
\end{definition}

The proof of the following compactness theorem is similar in spirit to the compactness theorems for complete pointed Riemannian manifolds \cite[Theorem 2.3]{hamilton-compactness}, and for $\G2$-structures by Lotay--Wei \cite[Theorem 7.1]{lotay-wei-gafa}.

\begin{theorem}
\label{thm:cptthm}
    Let $(M_i^8,\Phi_i,g_i)$ be a sequence of smooth manifolds with $\S7$-structures and with the induced Riemannian metrics, and suppose that, for each $i$, we have $p_i\in M_i$ and $g_i$ is complete. Suppose further, that
\begin{align}
\label{eq:cptpf1}
    \underset{i}{\textup{sup}}\ \underset{x\in M_i}{\textup{sup}}\ \left( |\del^k_{g_i}\Riem_{g_i}(x)|^2_{g_i} + |\del^{k+1}_{g_i}T_{i}(x)|^2_{g_{i}}   \right)^{\frac 12} < \infty,
    \quad\forall \ k\geq 0,
\end{align}
    where $T_i$, $\Riem _{g_i}$ are the torsion of $\s7_i$ and the curvature of $g_i$, respectively, and the injectivity radius of $(M_i, g_i)$ at $p_i$ satisfies
\begin{align*}
    \textup{inf}\ inj(M_i, g_i, p_i)>0,
    \quad\forall\  i.
\end{align*}
Then there exist a manifold $(M^8,\Phi)$ with a $\S7$-structure and a point $p\in M$ such that, subsequentially,
\begin{align*}
 (M_i, \s7_i, p_i) \rightarrow (M, \s7, p),
 \quad as\ i\rightarrow \infty.   
\end{align*}
\end{theorem}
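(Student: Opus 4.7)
The plan is classical in two stages: first extract a Cheeger--Gromov limit at the Riemannian level, then upgrade the convergence to the level of $\S7$-structures. The uniform bounds on $|\del^k \Riem_{g_i}|$ from \eqref{eq:cptpf1} together with the positive lower bound on $\operatorname{inj}(M_i,g_i,p_i)$ fulfill the hypotheses of Hamilton's compactness theorem \cite[Theorem~2.3]{hamilton-compactness} applied to the sequence $(M_i,g_i,p_i)$. This produces a complete pointed smooth $8$-manifold $(M,g,p)$, a nested exhaustion $\Omega_i\subset M$ with $p\in\operatorname{int}(\Omega_i)$, and smooth embeddings $F_i:\Omega_i\to M_i$ with $F_i(p)=p_i$, such that $\tilde g_i := F_i^* g_i \to g$ in $C^\infty_{\mathrm{loc}}(M)$.

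Next, set $\tilde\s7_i := F_i^* \s7_i$ on $\Omega_i$. The pointwise constancy $|\s7_i|^2=336$ (cf.~\eqref{eq:impiden4}) together with the defining identity $\del \s7_i = T_i\diamond\s7_i$ from \eqref{Tdefneqn}, combined with inductive differentiation and the bounds \eqref{eq:cptpf1}, yield uniform local $C^k$ bounds on $\tilde\s7_i$ with respect to $\tilde g_i$ for every $k\geq 0$. Since $\tilde g_i\to g$ in $C^\infty_{\mathrm{loc}}$, these bounds translate to uniform bounds with respect to any fixed background connection on $M$ over compact subsets. An Arzel\`a--Ascoli and diagonal argument then extracts a subsequence, still denoted $\{\tilde\s7_i\}$, converging in $C^\infty_{\mathrm{loc}}(M,\Lambda^4 T^*M)$ to some smooth $4$-form $\s7$.

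The main obstacle is verifying that the limit $\s7$ is admissible and induces $g$, since --- unlike positive $3$-forms in the $\G2$ setting --- the admissible $\S7$-locus is a $43$-dimensional $GL(8,\bR)$-orbit inside $\Lambda^4 \bR^8$ rather than an open set, so admissibility is not automatic under $C^0$-limits. Let $\mathcal{O}\subset\Lambda^4 (T_xM)^*$ denote this orbit at a point $x\in M$. The topological closure of $\mathcal{O}$ consists of $\mathcal{O}$ itself together with degenerate $4$-forms for which the tensor $B_{ij}(v)$ of \eqref{eq:metric} becomes singular, i.e. forms inducing a degenerate ``metric''. Since each $\tilde\s7_i(x)\in\mathcal{O}$ induces $\tilde g_i(x)$, and the latter converges to the positive-definite $g(x)$, the limit $\s7(x)$ cannot lie in the degenerate boundary. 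Consequently $\s7(x)\in\mathcal{O}$ and, by continuity of the algebraic formula \eqref{eq:metric} on $\mathcal{O}$, $g_\s7(x)=g(x)$. This establishes the convergence $(M_i,\s7_i,p_i)\to(M,\s7,p)$ in the sense of Definition~\ref{def:C-Gcomp}. Alternatively, admissibility may be concluded through the spinorial description \cite[\textsection 2]{martin-merchan}, identifying $\S7$-structures inducing a fixed metric with unit spinors and exploiting the continuity of this identification under $C^\infty$-convergence.
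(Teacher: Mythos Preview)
Your proposal is correct and follows the same two-stage strategy as the paper: first Hamilton's Cheeger--Gromov compactness for the metrics, then uniform $C^k$ bounds on the pulled-back $4$-forms via $|\s7|^2=336$, the relation $\del\s7=T\diamond\s7$, the torsion bounds, and the comparison of the pulled-back connections with the limit connection, followed by Arzel\`a--Ascoli and a diagonal argument.

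The only substantive difference is in how admissibility of the limit $4$-form is verified. You argue via the orbit geometry of $\Lambda^4\bR^8$ under $GL(8,\bR)$, asserting that the boundary of the admissible orbit consists of degenerate forms, and that non-degeneracy of the limit metric excludes this. The paper instead passes the polynomial identity
\[
(u\lrcorner v\lrcorner \s7_{i,j})\wedge (u\lrcorner w\lrcorner \s7_{i,j}) \wedge \s7_{i,j} = 6\bigl(g_{i,j}(u,u)g_{i,j}(v,w)-g_{i,j}(u,v)g_{i,j}(u,w)\bigr)\vol_{i,j}
\]
directly to the limit; since this is polynomial in $\s7$ and the right-hand side converges to the corresponding expression for the positive-definite limit metric $g$, the limit $4$-form is admissible with induced metric $g$. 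The paper's route is more self-contained, as it avoids having to justify the orbit-closure claim (which is true but not entirely trivial), whereas your orbit argument and your spinorial alternative are a bit more conceptual. All three rest on the same underlying algebraic fact, so the difference is one of presentation rather than content.
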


\begin{proof}
Throughout the proof, we will continue to use the index $i$, even after taking a subsequence. By \eqref{eq:cptpf1}, we have a uniform bound on the Riemann curvature tensor, and the injectivity radius is positive, so the Cheeger--Gromov compactness theorem for complete pointed Riemannian manifolds \cite[Theorem 2.3]{hamilton-compactness} yields a complete Riemannian manifold $(M^8,g)$ and a point $p\in M$ such that, after passing to a subsequence,
\begin{align}
\label{eq:cptpf2}
    (M_i, g_i, p_i) \rightarrow (M, g, p),
    \quad \textup{as}\ i\rightarrow \infty.
\end{align}
Explicitly, there exist nested compact sets $\Omega_i\subset M$ exhausting $M$, with $p\in \textup{int}(\Omega_i)$ for each $i$, and diffeomorphisms $F_i:\Omega_i\rightarrow F_i(\Omega_i)\subset M_i$ with $F_i(p)=p_i$, such that, on any compact subset $K\subset M$  and for every $\varepsilon >0$, there exists $i_0=i_0(\varepsilon)$ for which the following hold uniformly on $K$:
\begin{align*}
    \underset{x\in K}{\textup{sup}}\ |\del^k(F_i^*g_i-g)|_{g_0} <\varepsilon,
    \qforq k\geq 0  
    \qandq i\geq i_0,
\end{align*}

Fixing $i$ sufficiently large, let us work on the compact subset $\Omega_i\subset M$. Since $\{\Omega_i\}$ is nested, for each  $j\geq 0$ we have $\Omega_i\subset \Omega_{i+j}$ and a diffeomorphism $F_{i+j}:\Omega_{i+j}\rightarrow F_{i+j}(\Omega_{i+j})\subset M_{i+j}$. Define the restricted diffeomorphism 
\begin{align*}
    F_{i,j}=\left.F_{i+j}\right|_{\Omega_i} :\Omega_i\rightarrow F_{i+j}(\Omega_i)\subset M_{i+j},
\quad\forall\  j\geq 0.
\end{align*}
The convergence in \eqref{eq:cptpf2} implies that the sequence $\{g_{i,j}=F_{i,j}^*g_{i+j} \}_{j=0}^{\infty}$ of Riemmanian metrics on $\Omega_i$ converges to $g_{i, \infty}=g$ on $\Omega_i$, as $j\rightarrow \infty$.

Let $\del,\ \del_{i,j}$ be the Levi-Civita connections of $g$ and $g_{i,j}$ on $\Omega_i$, respectively. Let $h=g-g_{i,j}$ and $A=\del-\del_{i,j}$ be the difference of the metrics and their connections on $\Omega_i$, respectively. Then locally,
\begin{align*}
A^c_{ab}=\frac{1}{2}(g_{i,j})^{cd}(\del_ah_{bd}+\del_bh_{ad}-\del_dh_{ab}).     
\end{align*}
Since $g_{i,j}\rightarrow g$ smoothly on $\Omega_i$, as $j\rightarrow \infty$, $g_{i,j}$ and $g$ are equivalent for large enough $j$,  and hence we can take the norms with respect to either of these metrics. Moreover, $|\del^kh|_g\rightarrow 0$ as $j\rightarrow \infty$, for all $k\geq 0$. Hence, $A$ is uniformly bounded with respect to $g$ for all large $j$. Since $\del g=0$,
\begin{align*}
    \del^kA^c_{ab}&=\frac{1}{2}\sum_{l=1}^k \del ^{(k+1-l)}(g_{i,j})^{cd}(\del^l\del_ah_{bd}+\del^l\del_bh_{ad}-\del^l\del_dh_{ab}) \\
&=-\frac 12 \sum_{l=1}^k \del ^{(k+1-l)}(g^{cd}-(g_{i,j})^{cd})(\del^l\del_ah_{bd}+\del^l\del_bh_{ad}-\del^l\del_dh_{ab}). 
\end{align*}
Thus there exist constants $c_k$ for $k\geq 0$ such that $|\del^kA|_g\leq c_k$ for all $j\geq 0$.

Using the diffeomorphisms $F_{i,j}$, we can define $\S7$-structures $\s7_{i,j}=F^*_{i,j}\s7_{i+j}$ on $\Omega_i$ by pulling back the $\S7$-structure $\s7_{i+j}$ on $M_{i+j}$. We next estimate $|\del^k\s7_{i,j}|_g$. Since $g$ and $g_{i,j}$ are equivalent for large $j$, $|\s7_{i,j}|_g\leq c_0|\s7_{i,j}|_{g_{i,j}}\leq 8c_0$ for some constant $c_0$. Next, observe trivially that
\begin{align*}
\del \s7_{i,j}=\del_{i,j}\s7_{i,j}+(\del-\del_{i,j})\s7_{i,j},    
\end{align*}
and since $A$ is uniformly bounded, there is a constant $c_1$ such that 
\begin{align*}
|\del \s7_{i,j}|_g\leq c_0|\del_{i,j}\s7_{i,j}|_{g_{i,j}}+C|A|_g |\s7_{i,j}|_g\leq c_1. 
\end{align*}
Similarly, for $\del^2{\s7_{i,j}}$, we have
\begin{align*}
\del^2\s7_{i,j}=\del^2_{i,j}\s7_{i,j}+(\del-\del_{i,j})\del_{i,j}\s7_{i,j}+(\del(\del-\del_{i,j}))\s7_{i,j},    
\end{align*}
and so, since $A$, $\del A$ are uniformly bounded, there is a constant $c_2$ such that
\begin{align*}
|\del^2\s7_{i,j}|_{g}\leq C|\del^2_{i,j}\s7_{i,j}|_{g_{i,j}}+C|A|_g|\del_{i,j}\s7_{i,j}|_g+C|\del A|_g|\s7|_g \leq c_2.    
\end{align*}
For $k\geq 2$, we have the estimate
\begin{align*}
|\del^k \s7_{i,j}|_g\leq C\sum_{l=0}^k|A|^l_g|\del^{k-l}_{i,j}\s7_{i,j}|_{g_{i,j}}+C\sum_{l=1}^{k-1}|\del^lA|_g|\del^{k-1-l}\s7_{i,j}|_g.     
\end{align*}
By the assumption \eqref{eq:cptpf1}, covariant derivatives of the torsion $T_i$ of all orders are uniformly bounded, so using \eqref{eq:Texpress} and the estimate $|\del^kA|_g\leq c_k$, by an induction argument, we can show the existence of constants $c_k$ for $k\geq 0$ such that $|\del^k\s7_{i,j}|_g\leq c_k$ on $\Omega_i$ for all $j, k\geq 0$. 

Since we have uniform bounds on covariant derivatives of all orders of $\s7_{i,j}$, the Arzel\`a-Ascoli Theorem (see eg. \cite[Corollary 9.14]{andrews-hopper}) implies that there exist a $4$-form $\s7_{i, \infty}$ and a subsequence of $\s7_{i,j}$ in $j$, which we still denote by $\s7_{i,j}$, that converges to $\s7_{i, \infty}$ smoothly on $\Omega_i$. In other words,
\begin{align}
\label{eq:cptpf3}
    |\del^k(\s7_{i,j}-\s7_{i, \infty})|_g\rightarrow 0, 
    \quad \textup{as}\ j\rightarrow \infty, 
\end{align}
uniformly on $\Omega_i$, for all $k\geq 0$.

We prove that $\s7_{i, \infty}$ is a $\S7$-structure on $\Omega_i$. Since each $\s7_{i,j}$ is a $\S7$-structure on $\Omega_i$ with associated metric $g_{i,j}$, we have the following relation. Let $u, v, w$ be any vector fields on $\Omega_i\subset M$, then
\begin{align*}
    (u\lrcorner v\lrcorner \s7_{i,j})\wedge (u\lrcorner w\lrcorner \s7_{i,j}) \wedge \s7_{i,j} = 6(g_{i,j}(u,u)g_{i,j}(v,w)-g_{i,j}(u,v)g_{i,j}(u,w))\vol_{i,j}.     
\end{align*}
Letting $j\rightarrow \infty$ in the previous equation gives
\begin{align}
\label{eq:cptpf4}
    (u\lrcorner v\lrcorner \s7_{i,\infty})\wedge (u\lrcorner w\lrcorner \s7_{i,\infty}) \wedge \s7_{i,\infty} = 6(g_{i,\infty}(u,u)g_{i,\infty}(v,w)-g_{i,\infty}(u,v)g_{i,\infty}(u,w))\vol_{i,\infty}.    
\end{align}
Since, by the Cheeger-Gromov compactness Theorem, the limit $g_{i, \infty}$ is a Riemannian metric on $\Omega_i$ and $\vol_{i, \infty}$ is its volume form, \eqref{eq:cptpf4} implies that $\s7_{i, \infty}$ is an admissible or a non-degenerate $4$-form and hence it is a $\S7$-structure on $\Omega_i$ with the associated metric $g_{i, \infty}=g$.

We now go from $\Omega_i$ to $M$. Since $\{\Omega_i\}$ is nested, we denote the inclusion map of $\Omega_i$ into $\Omega_n$ by
\begin{align*}
    I_{in}:\Omega_i\rightarrow \Omega_n,
    \qforq n\geq i.     
\end{align*}
As before, for each $\Omega_n$, we have $g_{n,j}$, $\s7_{n,j}$ which, after passing to a subsequence, converge to $g_{n, \infty}$, $\s7_{n, \infty}$ respectively as $j\rightarrow \infty$. By definition,
\begin{align*}
    I^*_{in}g_{n,j}=g_{i,j}
    \qandq 
    I^*_{in}\s7_{n,j}=\s7_{i,j}.  
\end{align*}
Note that the pull-back $I^*_{in}$ is independent of $j$, hence by taking $j\rightarrow \infty$ in the above expression, we get
\begin{align}
\label{eq:cptpf5}
    I^*_{in}g_{n,\infty}= g_{i,\infty}
    \qandq  
    I^*_{in}\s7_{n,\infty}=\s7_{i,\infty}.  
\end{align}
Thus, from \eqref{eq:cptpf5}, there exists a $4$-form $\s7$ on $M$, which is a $\S7$-structure with associated metric $g$ such that
\begin{align}\label{eq:cptpf6}
I^*_ig=g_{i, \infty}\qandq  I^*_{i}\s7 = \s7_{i, \infty}, 
\end{align}
where $I_i:\Omega_i\rightarrow M$ is the inclusion map.

Finally, we show that $(M_i, \s7_i, p_i)$ converges to $(M, \s7, p)$. Let $K$ be any compact subset of $M$. Since $\{\Omega_i\}$ exhausts $M$, there exists $i_0$ such that $K$ is contained in $\Omega_i$ for all $i\geq i_0$. Fix an $i$ such that $K\subset \Omega_i$. Then by \eqref{eq:cptpf3}, on $K$ we have
\begin{align*}
|\del^k(F^*_{l}\s7_{l}-\s7)|_g &= |\del^k(F^*_{i+j}\s7_{i+j}-\s7)|_g,\ \ \ \ \textup{where}\ l=i+j, \\
&=|\del^k(\s7_{i,j}-\s7_{i, \infty})|_g \rightarrow 0\ \ \ \ \textup{as}\ l\rightarrow \infty
\end{align*}
for all $k\geq 0$. This completes the proof.
\end{proof}
We note that, if all the metrics in the sequence $(M_i, \s7_i, g_i)$ are the same, then the limiting $\S7$-structure $\s7$ induces the same metric. 

\subsubsection{Compactness for the harmonic $\S7$-flow}

We now state and prove the compactness theorem for solutions of the harmonic $\S7$-flow \eqref{eq: Har Spin(7) Flow}.
\begin{theorem}
\label{thm:flowcptthm}
    Let $M_i$ be a sequence of compact $8$-manifolds and let $p_i\in M_i$ for each $i$. Let $\{\s7_i(t)\}$ be a sequence of solutions to the harmonic $\S7$-flow \eqref{eq: Har Spin(7) Flow} on $M_i$ for $t\in (a,b)$, with $-\infty\leq a<0<b\leq \infty$. Suppose that 
\begin{align}
\label{eq:fcptthm1}
    \underset{i}{\textup{sup}}\ \underset{x\in M_i, t\in (a,b)}{\textup{sup}} |T_i(x,t)|_{g_i} < \infty,
\end{align}
    where $T_i$ denotes the torsion of $\s7_i(t)$, and the injectivity radius satisfies
\begin{align}
\label{eq:fcptthm2}
    \underset{i}{\textup{inf}}\ inj (M_i, g_i(0), p_i) >0.    
\end{align}
    Suppose further that there are uniform constants $C_k$, for all $k \geq 0$, such that
\begin{align}
\label{eq:fcptthm3}
    \underset{i}{\textup{sup}}\ |\del^k\Riem_i|_{g_i} \leq C_k.    
\end{align}
    Then there exist an $8$-manifold $M$, a point $p\in M$ and a solution $\s7(t)$ of the flow \eqref{eq: Har Spin(7) Flow} on $M$ for $t\in (a,b)$ such that, after passing to a subsequence, 
\begin{align*}
    (M_i, \s7_i(t), p_i) \longrightarrow (M, \s7(t), p),
    \quad \textup{as } i\rightarrow \infty.    
\end{align*}
\end{theorem}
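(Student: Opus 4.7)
The plan is to mimic Hamilton's Cheeger--Gromov-type compactness theorem for the Ricci flow \cite{hamilton-compactness}, with the static Cheeger--Gromov result from Theorem \ref{thm:cptthm} playing the role of the classical metric version, and the Shi-type estimates of Corollary \ref{shitypeest} supplying uniform bounds on all derivatives of torsion. The argument will proceed in three main steps: (i) upgrading the pointwise torsion bound \eqref{eq:fcptthm1} to uniform bounds on all covariant derivatives of $T_i$ at every interior time; (ii) invoking Theorem \ref{thm:cptthm} at $t=0$ to extract a base limit $(M,\s7(0),p)$ together with diffeomorphisms $F_i:\Omega_i\to F_i(\Omega_i)\subset M_i$; and (iii) pulling back the sequence of flows via $F_i$ and extracting a subsequential smooth space--time limit on $M\times(a,b)$ which solves \eqref{eq: Har Spin(7) Flow}.

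For (i), I would fix any compact subinterval $[\alpha,\beta]\Subset(a,b)$ and set $K:=\sup_i\sup_{M_i\times(a,b)}|T_i|_{g_i}<\infty$ by \eqref{eq:fcptthm1}; increasing $K$ if necessary one may assume $K\geq 1$ and hence $|\del^j \Riem_i|\leq B_j K^{j+2}$ for suitable $B_j$, thanks to \eqref{eq:fcptthm3}. Choosing $t_\ast\in(a,\alpha)$ with $\alpha-t_\ast\leq 1/K^4$ (subdividing into shorter sub-intervals if necessary, which remains uniform in $i$ since $K$ is), Corollary \ref{shitypeest} applied to the time-translated flows furnishes constants $C_m=C_m(g,K,\alpha-t_\ast)$ with
\[
|\del^m T_i(x,t)|_{g_i}\leq C_m, \qquad \forall\,(x,t)\in M_i\times[\alpha,\beta],
\]
uniformly in $i$, for every $m\geq 0$. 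This verifies the bounded-geometry hypotheses of Theorem \ref{thm:cptthm} at every interior time.

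For (ii), applying Theorem \ref{thm:cptthm} to $(M_i,\s7_i(0),p_i)$ yields the base limit $(M,\s7(0),p)$, an exhaustion $\{\Omega_i\}$ of $M$, and diffeomorphisms $F_i:\Omega_i\to F_i(\Omega_i)\subset M_i$ such that $F_i^\ast\s7_i(0)\to\s7(0)$ and $F_i^\ast g_i\to g$ smoothly on compact subsets. Defining the pulled-back flows $\widetilde{\s7}_i(t):=F_i^\ast\s7_i(t)$, these satisfy \eqref{eq: Har Spin(7) Flow} on $\Omega_i\times(a,b)$ with respect to the time-independent metric $F_i^\ast g_i$.

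For (iii), I would combine the spatial bounds from step (i) with the flow equation $\partial_t\s7=\Div T\diamond\s7$ to trade time derivatives for iterated spatial expressions in $T$ and $\Riem$, thereby yielding uniform bounds on $|\partial_t^j\del^m\widetilde{\s7}_i|$ on compact subsets of $\Omega_i\times(a,b)$ for all $j,m\geq 0$. A diagonal extraction together with the parabolic Arzel\`a--Ascoli theorem then delivers a subsequential $C^\infty$-limit $\widetilde{\s7}(t)$ on compacta of $M\times(a,b)$, which is smooth and satisfies \eqref{eq: Har Spin(7) Flow} by continuity, with $\widetilde{\s7}(0)=\s7(0)$. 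The hard part will be the bookkeeping in this final extraction: one must ensure that a \emph{single} subsequence of indices $i$ realises simultaneously the convergence at every time $t\in(a,b)$, every derivative order, and every element of the exhaustion $\{\Omega_j\}$, while remaining compatible with the spatial diffeomorphisms $F_i$ chosen at $t=0$ in step (ii). This is handled exactly as in Hamilton's classical proof, once the uniform space--time derivative estimates are secured.
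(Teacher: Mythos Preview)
Your proposal is correct and follows essentially the same approach as the paper's proof: both use the Shi-type estimates (Corollary~\ref{shitypeest}) to upgrade the torsion bound to derivative bounds, apply Theorem~\ref{thm:cptthm} at $t=0$ to extract the base limit with diffeomorphisms $F_i$, then pull back the flows and use the evolution equation to bound time derivatives before invoking Arzel\`a--Ascoli with a diagonal argument. The paper's proof additionally emphasises that $g_i(t)=g_i(0)$ (since the flow is isometric), which makes the metric part of the compactness trivial, but this is implicit in your setup as well.
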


\begin{proof}
The proof is similar in spirit to that of the compactness theorem for the isometric flow of $\G2$-structures in \cite[Theorem 3.13]{dgk-isometric}. The
idea is to show that the bounds on the $\S7$-structure and on its covariant derivatives and time derivatives at time $t = 0$ extend to subsequent times, in the presence of additional bounds on the torsion and its derivatives for all time. We give the details below.

From the derivative estimates Corollary~\ref{shitypeest} and~\eqref{eq:fcptthm1}, we have
\begin{equation} \label{eq:fcptpf1}
    |\del^m_{g_i(t)}T_i(x,t)|
    \leq C_m.
\end{equation}
Since each $M_i$ is compact, we know $|\Riem_i|_{g_i}$ is bounded, and assumption~\eqref{eq:fcptthm2} allows us to use Theorem~\ref{thm:cptthm} for $t=0$ to extract a subsequence of $(M_i, \s7_i(0), p_i)$ which converges to a complete limit $(M, \s7_{\infty}(0), p)$. That is, there exist compact subsets $\Omega_i\subset M$ exhausting $M$ with $p\in \text{int}(\Omega_i)$, for each $i$, and diffeomorphisms $F_i:\Omega_i \rightarrow F_i(\Omega_i)\subset M_i$ with $F_i(p)=p_i$, such that $F_i^*g_i(0)\rightarrow g_{\infty}(0)$ and $F_i^*\s7_i(0)\rightarrow \s7_{\infty}(0)$ smoothly on any compact subset $\Omega\subset M$ as $i\rightarrow \infty$. 

Fix a compact subset $\Omega \times [c,d]\subset M\times (a,b)$ and let $i$ be sufficiently large so that $\Omega \subset \Omega_i$. Let $\bar{g}_i(t)=F_i^*g_i(t)$. Now since $\s7_{i}(t)$ are all solutions to the harmonic $\S7$-flow, we have $g_i(t)=g_i(0)$ for each $i$. Thus we trivially have
\begin{equation*}
    \sup_{\Omega\times [c,d]} |\del^m_{\bar{g}_i(0)}\bar{g}_i(t)|_{\bar{g}_i(0)} = 0.
\end{equation*} 
Since the limit metric $g_{\infty}(0)$ is uniformly equivalent to $g_i(0)$, we get
\begin{equation*}
    \sup_{\Omega\times [c,d]} |\del^m_{\bar{g}_i(\infty)}\bar{g}_i(t)|_{\bar{g}_i(\infty)} \leq C_m
\end{equation*}
for some positive constants $C_m$ and similarly
\begin{equation*}
    \sup_{\Omega \times [c,d]} \Big |\frac{\pt^l}{\pt t^{l}} \del^m_{\bar{g}_{\infty}(0)}\bar{g}_i(t) \Big |_{\bar{g}_{\infty}(0)} \leq C_{m,l}
\end{equation*} 
for some positive constants $C_{m,l}$.

Now let $\bar{\s7}_i(t)=F_i^*\s7_i(t)$. Then $\bar{\s7}_i(t)$ is a sequence of solutions of the harmonic $\S7$-flow on $\Omega \subset M$ for $t\in [c,d]$. Using~\eqref{eq:fcptpf1} and proceeding in a similar way as in \cite[Claim 3.9]{dgk-isometric} for $\S7$-structures, we deduce that there exist constants $C_m$, independent of $i$, such that
\begin{equation}
    \sup_{\Omega\times [c,d]} |\del^m_{\bar{g}_i(0)}\bar{\s7}_i(t)|_{\bar{g}_i(0)} \leq C_m
\end{equation}
and since $\bar{g}_i(0)$ and $\bar{\s7}(0)$ converge uniformly to $g_{\infty}(0)$ and $\bar{\s7}_{\infty}(0)$ with all their covariant derivatives on $\Omega$, we have
\begin{equation}
    \sup_{\Omega\times [c,d]} |\del^m_{\bar{g}_{\infty}(0)}\bar{\s7}_i(t)|_{\bar{g}_{\infty}(0)} \leq C_m.
\end{equation}
Moreover, because the time derivatives can be written in terms of spatial derivatives using the evolution equations of the harmonic $\S7$-flow, we get, for some uniform constants $C_{m,l}$, 
\begin{equation}
    \sup_{\Omega \times [c,d]} \Big |\frac{\pt^l}{\pt t^{l}} \del^m_{\bar{g}_{\infty}(0)}\bar{\s7}_i(t) \Big |_{\bar{g}_{\infty}(0)} \leq C_{m,l}.
\end{equation} 
It now follows from the Arzel\`a--Ascoli theorem that there exists a subsequence of $\bar{\s7}_i(t)$ that converges smoothly on $\Omega \times [c,d]$. A diagonal subsequence argument then produces a subsequence that converges smoothly on any compact subset of $M\times (a,b)$ to a solution $\bar{\s7}_{\infty}(t)$ of the isometric flow.
\end{proof}

\section{Monotonicity, entropy and $\varepsilon$-regularity of the $\S7$-flow}
\label{sec: analysis of evolution}

In this section, just like in the $\G2$ case, we first introduce a quantity $\Theta$ that is \emph{almost monotonic} along \eqref{eq: Har Spin(7) Flow}. We derive the evolution of $\Theta$ in Lemma \ref{lem: d/dt of Theta} below,  and use it to prove the almost monotonicity formula. We then introduce an \emph{entropy functional}, and use an $\varepsilon$-regularity theorem to prove that small initial entropy implies long-time existence and convergence to a critical point. The proofs of several results in this section are similar to the corresponding theorems for $\G2$-structures  in \cite{dgk-isometric}, so in some cases we will only sketch the proofs and refer the reader to that source for further details.

\subsection{Almost monotonicity formula}

Let $(M, g)$ be a complete Riemannian manifold. For $x_0\in M$, let $u$ be the fundamental solution of the backward heat equation, starting with the delta function at $x_0$ \cite{Hamilton1993}:
\begin{align*}
    & \Big( \frac{\partial}{\partial t} + \Delta \Big) u = 0, \quad \lim_{t \to t_0} u = \delta_{x_0}
\end{align*}
and set $u= \frac{e^{-f}}{\big( 4\pi (t_0 -t) \big)^4}.$
For a solution $\{\s7(t)\}_{t\in [0, t_0)}$ of the harmonic $\S7$-flow on $(M,g)$, we define the function
\begin{align}\label{thetadefn}
  \Theta_{(x_0, t_0)} (\s7(t)) = (t_0 -t) \int_M |T_{\s7(t)}|^2 u \, \vol.  
\end{align}
The following lemma is a direct adaptation of \cite[Lemma 5.2]{dgk-isometric}.
\begin{lemma} 
\label{lem: d/dt of Theta}
\begin{align}
    \frac{d}{d t} \Theta_{(x_0, t_0)}(\s7(t)) 
    =& - 2(t_0 -t) \int_M  |\Div T - \del f \lrcorner T|^2 u \vol \notag\\
    & - 2(t_0 -t) \int_M  \big( \nabla_m \nabla_l u - \frac{\nabla_m u \nabla_l u}{u}+ \frac{ug_{ml}}{2(t_0 -t)} \big) T_{m;is}T_{l;is} \vol  \notag\\
    & - (t_0 -t) \int_M u R_{mlis} (2T_{l;ir}T_{m;rs} - 2 T_{m;ir}T_{l;rs} + \tfrac{1}{4} R_{mlis} - \tfrac18 R_{mlab} \s7_{abis} ) \notag\\
    & - 2(t_0 -t) \int_M T_{m;is}  u \nabla_l R_{mlis} \vol. \label{eqlemma4.4}
\end{align}
\end{lemma}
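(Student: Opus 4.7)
The plan is to differentiate $\Theta_{(x_0,t_0)}(\s7(t))$ directly via the product rule, producing three contributions: the derivative of the $(t_0-t)$ prefactor gives $-\int_M |T|^2 u\,\vol$; the $\partial_t|T|^2$ term is supplied by Proposition~\ref{prop:evolnormT}; and $\partial_t u = -\Delta u$ by the backward heat equation. The first simplification to carry out is to combine the $\Delta|T|^2$ term from Proposition~\ref{prop:evolnormT} with the $-|T|^2 \Delta u$ term; two integrations by parts (on the closed manifold $M$, or with standard decay at infinity as in Hamilton~\cite{Hamilton1993}) make these exactly cancel. After this cancellation one is left with
$$\frac{d}{dt}\Theta = -\int_M |T|^2 u\,\vol - 2(t_0-t)\int_M |\nabla T|^2 u\,\vol + (t_0-t)\,\mathrm{(quartic\ in\ }T\mathrm{\ +\ curvature)}.$$

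The pivotal algebraic step is to convert the negative square $-2|\nabla T|^2$ into the target square $-2|\Div T - \nabla f\lrcorner T|^2$ plus a Hessian-of-$f$ correction. I would first split
$$\nabla_m T_{l;is}\nabla_m T_{l;is} = \nabla_m T_{l;is}\nabla_l T_{m;is} + \nabla_m T_{l;is}\bigl(\nabla_m T_{l;is} - \nabla_l T_{m;is}\bigr),$$
handling the antisymmetrised bracket via the Bianchi-type identity of Theorem~\ref{thm:spin7bianchi}, which trades it for quadratic-in-$T$ and curvature pieces. Integrating the first term by parts (moving one $\nabla_m$ onto $T_{l;is}$ and onto $u$), and commuting covariant derivatives via the Ricci identity~\eqref{eq: riccischematic}, produces a $|\Div T|^2 u$ contribution, a cross term $2\int T_{l;is}\Div T_{is}\,\nabla_l u\,\vol$, and further curvature-Ricci remainders.

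The cross term is then rewritten with the fundamental identity $\nabla u = -u\,\nabla f$, which turns it into $-2\int \langle \Div T,\nabla f\lrcorner T\rangle u\,\vol$ and allows completion of the square:
$$|\Div T|^2 u - 2\langle\Div T,\nabla f\lrcorner T\rangle u = |\Div T - \nabla f\lrcorner T|^2 u - |\nabla f\lrcorner T|^2 u.$$
The stray term $-|\nabla f\lrcorner T|^2 u$, combined with $-\int|T|^2 u\,\vol$ rewritten as $-2(t_0-t)\int u \tfrac{g_{ml}}{2(t_0-t)} T_{m;is}T_{l;is}\,\vol$, should then reorganise — using the identity $\nabla_m\nabla_l u - \tfrac{\nabla_m u\,\nabla_l u}{u} = -u\,\nabla_m\nabla_l f$ — into the Hessian block in the second line of \eqref{eqlemma4.4}.

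The main obstacle I anticipate is the bookkeeping of the quartic and curvature tensors generated by Proposition~\ref{prop:evolnormT}, the Bianchi identity~\eqref{spin7bianchi}, and the successive commutators of covariant derivatives. In particular, the four explicit terms inside the curvature integrand of \eqref{eqlemma4.4}, namely $2T_{l;ir}T_{m;rs}$, $-2T_{m;ir}T_{l;rs}$, $\tfrac14 R_{mlis}$ and $-\tfrac18 R_{mlab}\s7_{abis}$, must emerge \emph{exactly}, with no residual trace from the many intermediate summands. The $\S7$-contraction identities \eqref{eq:impiden1}--\eqref{eq:impiden7} together with the $\S7$-equivariance of the quartic-in-$T$ contributions in Proposition~\ref{prop:evolnormT} should suffice to force these cancellations; similarly, the term $-2(t_0-t)\int T_{m;is}u\,\nabla_l R_{mlis}\,\vol$ will come from combining the $\nabla_a R_{mais}$-type contributions in Proposition~\ref{prop:evolnormT} with the identity~\eqref{rmspin7} and the Riemannian second Bianchi identity \eqref{eq: riem2ndBid}.
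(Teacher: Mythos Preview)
Your approach is correct in principle and would reach the formula, but it takes a genuinely different and more laborious route than the paper. You feed in Proposition~\ref{prop:evolnormT} for $\partial_t|T|^2$, which already contains the fully expanded $\Delta|T|^2 - 4|\nabla T|^2$ plus a long list of quartic-in-$T$ and curvature terms; you then have to undo part of that expansion by splitting $|\nabla T|^2$ into symmetric and antisymmetric pieces, integrating by parts twice, commuting derivatives, and tracking all the resulting debris against the explicit terms in Proposition~\ref{prop:evolnormT}. The ``main obstacle'' you flag is real, and it is entirely self-inflicted.

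The paper bypasses Proposition~\ref{prop:evolnormT} altogether. It uses instead the raw first-variation formula~\eqref{eq: dT/dt},
\[
\partial_t T_{m;is} = (\Div T)_{ip}T_{m;ps} - (\Div T)_{sp}T_{m;pi} + \pi_7(\nabla_m(\Div T)_{is}),
\]
contracts this with $T_{m;is}$, and observes that the first two terms vanish by symmetry and that the $\pi_7$ projection is redundant since $T_{m;\cdot\cdot}\in\Omega^2_7$. This gives immediately $(t_0-t)\int 2u\,T_{m;is}\nabla_m(\Div T)_{is}$, and a \emph{single} integration by parts produces $|\Div T|^2 u$ together with the cross term. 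The Hessian of $u$ then appears after one integration by parts on $(t_0-t)|T|^2\Delta u$ and one application of the Bianchi-type identity~\eqref{spin7bianchi}; the curvature block in~\eqref{eqlemma4.4} drops out directly from that single Bianchi substitution, with no separate quartic-in-$T$ cancellations to organise. In short, the paper's route is ``differentiate $T$, contract, integrate by parts once, Bianchi once,'' whereas yours is ``differentiate $|T|^2$, cancel Laplacians, split $|\nabla T|^2$, integrate by parts twice, commute, then reconcile with the leftover terms of Proposition~\ref{prop:evolnormT}.'' Both arrive, but the paper's path has far less bookkeeping.
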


\begin{proof}
By direct computation, and using \eqref{eq: dT/dt}, we have
\begin{align*}
   \frac{d}{d t} \Theta 
    =&\  \int_M (t_0 -t)  u \frac{\partial}{\partial t} |T|^2    - |T|^2 u    +    (t_0 -t) |T|^2 \frac{\partial}{\partial t} u \,  \vol\\
    =&\ \int_M 2(t_0 -t)  u T_{m;is}\Big(\nabla_r T_{r;ip} T_{m;ps} - \nabla_r T_{r;sp} T_{m;pi} + \pi_7 (\nabla_m (\nabla_r T_{r;is})) )\Big) \\   
    &- |T|^2 u - (t_0 -t) |T|^2 \Delta u  \, \vol,
\end{align*}
but $ T_{m;is} \nabla_r T_{r;ip} T_{m;ps} =0$, $ T_{m;is} \nabla_r T_{r;sp} T_{m;pi} =0$ and $T_{m;is} \in \Omega^{2}_{7}$, therefore
$$
\frac{d}{d t} \Theta = \int_M 2(t_0 -t)  u T_{m;is} (\nabla_m (\nabla_r T_{r;is})) - |T|^2 u - (t_0 -t) |T|^2 \Delta u  \, \vol.
$$
Integrating by parts and using the $\S7$-Bianchi identity \eqref{spin7bianchi}, 
we have
\begin{align*}
   \frac{d}{d t} \Theta 
   =&\ \int_M - 2(t_0 -t) \Big( |\Div T|^2 u + \nabla_m u  T_{m;is}  \nabla_r T_{r;is}\Big) - |T|^2 u  + 2 (t_0 -t) \Big(  T_{m;is} \nabla_l u \nabla_m T_{l;is} \\
    &  + 2 T_{m;is} \nabla_l u T_{l;ir}T_{m;rs} - 2 T_{m;is} \nabla_l u T_{m;ir}T_{l;rs} + \tfrac{1}{4} T_{m;is} \nabla_l u R_{mlis} - \tfrac18 T_{m;is} \nabla_l u R_{mlab} \s7_{abis} \Big) \, \vol, 
\end{align*}
but, as before, 
$$
T_{m;is} T_{l;ir}T_{m;rs} =0 ,
\quad T_{m;is} T_{m;ir}T_{l;rs} =0,
\qandq T_{m;is} (\tfrac{1}{4} R_{mlis} - \tfrac18 R_{mlab} \s7_{abis}) = T_{m;is} R_{mlis},
$$
so
\begin{align*}
    \frac{d}{d t} \Theta 
    =&\ 
    \int_M - 2(t_0 -t) \Big( |\Div T|^2 u + \nabla_m u  T_{m;is}  \nabla_r T_{r;is}\Big) - |T|^2 u  \\ 
    & + 2 (t_0 -t) \Big(  T_{m;is} \nabla_l u \nabla_m T_{l;is}
 + T_{m;is} \nabla_l u R_{mlis} \Big) \, \vol  \\
    =&\ \int_M - 2(t_0 -t) \Big( |\Div T|^2 u - 2 \langle \Div T , \nabla f \lrcorner T \rangle  u \Big)  \\
    & - 2(t_0 -t) \int_M  \big( \nabla_m \nabla_l u + \frac{ug_{ml}}{2(t_0 -t)} \big) T_{m;is}T_{l;is}\\
    & - (t_0 -t) \int_M u R_{mlis} (2T_{l;ir}T_{m;rs} - 2 T_{m;ir}T_{l;rs} + \tfrac{1}{4} R_{mlis} - \tfrac18 R_{mlab} \s7_{abis} ) \\
    & - 2(t_0 -t) \int_M T_{m;is}  u \nabla_l R_{mlis} \vol.
    \qedhere
\end{align*}
\end{proof}

\begin{theorem}[almost monotonicity formula]
\label{thm:almostmon}
Let $\{\Phi(t)\}$ be a solution of the harmonic $\S7$-flow \eqref{eq: Har Spin(7) Flow} on $(M^8,g)$.
\begin{enumerate}
    \item If $M$ is compact, then, for any $0<\tau_1 < \tau_2 < t_0$, there exist $K_1$, $K_2>0$ depending only on the geometry of $(M,g)$ such that
$$
\Theta(\Phi(\tau_2)) \leq K_1 \Theta(\Phi(\tau_1)) + K_2 (\tau_1 -\tau_2) (E(0) +1) .
$$
    \item When $(M,g) = (\bR^8, g_{\mathrm{Eucl}})$, then, for any $x_0 \in \bR^8$ and $0\leq \tau_1 < \tau_2$ we have
$$
\Theta(\Phi(\tau_2)) \leq \Theta(\Phi(\tau_1)) .
$$
\end{enumerate}
\end{theorem}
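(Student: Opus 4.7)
The starting point is the evolution identity \eqref{eqlemma4.4} derived in Lemma \ref{lem: d/dt of Theta}, which decomposes $\frac{d}{dt}\Theta$ into four lines: (L1) a manifestly non-positive term involving $|\Div T-\del f\lrcorner T|^2$; (L2) a ``Perelman quantity'' coupled to $T\otimes T$; (L3) curvature terms quartic in $T$ plus purely quadratic-curvature terms; (L4) a term pairing $uT$ with $\nabla \Riem$. The plan is to analyse each line separately, exploiting the special structure of the backward heat kernel in each setting.

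For part (2), on $(\bR^8, g_{\mathrm{Eucl}})$ the backward heat kernel takes the form
$$
u(x,t)=\frac{1}{(4\pi(t_0-t))^{4}}\exp\Bigl(-\tfrac{|x-x_0|^2}{4(t_0-t)}\Bigr),
$$
so that $f=\tfrac{|x-x_0|^2}{4(t_0-t)}+4\log(4\pi(t_0-t))$ satisfies $\del_m\del_l f=g_{ml}/(2(t_0-t))$. Using $\del_m u=-u\,\del_m f$ and expanding, a direct calculation yields
$$
\del_m\del_l u-\frac{\del_m u\,\del_l u}{u}+\frac{u\,g_{ml}}{2(t_0-t)}=-u\,\del_m\del_l f+\frac{u\,g_{ml}}{2(t_0-t)}=0,
$$
so the entire (L2) vanishes identically; (L3) and (L4) also vanish since $\Riem\equiv 0$. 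Only (L1) survives, and being non-positive, $\frac{d}{dt}\Theta\leq 0$, which integrates to the stated monotonicity.

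For part (1), on compact $(M,g)$ the first line is again discarded. Line (L2) is handled by the Hamilton--Perelman matrix Harnack estimate for the backward heat kernel on a compact manifold, which provides a constant $C=C(g)>0$ such that $\bigl|\del_m\del_l u - \del_m u\,\del_l u/u + u g_{ml}/(2(t_0-t))\bigr|\leq Cu$; this yields a contribution bounded by $C(t_0-t)\int u|T|^2\vol = C\Theta$. For (L3), the uniform bounds on $|\Riem|$ arising from compactness of $M$ produce $C(t_0-t)\int u|T|^2\vol=C\Theta$ for the quartic-in-$T$ terms, and $C(t_0-t)\int u\vol\leq C(t_0-t)$ for the pure curvature terms (since the backward heat kernel satisfies $\int_M u\vol\leq C$). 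For (L4), Cauchy--Schwarz gives $\int u|T|\vol\leq\bigl(\int u\vol\bigr)^{1/2}\bigl(\int u|T|^2\vol\bigr)^{1/2}\leq C\sqrt{\Theta/(t_0-t)}$, so after multiplication by $2(t_0-t)$ and an application of Young's inequality one obtains $\leq \Theta+C(t_0-t)$. Invoking the decay $E(\s7(t))\leq E(\s7_0)$ along the gradient flow to absorb certain constants, one arrives at the differential inequality
$$
\frac{d}{dt}\Theta \leq C_1\,\Theta + C_2(t_0-t)(E(\s7_0)+1),
$$
and a standard Gronwall argument on $[\tau_1,\tau_2]$ then gives
$$
\Theta(\s7(\tau_2))\leq e^{C_1(\tau_2-\tau_1)}\Theta(\s7(\tau_1))+C_2(E(\s7_0)+1)\int_{\tau_1}^{\tau_2}e^{C_1(\tau_2-s)}(t_0-s)\,ds,
$$
from which the claim follows after absorbing constants into $K_1,K_2$ (and correcting the apparent sign typo $(\tau_1-\tau_2)\mapsto(\tau_2-\tau_1)$ in the statement).

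The main technical obstacle is importing the matrix Harnack bound on the backward heat kernel with constants depending only on the fixed geometry $(M,g)$; this is classical (Hamilton, Perelman) but needs to be stated carefully in our parabolic-weighted setting. Once it is in hand, the remainder is a routine bookkeeping exercise combining Cauchy--Schwarz, Young's inequality, and the uniform curvature bounds on $M$.
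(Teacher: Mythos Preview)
Your treatment of part~(2) is correct and matches the paper exactly: on $\bR^8$ the explicit Gaussian backward heat kernel makes (L2)--(L4) vanish identically, leaving only the non-positive (L1).

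For part~(1), your overall strategy---bound each line of \eqref{eqlemma4.4} and integrate the resulting differential inequality---is the same as the paper's, but there is a genuine gap in your handling of (L2). You claim a uniform pointwise matrix bound
\[
\Bigl|\nabla_m\nabla_l u - \frac{\nabla_m u\,\nabla_l u}{u} + \frac{u\,g_{ml}}{2(t_0-t)}\Bigr| \leq C\,u
\]
with $C=C(g)$ independent of $t$. This is stronger than what Hamilton's matrix Harnack for the backward heat kernel actually delivers on a general compact manifold. What Hamilton proves in \cite{Hamilton1993} yields, after contraction against the positive-semidefinite tensor $T_{m;is}T_{l;is}$, a bound on (L2) of the form
\[
C\Bigl(E(\s7(0)) + \log\tfrac{B}{(t_0-t)^4}\,\Theta(\s7(t))\Bigr),
\]
so the coefficient in front of $\Theta$ carries a logarithmic factor that diverges as $t\to t_0$, and there is an additive energy term (this is also where the $E(0)$ dependence you later invoke actually enters). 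Consequently the correct differential inequality is not $\dot\Theta \leq C_1\Theta + \text{lower order}$ but
\[
\dot\Theta \leq C_1\Bigl(1+\log\tfrac{B}{(t_0-t)^4}\Bigr)\Theta + C_2\bigl(1+E(\s7(0))\bigr),
\]
and plain Gronwall does not suffice. One must absorb the log via the integrating factor $e^{-C_1\xi(t)}$ with $\xi'(t)=1+\log\tfrac{B}{(t_0-t)^4}$ and then integrate, as the paper does following \cite[Theorem~5.3]{dgk-isometric}. Your closing sentence correctly flags the Harnack input as the crux but mischaracterises its form: the logarithm is not removable under mere bounded-geometry hypotheses and forces this extra step. (Your observation about the sign typo $(\tau_1-\tau_2)\mapsto(\tau_2-\tau_1)$ in the statement is correct.)
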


\begin{proof}
We sketch the proof following \cite[Theorem 5.3]{dgk-isometric} and rely on Lemma~\ref{lem: d/dt of Theta}.
\begin{enumerate}
    \item  The third and fourth terms of  \eqref{eqlemma4.4} are bounded by
$$
C( 1 + \Theta(\Phi(t))),
$$
due to the bounded geometry of $(M,g)$, Young's inequality and $\int_M  u \, \vol =1$.

For the second term of~\eqref{eqlemma4.4}, use \cite{hamilton-compactness} and the decreasing of 
$E(\Phi(t))$ along the harmonic $\S7$-flow to bound it by
$$
C \big(E(\Phi(0)) + \log \frac{B}{(t_0 -t)^4} \Theta(\Phi(t))\big),
$$
so that
\begin{align*}
\frac{d}{dt} \Theta(\Phi(t))  \leq& - 2(t_0 -t) \int_M  |\Div T - \del f \lrcorner T|^2 u \\
& +C_1 \Big(1 + \log \big( \frac{B}{(t_0 -t)^4}\big) \Big)\Theta(\Phi(t)) 
+ C_2 ( 1 + E(\Phi(0))) \vol.
\end{align*}
To control the logarithmic term, let $\xi(t)$ be any function satisfying
$$
\xi'(t) = 1 + \log \frac{B}{(t_0 - t)^4}.
$$
The claim is then obtained by integration over  $[t_0 -1 , t_0[\,$ of
$$
\frac{d}{dt} \Big[ e^{-C_1 \xi(t)} \Theta(\Phi(t))\Big] \leq K \big(E(\Phi(0)) + 1 \big).
$$

    \item On $(M^8,g) = (\bR^8, g_{\mathrm{Eucl}})$, the backward heat kernel is
$$
u(x,t) = \frac{1}{(4\pi(t_0-t))^4}
    \exp\left\{-\frac{|x-x_0|^2}{4(t_0-t)}\right\}
$$
so indeed
$\frac{d}{dt} \Theta(\Phi(t))  \leq 0.$\qedhere
\end{enumerate}
\end{proof}
One needs to introduce the quantity $\Theta$ because the energy functional $E$ is not scale-invariant, hence it is not sufficient to control the small-scale behaviour of a $\S7$-structure. However, we would like to prove long-time existence and convergence of the flow when the initial energy is bounded (which is a reasonable assumption) and hence will need to bound $\Theta(\s7(t))$ in terms of the initial energy $E(\s7(0))$ for $t<t_0$. This fails for small times $t_0$, which can be observed from \eqref{thetadefn}. Thus, we require a quantity sturdier than $\Theta$.
Motivated by analogous functionals for the mean curvature flow \cite{colding-minicozzi}, the high dimensional Yang--Mills flow \cite{kelleher}, the harmonic map heat flow \cite{kelleher-streets} and the isometric flow of $\G2$-structures \cite{dgk-isometric}, we define the following \emph{entropy} functional.

\begin{definition}
Let $(M^8, \s7, g)$ be a compact manifold with a $\S7$-structure. Let $u_{(x,t)}(y,s)=u^g_{(x,t)}(y,s)$ be the backwards heat kernel, starting from $\delta{(x,t)}$ as $s\rightarrow t$. For $\sigma>0$ we define
\begin{align}\label{eq:entropyeqn}
\lambda(\s7, \sigma) = \underset{(x,t)\in M\times (0, \sigma]}{\textup{max}} \left\{ t\int_M |T_{\s7}|^2(y) u_{(x,t)}(y,0) \vol\right \}.     
\end{align}
\end{definition}
One should think of $\sigma$ as the ``scale'' at which we are analyzing the flow. Since $M$ is compact, the maximum in \eqref{eq:entropyeqn} is achieved.
We need the next lemma for the proof of the $\varepsilon$-regularity theorem. We skip the proof, as it is similar to the $\G2$ case \cite[Lemma 5.6]{dgk-isometric} and uses the analysis on the backwards heat kernel from \cite{Hamilton1993}.

\begin{lemma}
\label{lem:thetainq}
    Let $(M^8,g)$ be compact and $E_0>0$. For every $\varepsilon >0$, there exist $\delta=\delta (\varepsilon, g, E_0)$ and $\bar{r}=\bar{r}(\varepsilon, g, E_0)>0$ such that the following holds:
    
    Suppose $\{\s7(t)\}_{t\in [0, t_0)}$ is a solution of the harmonic $\S7$-flow \eqref{eq: Har Spin(7) Flow}, with induced metric $g$, satisfying $E(\s7(0))\leq E_0$. Whenever
$$
\Theta_{(x_0, t_0)}(\s7(t_1))<\delta, \quad \text{for some } (x_0,t_1)\in M\times [0, t_0),
$$
    then, setting $r:=\textup{min}(\bar{r}, \sqrt{t_0-t_1})$, we have
$$
    \Theta_{(x,t)}(\s7(t_1))< \varepsilon, \quad\forall \ 
    (x,t)\in B(x_0, r)\times [t_0-r^2, t_0].
$$    


\end{lemma}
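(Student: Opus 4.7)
The plan is to adapt the argument from \cite[Lemma 5.6]{dgk-isometric}, whose essential ingredient is not specific to $\mathrm{G}_2$ and transfers to the $\S7$ setting because $\Theta$ here is built from the same type of integrated torsion-squared quantity against a backward heat kernel. Fix $t_1$ and the torsion $T_{\s7(t_1)}$, and view
\[
F(x,t) := \Theta_{(x,t)}(\s7(t_1)) = (t-t_1)\int_M |T_{\s7(t_1)}|^2(y)\, u_{(x,t)}(y,t_1)\, \vol(y)
\]
as a function of the base point $(x,t)$ only. Our hypothesis reads $F(x_0,t_0)<\delta$ and we want $F(x,t)<\varepsilon$ on the parabolic cylinder $B(x_0,r)\times [t_0-r^2,t_0]$, with $r=\min(\bar r,\sqrt{t_0-t_1})$. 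The choice $r\le \sqrt{t_0-t_1}$ already ensures $t-t_1\ge 0$, and if we further require $\bar r^2\le\tfrac12 (t_0-t_1)$ in the relevant regime, the two time-scales $t-t_1$ and $t_0-t_1$ are comparable, up to a universal factor, throughout the cylinder.

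The crucial input is Hamilton's pointwise Gaussian analysis of the backward heat kernel on a manifold with bounded geometry \cite{Hamilton1993}: since $(M,g)$ is compact and fixed, we obtain, uniformly in $(x_0,t_0)$ and $t_1<t_0$, a comparison estimate of the form
\[
u_{(x,t)}(y,t_1) \;\le\; C_1\, u_{(x_0,t_0)}(y,t_1) \;+\; C_2\,\frac{1}{(t_0-t_1)^4}\,\exp\!\Bigl(-\tfrac{d(y,x_0)^2}{C_3(t_0-t_1)}\Bigr),
\]
valid for every $(x,t)$ in the parabolic cylinder around $(x_0,t_0)$, with constants depending only on $g$. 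The first term transfers information about $F(x,t)$ to $F(x_0,t_0)$, while the second supplies a uniform Gaussian decay away from $x_0$.

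Multiplying by $(t-t_1)\,|T_{\s7(t_1)}|^2$ and integrating, the first term contributes at most $C_1 F(x_0,t_0) < C_1\delta$. For the second, split $M = B(x_0,\rho\sqrt{t_0-t_1})\sqcup (M\setminus B(x_0,\rho\sqrt{t_0-t_1}))$ at an intermediate dyadic scale $\rho$. The Gaussian factor is uniformly bounded on the ball, where one uses the energy bound
\[
\int_M |T_{\s7(t_1)}|^2\,\vol = 2E(\s7(t_1)) \le 2E(\s7(0)) \le 2E_0,
\]
since $E$ is non-increasing along \eqref{eq: Har Spin(7) Flow}. Off the ball, the Gaussian decays like $e^{-\rho^2/C_3}$ and dominates this same $L^2$ bound, producing a contribution bounded by $C_4(g)\,e^{-\rho^2/C_3}E_0$. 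Choosing first $\rho$ large enough so that $C_4 e^{-\rho^2/C_3}E_0<\varepsilon/2$, then $\bar r$ (via $\rho\sqrt{t_0-t_1}\le \bar r$ in the relevant regime) to absorb the on-ball contribution of the Gaussian piece into the same budget $\varepsilon/2$, and finally $\delta$ so that $C_1\delta<\varepsilon/2$, one concludes $F(x,t)<\varepsilon$.

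The main obstacle is producing the comparison estimate with constants depending only on $(g,E_0)$ and not on the individual base points $(x_0,t_0)$ or on $t_1$; this requires handling the two boundary regimes in the definition $r=\min(\bar r,\sqrt{t_0-t_1})$ separately. When $\sqrt{t_0-t_1}\le \bar r$, one rescales parabolically so that $t_0-t_1=1$, reducing the inequality to the standard short-time Gaussian bounds on a fixed compact manifold. When $\bar r<\sqrt{t_0-t_1}$, the time elapsed is macroscopic and one instead uses the global heat kernel bounds on $(M,g)$, together with the compactness of $M$ to absorb everything into finitely many local charts. Compactness and the non-increase of $E$ along the flow are precisely what let the two regimes be patched into a single pair of constants $(\delta,\bar r)$ depending only on $(\varepsilon,g,E_0)$.
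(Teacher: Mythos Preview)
The paper itself omits the proof and simply refers to \cite[Lemma~5.6]{dgk-isometric} together with Hamilton's backward-heat-kernel analysis \cite{Hamilton1993}, which is exactly the route you propose; so at the level of strategy you are doing what the paper does.

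There is, however, a real gap in your execution. The pointwise inequality you assert,
\[
u_{(x,t)}(y,t_1) \;\le\; C_1\, u_{(x_0,t_0)}(y,t_1) \;+\; \frac{C_2}{(t_0-t_1)^{4}}\,\exp\!\Bigl(-\tfrac{d(y,x_0)^2}{C_3(t_0-t_1)}\Bigr),
\]
cannot hold with constants depending only on $g$ throughout the whole cylinder. Evaluate at $y=x$: the left-hand side is of order $(t-t_1)^{-4}$, whereas both right-hand terms are of order $(t_0-t_1)^{-4}$. In the regime $r=\sqrt{t_0-t_1}$ the cylinder reaches down to $t=t_1$, so $t-t_1$ can be arbitrarily small relative to $t_0-t_1$ and the left-hand side blows up while the right stays bounded. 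The same defect appears when $r=\bar r$ and $t_0-t_1$ is only marginally larger than $\bar r^2$. Your proposed remedy of ``parabolically rescaling so that $t_0-t_1=1$'' does not help: the obstruction lives in the \emph{scale-invariant} ratio $(t-t_1)/(t_0-t_1)$, which is unchanged by any dilation of the metric. Consequently your single near/far split at the scale $\sqrt{t_0-t_1}$ leaves an uncontrolled factor $\bigl((t_0-t_1)/(t-t_1)\bigr)^{3}$ in the near-field contribution after you integrate against $|T(t_1)|^2$.

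A correct argument must organise the comparison so that the resulting bound on $\Theta_{(x,t)}(\s7(t_1))$ is uniform in $(t-t_1)/(t_0-t_1)$; one cannot rely on a single pointwise kernel inequality valid over the entire cylinder. Revisiting \cite[Lemma~5.6]{dgk-isometric} for the precise mechanism that handles the bottom of the cylinder is the needed repair.
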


\medskip

We can now state the $\varepsilon$-regularity theorem for the harmonic $\S7$-flow.

\begin{theorem}[$\varepsilon$-regularity]\label{thm:epsilonreg}
    Let $(M^8,g)$ be compact and $E_0>0$. There exist $\varepsilon,\ \bar{\rho}>0$ such that, for every $\rho \in (0, \bar{\rho}]$, there exist $r\in (0, \rho)$ and $C<\infty$ such that the following holds:
    
    Suppose $\{\s7(t)\}_{t\in [0, t_0)}$ is a solution of the harmonic $\S7$-flow \eqref{eq: Har Spin(7) Flow}, with induced metric $g$, satisfying $E(\s7(0))\leq E_0$. Whenever 
\begin{align*}
    \Theta_{(x_0, t_0)}(\s7(t_0-\rho^2)) < \varepsilon,    
    \quad \text{for some }  x_0\in M,
\end{align*}
then, setting $\Lambda_r(x,t) = \textup{min}\ \left(1-r^{-1}d_g(x_0, x), \sqrt{1-r^{-2}(t_0-t)}    \right)$, we have
\begin{align*}
    \Lambda_r(x, t)|T_{\s7}(x,t)|\leq \frac{C}{r},    
    \quad\forall \ 
    (x,t)\in B(x_0, r)\times [t_0-r^2, t_0].
\end{align*}
\end{theorem}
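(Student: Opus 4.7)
The plan is to prove the theorem by contradiction, via a blow-up argument in the standard Struwe/White style, exploiting the approximate monotonicity of $\Theta$ (Theorem~\ref{thm:almostmon}(1)) together with its strict monotonicity on $(\bR^{8},g_{\mathrm{Eucl}})$ (Theorem~\ref{thm:almostmon}(2)). First I would fix $\varepsilon=\varepsilon(g,E_0)>0$ smaller than the threshold $\delta$ supplied by Lemma~\ref{lem:thetainq} for some target $\varepsilon'$ to be chosen presently (e.g. $\varepsilon'\ll 1$), and choose $\bar\rho$ at most the radius $\bar r$ of that lemma. Lemma~\ref{lem:thetainq} then upgrades the single-point smallness hypothesis $\Theta_{(x_0,t_0)}(\Phi(t_0-\rho^{2}))<\varepsilon$ to uniform smallness $\Theta_{(x,t)}(\Phi(t_0-\rho^{2}))<\varepsilon'$ throughout a parabolic cylinder $B(x_0,\rho)\times[t_0-\rho^{2},t_0]$.

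Assuming the conclusion fails, I obtain a sequence of solutions $\{\Phi^{(i)}\}$ satisfying the hypotheses but with
\[
    L_i^{2}\;:=\;\sup_{(x,t)\in B(x_0,r)\times[t_0-r^{2},t_0]}\Lambda_r^{2}(x,t)\,|T_{\Phi^{(i)}}(x,t)|^{2}\;\longrightarrow\;\infty.
\]
Because $\Lambda_r$ vanishes on the parabolic boundary, the supremum is achieved at some interior point $(y_i,s_i)$. Set $\lambda_i:=|T_{\Phi^{(i)}}(y_i,s_i)|$; then $\Lambda_r(y_i,s_i)\lambda_i=L_i\to\infty$, so in particular $\lambda_i\to\infty$. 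A standard Hamilton point-picking refinement (maximising $\Lambda_r^{2}|T|^{2}$ over a slightly smaller cylinder) guarantees that $|T_{\Phi^{(i)}}|\le 2\lambda_i$ on a parabolic cylinder around $(y_i,s_i)$ of radius $\sim L_i/\lambda_i$ in the original coordinates.

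Now I parabolically rescale as in Lemma~\ref{lemma:pscale} with $c=\lambda_i$, centering at $(y_i,s_i)$: set $\widetilde\Phi^{(i)}(\tilde x,\tilde t):=\lambda_i^{4}\Phi^{(i)}\bigl(\exp_{y_i}(\tilde x/\lambda_i),s_i+\tilde t/\lambda_i^{2}\bigr)$. By \eqref{pscaletor}, $|\widetilde T^{(i)}|\le 2$ on parabolic balls of radius $\sim L_i\to\infty$ in the new coordinates, $|\widetilde T^{(i)}(0,0)|=1$, and the rescaled curvatures $|\widetilde\nabla^{j}\widetilde R|_{\tilde g^{(i)}}=\lambda_i^{-(2+j)}|\nabla^{j}R|_{g}$ tend uniformly to $0$, so the rescaled injectivity radius at $0$ tends to $+\infty$. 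The Shi-type estimates of Corollary~\ref{shitypeest} then furnish uniform bounds on all covariant derivatives of $\widetilde T^{(i)}$, and Theorem~\ref{thm:flowcptthm} produces, after extracting a subsequence, a complete limit flow $\Phi_{\infty}$ on $(\bR^{8},g_{\mathrm{Eucl}})$ with $|T_{\Phi_{\infty}}(0,0)|=1$.

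Finally I would derive a contradiction from the Euclidean monotonicity. Since $\Theta$ is scale-invariant under parabolic rescaling and uniformly small in the original picture by Step~1, pulling back via the rescaling gives $\widetilde\Theta_{(\tilde x,\tilde t)}\bigl(\widetilde\Phi^{(i)}(\tilde s)\bigr)<\varepsilon'$ for all $\tilde s<\tilde t$ with $(\tilde x,\tilde t)$ ranging over a family of points that exhausts $\bR^{8}\times(-\infty,0]$ as $i\to\infty$. Passing to the limit and invoking Theorem~\ref{thm:almostmon}(2), one concludes
\[
    \Theta_{\infty,(0,\delta)}\bigl(\Phi_{\infty}(\tilde s)\bigr)\;\le\;\varepsilon', \quad \text{for every } \tilde s\in(-\infty,\delta),\ \delta>0.
\]
Letting $\tilde s\uparrow\delta$ and then $\delta\downarrow 0$, the backward-heat-kernel integrand concentrates at the origin, so the left-hand side tends to $|T_{\Phi_{\infty}}(0,0)|^{2}=1$ by continuity, yielding $1\le\varepsilon'$. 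Choosing $\varepsilon'<1$ at the outset (hence $\varepsilon$ small enough) produces the required contradiction. The delicate step is the point-picking combined with the verification that the compactness Theorem~\ref{thm:flowcptthm} applies to the rescaled sequence --- specifically, ensuring the local torsion bound $|\widetilde T^{(i)}|\le 2$ holds on parabolic cylinders whose rescaled radius diverges with $i$, so that the limit domain is indeed all of $\bR^{8}\times(-\infty,0]$; the remaining ingredients follow the template already codified for $\G2$-structures in \cite[Thm.~5.9]{dgk-isometric}.
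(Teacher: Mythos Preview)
Your argument is correct and follows essentially the same blow-up/compactness/monotonicity scheme as the paper's proof, which is modelled on \cite[Theorem~5.7]{dgk-isometric}. One small simplification: the separate ``Hamilton point-picking refinement'' you invoke is unnecessary, since the weight $\Lambda_r$ already vanishes on the parabolic boundary, so maximising $\Lambda_r|T|$ directly yields the bound $|T|\le \Lambda_r(\bar x_i,\bar t_i)\lambda_i/\Lambda_r(x,t)\le 2$ on a cylinder of rescaled radius $\sim L_i\to\infty$---this is exactly how the paper obtains the uniform torsion bound on the rescaled sequence.
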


\begin{proof}
Since the proof is similar to the $\G2$ case in \cite[Theorem 5.7]{dgk-isometric}, we only sketch the main ideas. The proof is by contradiction. If the claim were not true, then for any sequences $\varepsilon_i\rightarrow 0$ and $\bar{\rho}_i\rightarrow 0$, there would exist $\rho_i\in (0, \bar{\rho}_i]$ such that, for any $r_i\in (0, \rho_i)$ and $C_i\rightarrow \infty$, there exist counterexamples $\{(M, \s7_i(t))\}_{t\in [0, t_i)}$ with $g_{\s7_i(t)}=g$, $E(\s7(0))\leq E_0$, and $x_i\in M$, such that
\begin{align*}
\Theta_{(x_i, t_i)}(\s7_i(t_i-{\rho_i}^2))< \varepsilon_i,    
\end{align*}
while 
\begin{align}
r_i\left(\underset{(x,t)\in B(x_i, r_i)\times [t_i-{r_i}^2, t_i]}{\textup{max}} \Lambda_{r_i}(x,t)|T_{\s7_i}(x,t)|   \right)> C_i.     
\end{align}
Let $(\bar{x},\bar{t})\in B(x_i, r_i)\times [t_i-{r_i}^2, t_i]$ be the point where the maximum is attained and set $Q_i:= |T_{\s7_i}(\bar{x_i}, \bar{t_i})|$.

The idea now is to use the rescaled flow (see \textsection\ref{sec:pscale})
\begin{align*}
    \widetilde{\s7_i}(t) &= {Q_i}^4\s7_i(\bar{t_i}+t{Q_i}^{-2}),
    \quad t\leq 0,\\
    g_i&= {Q_i}^2g,
\end{align*}
and the pointed sequence $\{(M, \widetilde{\s7_i}(t), g_i, \bar{x}_i)\}$. By the definition of $Q_i$, we have 
$$
    |T_{\widetilde{\s7_i}}(x, t)|\leq \frac{\Lambda_i(\bar{x_i}, \bar{t_i})}{\Lambda_i(x, \bar{t_i}+t{Q_i}^{-2})} \leq 2   
\qandq
    |T_{\widetilde{\s7_i}}(\bar{x}_i, 0)|=1.
$$ 
We can then apply the derivative estimates and the compactness Theorem~\ref{thm:flowcptthm} to deduce that the sequence of pointed solutions subsequentially converges to a limit ancient solution of the harmonic $\S7$-flow $({\bR}^8, \s7_{\infty}(t), g_{\textup{Eucl}}, 0)_{t\in (-\infty, 0]}$ with 
\begin{align}
\label{epsilonregpf1}
    |T_{\s7_{\infty}}(0,0)|=1.    
\end{align}
On the other hand, using the almost monotonicity formula Theorem~\ref{thm:almostmon}, the scale invariance of $\Theta$ combined with estimates on the backwards heat kernel $u$ gives $|T_{\s7_{\infty}}(0,0)|=0$, which is not possible due to \eqref{epsilonregpf1}. 
\end{proof}

An immediate corollary of the $\varepsilon$-regularity theorem is the following result, which states that if the entropy of the initial $\S7$-structure is small then the torsion is controlled at all times. Again, the proof is similar to \cite[Cor. 5.8]{dgk-isometric}.

\begin{corollary}[small initial entropy controls torsion]
\label{cor:enttor}
    Let $\{\s7(t)\}$ be a solution of the harmonic $\S7$-flow \eqref{eq: Har Spin(7) Flow} on compact $(M, g)$,  starting at $\s7_0$. For every $\sigma>0$, there exist $\varepsilon, t_0>0$ and $C< \infty$ such that, if $\s7_0$ induces $g$ and its entropy \eqref{eq:entropyeqn} satisfies
\begin{align*}
    \lambda(\s7_0, \sigma) < \varepsilon,  
\end{align*}
then
\begin{align*}
    \underset{M}{\textup{max}}\ |T_{\s7(t)}|\leq \frac{C}{\sqrt{t}}.    
\end{align*}

\end{corollary}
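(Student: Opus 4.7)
The plan is to reduce the assertion directly to the $\varepsilon$-regularity theorem (Theorem \ref{thm:epsilonreg}), after verifying that the entropy hypothesis on $\s7_0$ pointwise controls the scale-invariant quantity $\Theta$ at the initial time at every base point.

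First, compare definitions \eqref{thetadefn} and \eqref{eq:entropyeqn} to observe the key identification at the initial time:
$$
\Theta_{(x,t)}(\s7(0)) \;=\; t\int_M |T_{\s7_0}|^2(y)\, u_{(x,t)}(y,0)\, \vol \;\leq\; \lambda(\s7_0,\sigma), \qquad \forall\, (x,t)\in M\times(0,\sigma].
$$
So smallness of the initial entropy at scale $\sigma$ is exactly smallness of $\Theta_{(x,t)}(\s7(0))$, uniformly in the basepoint $(x,t)$ with $t\leq\sigma$.

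Second, since $M$ is compact, $E(\s7_0)<\infty$; moreover the backward heat kernel $u_{(x,\sigma)}(\cdot,0)$ admits a positive lower bound $c(\sigma, g)>0$ on $M$ (standard parabolic estimates on a compact manifold), whence the entropy bound implies the energy bound $E(\s7_0)\leq \lambda(\s7_0,\sigma)/c$. Thus we may fix once and for all a constant $E_0=E_0(\sigma,g)$ and apply Theorem \ref{thm:epsilonreg} with this $E_0$ to obtain thresholds $\varepsilon_{\mathrm{reg}}, \bar\rho>0$. Set $\varepsilon:=\min\{\varepsilon_{\mathrm{reg}},\,c\cdot E_0\}$ and $t_0:=\min\{\sigma,\bar\rho^2\}$.

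Third, given an arbitrary $(x,t)\in M\times(0,t_0]$, apply Theorem \ref{thm:epsilonreg} at $(x_0,t_0)=(x,t)$ with $\rho:=\sqrt{t}\in(0,\bar\rho]$. Since $t-\rho^2=0$, the hypothesis $\Theta_{(x,t)}(\s7(t-\rho^2))<\varepsilon$ reduces to $\Theta_{(x,t)}(\s7(0))<\varepsilon$, which is the first step. Evaluating the resulting bound at the centre of the parabolic cylinder, where $\Lambda_r(x,t)\equiv 1$, yields $|T_{\s7(t)}(x)|\leq C/r$.

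The main technical obstacle is that Theorem \ref{thm:epsilonreg} only produces some $r\in(0,\rho)$, whereas the stated bound $C/\sqrt{t}$ demands $r\sim\rho$ with a universal constant. This last step I would close by a parabolic rescaling argument via Lemma \ref{lemma:pscale}: rescaling the flow by $Q=1/\sqrt{t}$ turns the scale $\rho=\sqrt{t}$ into unit scale, at which the constants in $\varepsilon$-regularity become absolute; undoing the rescaling then produces the desired pointwise estimate $|T_{\s7(t)}(x)|\leq C/\sqrt{t}$ with $C=C(g,\sigma)$ independent of $(x,t)$.
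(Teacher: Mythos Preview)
Your approach is correct and is precisely the route the paper intends by citing the $\G2$ analogue \cite[Cor.~5.8]{dgk-isometric}: the entropy hypothesis gives $\Theta_{(x,t)}(\s7(0))<\varepsilon$ uniformly over $(x,t)\in M\times(0,\sigma]$, which plugs directly into Theorem~\ref{thm:epsilonreg} with $\rho=\sqrt{t}$ and $t_0-\rho^2=0$. The derivation of an energy bound from the entropy bound (via a heat-kernel lower bound) is also correct and necessary, since Theorem~\ref{thm:epsilonreg} requires an \emph{a priori} $E_0$.

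Your concern about $r$ versus $\rho$ is well taken, but the parabolic-rescaling fix is not quite sound as written: rescaling by $Q=1/\sqrt{t}$ replaces $g$ by $t^{-1}g$, and the constants in Theorem~\ref{thm:epsilonreg} depend on the geometry of $(M,g)$, so ``the constants become absolute'' begs the question. The cleaner resolution is to note that the contradiction argument proving Theorem~\ref{thm:epsilonreg} actually delivers $r$ as a fixed fraction of $\rho$ (e.g.\ $r=\rho/2$) with $C$ independent of $\rho$: the negation there quantifies over \emph{all} $r_i\in(0,\rho_i)$, so one may choose $r_i=\rho_i/2$ throughout, and the blow-up limit on $(\bR^8,g_{\mathrm{Eucl}})$ yields the contradiction regardless of the ratio. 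With that reading of Theorem~\ref{thm:epsilonreg}, your third step immediately gives $|T_{\s7(t)}(x)|\le 2C/\sqrt{t}$ at the centre of the cylinder, and no extra rescaling is needed.
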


\subsection{Long-time existence}

In this section, we prove that the harmonic $\S7$-flow on a compact manifold $M$, under smallness assumptions of the initial data, exists for all time and converges to a harmonic $\S7$-structure. We start with the following convexity result for the energy functional $E$ along the flow. 

\begin{lemma}
\label{lemma:conv}
    Along a solution of the harmonic $\S7$-flow \eqref{eq: Har Spin(7) Flow} on compact $(M, g)$,
\begin{align} 
\label{eq:convpf1}
    \frac{d^2}{dt^2}E(\s7 (t))
    &\geq \int_M (\Lambda -3 |T|^2) |\Div T|^2 \vol
    \end{align}
    where $\Lambda$ is the first non-zero eigenvalue of the rough Laplacian of $g$ on two-forms. 
\end{lemma}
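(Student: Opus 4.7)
Since the harmonic $\S7$-flow preserves the Riemannian metric $g$ (and hence the volume form), Lemma~\ref{lemma:E1var} immediately yields
\begin{equation*}
    \tfrac{d}{dt}E(\s7(t)) = -\int_M |\Div T|^2\,\vol,
\end{equation*}
where, in this context, $\Div T$ should be read as an element of $\Omega^2_7$ (its $\Omega^2_{21}$-component being annihilated by $\diamond\s7$). Differentiating once more reduces the claim to controlling $\partial_t \Div T$:
\begin{equation*}
    \tfrac{d^2}{dt^2}E(\s7(t)) = -2\int_M \langle \Div T,\,\partial_t \Div T\rangle\,\vol.
\end{equation*}
The strategy is to identify $-2\langle \Div T, \partial_t \Div T\rangle$ with $2|\nabla \Div T|^2$ modulo cross-terms of the form $T * \Div T * \nabla \Div T$, then to absorb those cross-terms into $|\nabla \Div T|^2$ and $|T|^2|\Div T|^2$ via Young's inequality, and finally to convert $\int|\nabla \Div T|^2$ into $\Lambda\int |\Div T|^2$ by the Rayleigh quotient.

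Set $X:=\Div T$. Proposition~\ref{prop:1var} with $\xi = X\diamond\s7$ gives $\partial_t T_{m;is} = X_{ip}T_{m;ps} - X_{sp}T_{m;pi} + \pi_7(\nabla_m X_{is})$. Applying $\nabla_m$, which commutes with $\partial_t$ because $g$ is constant along the flow, and using $\nabla_m T_{m;ps}=X_{ps}$ via the Leibniz rule, the two purely quadratic contributions combine as $X_{ip}X_{ps} - X_{sp}X_{pi}$ and \emph{cancel exactly}: they constitute the antisymmetric part of $X^2$, which vanishes since $X^2$ is symmetric whenever $X$ is skew. Unpacking $\pi_7$ via \eqref{eq:pi7} and replacing $\nabla_m \s7_{abis}$ by $T * \s7$ through \eqref{Tdefneqn} produces the schematic identity
\begin{equation*}
    \partial_t X_{is} = \pi_7(\Delta X)_{is} + (T*\nabla X)_{is},
\end{equation*}
whose error collects $-\tfrac18 \nabla_m X_{ab}\nabla_m \s7_{abis}$ together with $\nabla_m X_{ip}T_{m;ps} - \nabla_m X_{sp}T_{m;pi}$, each of which is quadratic in $\nabla X$ and $T$.

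Pairing with $X$, using self-adjointness of $\pi_7$ and $\pi_7(X)=X$, and integrating by parts gives
\begin{equation*}
    \int_M \langle X, \pi_7(\Delta X)\rangle\,\vol = \int_M \langle X, \Delta X\rangle\,\vol = -\int_M |\nabla X|^2\,\vol,
\end{equation*}
so that the leading contribution to $\tfrac{d^2}{dt^2}E$ is the \emph{good} term $2\int_M|\nabla X|^2$. The residual cross-terms $\int_M X*(T*\nabla X)$ are controlled pointwise by $C|T||X||\nabla X|$ (with $C$ determined by the contraction identities \eqref{eq:impiden1}--\eqref{eq:impiden3}) and then by Young's inequality $|T||X||\nabla X|\leq \tfrac{\varepsilon}{2}|\nabla X|^2 + \tfrac{1}{2\varepsilon}|T|^2|X|^2$, with $\varepsilon$ tuned so that the coefficient of $\int |\nabla X|^2$ that remains is exactly $1$ while the coefficient of $\int |T|^2|X|^2$ works out to $3$. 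Applying the Rayleigh characterisation $\int_M |\nabla X|^2 \geq \Lambda \int_M |X|^2$, valid on the $L^2$-orthogonal complement of the (finite-dimensional) space of parallel two-forms, yields the claim. The main obstacle is the careful constant-chasing through the $\pi_7$-projection and the $\nabla\s7 = T\diamond\s7$ contractions: extracting precisely the factor $3$ in $\Lambda - 3|T|^2$ hinges both on the cancellation of the quadratic $X$--$X$ term noted above and on the exact numerology of Proposition~\ref{prop:diaproperties2}, which pins down the norms of the various diamond pairings.
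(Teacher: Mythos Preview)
Your overall architecture matches the paper's --- differentiate $E'=-\int|\Div T|^2$, compute $\partial_t(\Div T)$ from the torsion evolution, integrate by parts, and finish with the Rayleigh quotient --- but the step where you invoke Young's inequality and ``tune $\varepsilon$'' to land on the constant $3$ is a genuine gap. Young's inequality $ab\leq\tfrac{\varepsilon}{2}a^2+\tfrac{1}{2\varepsilon}b^2$ has one free parameter, not two: once you fix the coefficient of $|\nabla X|^2$ that you sacrifice, the coefficient of $|T|^2|X|^2$ is determined by the size of the schematic constant $C$ in $|X*(T*\nabla X)|\leq C|T||X||\nabla X|$, which you never compute. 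There is no mechanism by which a generic $C$ yields exactly $3$, so as written the argument proves only $\tfrac{d^2}{dt^2}E\geq\int(\Lambda-C'|T|^2)|\Div T|^2$ for \emph{some} $C'$, not the stated lemma.

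The paper avoids this by tracking constants exactly rather than schematically. Writing $X=\Div T\in\Omega^2_7$, it expands $|\pi_7(\nabla_m X)|^2$ using the constraint $X_{is}\s7_{isab}=-6X_{ab}$ (differentiated once in $m$). The point is that the apparent cross-terms of type $X*T*\nabla_m X$ arising from $\pi_7$ and from the first two terms of $\partial_t T$ can be shown, after relabelling, to be \emph{symmetric} in the free $\Omega^2$-indices while $\nabla_m X$ is skew, so they vanish identically --- no Young's inequality is needed there. What survives is $|\nabla_m X|^2$ plus a purely algebraic term $\tfrac32\,X_{is}T_{m;iq}(X_{ps}T_{m;qp}+X_{qp}T_{m;sp})$, and a single Cauchy--Schwarz on this quartic expression produces the factor $3$ directly. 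Your sketch would become a proof if you replaced the schematic $T*\nabla X$ bookkeeping by this exact expansion and observed the symmetry cancellation.

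A minor point: you correctly restrict the Rayleigh inequality to the orthogonal complement of parallel $2$-forms, but you should say why $\Div T$ lies there --- it is because $\int_M\langle\Div T,\omega\rangle=-\int_M\langle T,\nabla\omega\rangle=0$ for any parallel $\omega$.
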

\begin{proof}
From \eqref{1vareqn},
\begin{align*}
    \frac{d^2}{dt^2}E(\s7 (t))
    &= - 2 \int_M \langle \frac{\partial}{\partial t}\Div T , \Div T \rangle \vol \\
    &= - 2 \int_M \nabla_m  (\nabla_r T_{r;ip} T_{m;ps} - \nabla_r T_{r;sp} T_{m;pi} + \pi_7 (\nabla_m (\nabla_r T_{r;is}))) \nabla_a  T_{a;is}\vol \\
    &= 2 \int_M 2\nabla_r T_{r;ip} T_{m;ps} \nabla_m  (\nabla_a  T_{a;is}) + |\pi_7 \nabla_m (\nabla_r T_{r;is})|^2 \vol.
\end{align*}
Now $\Div T \in \Omega^2_7$, so that $(\Div T)_{is} \s7_{isab} = -6 (\Div T)_{ab}$, and therefore
\begin{align*}
    &-6 \nabla_m (\Div T)_{ab} = \nabla_m (\Div T)_{is} \s7_{isab} + 2 (\Div T)_{is} T_{m;iq}\s7_{qsab} - 6 (\Div T)_{qb}T_{m;aq} -6 (\Div T)_{aq} T_{m;bq}.
\end{align*}
We use this fact to expand the second term in the integral:
\begin{align*}
   &|\pi_7 \nabla_m (\nabla_r T_{r;is})|^2 = \langle \pi_7 \nabla_m (\nabla_r T_{r;is}),\nabla_m (\nabla_r T_{r;is})\rangle \\
   &= \Big( \tfrac14 (\nabla_m \Div T)_{ij} - \tfrac18 (\nabla_m \Div T)_{ab}\s7_{ijab}\Big) (\nabla_m \Div T)_{ij} \\
   &= \tfrac14 |\nabla_m (\Div T)|^2 - \tfrac18 (\nabla_m \Div T)_{ab}\s7_{ijab} \nabla_m (\Div T)_{ij} \\
   &= \tfrac14 |\nabla_m (\Div T)|^2 - \tfrac18 (\nabla_m \Div T)_{ab} \Big( - 2 (\Div T)_{is} T_{m;iq}\s7_{qsab} + 6 (\Div T)_{qb}T_{m;aq} + 6 (\Div T)_{aq} T_{m;bq} \\
   &\qquad -6 (\nabla_m \Div T)_{ab} \Big) \\
   &=  |\nabla_m (\Div T)|^2 + \tfrac14 (\nabla_m \Div T)_{ab} (\Div T)_{is} T_{m;iq}\s7_{qsab} 
   - \tfrac34 \Big( (\Div T)_{qb}T_{m;aq} + (\Div T)_{aq} T_{m;bq}\Big) (\nabla_m \Div T)_{ab} \\
   &= |\nabla_m (\Div T)|^2 - \tfrac34 \Big( (\Div T)_{qb}T_{m;aq} + (\Div T)_{aq} T_{m;bq}\Big) (\nabla_m \Div T)_{ab} \\
   &\qquad + \tfrac14 (\Div T)_{is} T_{m;iq} \Big( 
   - 2 (\Div T)_{ir} T_{m;ip}\s7_{prqs} + 6 (\Div T)_{ps}T_{m;qp} + 6 (\Div T)_{qp} T_{m;sp}
   -6 (\nabla_m \Div T)_{qs} \Big) \\
   &= |\nabla_m (\Div T)|^2 - \tfrac34 \Big( (\Div T)_{qb}T_{m;aq} + (\Div T)_{aq} T_{m;bq}\Big) (\nabla_m \Div T)_{ab} \\
   &\qquad + \tfrac32 (\Div T)_{is} T_{m;iq} (\Div T)_{ps}T_{m;qp} + \tfrac32 (\Div T)_{is} T_{m;iq} (\Div T)_{qp} T_{m;sp} - \tfrac32 (\Div T)_{is} T_{m;iq} (\nabla_m \Div T)_{qs} \\
   &= |\nabla_m (\Div T)|^2 + \tfrac34 \Big( (\Div T)_{qb}T_{m;aq} - (\Div T)_{aq} T_{m;bq}\Big) (\nabla_m \Div T)_{ab} \\
   &\qquad + \tfrac32 (\Div T)_{is} T_{m;iq} (\Div T)_{ps}T_{m;qp} + \tfrac32 (\Div T)_{is} T_{m;iq} (\Div T)_{qp} T_{m;sp} \\
   &= |\nabla_m (\Div T)|^2 + \tfrac32 (\Div T)_{is} T_{m;iq} \Big( (\Div T)_{ps}T_{m;qp} +  (\Div T)_{qp} T_{m;sp} \Big)\\
   &\geq |\nabla_m (\Div T)|^2 -3 |\Div T|^2 |T|^2. 
\end{align*}
\begin{align*}
    \therefore\qquad
    \frac{d^2}{dt^2}E(\s7 (t)) &\geq \int_M |\nabla_m (\Div T)|^2 -3 |\Div T|^2 |T|^2 \vol  \\
    &\geq \int_M (\Lambda -3 |T|^2) |\Div T|^2 \vol, 
\end{align*}
    where $\Lambda$ is the first non-zero eigenvalue of the rough Laplacian on $2$-forms. 
    On a compact manifold, the kernel of that rough Laplacian consists indeed of parallel $2$-forms, so it is orthogonal to $\Div T$.
\end{proof}

The following general interpolation result  can be proved in a similar way as \cite[Lemma 5.12]{dgk-isometric}.  
\begin{lemma}[interpolation]
\label{lemma-interpolation}
    Let $(M,\s7,g)$ be a compact $\S7$-structure manifold with torsion $T=T_\Phi$. Suppose $|\del T|\leq C$ and non-collapsing at every $x\in M$, i.e., 
\begin{align*}
    \vol(B(x, r))\geq v_0r^8, \qforq 0<r\leq 1,
\end{align*}
    for some constant $v_0(M,g)>0$. Then, for every $\varepsilon>0$, there exists $\delta(\varepsilon, C, v_0)\geq 0$ such that, if 
\begin{align}
\label{eq-interpolation1}
    E(\s7) < \delta,    
\end{align}
    then $|T|<\varepsilon$.
\end{lemma}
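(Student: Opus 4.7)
The plan is a standard contradiction argument: a pointwise bound on the derivative combined with the volume non-collapsing forces an $L^2$-lower bound from an $L^\infty$-lower bound. Concretely, I would argue by contradiction and assume there exists $x_0\in M$ with $|T(x_0)|\geq\varepsilon$, then propagate this bound to a ball using $|\nabla T|\leq C$, and finally contradict $E(\Phi)<\delta$ via the non-collapsing hypothesis.

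\textbf{Step 1 (propagation of the pointwise bound).} For any $y$ in the geodesic ball $B(x_0,r)$ with $r\leq 1$ small enough that a minimising unit-speed geodesic $\gamma:[0,d]\to M$ joins $x_0$ to $y$ (with $d=d_g(x_0,y)\leq r$), parallel transport $T$ along $\gamma$ and compute $\tfrac{d}{ds}|T\circ\gamma|\leq |\nabla_{\dot\gamma}T|\leq C$. Integrating gives $\bigl||T(y)|-|T(x_0)|\bigr|\leq Cd\leq Cr$. Hence, choosing $r:=\min\{1,\varepsilon/(2C)\}$, every $y\in B(x_0,r)$ satisfies $|T(y)|\geq \varepsilon/2$.

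\textbf{Step 2 (lower bound on the energy).} By the non-collapsing assumption $\vol(B(x_0,r))\geq v_0 r^8$, so
\begin{equation*}
    2E(\Phi)=\int_M |T|^2\vol \;\geq\; \int_{B(x_0,r)}|T|^2\vol\;\geq\;\bigl(\tfrac{\varepsilon}{2}\bigr)^{2}\,v_0\,r^{8}=\frac{\varepsilon^{2}\,v_0}{4}\,\bigl(\min\{1,\varepsilon/(2C)\}\bigr)^{8}.
\end{equation*}
Setting
\begin{equation*}
    \delta(\varepsilon,C,v_0):=\frac{\varepsilon^{2}\,v_0}{16}\,\bigl(\min\{1,\varepsilon/(2C)\}\bigr)^{8},
\end{equation*}
the inequality $E(\Phi)<\delta$ is incompatible with the previous display, producing the desired contradiction.

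\textbf{Main obstacle.} The argument is essentially routine; the only point requiring minimal care is Step~1, where one must compare $|T|$ at two distinct points on $M$. This is handled by parallel transport along a minimising geodesic, using that the induced connection on the relevant tensor bundle is metric. The interpolation therefore reduces to the elementary combination of a Lipschitz estimate with a volume lower bound, exactly as in \cite[Lemma 5.12]{dgk-isometric}.
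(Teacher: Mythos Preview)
Your argument is correct and is precisely the standard one the paper defers to \cite[Lemma 5.12]{dgk-isometric}: a Lipschitz propagation of the pointwise lower bound via $|\nabla T|\leq C$ to a ball of radius $\sim\varepsilon/C$, followed by the non-collapsing lower bound on its volume to force $E(\Phi)$ above a definite threshold. The constants are fine (note $M$ compact guarantees the minimising geodesic in Step~1), and there is nothing to add.
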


We now prove Theorem~\ref{thm:small_tor_ent}. First, we prove that if the torsion of the initial $\S7$-structure
is pointwise small, then \eqref{eq: Har Spin(7) Flow} exists for all time and converges to a critical point of the energy functional. Then Corollary~\ref{cor:enttor}, the derivative estimates from Corollary~\ref{shitypeest},  and the interpolation Lemma~\ref{lemma-interpolation} imply that, if the initial entropy is small, then the torsion does eventually become pointwise small.

\begin{theorem}[small initial torsion gives long-time existence]
\label{thm: smalltorconv}
    Let $(M,\Phi_0,g)$ be a compact  $\S7$-structure manifold. For every $\delta >0$, there exists $\varepsilon (\delta, g) >0$ such that, if $|T_{\Phi_0}| < \varepsilon$, then a harmonic $\S7$-flow \eqref{eq: Har Spin(7) Flow} starting at $\Phi_0$ exists for all time and converges subsequentially smoothly to a $\S7$-structure $\Phi_{\infty}$ such that 
$$
\Div T_{\Phi_{\infty}} =0, \quad
|T_{\Phi_{\infty}}| < \delta.
$$
\end{theorem}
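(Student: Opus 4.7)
The plan is to combine the convexity of the energy $E$ under small torsion (Lemma \ref{lemma:conv}) with the Shi-type derivative estimates (Corollary \ref{shitypeest}), the interpolation inequality (Lemma \ref{lemma-interpolation}), and the Hamilton-type compactness result (Theorem \ref{thm:flowcptthm}), all orchestrated by a bootstrap. Let $\Lambda > 0$ denote the first non-zero eigenvalue of the rough Laplacian on $2$-forms over the fixed background $(M,g)$, and set $\delta_1 := \min\bigl\{\delta,\, \tfrac{1}{2}\sqrt{\Lambda/3}\bigr\}$. The threshold $\varepsilon$ will be pinned down progressively.

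First I would set up the bootstrap showing that pointwise smallness of the torsion is preserved along the flow. Define
\[
\tau^\ast := \sup\bigl\{\,t \in [0, T_{\max}) : \|T(\cdot,s)\|_{L^\infty} \leq \delta_1 \text{ for all } s\in[0,t]\bigr\}.
\]
Taking $\varepsilon < \delta_1/2$, the doubling-time estimate (Corollary \ref{dteforif}) guarantees $\tau^\ast > 0$. On $[0, \tau^\ast]$ the background curvature and its derivatives are uniformly bounded, and $|T|\leq \delta_1$, so Corollary \ref{shitypeest} (restarted from each past instant, to avoid the $t\to 0$ degeneration) yields $|\nabla^k T(\cdot, t)| \leq C_k$ with constants depending only on $(M,g)$ and $\delta_1$. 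Since the flow is energy-decreasing (Lemma \ref{lemma:E1var}),
\[
E(\Phi(t)) \leq E(\Phi_0) \leq \tfrac{1}{2}\varepsilon^2\vol(M)
\qquad \forall\, t\in[0,\tau^\ast].
\]
The interpolation Lemma \ref{lemma-interpolation}, applied with the uniform bound on $|\nabla T|$ and the non-collapsing which follows from compactness of $(M,g)$, then shows that further shrinking $\varepsilon$ forces $|T(\cdot, t)| < \delta_1/2$ throughout $[0, \tau^\ast]$. This contradicts maximality unless $\tau^\ast = T_{\max}$, and the uniform derivative bounds preclude finite-time singularities, so $T_{\max} = +\infty$.

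Next I would exploit convexity to extract subconvergence. Since $|T(t)| \leq \delta_1 < \sqrt{\Lambda/3}$ for all $t \geq 0$, Lemma \ref{lemma:conv} gives
\[
\frac{d^2 E}{dt^2}(\Phi(t)) \;\geq\; (\Lambda - 3\delta_1^2)\int_M |\Div T|^2\, \vol \;\geq\; 0,
\]
so $E'(t) = -2\int_M|\Div T|^2\vol$ is non-decreasing and non-positive, while $E \geq 0$ is bounded below; hence $E'(t) \to 0$, i.e.\ $\|\Div T(t)\|_{L^2} \to 0$ as $t \to \infty$. Choosing any sequence $t_i \to \infty$, the uniform bounds on $|T|$ and on all $|\nabla^k T|$ secured in the first step, together with the fixed background metric and compactness of $M$, place us exactly in the setting of Theorem \ref{thm:flowcptthm}, which yields a subsequential smooth limit $\Phi_\infty$. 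As a limit of elements of $\llbracket \Phi_0 \rrbracket$, it still induces $g$ and satisfies $|T_{\Phi_\infty}| \leq \delta_1 < \delta$; smooth convergence of $\nabla^k T$ combined with $\|\Div T(t_i)\|_{L^2} \to 0$ forces $\Div T_{\Phi_\infty} = 0$.

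The main technical hurdle is the bootstrap step: one must uniformly control $|\nabla T|$ throughout the flow in terms of the background geometry alone, so that the interpolation Lemma \ref{lemma-interpolation} can be applied at $\tau^\ast$ with a constant independent of time. This hinges on carefully deploying the Shi-type estimates of Corollary \ref{shitypeest}, which degenerate as $t\to 0$, by restarting them from each past instant where the torsion bound is already established, and patching this with the short-time control supplied by the doubling-time estimate. Once this uniform derivative bound is secured, the remainder follows the pattern of the analogous $\G2$ result in \cite[Thm.\ 5.13]{dgk-isometric}, adapted to the $\S7$-setting using the ingredients developed earlier in the paper.
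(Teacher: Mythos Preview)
Your proposal is correct and follows essentially the same approach as the paper: bootstrap via doubling-time, Shi-type estimates, energy decrease plus interpolation to close the bootstrap, then convexity (Lemma~\ref{lemma:conv}) for convergence. The only minor variation is that the paper extracts from Lemma~\ref{lemma:conv} the sharper exponential decay $\tfrac{d}{dt}\|\Div T\|_{L^2}^2 \leq -\tfrac{\Lambda}{2}\|\Div T\|_{L^2}^2$ (yielding a unique $L^1$ limit and hence full smooth convergence), whereas you use the softer monotonicity $E''\geq 0\Rightarrow E'\to 0$, which still suffices for the subsequential convergence claimed in the statement.
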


\begin{proof}
We only sketch the proof, since it is very similar to that of the isometric $\G2$-flow \cite[Theorem 5.13]{dgk-isometric}.
By the doubling-time estimate [Proposition \ref{dteforif}], given $\delta>0$, there exists $\varepsilon_0>0$ 
such that, whenever $|T_{\s7_0}|<\varepsilon_0$,
\begin{align}
\label{eq:ltetorpf1}
    t_* := \textup{max}\{ t\geq 0\ :\  |T_{\s7(t)}| \leq 2\varepsilon_0\} 
    > \delta.   
\end{align}
We prove that $t_*=\infty$, hence that a flow with small initial torsion exists for all time. By contradiction, suppose $t_*< \infty$. Then, by the derivative estimates of Corollary~\ref{shitypeest} for the time interval $[t_*-\delta, t_*]$, there is a constant $c_0=c_0(g, \varepsilon_0)$ such that 
\begin{align}
\label{eq:ltetorpf2}
    |\del T_{\s7(t_*)}|< c_0.  
\end{align}
Note that \eqref{eq: Har Spin(7) Flow} is the negative gradient flow of $E(\s7(t))$ and hence $E(\s7(t_*))\leq E(\s7(0))=E(\s7_0)$. Given $a>0$, the interpolation Lemma \ref{lemma-interpolation} yields a constant $\gamma_{a}=\gamma_{a}(c_0)>0$ such that, if $E(\s7_0)< \gamma_a$, then $|T_{\s7(t_*)}|< a$. 
Now, if $\varepsilon < \textup{min} \{\varepsilon_0, \gamma_{2\varepsilon_0} \}$, then $|T_{\s7(t_*)}|<2\varepsilon$, which contradicts the maximality of $t_*$, as defined in  \eqref{eq:ltetorpf1}. Thus $t_*=\infty$ and the flow exists for all time.

Denote by $\Lambda$ the first non-zero eigenvalue of the Laplacian acting on $2$-forms. Note from \eqref{eq:convpf1} that, if $|T|^2\leq \frac{\Lambda}{6}$, then 
\begin{align}\label{eq:ltetorpf3}
\frac{d}{dt} \int_M |\Div T_{\s7(t)}|^2 \vol = -\frac{d^2}{dt^2} E(\s7(t))\leq -\frac{\Lambda}{2}\int_M |\Div T_{\s7(t)}|^2 \vol.     
\end{align}
Let $a_0= \sqrt{\frac{\Lambda}{6}}$. If we take $\varepsilon < \textup{min}\{\varepsilon, \gamma_{2\varepsilon_0}, \gamma_{a_0}\}$, then from the previous paragraph we obtain existence of the flow for all time and satisfying \eqref{eq:ltetorpf3}. Thus, we get the  decay estimate
\begin{align}
\label{eq:ltetorpf4}
    \int_M |\Div T_{\s7(t)}|^2 \vol \leq e^{-\frac{\Lambda t}{2}}\int_M |\Div T_{\s7(0)}|^2 \vol,     
    \quad\forall \ t\geq 0.
\end{align}
Now, for every $s_1<s_2$, we have
\begin{equation} \label{eq:ltetorpf5}
\begin{aligned}
    \int_M |\s7(s_2)-\s7(s_1)| \vol 
    &\leq  \int_M \int_{s_1}^{s_2} \left| \partial_t \s7(s)  \right| ds \vol
    = \int_{s_1}^{s_2} \int_M |\Div T_{\s7(s)}| \vol ds\\
    &\leq c\int_{s_1}^{s_2} \left(\int_M |\Div T_{\s7(s)}|^2 \vol \right)^{\frac{1}{2}} ds\\
    &\leq c \int_{s_1}^{s_2}  e^{-\frac{\Lambda s}{4}} ds. 
\end{aligned}
\end{equation}
Hence $\s7(t)$ converges exponentially fast to a unique limit $\s7_{\infty}$ in $L^1$ as $t\rightarrow+\infty$. 

Moreover, the uniform torsion bound gives estimates on all derivatives of the torsion. Given any sequence $t_n \rightarrow+\infty$, a subsequence of $\{\s7(t_n)\}$ will converge smoothly to a limit, which must be $\s7_\infty$ by uniqueness. Therefore, the flow $\{\s7(t)\}$ converges smoothly to $\s7_\infty$ as $t\rightarrow +\infty$. Inequality \eqref{eq:ltetorpf4} implies that $\Div T_{\s7_\infty}= 0$, and choosing $\varepsilon>0$ small enough, we can also achieve  $|T_{\s7_\infty} |<\delta$, using the interpolation Lemma \ref{lemma-interpolation}.
\end{proof}

We finally prove that small initial entropy implies long-time existence and convergence.

\begin{theorem}[small entropy]
\label{thm:smallent}
    On a compact  $\S7$-structure manifold $(M,\Phi_0,g)$, there exist constants $C_k(M,g) < +\infty$, such that the following holds. For each $\varepsilon>0$ and $\sigma >0$, there exists $\lambda_{\varepsilon} (g, \sigma) >0$ such that, if the entropy \eqref{eq:entropyeqn} satisfies 
\begin{align}
\label{smallenteq}
    \lambda(\s7_0, \sigma) < \lambda_{\varepsilon},
\end{align}
    then the torsion becomes eventually pointwise small along the harmonic $\S7$-flow \eqref{eq: Har Spin(7) Flow} starting at $\Phi_0$. Therefore the flow exists for all time and subsequentially converges to a $\S7$-structure $\Phi_{\infty}$ such that 
    $$
    \Div T_{\Phi_{\infty}} =0, \quad
    |T_{\Phi_{\infty}}| < \varepsilon \qandq
    |\nabla^k T_{\Phi_{\infty}}| < C_k,
    \,\forall\  k\geq 1.
    $$
\end{theorem}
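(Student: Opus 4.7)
The strategy is to reduce the small-entropy hypothesis to the small-torsion hypothesis of Theorem~\ref{thm: smalltorconv}, by using Corollary~\ref{cor:enttor} to force the torsion to become pointwise small at a single intermediate time $t_*$, and then restarting the harmonic flow from that instant. The higher-derivative bounds on the limit will come from Corollary~\ref{shitypeest} combined with the smooth subsequential convergence established in Theorem~\ref{thm: smalltorconv}.

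More concretely, given $\varepsilon>0$ and $\sigma>0$, I would first apply Theorem~\ref{thm: smalltorconv} to fix a threshold $\delta_\varepsilon=\delta_\varepsilon(\varepsilon,g)>0$ such that any harmonic $\S7$-flow whose initial torsion satisfies $|T_{\Phi}|<\delta_\varepsilon$ exists for all time and converges subsequentially smoothly to a $\S7$-structure $\Phi_\infty$ with $\Div T_{\Phi_\infty}=0$ and $|T_{\Phi_\infty}|<\varepsilon$. Next, I would invoke Corollary~\ref{cor:enttor} to obtain an entropy threshold $\varepsilon_1=\varepsilon_1(g,\sigma)>0$ and constants $t_0=t_0(g,\sigma)$, $C=C(g,\sigma)<\infty$ so that
\begin{equation*}
\lambda(\Phi_0,\sigma)<\varepsilon_1
\quad\Longrightarrow\quad
\max_{M}|T_{\Phi(t)}|\le \tfrac{C}{\sqrt{t}} \qquad (0<t\le t_0).
\end{equation*}

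I would then pick $\lambda_\varepsilon:=\varepsilon_1$ (shrinking it further if necessary to ensure that the critical time $t_*:=(C/\delta_\varepsilon)^2$ below lies in $(0,t_0]$). The bound $|T_{\Phi(t)}|\le C/\sqrt{t}$ shows that torsion stays bounded on $(0,t_*]$, so the short-time solution provided by the general theory in \textsection\ref{sec: general theory of H-str} extends without blow-up up to $t_*$, and at that instant
\begin{equation*}
\max_M |T_{\Phi(t_*)}| \le \tfrac{C}{\sqrt{t_*}} = \delta_\varepsilon.
\end{equation*}
Now restart the harmonic $\S7$-flow from the $\S7$-structure $\Phi(t_*)$, whose torsion satisfies the small-torsion hypothesis of Theorem~\ref{thm: smalltorconv}. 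By uniqueness of short-time solutions, the restarted flow agrees with the original one on its interval of definition; by Theorem~\ref{thm: smalltorconv} it exists for all time $t\ge t_*$ and converges subsequentially smoothly to a $\S7$-structure $\Phi_\infty$ with $\Div T_{\Phi_\infty}=0$ and $|T_{\Phi_\infty}|<\varepsilon$.

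Finally, for the derivative bounds, once $|T_{\Phi(t)}|$ is uniformly bounded along the flow (say by $2\varepsilon_0$ on $[t_*,\infty)$, as in the proof of Theorem~\ref{thm: smalltorconv}), the curvature of the background metric $g$ is bounded a priori (since $M$ is compact), so Corollary~\ref{shitypeest} yields uniform bounds $|\nabla^k T_{\Phi(t)}|\le C_k(M,g)$ for all $k\ge 1$ and for all $t\ge t_*+1$, say. These bounds pass to the smooth subsequential limit $\Phi_\infty$, yielding $|\nabla^k T_{\Phi_\infty}|\le C_k$ as claimed. The main obstacle I anticipate is bookkeeping the dependence of the constants: ensuring that $\lambda_\varepsilon$ is small enough for the entropy estimate of Corollary~\ref{cor:enttor} to hold \emph{and} that the resulting time $t_*$ needed to drive the torsion below $\delta_\varepsilon$ is compatible with the validity range $(0,t_0]$ of that estimate; this may require a further shrinkage of $\lambda_\varepsilon$, or equivalently an iteration in which one exploits the scale-invariance properties of the entropy recorded in \textsection\ref{sec:pscale} to adjust the parabolic rescaling so that $t_*\le t_0$ is automatic.
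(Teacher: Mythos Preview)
Your overall architecture is exactly that of the paper: use Corollary~\ref{cor:enttor} to make the torsion small at some intermediate time, then feed this into Theorem~\ref{thm: smalltorconv}. The one substantive difference is in \emph{how} you pass from the decay $|T_{\Phi(t)}|\le C/\sqrt{t}$ on $(0,t_0]$ to pointwise smallness $|T|<\delta_\varepsilon$.

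You try to do this directly by waiting until $t_*=(C/\delta_\varepsilon)^2$, and you correctly flag the obstacle: there is no reason for $t_*\le t_0$, since $C$ and $t_0$ in Corollary~\ref{cor:enttor} depend only on $(g,\sigma)$ and are fixed before $\varepsilon$ (hence $\delta_\varepsilon$) is chosen. Neither of your proposed fixes closes this gap: shrinking $\lambda_\varepsilon$ further does not improve $C$ or $t_0$ (these come from the $\varepsilon$-regularity threshold, which is independent of how small the entropy is), and parabolic rescaling is unavailable here because the background metric $g$ on the compact manifold is fixed.

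The paper resolves this differently. From $|T|\le C/\sqrt{t}$ one first applies the Shi-type estimates (Corollary~\ref{shitypeest}) at a fixed time $\tau\in(0,t_0]$ to get $|\nabla T_{\Phi(\tau)}|\le C'$. Separately, for fixed $\sigma$ the backward heat kernel $u_{(x,\sigma)}(\cdot,0)$ is bounded below by a positive constant on the compact manifold, so $\lambda(\Phi_0,\sigma)<\lambda_\varepsilon$ forces $E(\Phi_0)$ (hence $E(\Phi(\tau))$, by monotonicity) to be as small as desired. Now the interpolation Lemma~\ref{lemma-interpolation} converts bounded $|\nabla T|$ together with small energy into $|T_{\Phi(\tau)}|<\delta_\varepsilon$. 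This is where the ``even smaller $\lambda_\varepsilon$'' actually enters, and it completely sidesteps your $t_*$ versus $t_0$ issue. If you insert this interpolation step in place of your direct waiting argument, the rest of your proof goes through exactly as you wrote it.
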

\begin{proof}
    By Corollary~\ref{cor:enttor}, if $\lambda_{\varepsilon}>0$ is small enough, then the torsion along the flow satisfies $|T_{\s7(t)}|\leq \frac{C}{\sqrt{t}}$, for all $t\in (0, \tau]$. Thus, by the derivative estimates in Corollary~\ref{shitypeest}, $\s7(\tau)$ satisfies $|\del T_{\s7(\tau)}|\leq C'$ for some constant $C'<\infty$. Hence by the interpolation Lemma~\ref{lemma-interpolation}, if $\lambda_{\varepsilon} >0$ is even smaller then $|T_{\s7(\tau)}|< \varepsilon$. By Theorem~\ref{thm: smalltorconv}, the flow then converges smoothly to a $\S7$-structure $\s7_{\infty}$ with divergence-free torsion and derivative bounds.
\end{proof}

\subsection{Singularity structure}
\label{sec:singularity}

We investigate the singularities of the harmonic $\S7$-flow. Let $\{\s7(t)\}_{t\in [0, \tau)}$ be a solution to \eqref{eq: Har Spin(7) Flow} on a compact manifold $(M,g)$, with a finite time singularity at $\tau< + \infty$. 

\subsubsection{The singular set $S$}

Let $\varepsilon$ and $\bar{\rho}$ be the quantities from the $\varepsilon$-regularity Theorem~\ref{thm:epsilonreg}. We define the \emph{singular set} of the flow by 
\begin{align}
\label{eq:singsetdefn}
    S = \{ x\in M \ :\ \Theta_{(x, \tau)}(\s7(\tau-\rho^2)) \geq \varepsilon, \ \textup{for\ all}\ \rho\in (0, \bar{\rho}]\}.    
\end{align}
The following lemma explains why $S$ is called the singular set of the flow.

\begin{lemma}
\label{lem:sing}
    The harmonic $\S7$-flow $\{\s7(t)\}_{t\in [0, \tau)}$ restricted to $M\setminus S$ converges as $t\rightarrow \tau$, smoothly and uniformly away from $S$, to a smooth harmonic $\S7$-structure $\s7(\tau)$ on $M\setminus S$. Moreover, for every $x\in S$, there is a sequence $(x_i, t_i)\to(x,\tau)$ such that 
\begin{align*}
    \lim_{i} |T_{\s7}(x_i, t_i)|= \infty.    
\end{align*}
Thus, $S$ is indeed the singular set of the flow.
\end{lemma}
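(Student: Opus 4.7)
The plan is to argue in two parts, following the now-standard template for parabolic geometric flows: away from $S$, combine the $\varepsilon$-regularity theorem with the local derivative estimates to extract a smooth limit; on $S$, argue by contrapositive that a locally bounded torsion forces $\Theta_{(x,\tau)}(\Phi(\tau-\rho^2))\to 0$, which is incompatible with $x\in S$.

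\smallskip
\noindent\textbf{Convergence away from $S$.} Fix $x_0\in M\setminus S$. By \eqref{eq:singsetdefn}, there exists $\rho\in(0,\bar\rho]$ with $\Theta_{(x_0,\tau)}(\Phi(\tau-\rho^2))<\varepsilon$. The $\varepsilon$-regularity Theorem~\ref{thm:epsilonreg} then yields $r\in(0,\rho)$ and $C<\infty$ such that $|T_{\Phi}(y,t)|\le C/r$ throughout $B(x_0,r/2)\times[\tau-r^2/4,\tau)$. The local derivative estimates of Corollary~\ref{localderest} then upgrade this to uniform bounds $|\nabla^m T_{\Phi(t)}|\le C_m$ on a slightly smaller parabolic cylinder, for every $m\ge 0$. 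Using the flow equation $\partial_t\Phi=\mathrm{Div}\,T\diamond\Phi$, the time derivatives of $\Phi$ inherit analogous pointwise bounds. Consequently, for $s_1<s_2$ in this cylinder, $|\Phi(s_2)-\Phi(s_1)|\le C(s_2-s_1)$ pointwise, so $\{\Phi(t)\}$ is Cauchy in every $C^k$-norm on compact subsets of the open neighbourhood of $x_0$, and converges smoothly as $t\to\tau$ to a $\S7$-structure $\Phi(\tau)$ defined on $M\setminus S$. Covering $M\setminus S$ by such neighbourhoods and using uniqueness of limits, we obtain uniform smooth convergence on compact subsets of $M\setminus S$.

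\smallskip
\noindent\textbf{Harmonicity of the limit.} Since $16E(\Phi(t))$ is non-negative and decreasing along the flow with $\frac{d}{dt}16E(\Phi(t))=-\int_M|\mathrm{Div}\,T|^2\diamond\Phi|^2\,\mathrm{vol}=-16\int_M|\mathrm{Div}\,T|^2\,\mathrm{vol}$, we have $\int_0^\tau\int_M|\mathrm{Div}\,T|^2\,\mathrm{vol}\,dt<\infty$, and hence a sequence $t_i\to\tau$ with $\int_M|\mathrm{Div}\,T_{\Phi(t_i)}|^2\,\mathrm{vol}\to 0$. For any compact $K\subset M\setminus S$, smooth convergence $\Phi(t_i)\to\Phi(\tau)$ on $K$ gives $\int_K|\mathrm{Div}\,T_{\Phi(\tau)}|^2\,\mathrm{vol}\le\liminf_i\int_M|\mathrm{Div}\,T_{\Phi(t_i)}|^2\,\mathrm{vol}=0$, so $\mathrm{Div}\,T_{\Phi(\tau)}=0$ on $M\setminus S$, proving that the limit is harmonic.

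\smallskip
\noindent\textbf{Blow-up at singular points.} Fix $x\in S$ and suppose, toward contradiction, that there are no sequences $(x_i,t_i)\to(x,\tau)$ with $|T_{\Phi}(x_i,t_i)|\to\infty$. Then there exist $r_0\in(0,\bar\rho]$, $t_0\in[0,\tau)$ and $K<\infty$ such that $|T_{\Phi}|\le K$ on $B(x,r_0)\times[t_0,\tau)$. Using the backward heat kernel $u_{(x,\tau)}(\cdot,\tau-\rho^2)$ appearing in \eqref{thetadefn} and splitting the integral,
\begin{align*}
\Theta_{(x,\tau)}(\Phi(\tau-\rho^2))
&= \rho^2\!\int_{B(x,r_0)}\!\!|T_{\Phi}|^2 u\,\mathrm{vol} + \rho^2\!\int_{M\setminus B(x,r_0)}\!\!|T_{\Phi}|^2 u\,\mathrm{vol} \\
&\le \rho^2 K^2 + \rho^2\,\|u(\cdot,\tau-\rho^2)\|_{L^\infty(M\setminus B(x,r_0))}\cdot 2E(\Phi(0)),
\end{align*}
where we used $\int_M u\,\mathrm{vol}=1$ and monotonicity of $E$ along the flow. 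By the standard Gaussian upper bound for the heat kernel on a compact manifold, $\|u(\cdot,\tau-\rho^2)\|_{L^\infty(M\setminus B(x,r_0))}\le C\rho^{-8}\exp(-r_0^2/(C\rho^2))$, so both terms tend to $0$ as $\rho\to 0$. This contradicts $\Theta_{(x,\tau)}(\Phi(\tau-\rho^2))\ge\varepsilon$ for all $\rho\in(0,\bar\rho]$, which is the defining property of $S$.

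\smallskip
The main obstacle is the harmonicity of the limit on $M\setminus S$: smooth convergence alone does not force $\partial_t\Phi=0$ in the limit, so one must pass through the $L^1$-integrability of $\int_M|\mathrm{Div}\,T|^2\,\mathrm{vol}$ (via monotonicity of $E$) and extract a subsequence along which the flow equation degenerates. The blow-up half is essentially a calibration of the backward heat kernel against the $L^\infty$-torsion bound near $x$ and the global $L^2$-bound from initial energy.
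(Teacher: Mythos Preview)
Your proof of convergence away from $S$ and of blow-up on $S$ is correct and matches (indeed, fills in details omitted from) the paper's argument: the paper applies $\varepsilon$-regularity and the local derivative estimates exactly as you do, and does not spell out the contrapositive heat-kernel argument for the ``Moreover'' clause, which you supply correctly.

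There is, however, a genuine gap in your harmonicity argument. From $\int_0^\tau\int_M|\Div T|^2\,\vol\,dt<\infty$ with $\tau<\infty$ you cannot conclude that $\liminf_{t\to\tau}\int_M|\Div T_{\s7(t)}|^2\,\vol=0$: take the integrand identically $1$. The standard trick of extracting a subsequence along which $\int_M|\Div T|^2\to 0$ only works when the time integral diverges (e.g.\ $\tau=\infty$), not when it is merely finite. So your argument does not establish $\Div T_{\s7(\tau)}=0$ on $M\setminus S$, and in fact one should not expect the finite-time limit at a singularity to be a critical point of $E$. Note that the paper's own proof makes no attempt to prove harmonicity either, and the companion statement Theorem~\ref{thm: singsize} refers to the limit simply as ``a $\S7$-structure $\s7_\tau$''; the word ``harmonic'' in the lemma statement is best read as loose terminology rather than as the assertion $\Div T_{\s7(\tau)}=0$. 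You should drop that paragraph.
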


\begin{proof}
By Theorem~\ref{thm:epsilonreg}, for every $x\in M\setminus S$, there exist $r_x>0$ and $C_x<\infty$ such that, setting
$$
\Lambda_{r_x}(y,t) = \textup{min}\ \left(1-\frac{d_g(x, y)}{r_x}, \sqrt{1-\frac{\tau-t}{{r_x}^2}}\right),
$$
one has
$$
    \Lambda_{r_x}(y,t)|T_{\s7}(y,t)|\leq \frac{C_{x}}{r_x},
    \quad\forall\ 
    (y,t)\in B(x, r_x)\times [\tau-{r_x}^2, \tau].
$$
So, for $\hat{r}_x=\frac{1}{2}r_x$, we have
$$
T_{\s7}(y,t)\leq \widehat{C}_x,    
\quad\forall\ 
(y,t)\in B(x, \hat{r}_x)\times [\tau-{\hat{r}_x}^2, \tau].
$$
Hence, by the local derivative estimates of Corollary~\ref{localderest}, there exist constants $C_{x,j}$ such that 
$$
|\del^jT_{\s7}(y,t)|\leq C_{x,j},
\quad\forall\  
(y,t)\in B(x, \frac{\hat{r}_x}{2})\times [\tau-\frac{{\hat{r}_x}^2}{4}, \tau],
\quad\forall 
\ j\geq 1.
$$
Thus as $t\rightarrow \tau$, the flow $\s7(t)$ converges smoothly and uniformly away from $S$ to a harmonic $\S7$-structure $\s7(\tau)$. 
\end{proof}

\subsubsection{Proof of Theorem \ref{thm: singsize}}
\label{sec: singsize}

Note from \eqref{eq:singsetdefn} that $S$ is a closed set. We therefore estimate $\mathcal{H}^6(S)$, where $\mathcal{H}^6$ denotes the $6$-dimensional Hausdorff measure on $(M, g)$.

\medskip

Let $S'$ be any subset of $S$. As in \cite{grayson-hamilton}, there is $S''\subset S'$ such that
\begin{align}
\label{eq: singpf1}
    \mathcal{H}^6(S'')
    \geq \frac 12 \mathcal{H}^{6}(S')    
\end{align}
and 
\begin{align*}
    u_{S''}(y,s)
    = \int_{S''} u_{(x, \tau)}(y,s)\vol(x)    
\end{align*}
satisfies
\begin{align}
\label{eq: singpf2}
    u_{S''}(y,s) \leq \frac{C}{\tau-s},    
\end{align}
for some constant $C$ depending only on $g$. Thus, for every $\rho \in (0, \bar{\rho})$, using \eqref{eq:singsetdefn} with the same $\varepsilon$ and $S''\subset S$, we get
\begin{align*}
 \varepsilon \mathcal{H}^6(S'') = \int_{S''}\varepsilon d\mathcal{H}^6(x)\leq \int_{S''} \Theta_{(x, \tau)}(\s7(\tau-\rho^2))d\mathcal{H}^6(x).    
\end{align*}
Using the definition of $\Theta$, the estimate \eqref{eq: singpf2} and the hypothesis \eqref{eq:singsize}, we estimate
\begin{align*}
\varepsilon \mathcal{H}^6(S'') &\leq \int_{S''}\int_M \rho^2 |T(y, \tau-\rho^2)|^2u_{(x,\tau)}(y, \tau-\rho^2)\vol(y)d\mathcal{H}^6(x)\\
& \leq \int_M \rho^2 |T(y, \tau-\rho^2)|^2u_{S''}(y, \tau-\rho^2)\vol(y)\\
&\leq C \int_M |T(y, \tau-\rho^2)|^2\vol(y) \\
& \leq CE_0.
\end{align*}
Since $S'$ was arbitrary, the result follows.

\begin{remark}
Theorem \ref{thm: singsize} says that the singular set is \emph{at most} $6$-dimensional. In order to find a geometric interpretation of $S$ in terms of $\S7$-geometry, it is likely that $S$ would be at most $4$-dimensional, as there are no distinguished $6$-dimensional subspaces in this context.
\end{remark}

\subsubsection{Soliton model at Type-\rom{1} singularities}
\label{sec: type_I sing}

Let $\{\s7(t)\}_{t\in [0, \tau]}$ be a solution to \eqref{eq: Har Spin(7) Flow}  that exists for some maximal time $\tau$. From the evolution of the norm of the torsion tensor $T$ along \eqref{eq: Har Spin(7) Flow} in Proposition~\ref{prop:evolnormT}, and the maximum principle, we see that the rate of blow-up of the norm of the torsion tensor must be at least
\begin{align*}
\underset{x\in M}{\textup{sup}} |T(t)| \geq \frac{1}{\sqrt{C(\tau-t)}}.    
\end{align*}

\begin{definition}
    The flow $\{\s7(t)\}_{t\in [0,\tau]}$ is said to have a \emph{Type-\rom{1} singularity at $\tau$} if 
\begin{align*}
 \underset{x\in M}{\textup{sup}} |T(t)| \leq \frac{1}{\sqrt{C(\tau-t)}}.    
\end{align*}
\end{definition}

Using the almost monotonicity formula from Theorem \ref{thm:almostmon} as in \cite{grayson-hamilton} and \cite[Theorem 5.20]{dgk-isometric}, we see that a sequence of blow-ups at a Type-\rom{1} singularity admits a subsequence that converges to a shrinking soliton (cf. \textsection\ref{sec: Spin(7)-solitons}) of the flow:
\begin{theorem}
\label{thm:Type1}
    Let $\{\s7(t)\}_{t\in [0, \tau)}$ be the maximal smooth harmonic $\S7$-flow on $(M^8, \s7_0)$ starting at $\s7_0$. Suppose that the flow encounters a Type-\rom{1} singularity at  $\tau<\infty$. Let $x\in M$ and $\lambda_i \searrow 0$, and consider the rescaled sequence $\s7_i(t)= {\lambda_i}^{-3}\s7(\tau-{\lambda_i}^2t)$. Then, after possibly passing to a subsequence, $(M, \s7_i(t), x)$ converges smoothly to an ancient harmonic $\S7$-flow $\{\s7_{\infty}(t)\}_{t<0}$ on $({\bR}^8, g_{\textup{Eucl}})$ induced by a shrinking soliton, i.e.
\begin{align*}
\Div T_{\s7_{\infty}}(x,t)= -\frac{x}{2t}\lrcorner T_{\s7_{\infty}}.  
\end{align*}
    Moreover $x\in M\setminus S$ if, and only if, $\{\s7_{\infty}(t)\}$ is the stationary flow induced by a torsion-free $\S7$-structure on $({\bR}^8, g_{\textup{Eucl}})$. 
\end{theorem}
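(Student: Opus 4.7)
The plan is to follow the template of the Type-I analysis for the isometric $\G2$-flow in \cite[Thm 5.20]{dgk-isometric}, combining parabolic rescaling, the Shi-type derivative estimates and the Hamilton-type compactness already proved, and then squeezing the soliton equation out of the almost monotonicity formula on $\bR^8$.

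First I would verify that the Type-I assumption, together with Lemma \ref{lemma:pscale} and the scaling identities \eqref{pscaletor}, produces rescaled flows with the bounded geometry needed to apply Corollary \ref{shitypeest} and then Theorem \ref{thm:flowcptthm}. Concretely, for the rescaled sequence $\s7_i(t)=\lambda_i^{-3}\s7(\tau-\lambda_i^2 t)$ on $(M,g_i:=\lambda_i^{-2}g)$, the Type-I hypothesis gives
$$
|T_{\s7_i}(\cdot,t)|_{g_i}= \lambda_i\,|T_\s7(\cdot,\tau-\lambda_i^2t)|_g\leq \tfrac{C}{\sqrt{-t}}, \qquad t\in[-A,-\delta],
$$
uniformly in $i$, while $|\nabla^j\Riem_{g_i}|_{g_i}=\lambda_i^{2+j}|\nabla^j\Riem_g|_g\to 0$, and $\mathrm{inj}(M,g_i,x)\to+\infty$. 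Corollary~\ref{shitypeest} then provides uniform bounds on $|\nabla^mT_{\s7_i}|$ on compact subsets of $t<0$, and Theorem~\ref{thm:flowcptthm} yields a subsequence converging smoothly on compact subsets of $\bR^8\times(-\infty,0)$ to an ancient harmonic $\S7$-flow $\{\s7_\infty(t)\}_{t<0}$ on $(\bR^8,g_{\mathrm{Eucl}})$.

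Next I would extract the soliton structure from Theorem~\ref{thm:almostmon}(2). Let $u(y,s)=(-4\pi s)^{-4}e^{-|y|^2/(-4s)}$ be the Euclidean backward heat kernel at $(0,0)$ and set $\Theta_\infty(t):=(-t)\int_{\bR^8}|T_{\s7_\infty(t)}|^2 u\,\vol$. Scale-invariance of $\Theta$ under parabolic rescaling, together with the almost monotonicity and the fact that the error $K_2(\tau_1-\tau_2)(E(0)+1)$ scales away as $\lambda_i\to 0$, forces
$\Theta_{(0,0)}(\s7_\infty(t_2))=\Theta_{(0,0)}(\s7_\infty(t_1))$
for every $t_1<t_2<0$. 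Passing the identity \eqref{eqlemma4.4} (now with vanishing curvature terms) to the limit then gives
\begin{equation*}
0=\frac{d}{dt}\Theta_\infty(t)= -2(-t)\int_{\bR^8}|\Div T_{\s7_\infty}-\nabla f\lrcorner T_{\s7_\infty}|^2 u\,\vol
\end{equation*}
together with the vanishing of the Hessian term, so that $\Div T_{\s7_\infty(t)}=\nabla f\lrcorner T_{\s7_\infty(t)}=-\tfrac{x}{2t}\lrcorner T_{\s7_\infty(t)}$, which is the shrinking harmonic $\S7$-soliton equation of Definition~\ref{def: harmonic Spin(7)-soliton}; this identifies $\{\s7_\infty(t)\}$ as a self-similar solution via Theorem~\ref{cor: iso soliton => self-sim sol}.

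Finally, I would characterise the regular points $x\in M\setminus S$. If $x\notin S$, then by \eqref{eq:singsetdefn} there is $\rho\in(0,\bar\rho]$ with $\Theta_{(x,\tau)}(\s7(\tau-\rho^2))<\varepsilon$, so Theorem~\ref{thm:epsilonreg} bounds $|T_\s7|$ uniformly in a parabolic neighbourhood of $(x,\tau)$; by the rescaling \eqref{pscaletor} this gives $|T_{\s7_i}|_{g_i}\to 0$ on compact sets, hence $T_{\s7_\infty}\equiv 0$ and the limit is the stationary flow induced by a torsion-free $\S7$-structure on $\bR^8$. Conversely, if $\{\s7_\infty(t)\}$ is stationary and torsion-free, then $\Theta_\infty\equiv 0$, which by the scale-invariance and smooth convergence yields $\Theta_{(x,\tau)}(\s7(\tau-\rho^2))<\varepsilon$ for all sufficiently small $\rho$, putting $x\in M\setminus S$.

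The main obstacle I anticipate is step two: justifying rigorously that the time-dependent error terms in the almost monotonicity formula (in particular the $\log(B/(t_0-t)^4)$ factor and the curvature terms in \eqref{eqlemma4.4}) do vanish uniformly under the Type-I parabolic rescaling, so that the monotonicity becomes a strict equality for the limit and forces the soliton identity pointwise rather than merely in an averaged sense.
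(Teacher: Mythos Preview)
Your proposal is correct and follows essentially the same approach the paper indicates: it refers the proof to \cite{grayson-hamilton} and \cite[Theorem 5.20]{dgk-isometric}, i.e., parabolic rescaling plus the Shi-type estimates and Theorem~\ref{thm:flowcptthm} to extract an ancient limit on $(\bR^8,g_{\mathrm{Eucl}})$, then scale-invariance and Theorem~\ref{thm:almostmon}(2) to force equality in the monotonicity and hence the shrinking-soliton equation, with the characterisation of $S$ via Theorem~\ref{thm:epsilonreg}. Your anticipated obstacle is precisely the standard step handled as in \cite{grayson-hamilton}: the curvature and logarithmic error terms in \eqref{eqlemma4.4} scale to zero because $|\nabla^j\Riem_{g_i}|_{g_i}\to 0$, so the limit on $\bR^8$ satisfies exact monotonicity and the equality case yields the pointwise identity.
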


\begin{remark}
    It would be very interesting to get explicit examples of shrinking, steady or expanding harmonic solitons, even for $\bR^8$. On the contrary, if one could prove that there do not exist any shrinking solitons on $\bR^8$, then the previous theorem tells us that the harmonic $\S7$-flow has no Type-\rom{1} singularities.
\end{remark}

\newpage
\printbibliography

\noindent
(SD): Institut f\"ur Mathematik, Humboldt-Universit\"at zu Berlin, Rudower Chaussee 25, 12489 Berlin.\\ \href{mailto:dwivedis@math.hu-berlin.de}{dwivedis@math.hu-berlin.de}

\medskip

\noindent
(EL): Univ. Brest, CNRS UMR 6205, LMBA, F-29238 Brest, France.\\ \href{loubeau@univ-brest.fr}{loubeau@univ-brest.fr}

\medskip

\noindent
(HSE): Institute of Mathematics, Statistics and Scientific Computing (IMECC), University of Campinas (Unicamp), 13083-859 Campinas-SP, Brazil.\\ \href{henrique.saearp@ime.unicamp.br}{henrique.saearp@ime.unicamp.br}

\end{document}